\DeclareSymbolFont{cyrletters}{OT2}{wncyr}{m}{n}
\DeclareMathSymbol{\Sha}{\mathalpha}{cyrletters}{"58}
\newtheorem*{thma}{Theorem A}
\newtheorem*{thmb}{Theorem B}
\newtheorem*{thmc}{Theorem C}
\newtheorem*{claim}{Claim}
\newcommand{\LL}{\Lambda}
\newcommand{\TT}{\mathbb{T}}
\newcommand{\bfT}{\mathbf{T}}
\newcommand{\RRn}{\mathcal{R}}
\newcommand{\RRc}{\mathcal{R}_{\mathrm{cyc}}}
\newcommand{\TTc}{\mathbb{T}_{\mathrm{cyc}}}
\newcommand{\TTo}{\mathbb{T}}
\newcommand{\QQ}{\mathbb{Q}}
\newcommand{\FF}{\mathcal{F}}
\newcommand{\lra}{\longrightarrow}
\newcommand{\ZZ}{\mathbb{Z}}
\newcommand{\PP}{\mathcal{P}}
\newcommand{\KKK}{\mathcal{K}}
\newcommand{\Gal}{\textup{Gal}}
\newcommand{\KS}{\textbf{\textup{KS}}}
\newcommand{\ES}{\textbf{\textup{ES}}}
\newcommand{\NN}{\mathcal{N}}
\newcommand{\ra}{\rightarrow}
\newcommand{\be}{\begin{equation}}
\newcommand{\ee}{\end{equation}}
\newcommand{\oo}{\mathcal{O}}
\newcommand{\mm}{\hbox{\frakfamily m}}
\newcommand{\FFc}{\mathcal{F}_{\textup{\lowercase{can}}}}
\newcommand{\Hom}{\textup{Hom}}
\newcommand{\Gr}{\textup{Gr}}
\newcommand{\hzero}{{(H.0)}}
\newcommand{\htwo}{{(H.2)}}
\newcommand{\res}{\mathrm{res}}
\newcommand{\cyc}{\textup{cyc}}
\newcommand{\pr}{\mathrm{pr}}
\newcommand{\fp}{\frak{p}}
\newcommand{\GL}{\textup{GL}}
\newcommand{\hzerominus}{(H.$0^-$)}
\newcommand{\htwoplus}{(H.$2^+$)}
\newcommand{\Bf}{\mathbf{f}}
\newcommand{\BF}{\mathbf{BF}}
\newcommand{\ur}{\textup{ur}}
\newcommand{\Bj}{\mathbf{j}}
\newcommand{\EXP}{\mathbf{EXP}}
\numberwithin{equation}{section}
\newtheorem{thm}{Theorem}[section]
\newtheorem{lemma}[thm]{Lemma}
\newenvironment{define}{\par\medskip\noindent\refstepcounter{thm}
\bgroup{\hspace*{-0.15 cm}\bf{Definition}
\thethm.}\bgroup}{\egroup \egroup\par\medskip}\newtheorem{prop}[thm]{Proposition}
\newtheorem{cor}[thm]{Corollary}
\newenvironment{rem}{\par\medskip\noindent\refstepcounter{thm}
\bgroup{\hspace*{-0.15 cm}\bf{Remark} \thethm.}\bgroup}{\egroup
\egroup\par\medskip} \parskip 2pt
\newcounter{Athm}[section]\setcounter{Athm}{1}
\renewcommand{\theAthm} {\arabic{Athm}}
\long\def\symbolfootnote[#1]#2{\begingroup%
\def\thefootnote{\fnsymbol{footnote}}\footnote[#1]{#2}\endgroup}
\begin{document}
\title{M\lowercase{ain conjectures for higher rank nearly ordinary families} -- I}% of} R\lowercase{ankin}-S\lowercase{elberg convolutions}}

\author{K\^az\i m B\"uy\"ukboduk}
\author{Tadashi Ochiai}

\address[K.B.]{School of Mathematics and Statistics, University College Dublin, Belfield, Ireland} %}\break %(Secondary Affiliations: Ko\c{c} University, Mathematics \& Harvard University, Mathematics)}
\email{kazim.buyukboduk@ucd.ie, ochiai@math.sci.osaka-u.ac.jp}
\address[T.O.]{Osaka University, 1-1 Machikaneyama-cho, Toyonaka, Osaka
560-0043, Japan}

%\email{\normalsize kazim@math.stanford.edu}
%\address{K\^az\i m B\"uy\"ukboduk \hfill\break\indent 
%UC Dublin,  Mathematics and Statistics\hfill\break\indent Belfield Dublin 4, Ireland
% \hfill\break\indent  \hfill\break\indent 
%Ko\c{c} University, Mathematics  \hfill\break\indent Rumeli Feneri Yolu, 34450 Sariyer \hfill\break\indent Istanbul, Turkey}

\keywords{Modular forms, Families of Galois representations, Main conjectures of Iwasawa theory}
%\curraddr{Ko\c{c} University,  Mathematics \hfill\break\indent Rumeli Feneri Yolu \hfill\break\indent 34450 Sar\i yer/\.Istanbul
%\hfill\break\indent Turkey}
%\address{\textit{Current Address:} \hfill\break\indent
%Ih\'es, Le Bois-Marie, 35,  \hfill\break\indent Route de Chartres \hfill\break\indent F-91440 Bures-sur-Yvette
%\hfill\break\indent France}
%\keywords{Stark conjectures, Euler systems, Kolyvagin systems.}
\subjclass[2010]{11R23 (primary); 11F11, 11R20 (secondary)}

\begin{abstract}
In this article, we present the first half of our project on the Iwasawa theory of 
higher rank Galois deformations over deformations rings of arbitrary dimension. We develop a theory of Coleman maps for a very general class of coefficient rings, devise a dimension reduction procedure for locally restricted Euler systems and finally, put these into use in order to prove a divisibility in a $3$-variable main conjecture for nearly ordinary families of Rankin-Selberg convolutions. 
\end{abstract}

\maketitle
\section{Introduction and set up}
\label{sec:Intro}

Our principal goal in this article is to establish a general machinery to approach the Iwasawa main conjectures for Galois representations of higher rank with coefficients in deformation rings of higher Krull dimension. This consists of two independent steps. 

First, under certain technical hypotheses, we devise an inductive procedure to bound 
the size of a Selmer group for a Galois representation of higher rank over regular rings of higher Krull dimension with the aid of a locally restricted Euler system; see Theorem~B below as a sample of our results, Theorems~\ref{thm:weakleopoldt} and \ref{thm:maingreenbergbound} for their most general form (see also Paragraph~\ref{subsec:history} for a comparison with prior related results). %We note that the locally restricted Euler system machine we develop here does not require the underlying Galois representation be $p$-ordinary.

Secondly, we construct Coleman maps for a very general class of nearly ordinary Galois deformations, with coefficients in finite extensions of the ring of power series in any number of variables; see Theorem~A below as a sample of our results in this direction. Plugging the $3$-variable Beilinson--Flach classes of Kings--Loeffler--Zerbes~\cite{KLZ2} in our Coleman map and relying on results of \cite{KLZ1}, we obtain another construction of Hida's $p$-adic $L$-function (along the same lines as \cite[Theorem B]{KLZ2}).
 
We finally exhibit an application of the theory we have developed here, with the aid of the Beilinson--Flach Euler system of \cite{KLZ2}. Our result (Theorem C below) is a divisibility in the statement of a $3$-variable main conjecture for the cyclotomic deformation of  the Rankin--Selberg convolution $\Bf_1\otimes\Bf_2$ of two Hida families $\Bf_1$ and $\Bf_2$, which we prove using our Theorem~B with the $3$-variable Beilinson--Flach Euler system. Note that a single-variable version of this result (concerning cyclotomic deformation of the Rankin--Selberg convolution $f_1\otimes f_2$ of two eigenforms $f_1$ and $f_2$) was established in \cite{KLZ2}.

\subsection{A sample of results in this article}
For the sake of the clarity of our exposition in the introduction, we shall phrase our main results only in the the case of Rankin-Selberg products. We first introduce our notation (which we will also rely on in the main body of this note); we refer the reader to Section~\ref{subs:exampleRankinSelberg} and Section~\ref{sec:BFES} for more precise definitions of the objects that show up in the paragraph below. Fix forever a prime $p>7$.

Let $\Bf_1=\sum_{n=1}^{\infty} a_n(\Bf_1)q^n$ and $\Bf_2=\sum_{n=1}^{\infty} a_n(\Bf_2)q^n$ denote two primitive Hida families of elliptic modular cusp forms with respective tame levels $N_1$ and $N_2$ such that $p \nmid N_i$ ($i=1,2$) and central characters $\Psi_i:(\ZZ/pN_i\ZZ)^\times\lra \overline{\QQ}_p^\times$ ($i=1,2$), which are defined over the respective local domains  $\mathbb{I}_{\Bf_i}$ ($i=1,2$), which are both finite flat over the respective one-variable Iwasawa algebra $\mathbb{Z}_p [[\Gamma_i]]$, where $\Gamma_i$ is the group equipped with an isomorphism $\chi_i:\, \Gamma_i\stackrel{\sim}{\lra}1+p\ZZ_p$ for $i=1,2$. We write $\Psi_i=\Psi_i^{(N_i)}\Psi_i^{(p)}$ so that the Dirichlet character $\Psi_i^{(N_i)}$ has prime-to-$p$ conductor and $\Psi_i^{(p)}$ has conductor dividing $p$.

We let $\mathbb{T}_{\Bf_i}$ denote Hida's big Galois representation that Hida associates to $\Bf_i$. Under mild hypotheses (which we shall assume throughout) this is a free $\mathbb{I}_{\Bf_i}$-module of rank $2$ and its restriction $\mathbb{T}_{\Bf_i}\vert_{G_{\QQ_p}}$ admits a $p$-ordinary filtration $F_p^+\TT_{\Bf_i} \subset \TT_{\Bf_i}$, where $F_p^+\TT_{\Bf_i}$ is a free direct summand. 

We set $\Lambda_{\cyc}:=\ZZ_p[[\Gamma_\cyc]]$, where $\Gamma_\cyc$ is the Galois group of the cyclotomic $\ZZ_p$-extension of $\QQ$. We call $\Lambda_{\cyc}$ the cyclotomic Iwasawa algebra. 
We denote by $\Lambda^\sharp_{\cyc}$ (resp. $(\Lambda^\sharp_{\mathrm{cyc}})^\iota$) the free $\Lambda_{\cyc}$-module of rank one on which the absolute Galois group $G_{\QQ}$ acts via the tautological character $\widetilde{\chi}_\cyc :\ G_{\QQ_p} \twoheadrightarrow \Gamma_{\cyc} \hookrightarrow  \Lambda^\times_{\cyc}$ (resp. via the inverse tautological character $\widetilde{\chi}^{-1}_\cyc:\ G_{\QQ_p} \twoheadrightarrow \Gamma_{\cyc} \stackrel{\textup{inv}}{\lra}\Gamma_\cyc\hookrightarrow \Lambda^\times_{\cyc}$, where $\textup{inv}$ is the map given by $\gamma\mapsto \gamma^{-1}$). We set $\mathbb{T}_\cyc = \mathbb{T}_{\Bf_1} \widehat{\otimes}_{\mathbb{Z}_p} \mathbb{T}_{\Bf_2}\widehat{\otimes}_{\mathbb{Z}_p}(\LL_\cyc^\sharp)^\iota$, which is a free module of rank $4$ over 
$\mathcal{R}_\cyc:= \mathbb{I}_{\Bf_1} \widehat{\otimes}_{\mathbb{Z}_p} \mathbb{I}_{\Bf_2}\widehat{\otimes}_{\mathbb{Z}_p}\LL_\cyc$ and we consider the subquotient $F^{-+}\TT_\cyc=\mathbb{T}_{\Bf_1}/F_p^+\TT_{\Bf_1} \,\otimes F_p^+\mathbb{T}_{\Bf_2}\widehat{\otimes}_{\mathbb{Z}_p}(\LL_\cyc^\sharp)^\iota$. Under suitable hypotheses, it is a free $\mathcal{R}_\cyc$-module of rank one. 

A continuous homomorphism $\kappa=\kappa_1\otimes\kappa_2\otimes \kappa_{\mathrm{cyc}} \in \textup{Hom}(\RRc,\overline{\QQ}_p)$ is called arithmetic  if it satisfies the following two conditions: 
\begin{itemize}
\item[(i)] The homomorphism {$\kappa_i:\mathbb{I}_{\Bf_i}\lra \overline{\QQ}_p$} is an arithmetic specialization of weight $w_i \geq 0$ on the ordinary Hida family $\mathbb{I}_{\Bf_i}$, {in the sense that $\kappa_i$ restricted to a certain open subgroup $U$ of $\Gamma_i \subset 
\mathbb{Z}_p [[\Gamma_i ]] \subset \mathbb{I}_{\Bf_i}$ is equal to $\chi^{w_i}_i$ for some non-negative integer $w_i$.} By Hida theory, 
the specialization of $\mathbb{T}_{\Bf_i}$ at each arithmetic specialization $\kappa_i$ corresponds to a cuspform of weight $k_i=w_i+2$  ($i=1,2$).
\item[(ii)] The homomorphism $\kappa_{\mathrm{cyc}}$ is of the form $\chi_\cyc^{j}\phi$ where $\chi_\cyc:\Gamma_\cyc\ra 1+p\ZZ_p$ is the cyclotomic character, $j$ is an integer and $\phi$ is a character of $\Gamma_\cyc$ of finite order. 
\end{itemize}
Given an arithmetic specialization $\kappa$ as above, we write $E_\kappa \subset 
\overline{\QQ}_p$ for the finite extension of $\QQ_p$ generated by the image of $\kappa$ and define $V_\kappa:=\TT_\cyc\otimes_{\kappa}E_\kappa$ (the specialization of $\TT_\cyc$ at $\kappa$). The Galois representation $V_\kappa$ inherits a filtration from $\TT_\cyc$ in the obvious manner. For $i=1,2$, we denote by $a_p(\Bf_i(\kappa_i))$ the $U_p$-eigenvalue  on the $p$-stabilized eigenform $\Bf_i(\kappa_i)$.

We finally let $\mathbb{D}(F^{-+}\TT_\cyc)$ denote the big Dieudonn\'e module associated to the family $F^{-+}\TT_\cyc$ of local Galois representations which interpolates the de Rham Dieudonn\'e modules of its specializations. We shall not provide its explicit form in this introduction but refer the reader to Definition~\ref{define:thefatdieudonnemodule}.  

We are now ready to state a particular case of one of our main results, see Corollary~\ref{cor:dualexponentialmapforH++} for its most general form.
%%%
\begin{thma}\label{cor:dualexponentialmapIntro} 
There exists an $\mathcal{R}_\cyc$-linear isomorphism 
$${\EXP}^\ast \,:\, 
H^1(\QQ_p,F^{-+}\TT_\cyc)
\lra 
\mathbb{D}(F^{-+}\TT_\cyc)  
$$ 
which is characterized by the following interpolation property: For every arithmetic specialization $\kappa=\kappa_1\otimes\kappa_2\otimes \kappa_\cyc$ as above with $j>w_2$ the following diagram commutes:
$$
\xymatrix{
H^1(\QQ_p,F^{-+}\TT_\cyc)
\ar[d]_\kappa \ar[rrr]^{{\EXP}^\ast} &&& 
\mathbb{D}(F^{-+}\TT_\cyc )
\ar[d]^\kappa \\ 
H^1(\QQ_p, F^{-+}V_\kappa)
\ar[rrr]_{\hspace*{10pt}e_p^+\,\times\, 
\exp^\ast} 
&&& 
D_{\textup{dR}}(F^{-+}V_\kappa)
}$$
Here $e_p^+:=(-1)^{j-w_2}(j-w_2)!\,e_p$ and $e_p=e_p(\kappa)$ is the $p$-adic multiplier given by
$$e_p=\left( 1 - \dfrac{p^{j-w_2-1}}{a_p(\Bf_1(\kappa_1 ))a_p(\Bf_2(\kappa_2 ))^{-1}\Psi_2^{(N_2)}(p)} \right) \left( 
1 - \dfrac{a_p(\Bf_1(\kappa_1 ))a_p(\Bf_2(\kappa_2 ))^{-1}\Psi_2^{(N_2)}(p)}{p^{j-w_2}}\right)^{-1}$$
in case $F^{-+}V_\kappa$ is crystalline, and 

$$ e_{p}=\left( \dfrac{p^{j-w_2-1}}{a_p(\Bf_1(\kappa_1 ))a_p(\Bf_2(\kappa_2 ))^{-1}\Psi_2^{(N_2)}(p)}\right)^n$$
when  $F^{-+} V_\kappa\,\vert_{I_p} \cong E_\kappa(\omega_2+1-j)(\eta)$ with $\textup{ord}_p(\textup{cond}(\eta))=n\geq1$.

Also, for every $\kappa$ as above with $j \leq w_2$ we also have the following commutative diagram:
$$
\xymatrix{
H^1(\QQ_p,F^{-+}\TTc )
\ar[d]_\kappa \ar[rrr]^(.48){\mathrm{\EXP}^\ast}&&& 
\mathbb{D}(F^{-+}\TTc )
\ar[d]^\kappa\\
H^1 (\QQ_p,F^{-+}V_\kappa )
\ar[rrr]_(.45){\hspace*{15pt}e_p^-\,\times\, 
\log
} 
&&& 
D_{\textup{dR}}(F^{-+}V_\kappa )
}$$
where $e_p^-:=\dfrac{e_p}{(w_2-j)!}$\,.
\end{thma}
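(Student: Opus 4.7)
My plan is to build $\EXP^\ast$ as an instance of a Perrin-Riou-style big dual exponential, adapted to the three-variable deformation setting. First I would pin down the local picture: on the inertia subgroup $I_p$, the representation $F^{-+}\TT_\cyc$ acts through a finite-order character (built from the $p$-parts of the two nebentypus characters, $\Psi_1^{(p)}$ and $(\Psi_2^{(p)})^{-1}$) twisted by $\widetilde{\chi}_\cyc^{-1}$; on the full decomposition group the nearly ordinary structure produces an extra unramified twist whose Frobenius eigenvalue is encoded by $a_p(\Bf_1)\,a_p(\Bf_2)^{-1}\,\Psi_2^{(N_2)}(p)$. In particular, $F^{-+}\TT_\cyc$ is a free rank-one $\RRc$-module with an explicitly described Galois action, and this description is what ultimately produces the Euler factor $e_p$.

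Next I would identify both sides of the desired isomorphism as free rank-one $\RRc$-modules. The right-hand side $\DD(F^{-+}\TT_\cyc)$ is such by construction (Definition~\ref{define:thefatdieudonnemodule}). For the left-hand side, I would use the Herr-style complex attached to the $(\varphi,\Gamma)$-module of $F^{-+}\TT_\cyc$ over the relative Robba ring (in the Kedlaya-Pottharst-Xiao framework, or via the Wach-module formalism when the specializations are crystalline) to see that $H^1(\QQ_p, F^{-+}\TT_\cyc)$ is free of rank one: $H^0$ vanishes because of the non-trivial cyclotomic twist, and the local Euler characteristic equals $-1$. So $\EXP^\ast$ is determined by where it sends a generator, and the isomorphism claim will follow once the construction matches generators appropriately.

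For the construction itself, $\EXP^\ast$ is assembled from Perrin-Riou's cyclotomic big logarithm $\logprcyc$ applied to the nearly ordinary Hida-family deformation $\TT_{\Bf_1}/F_p^+\TT_{\Bf_1}\,\otimes\, F_p^+\TT_{\Bf_2}$ over $\mathbb{I}_{\Bf_1}\widehat{\otimes}\mathbb{I}_{\Bf_2}$, composed with the trivialization of the big Dieudonn\'e module coming from the two-variable periods of Ohta and Ochiai. Equivalently, one defines it directly from the $(\varphi,\Gamma)$-module: the piece of $\mathbb{D}^\dagger_{\textup{rig}}$ whose $\psi$-fixed (or crystalline) part is exactly $\DD(F^{-+}\TT_\cyc)$ provides a tautological map to $H^1$ via the Herr complex, and $\EXP^\ast$ is its inverse. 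At an arithmetic $\kappa$ with $j>w_2$, the specialization $F^{-+}V_\kappa$ has strictly positive Hodge-Tate weight, so $H^1(\QQ_p, F^{-+}V_\kappa)=H^1_f$ and the specialized map factors through Bloch-Kato's dual exponential; the multiplier $e_p$ is produced by matching the family Dieudonn\'e with the pointwise de Rham comparison, while the factor $(-1)^{j-w_2}(j-w_2)!$ comes from Perrin-Riou's normalization of $\logprcyc$. For $j \leq w_2$ the Hodge-Tate weight is non-positive, and one passes from $\exp^\ast$ to $\log$ by local duality (or analytic continuation of the interpolation formula), yielding the second diagram with the expected constant $e_p^-$.

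The main obstacle will be the ramified case $\textup{ord}_p(\textup{cond}(\eta))=n\geq 1$, where the two Euler-type factors of the crystalline formula collapse into a single $n$-th power $\bigl(p^{j-w_2-1}/a_p(f_1(\kappa_1))a_p(f_2(\kappa_2))^{-1}\Psi_2^{(N_2)}(p)\bigr)^n$. Here I would need Perrin-Riou's interpolation formula for de Rham but non-crystalline specializations, with Gauss sums attached to $\eta$ entering the picture and being organized so that they match this explicit multiplier. The isomorphism property itself is essentially built into the construction once $\DD(F^{-+}\TT_\cyc)$ is understood as the correct piece of the ambient $(\varphi,\Gamma)$-module, and the remaining content of the theorem is the verification of the two interpolation diagrams, together with the density argument that the determining scalar in $\RRc$ is a unit (which is forced by the interpolation formulae, since $e_p$ and its complement are non-zero at a Zariski-dense set of arithmetic $\kappa$, and $\RRc$ is a reduced complete local Noetherian ring).
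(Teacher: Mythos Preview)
Your approach is plausible but takes a genuinely different route from the paper's. You propose building $\EXP^\ast$ via Perrin-Riou's big logarithm and the $(\varphi,\Gamma)$-module formalism (Herr complex, relative Robba ring in the Kedlaya--Pottharst--Xiao sense, Wach modules). The paper, by contrast, deliberately avoids all $p$-adic Hodge-theoretic machinery: it starts from the \emph{classical} Coleman power series isomorphism over $\QQ_p^{\textup{ur}}$ (Theorem~\ref{thm:interpolationclassic_unr}), twists by $\omega^a\chi_\cyc^b$ to adjust the shape, then applies a \emph{coordinate change} on $\Gamma_1\times\cdots\times\Gamma_r\times\Gamma_\cyc$ so that the $G_{\QQ_p}$-action on the rank-one graded piece looks like a single cyclotomic variable times an unramified character (Theorem~\ref{thm:interpolationforcoleman_unr}). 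One then descends from $\QQ_p^{\textup{ur}}$ to $\QQ_p$ by taking $\Gal(\QQ_p^{\textup{ur}}/\QQ_p)$-invariants, using non-triviality of the unramified character $\widetilde{\alpha}_m$ to kill the obstruction (Theorem~\ref{thm:interpolationforcoleman}), and finally passes to the $\RRc$-linear Kummer dual to obtain $\EXP^\ast$ (Corollary~\ref{cor:dualexponentialmapforH++}).

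What each approach buys: the paper's method is elementary and hands-on, and the isomorphism property is inherited directly from the classical Coleman map rather than needing a separate density or unit argument as you sketch at the end; this is exactly why the authors emphasize that their Coleman maps are surjective ``as a natural consequence'' of the construction. Your route is closer in spirit to \cite{KLZ2} (which the paper explicitly contrasts with), and would require care over the coefficient ring: the KPX framework lives over $\QQ_p$-affinoids, not over the complete local Noetherian $\ZZ_p$-algebra $\RRc$, so you would need either an integral refinement or a separate argument to recover the integral isomorphism. Your treatment of the ramified interpolation is also where the two approaches diverge most visibly: the paper obtains the power $(p^{-1}\varphi^{-1})^n$ directly from the classical Coleman interpolation and explains (Remark~\ref{rem:comparetorubinwithGausssums}) that the Gauss sums you anticipate are already absorbed into the twisted projection maps, whereas in your sketch this bookkeeping is left to Perrin-Riou's general formula.
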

{A similar statement is proved in \cite[Theorem 8.2.8]{KLZ2}, by reducing to the case where the $p$-local Galois representation in consideration is unramified, which the authors of op. cit. handle in \cite[Theorem 8.2.3]{KLZ2}. Our construction here goes through directly without passing to an unramified twist of $F^{-+}\TT_\cyc$. Hence it is simpler and the method works in a more general situation.} We also refer the reader to Remark~\ref{rem:comparetorubinwithGausssums} who might be curious about absence of the Gauss sums as compared to \cite[Theorem 8.2.3]{KLZ2}, in the portions of the interpolation formulae that concern non-crystalline specializations. 

Next, we state a particular consequence of the locally restricted Euler system machinery for higher dimensional coefficient rings obtained through the theory developed in Section~\ref{sec:dimreduction}. 
We note that we do not require any $p$-ordinary hypothesis in the most general form (Theorem~\ref{thm:weakleopoldt} below) of our results. 
In this portion, we shall assume that $\mathcal{R}_\cyc $ is isomorphic to a power series ring in three variables $\mathcal{O}[[X_1,X_2,X_3]]$, 
where $\mathcal{O}$ is the ring of integers of a finite extension of $\QQ_p$. This assumption turns out to be not so restrictive; see \cite[Lemma 2.7]{fouquetochiai} in this regard. 

Recall the subquotient $F^{-+}\TT_\cyc=(\mathbb{T}_{\Bf_1}/F_p^+\TT_{\Bf_1}) \widehat\otimes F_p^+\mathbb{T}_{\Bf_2}\widehat{\otimes}_{\mathbb{Z}_p}(\LL_\cyc^\sharp)^\iota$ of $\TT_\cyc$. We also define the quotient $F^{--}\TT_\cyc =(\mathbb{T}_{\Bf_1}/F_p^+\TT_{\Bf_1}) \widehat\otimes 
(\mathbb{T}_{\Bf_2}/F_p^+\mathbb{T}_{\Bf_2})\widehat{\otimes}_{\mathbb{Z}_p}(\LL_\cyc^\sharp)^\iota$  of $\TT_\cyc$ and the submodules 
$$
F^{++}\TT_\cyc=\left(\mathbb{T}_{\Bf_1}\widehat\otimes F_p^+\TT_{\Bf_2}+ F_p^+\TT_{\Bf_1}\widehat\otimes \TT_{\Bf_2}\right)\widehat\otimes(\LL_\cyc^\sharp)^\iota\supset F_p^+\TT_{\Bf_1}\otimes \TT_{\Bf_2}\widehat\otimes(\LL_\cyc^\sharp)^\iota=: F^{+}\TT_\cyc\,.
$$ 
of $\TT_\cyc$. 
Notice that we have a natural isomorphism $F^{++}\TT_\cyc/F^{+}\TT_\cyc\stackrel{\sim}{\lra}F^{-+}\TT_\cyc$.
%%%%%
{\begin{thmb}[Theorem~\ref{thm:mainconjwithoutpadicLfunc}]
\label{thm:weakleopoldtintro}
For all sufficiently large $p$ and under suitable technical hypotheses (that are made precise in the main text), the following 
statements hold: 
\begin{itemize}
\item[(1)] The Greenberg Selmer group $H^1_{\FF_\Gr^*}(\mathbb{Q},\TT_\cyc^\vee (1))^\vee$ attached to $\TT_\cyc$ is torsion,
\item[(2)] $\textup{char}\left(H^1_{\FF_\Gr^*}(\QQ,\TT_\cyc^\vee (1))^\vee\right)\,\, 
\supset 
%\Big{|}
\,\, \textup{char}\left(H^1(\QQ_p,F^{-+}_p\TT_\cyc)\big{/}\mathcal{R}_\cyc\cdot \res_{+/\textup{f}}\left(\BF^{\Bf_1,\Bf_2}_{1}\right)\right)$
\end{itemize}
where $\BF^{\Bf_1,\Bf_2}_{1}$ is the generalized Beilinson--Flach element of \cite{KLZ2} associated to the family $\Bf_1\otimes\Bf_2$ of Rankin-Selberg convolutions, and the map $\res_{+/\textup{f}}$ is the compositum of the arrows 
$$\ker\left(H^1(\QQ,\TT_\cyc)\stackrel{\res_p}{\lra} H^1(\QQ_p,F^{--}\TT_\cyc)\right)\stackrel{\res_p}{\lra} H^1(\QQ_p,F^{++}\TT_\cyc){\lra}H^1(\QQ_p,F^{-+}\TT_\cyc)\,.$$
\end{thmb}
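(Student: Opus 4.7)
The plan is to deduce both assertions as a direct application of the locally restricted Euler system bound of Theorem~\ref{thm:maingreenbergbound} (whose most general form is developed in Section~\ref{sec:dimreduction}), with the Euler system input being the Beilinson--Flach classes $\{\BF^{\Bf_1,\Bf_2}_{\eta}\}_{\eta}$ of \cite{KLZ2}. The overall flow is as follows: recast the Beilinson--Flach classes as a locally restricted Euler system for $(\TT_\cyc,\FF_\Gr)$, feed them into the dimension-reduction machinery of this paper, and finally bootstrap from a non-triviality result for $\res_{+/\textup{f}}\left(\BF^{\Bf_1,\Bf_2}_{1}\right)$ (obtained via the big dual exponential map of Theorem~A) both to the torsionness of the dual Selmer group and to the stated divisibility of characteristic ideals.

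First I would exhibit the Beilinson--Flach classes as a locally restricted Euler system taking values in $\TT_\cyc$. The construction of \cite{KLZ2} provides norm-compatible classes $\BF^{\Bf_1,\Bf_2}_{\eta}\in H^1(\QQ(\eta),\TT_\cyc)$ indexed by squarefree integers $\eta$ coprime to $pN_1N_2$. The crucial local property at $p$, namely that the restriction of $\BF^{\Bf_1,\Bf_2}_{1}$ lies in the kernel of $H^1(\QQ_p,\TT_\cyc)\lra H^1(\QQ_p,F^{--}\TT_\cyc)$, I would obtain by interpolating the corresponding statement at a Zariski-dense set of arithmetic specializations from \cite{KLZ2}, using the freeness of the relevant cohomology over $\mathcal{R}_\cyc$. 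This is precisely the setup in which the composite map $\res_{+/\textup{f}}$ of the statement is defined on the bottom class, so that $\BF^{\Bf_1,\Bf_2}$ becomes a locally restricted Euler system in the exact sense of Section~\ref{sec:dimreduction}.

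Next I would verify the standing hypotheses needed to run the Euler system machinery: the Panchishkin-type condition at $p$ (automatic from the nearly ordinary assumption on both Hida families), the big-image hypothesis on the residual representation of $\TT_{\Bf_1}\otimes_{\ZZ_p}\TT_{\Bf_2}$ (which holds for $p>7$ once the residual representations of $\Bf_1$ and $\Bf_2$ are suitably independent and non-CM), and the weak Leopoldt conjecture together with the torsion-freeness of certain local and global cohomology groups over $\mathcal{R}_\cyc$. Once these are in place, Theorem~\ref{thm:maingreenbergbound} yields the divisibility of characteristic ideals in part~(2); the torsionness of the dual Selmer group in part~(1) then follows formally, since a non-torsion dual Selmer group would force the right-hand side of the divisibility to be the zero ideal, which is ruled out as soon as $\res_{+/\textup{f}}\left(\BF^{\Bf_1,\Bf_2}_{1}\right)$ is non-torsion in $H^1(\QQ_p,F^{-+}\TT_\cyc)$.

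The hard part, to my mind, is this non-triviality step. My approach is to compose with the big dual exponential $\EXP^\ast$ of Theorem~A and identify $\EXP^\ast\left(\res_{+/\textup{f}}(\BF^{\Bf_1,\Bf_2}_{1})\right)$, up to the non-zero interpolation factors $e_p$ of Theorem~A, with the three-variable nearly ordinary Rankin--Selberg $p$-adic $L$-function. The interpolation formula of Theorem~A at an arithmetic specialization $\kappa$ with $j>w_2$ reduces this identification to the classical explicit reciprocity law of \cite{KLZ2} at de~Rham points, and the non-vanishing of Rankin--Selberg complex $L$-values at a single critical specialization is enough to conclude that the resulting element of $\mathbb{D}(F^{-+}\TT_\cyc)$, hence $\res_{+/\textup{f}}(\BF^{\Bf_1,\Bf_2}_{1})$ itself, is non-torsion. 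A secondary technical subtlety is tracking the correct integral structure of the Euler system throughout the dimension-reduction argument; this is in part why we insist that $\mathcal{R}_\cyc\cong \oo[[X_1,X_2,X_3]]$, so that one can pass freely between height-one primes and the whole ring without introducing spurious torsion contributions.
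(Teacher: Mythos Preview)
Your proposal is correct and follows essentially the same route as the paper: the Beilinson--Flach classes are shown to be a locally restricted Euler system (the paper simply cites \cite[Proposition~8.1.7]{KLZ2} rather than your interpolation argument), the running hypotheses are verified via Lemma~\ref{lem:allhypoHholdstrue} and Theorem~\ref{thm:MR2holds}, non-triviality of $\res_{+/\textup{f}}(\BF^{\Bf_1,\Bf_2}_1)$ is obtained through $\EXP^*$ and the explicit reciprocity law linking it to Hida's $p$-adic $L$-function (Theorem~\ref{thm:excplticitreciprocityPart2} and Corollary~\ref{cor:nontriviality_BF}), and Theorem~\ref{thm:maingreenbergbound} then delivers both the torsionness and the divisibility. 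One small correction: the weak Leopoldt conjecture is not a standing hypothesis to be verified but rather the \emph{output} of Theorem~\ref{thm:weakleopoldt}, which is itself established from the Euler system input; likewise the paper obtains torsionness (part~1) directly from Corollary~\ref{cor:Greenbergtorsion} rather than deducing it after the fact from the divisibility.
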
}
The Greenberg Selmer group and other auxiliary Selmer groups that intervene for technical reasons are defined in Section~\ref{sec:selmerstructures}. We note that the $(-1)$-eigenspace for complex conjugation acting on $\TT_\cyc$ has rank two; this is the reason why the Euler system machinery that was readily available prior to our work does not apply directly. In Section~\ref{sec:dimreduction}, we develop a \emph{locally restricted Euler system machinery}; see Theorem~\ref{thm:weakleopoldt} for our general result in this direction. This is one of the key technical ingredients in proving Theorem~B.

On our way to prove Theorem~B, we also prove a big image result for the big Galois representation associated to a Rankin-Selberg product of two Hida families (Theorem~\ref{thm:MR2holds}). We believe that this result is of independent interest.

Combining our construction in Theorem~A with Theorem~B and the reciprocity formulae in \cite[Theorem 6.5.9]{KLZ1} for Beilinson--Flach elements, we have the following result in favour of the Iwasawa main conjectures for nearly-ordinary families of Rankin-Selberg products. 
{\begin{thmc}[Corollary~\ref{cor:mainthmRankinSelberg}]
\label{thm:main}
For all sufficiently large $p$ and under certain hypothesis (see Corollary~\ref{cor:mainthmRankinSelberg} for a precise statement) we have the following divisibility of integral ideals of $\mathbb{I}\widehat\otimes\mathbb{I}\widehat\otimes\LL_\cyc :$  
$$ 
\textup{char}_{\mathbb{I}\widehat\otimes\mathbb{I}\widehat\otimes\LL_\cyc } 
\left(H^1_{\FF_\Gr^*}(\QQ,\TT_\cyc^\vee (1))^\vee \otimes_{\mathcal{R}_\cyc} \mathbb{I}\widehat\otimes\mathbb{I}\widehat\otimes\LL_\cyc  \right)\,\, 
\supset 
\,\, \left(H \cdot  L_p^{\textup{Hida}}(\Bf_1,\Bf_2,\Bj) \right)\,
$$
where $H \in \mathbb{I}_{\Bf_1}$ denotes Hida's congruence divisor associated to $\Bf_1$ and 
$\mathbb{I}\widehat\otimes\mathbb{I}\widehat\otimes\LL_\cyc$ is a certain local domain that is finite flat over  $\mathcal{R}_\cyc$ (see Definition~\ref{DEF_extensionsofbranches} where we introduce these objects). %(and which comes about for technical reasons).
\end{thmc}}
We note that the statement of this theorem involves a minimal amount of correction terms to relate the algebraic $p$-adic $L$-function to the analytic $p$-adic $L$-function, thanks to the fact that our Coleman map $\EXP^*$ is surjective.
\subsection{Companion article}
In the second part of this project, we shall present a slight extension of our Theorem~\ref{cor:dualexponentialmapforH++} on the construction of Coleman maps, that will cover arbitrary base fields that are unramified at all primes above $p$. Our construction will apply, for example, in the context of nearly-ordinary families of automorphic representations for $\textup{GL}_n$ over CM fields. Moreover, as our construction gives rise to a collection of Coleman maps for each subquotient that appears in the $p$-ordinary filtration, we will be able to introduce a general ``rank reduction principle" for higher rank Euler systems over very general coefficient rings. We will prove that this theory is non-vacuous: Building on our work in the current note, we will be able to prove that the Beilinson--Flach Euler system of \cite{LLZ, KLZ2} lifts to a three-variable family of rank-$2$ Euler systems.  This will provide us with the first example of a $p$-adic family of non-trivial higher rank Euler system, verifying (an extension of) a conjecture of Perrin-Riou in this context.

\subsection{Related results}
\label{subsec:history} 
Before we present a detailed account of our results in full generality, we first discuss past work related to the contents of this article.

The pioneering construction of Coleman followed by Perrin-Riou's ground-breaking work allows one to interpolate the Bloch-Kato exponential maps along the cyclotomic deformations of Galois representations. In~\cite{ochiai-AJM03}, the second named author has expanded this construction to nearly-ordinary deformations of Galois representations that are afforded by Hida families. This has been subsequently generalized in \cite{ochiai-AJM14} to a treatment of families of Siegel modular forms. 
We have two major remarks: First of all, our construction is very direct as we do not rely on elements of $p$-adic Hodge theory. 
Secondly, it comes out as natural consequence of our rather hands on construction that the Coleman maps we consider are indeed surjective. 

Next, we discuss older results that relate to the portion of our work on the general theory of Euler systems. After Kolyvagin's celebrated work, Kato, Perrin-Riou and Rubin developed a general machinery of Euler systems to treat Galois representations of core Selmer rank one in the sense of Mazur and Rubin~\cite{mr02} and with coefficients in a DVR or their deformations of character type (i.e., for coefficient rings that arise as the universal deformation rings of characters). The second named author introduced in \cite{ochiai-AIF} an extension of this theory to more general coefficient rings (in fact, as general as we may treat here) but still in the case when the core Selmer rank equals one. In order to handle the cases when the core Selmer rank is arbitrary, the second named author established what he called a locally restricted Euler system machinery in \cite{kbbstark,kbbiwasawa,kbbESrankr,kbbCMabvar}; however, all these works allowed only the treatment of deformations of character type. In the current article, we extend all these to work with a very extensive class of deformation rings. The main difficulty we have to handle has to do with Tamagawa factors (and their effect on various control theorems), which turns out to be somewhat more notorious if we do not allow variation in the cyclotomic direction.

We finally note that a single-variable version of Theorem~C which allows only the cyclotomic variable was already proved in \cite[Theorem 11.6.4]{KLZ2}.
%%%%%%%%%%%%
\subsection{General Setup}
Let us fix an odd prime $p$ throughout the paper. 
We fix embeddings of the algebraic closure of the field of rationals 
$\overline{\mathbb{Q}}$ into $\mathbb{C}$ and $\overline{\mathbb{Q}_p}$ 
simultaneously. We also fix a system of norm compatible 
$p$-power roots of unity $\{ \zeta_{p^n}\}$. 
Let $R$ be a complete local Noetherian $\ZZ_p$-algebra of mixed characteristic and finite residue field $\texttt{k}=R/\mm_R$, where we denote by $\mm_R$ the maximal ideal of $R$. Let $K$ be either a totally real or a CM number field. Let $\Sigma$ be a set of places of $K$ that contains all places above $p$ as well as all archimedean places. Let $K_{\Sigma}$ be the maximal extension of $K$ unramified outside $\Sigma$ and set $G_{K,\Sigma}:=\textup{Gal}(K_{\Sigma}/K)$. 
For each place $v$ of $K$, we denote by $K_v$ the completion of $K$ at $v$. We also set $G_v:=\textup{Gal}(\overline{K}_{v}/K_v)$. We denote by $\QQ_\cyc$ 
the cyclotomic $\mathbb{Z}_p$-extension of $\QQ$. 
We denote by $K_\cyc$ the composite $K\QQ_\cyc$. For any prime $\fp$ of $K$ above $p$, we denote by 
$K_{\fp ,\cyc}$ the composite $K_\fp \QQ_\cyc$. 
\par 
Let $\TT$ be a free $R$-module of rank $d$ which is endowed with a continuous action of $G_{K,\Sigma}$. When $K$ is totally real, 
we shall set 
$$
d_+=d_{+}(\TT)=\sum_{v\mid \infty} \textup{rank}_R\,H^0(K_v,\TT)=  \textup{rank}_R\, H^0(\mathbb{R},\textup{Ind}^K_{\QQ}\TT)
$$ 
and we set $d_+=\dfrac{d [K:\QQ]}{2}$ when $K$ is a CM field. In either case, we set 
$$
d_-=d_-(\TT):=d[K:\QQ]-d_+\,.
$$

%%%%%%%%%%%%%%%%%%%%%%%%%%%%%%%%%%
\begin{define}\label{definition:deformationdatum}
Let $R$ be a complete local Noetherian $\ZZ_p$-algebra of mixed characteristic and finite residue field. 
For a continuous ring homomorphism $\kappa: R \lra \overline{\QQ}_p$\,, we denote by $E_\kappa$ the finite extension 
$\textup{Frac}\left(\kappa (R)\right)$ of $\QQ_p$ and by $\frak{o}_\kappa\subset E_\kappa$ its ring of integers. 
Let $\TT$ be a free $R$-module of finite rank and {let 
$
\mathcal{S} $ be a subset of the set $\textup{Hom}_{\rm cont}(R,\overline{\QQ}_p)$ 
of continuous ring homomorphisms $R \stackrel{\kappa}{\lra} \overline{\QQ}_p$ such that the set $\{\ker{\kappa} \}_{\kappa \in \mathcal{S}}\subset \textup{Spec}(R)$ is Zariski dense. }Then, 
we call a pair $(\TT,R, \mathcal{S})$ a \emph{deformation datum} if it satisfies 
the following three properties (Geo), (Crt) and (Pan) for each prime $\frak{p}$ of $K$ above $p$\,:
\begin{itemize}
\item[(Geo)] The representation $V_\kappa\big{|}_{G_{\fp}}:=\left(\TT\otimes_\kappa E_\kappa\right)\big{|}_{G_{\fp}}$ is de Rham as a $G_\frak{p}$-representation. 
\item[(Crt)] Suppose that (Geo) holds true and write 
$$
V_\kappa\big{|}_{G_{\fp}} \otimes_{E_\kappa}\mathbb{C}_p\cong \bigoplus_{n\in \ZZ} \mathbb{C}_p(n)^{m_n(\fp,\kappa)}\, 
$$
as $G_\frak{p}$-representation. We then have 
$$
\sum_{\frak{p}}d^{(\frak{p})}_+ [K_\fp : \QQ_p ]=d_+(\TT)
$$ 
where we have set $d_+^{(\frak{p})}:=\displaystyle{\sum_{n>0}m_n(\fp,\kappa)}$. 
\item[(Pan)] There is a direct summand $F^+_{\frak{p}}\TT\subset \TT$ (as an $R$-submodule) of rank $ d^{(\frak{p})}_+$ that is stable under the $G_{\frak{p}}$-action and we have 
\begin{align*}
& F^+_{\frak{p}} V_\kappa \otimes_{E_\kappa}\mathbb{C}_p\cong\bigoplus_{n>0} \mathbb{C}_p(n)^{m_n(\fp,\kappa)}, \\ 
& (V_\kappa /F^+_{\frak{p}} V_\kappa ) \otimes_{E_\kappa}\mathbb{C}_p\cong\bigoplus_{n\leq 0} \mathbb{C}_p(n)^{m_n(\fp,\kappa)} ,
\end{align*}
where $F^+_{\frak{p}} V_\kappa $ is the $G_\fp$-stable filtration on $V_\kappa$ induced by $F^+_{\frak{p}}\TT$. 
\end{itemize}
\end{define} 
%%%%
\begin{rem}
The condition {(Pan)} is called the Panchishkin (or sometimes, Dabrowski-Panchishkin condition) in the literature and it is a generalization of the $p$-ordinary condition. Under (Pan), the condition (Crt) amounts to the requirement that for $\kappa\in \mathcal{S}$, the representation $V_\kappa$ be the $p$-adic realization of a critical motive in the sense of Deligne.
\end{rem}

Let $T_\kappa$ be a fixed $G_{K,\Sigma}$\,-\,stable lattice inside $V_\kappa$ and we set $\overline{T}:=\TT\otimes_R \texttt{k}$ and call it the residual representation of $\TT$. We define $F^+_\frak{p}\,\overline{T}:=F^+_\frak{p}\,\mathbb{T}\otimes_R \texttt{k}$. We also set $T_{\kappa,\textup{cyc}}:=T_\kappa\otimes (\Lambda^\sharp_{\mathrm{cyc}})^\iota$ which is equipped with the diagonal action of  $G_{K,\Sigma}$ and refer to it as the cyclotomic deformation of $T_\kappa$. 
\par 
For any topological abelian group $A$, we denote the Pontrjagin dual 
$\mathrm{Hom}_{\mathrm{cont}} (A , \mathbb{Q} /\mathbb{Z})$ by $A^\vee$. 
{For a finitely generated $R$-module $M$ we denote its $R$-linear dual 
$\mathrm{Hom}_R (M,R)$ by $M^R$. We sometimes use the notation $M^\ast$ for the $R$-linear dual of an $R$-module. 
However an $R$-module $M$ is naturally a module over any subring of $R$. Also $M$ might be regarded as a module over some other rings. Hence 
we use the notation $M^R$ in place of $M^\ast$ to avoid the confusion. }

%%%%%%%%%%%%%%%%%%%%%%%%%%%%%%%%%%%%%%%%%%%%%%%%%%%%%%%%%%%%%%%%%%%%
\subsection{Further Notation and Hypotheses}\label{subsection:furthernotation}

For a prime $\lambda \notin \Sigma$, we denote by $K(\lambda)$ the maximal $p$-extension of $K$ contained in the ray class field of $K$ with module $\lambda$, by $\textup{Fr}_\lambda$ an arithmetic Frobenius at $\lambda$. Let $\mathcal{N}_{\Sigma}$ be the set of square free products of primes $\lambda \notin \Sigma$. For $\eta=\lambda_1\cdots\lambda_r \in \NN_\Sigma$, we write $K(\eta):=K(\lambda_1)\cdots K(\lambda_r)$ for the compositum of the fields $K(\lambda_i)$ and define $K_m(\eta):=K_mK(\eta)$. We define $\Delta_\eta:=\textup{Gal}(K(\eta)/K)\cong \Delta_{\lambda_1}\times\cdots\times  \Delta_{\lambda_r}$ and set $\pmb{\Delta}:=\varprojlim \Delta_\eta$. We denote the compositum of all fields $K(\eta)$ as $\eta$ runs through $\NN_\Sigma$ 
by $\mathcal{K}$. 
We will consider the following conditions on the residual representation $\overline{T}$ of $\TT$:
\begin{enumerate}
\item[(H.0)] For every prime $\frak{p}$ of $K$ above $p$, we have $H^0(K_{\frak{p}},\overline{T})=0$.  
\item[(H.$0^-$)]  For every prime $\frak{p}$ of $K$ above $p$, we have  $H^0(K_{\frak{p}},\overline{T}/F^+_{\frak{p}}\overline{T})=0$.
\item[(H.2)] For every prime $\frak{p}$ of $K$ above $p$, we have $H^2(K_{\frak{p}},\overline{T})=0$. 
\item[(H.$2^+$)] For every prime $\frak{p}$ of $K$ above $p$, we have $H^2(K_{\frak{p}},F^+_{\frak{p}}\overline{T})=0$.
\item[(H.++)] For some prime $\frak{p}_{\textup{o}}$ of $K$ above $p$ of degree $one$, 
there exists an $R$-module direct summand $F^{++}{\TT}$ of $\TT$ which is 
an $R$-module of rank $1+d^{(\frak{p}_\textup{o})}_+$ containing $F^+_{\frak{p}_\textup{o}}\,\TT$ and is stable under $G_{\frak{p}_o}$-action. 
\item[(H.$2^{++}$)] Only under the assumption (H.++) and for a prime $\frak{p}_{\textup{o}}$ specified with the condition (H.++), we have $H^2(K_{\frak{p}_0},F^{++}\overline{T})=0$.
\end{enumerate}

For a finite extension $\mathcal{E}$ of $\QQ_p$, we shall write $\oo_\mathcal{E}$ (or simply $\mathcal{O}$, when it is not necessary to specify $\mathcal{E}$) for its ring of integers.  
\subsection*{Acknowledgements} The first named author (K.B.) acknowledges partial support through EC Grant 745691-CriticalGZ. He also thanks the second named author (T.O.) for his invitation to Osaka University where this project was initiated. 
The second named author (T.O) acknowledges partial support through Grant-in-Aid for Scientific Research (B) of JSPS: Grant Number 26287005. He also 
thanks to Ko\c{c} University for their hospitality during his stay in March 2017. 
Authors would also like thank Nesin Mathematics Village for hosting them as part of a Research in Pairs program.
%%%%%%%%%%%%%%%%%%%%%%%%%%%%%%%%%%
%%%%%%%%%%%%%%%%%%%%%%%%%%%%%%%%%%
%%%%%%%%%%%%%%%%%%%%%%%%%%%%%%%%%%%%%%%%%%%%%%%
\tableofcontents
%%%%%%%%%%%%%%%%%%%%%%%%%%%%%%%%%%
%%%%%%%%%%%%%%%%%%%%%%%%%%%%%%%%%%
%%%%%%%%%%%%%%%%%%%%%%%%%%%%%%%%%%%%%%%%%%%%%%%
\section{Selmer structures and locally restricted Euler/Kolyvagin systems}
\label{sec:selmerstructures}
In this section, we will describe a variety of Selmer modules that we shall work with. Our ultimate goal is to control the Greenberg Selmer modules (which are defined via the Greenberg Selmer structure $\FF_\Gr$ introduced below)  in terms of what we call locally restricted Euler systems. 
The auxiliary Selmer structures we introduce here play a crucial role in our considerations for this goal. 

We suppose in this section that our coefficient ring $\mathcal{R}$ is isomorphic to a power series ring in $r$ variables 
$\oo[[x_1,x_2,\ldots,x_r]]$ over 
the ring of integers $\oo$ of a finite extension of $\QQ_p$. Let $(\TT,\mathcal{R},\mathcal{S})$ denote a deformation datum as above.

\begin{define} For every prime $\lambda$ of $K$ and any subquotient $X$ of $\TT_\cyc$, let 
$$H^1_{\textup{ur}}(K_\lambda,Y)=\ker\left(H^1(K_\lambda,Y)\ra H^1(K^{\textup{ur}}_\lambda,Y)\right)$$
denote the collection of unramified cohomology classes, where $Y=X$ or $X^\vee(1)$.
\end{define}
For any subquotient $Y$ of $\TT$, we write $Y_\cyc$ for its cyclotomic deformation.
\begin{lemma}
\label{lem:MRunramifiedisallunrforcyclo}
{Let $\lambda\nmid p$ be a prime of $K$.} For any subquotient $Y$ of $\TT$, $H^1_{\textup{ur}}(K_\lambda,Y_\cyc)=H^1(K_\lambda,Y_\cyc)$.
\end{lemma}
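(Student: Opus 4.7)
My plan is to apply inflation-restriction to reduce the claim to a vanishing statement at the inertial level, and then settle that vanishing with a Cayley--Hamilton argument exploiting the transcendence of the cyclotomic variable over $R$.

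First, the inflation-restriction sequence
$$0 \longrightarrow H^1_{\textup{ur}}(K_\lambda, Y_\cyc) \longrightarrow H^1(K_\lambda, Y_\cyc) \longrightarrow H^1(I_\lambda, Y_\cyc)^{\textup{Fr}_\lambda}$$
reduces the lemma to the vanishing of the rightmost group. Since $\lambda \notin \Sigma$ forces $\lambda \nmid p$ and $\QQ_\cyc/\QQ$ is unramified outside $p$, the inertia $I_\lambda$ acts trivially on $(\LL_\cyc^\sharp)^\iota$; therefore its action on $Y_\cyc = Y \otimes_{\ZZ_p} (\LL_\cyc^\sharp)^\iota$ factors through the $Y$-factor alone. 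Flat base change along $\ZZ_p \hookrightarrow \LL_\cyc$ then yields the canonical identification
$$H^1(I_\lambda, Y_\cyc) \;\cong\; H^1(I_\lambda, Y) \otimes_R \RRc,$$
under which $\textup{Fr}_\lambda$ acts as $\phi \otimes \gamma_\lambda^{-1}$, where $\phi := \textup{Fr}_\lambda|_{H^1(I_\lambda, Y)}$ is the induced $R$-linear Frobenius and $\gamma_\lambda := \widetilde{\chi}_\cyc(\textup{Fr}_\lambda) \in \Gamma_\cyc \subset \LL_\cyc^\times$. Multiplying by the unit $\gamma_\lambda$, the desired vanishing $H^1(I_\lambda, Y_\cyc)^{\textup{Fr}_\lambda} = 0$ is equivalent to proving injectivity of the $\RRc$-linear endomorphism $\phi - \gamma_\lambda$ on $M := H^1(I_\lambda, Y) \otimes_R \RRc$.

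To establish this injectivity I would invoke Cayley--Hamilton: since $H^1(I_\lambda, Y)$ is finitely generated over the Noetherian ring $R$, the endomorphism $\phi$ satisfies a monic polynomial $P(T) \in R[T]$. Polynomial division $P(T) = (T - \gamma_\lambda) Q(T) + P(\gamma_\lambda)$ in $\RRc[T]$ gives the operator identity $P(\gamma_\lambda) \cdot \mathrm{id} = -(\phi - \gamma_\lambda) Q(\phi)$ on $M$, hence $\ker(\phi - \gamma_\lambda)$ is annihilated by $P(\gamma_\lambda) \in \RRc$. Writing $\RRc \cong R[[X]]$ for $X = \gamma - 1$ with $\gamma$ a topological generator of $\Gamma_\cyc$, we have $\gamma_\lambda = (1+X)^a$ for some nonzero $a \in \ZZ_p$, because $\chi_\cyc(\textup{Fr}_\lambda) = \mathbf{N}\lambda$ has nontrivial projection to the torsion-free group $\Gamma_\cyc$. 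A transcendence argument (reducing modulo $\mathfrak{m}_R$ and iterating polynomial division, which cannot terminate in the zero polynomial since $(1+X)^a$ is not algebraic over the residue field) then shows $P((1+X)^a) \ne 0$ in the domain $\RRc$.

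The main obstacle is upgrading this non-vanishing of $P(\gamma_\lambda) \in \RRc$ to the statement that $P(\gamma_\lambda)$ acts as a non-zero-divisor on $M$. This is automatic when $H^1(I_\lambda, Y)$ is $R$-flat, but in general requires a small dévissage: filtering $H^1(I_\lambda, Y)$ through its torsion submodule (or lifting via a free presentation $R^b \twoheadrightarrow H^1(I_\lambda, Y)$ with a compatible lift of $\phi$) and iterating, using that $P(\gamma_\lambda)$ survives reduction modulo every power of $\mathfrak{m}_R$. With this injectivity settled, the lemma follows.
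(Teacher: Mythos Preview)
Your reduction via inflation--restriction to the injectivity of $\phi-\gamma_\lambda$ on $N\otimes_R\RRc$ (with $N=H^1(I_\lambda,Y)$), the Cayley--Hamilton step showing $\ker(\phi-\gamma_\lambda)$ is annihilated by $P(\gamma_\lambda)$, and the transcendence argument that $P(\gamma_\lambda)\ne 0$ in $\RRc$ are all correct, and this is precisely the shape of the proof of \cite[Lemma~5.3.1(ii)]{mr02} that the paper invokes. You are also right to flag that nonvanishing in the domain $\RRc$ does not automatically give injectivity on the possibly non-flat module $M$.

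The gap is that neither of your two proposed d\'evissages works as stated. Filtering by the torsion submodule does not iterate: the torsion part of $N_{\textup{tors}}$ is $N_{\textup{tors}}$ itself, so you are stuck after one step. Lifting $\phi$ to a free presentation $\RRc^b\twoheadrightarrow M$ gives injectivity of $\tilde\phi-\gamma_\lambda$ upstairs, but injectivity does not descend to quotients (snake lemma only gives $\ker|_M\hookrightarrow\coker|_K$). The d\'evissage that does work is the $\mm_R$-adic one your final clause already gestures toward: since $P$ is \emph{monic}, its reduction $\bar P\in\texttt{k}[T]$ is nonzero, and your transcendence argument shows $\bar P(\gamma_\lambda)\ne 0$ in the domain $\texttt{k}[[X]]$. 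Hence multiplication by $P(\gamma_\lambda)$ is injective on each graded piece $(\mm_R^kN/\mm_R^{k+1}N)[[X]]\cong\texttt{k}[[X]]^{s_k}$ of the $\mm_R$-adic filtration on $M=N[[X]]$; since $N$ is finitely generated over the Noetherian local ring $R$ this filtration is separated by Krull's intersection theorem, and the desired injectivity follows. With this correction your proof is complete and matches the paper's.
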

\begin{proof}
This follows from the proof of Lemma~5.3.1(ii) of \cite{mr02}.
\end{proof}
%%%
\begin{prop}
\label{prop:localcohom}\label{equation:vanishingH^2X}
Let $R$ be a complete local Noetherian $\ZZ_p$-algebra of mixed characteristic with finite residue field $\texttt{k}=R/\mm_R$. 
Assume that $R$ is regular. 
Let $X$ be a free $R$-module of finite rank with a continuous $G_\frak{p}$-action satisfying 
$H^0(K_\frak{p},X\otimes \texttt{k})=H^2(K_\frak{p},X\otimes \texttt{k})=0$. 
\par 
Then the $R$-module $H^1(K_{\frak{p}},X)$ is a free $R$-module of rank $d_X:= [K_\frak{p}:\QQ_p]\, \textup{rank}_R\, X$\,.
\end{prop}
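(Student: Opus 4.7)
The plan is to prove the proposition by induction on the Krull dimension $d=\dim R$, combining Tate's local Euler characteristic formula, Nakayama-type arguments driven by the mod-$\mm_R$ hypotheses, and the local criterion of flatness. In the base case $d=0$, the regular local ring $R$ coincides with its residue field $\texttt{k}$, so $X=\bar X:=X\otimes_R\texttt{k}$; the vanishing of $H^0(K_\fp,\bar X)$ and $H^2(K_\fp,\bar X)$ together with Tate's formula delivers $\dim_\texttt{k} H^1(K_\fp,\bar X)=[K_\fp:\QQ_p]\cdot\textup{rank}_R X=d_X$, and freeness over a field is automatic.

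Before running the induction, I would verify that each $H^i(K_\fp,X)$ is a finitely generated $R$-module. Since the residual cohomology groups $H^i(K_\fp,\bar X)$ are all finite, an induction on $n$ using the short exact sequences
\begin{equation*}
0\to\mm_R^n X/\mm_R^{n+1}X\to X/\mm_R^{n+1}X\to X/\mm_R^n X\to 0,
\end{equation*}
whose leftmost term is $\bar X\otimes_\texttt{k}(\mm_R^n/\mm_R^{n+1})$ and hence a finite direct sum of copies of $\bar X$, shows that each $H^i(K_\fp,X/\mm_R^n X)$ is finite. The Mittag-Leffler condition is then automatic, continuous cohomology commutes with the inverse limit, and the topological Nakayama lemma for profinite modules over complete local rings upgrades this to finite generation of $H^i(K_\fp,X)$ over $R$.

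For the inductive step, assuming the proposition for regular local rings of dimension $<d$, I would pick $t\in\mm_R$ belonging to a regular system of parameters. Then $R':=R/tR$ is regular of dimension $d-1$, $X':=X/tX$ is free over $R'$ of rank $\textup{rank}_R X$ with continuous $G_\fp$-action, and the residual representation $X'\otimes_{R'}\texttt{k}$ is canonically $\bar X$, so the mod-$\mm_R$ hypotheses transfer. By the inductive hypothesis, $H^0(K_\fp,X')=H^2(K_\fp,X')=0$ and $H^1(K_\fp,X')$ is free over $R'$ of rank $d_X$. The long exact sequence associated to $0\to X\xrightarrow{\,t\,}X\to X'\to 0$ forces $t$ to act surjectively on the finitely generated modules $H^0(K_\fp,X)$ and $H^2(K_\fp,X)$, so Nakayama forces both to vanish; what remains is
\begin{equation*}
0\to H^1(K_\fp,X)\xrightarrow{\,t\,}H^1(K_\fp,X)\to H^1(K_\fp,X')\to 0,
\end{equation*}
exhibiting $H^1(K_\fp,X)$ as a finitely generated $R$-module on which $t$ is a non-zero-divisor and whose reduction modulo $t$ is free of rank $d_X$ over $R'$. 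The local criterion of flatness then promotes this to $R$-flatness, hence $R$-freeness (a finitely generated flat module over a local ring is free), of rank $d_X$.

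The principal technical obstacle is the finite-generation step: while topological Nakayama for profinite modules over complete Noetherian local rings is standard, it does take some care to match the $\mm_R$-adic topology on $H^i(K_\fp,X)$ with the profinite one arising from the inverse limit. Once this is in hand, its use in Nakayama-killing $H^2(K_\fp,X)$ during the inductive step is the only delicate point in an otherwise structural argument, and the choice of $t$ as part of a regular system of parameters (rather than an arbitrary element of $\mm_R$) is what keeps the residual representation, and hence the hypotheses, unchanged at each stage.
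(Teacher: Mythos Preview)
Your proof is correct and follows essentially the same approach as the paper's: both induct along a regular system of parameters, using the long exact sequence and Nakayama to kill $H^0$ and $H^2$ at each stage before concluding freeness of $H^1$. The paper constructs the isomorphism $R^{d_X}\to H^1(K_\fp,X)$ by hand (lifting generators from the residue field and showing the kernel vanishes modulo each successive parameter) rather than invoking the local criterion of flatness, and is less explicit than you are about finite generation, but these are cosmetic differences.
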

\begin{proof}
First, the assumption $H^2(K_\frak{p},X\otimes \texttt{k})=0$ implies that
\begin{equation}\label{equation:vanishingH^2X}
H^2(K_{\fp},Y)=0 \text{ for every $R$-quotient $Y$ of $X$} 
\end{equation}
thanks to Nakayama's lemma and to the fact that the cohomological dimension of $G_{\fp}$ is $2$.
\par 
Now let us choose and fix a regular sequence $r_1 ,\ldots ,r_l$ in $R$ such that $\mm_R$ is generated by $r_1 ,\ldots ,r_l$. 
Note that for each $k$ satisfying $1\leq k \leq l$, we have the exact sequence 
$$
0 \lra X_{k-1 } \xrightarrow{\text{ $\times r_k$ }} X_{k-1} \lra 
 X_k \lra 0
$$
where $X_k$ stands for $X/(r_1 ,\ldots ,r_{k-1})X$. 
This short exact sequence induces the long exact sequence of Galois gohomology 
\begin{multline}\label{equation:longexactseqH^0H^1}
0 \lra H^0 (K_\frak{p},X_{k-1 } )\overset{\times r_k}{\lra} 
H^0 (K_\frak{p}, X_{k-1 }) \lra 
H^0 (K_\frak{p}, X_{k}) \\ 
\lra H^1 (K_\frak{p},X_{k-1 } )\overset{\times r_k}{\lra} 
H^1 (K_\frak{p},X_{k-1 }) \lra 
H^1 (K_\frak{p}, X_{k}) \lra 0. 
\end{multline}
Here the surjectivity of the final map follows from \eqref{equation:vanishingH^2X}. 
Notice that $X_{l}$ is nothing but $X\otimes \texttt{k}$. 
For every $k$, our assumption that $H^0(K_\frak{p},X\otimes \texttt{k})=0$ implies $H^0(K_\frak{p},X_k)=0$ (by reverse induction on $k$, relying on Nakayama's lemma), and in turn
\begin{equation}\label{equation:inductivelyfree}
H^1 (K_\frak{p}, X_{k-1 })[r_k] =0 \text{ and } \dfrac{H^1 (K_\frak{p},X_{k-1 })}{r_k H^1 (K_\frak{p},X_{k-1 })} \cong H^1 (K_\frak{p},X_{k }).
\end{equation} 

Recall for $k=l$ that $H^1 (K_\frak{p},X_{l})= H^1(K_\frak{p},X\otimes \texttt{k})$ is free of 
rank $d_X$ over $\texttt{k}=R/(r_1 ,\ldots ,r_l)$ by our running assumptions that $H^0(K_\frak{p},X\otimes \texttt{k})=H^2(K_\frak{p},X\otimes \texttt{k})=0$ 
and by Tate's local Euler characteristic formula. By Nakayama'a lemma and the second portion of \eqref{equation:inductivelyfree}, 
we have 
\begin{equation}\label{equation:surjectionXeveryk}
\left( R/(r_1 ,\ldots ,r_{k-1})\right)^{\oplus d_X} \twoheadrightarrow H^1 (K_\frak{p}, X_{k-1}) \text{ for $k=1,\ldots ,l+1$}. 
\end{equation}
On applying the functor $(- \otimes_R R/(r_1 ,\ldots ,r_{k}))$ to \eqref{equation:surjectionXeveryk} and using Nakayama's lemma and \eqref{equation:inductivelyfree}, 
we inductively show that the kernel of \eqref{equation:surjectionXeveryk} is zero for $k=1,\ldots ,l+1$, which completes the proof.  
\end{proof}
\begin{define}\label{definition:TandR} 
Let $\mathbf{T}:=\TT$ (resp.$\mathbf{T}:=\TT_\cyc$) be a module over $R=\mathcal{R}$ (resp. $R=\mathcal{R}_\cyc$). 
For every finite abelian $p$-extension $L$ of $K$, we define the semi-local cohomology group 
$${\displaystyle{H^1(L_p,F^\pm\mathbf{T}):=\bigoplus_{\frak{p}|p}\bigoplus_{\frak{q} \mid \fp}\,H^1(L_{\frak{q}},F^\pm_\frak{p}\mathbf{T})}}.$$ 
\end{define}
Until the end of Section~\ref{sec:selmerstructures}, we will let $\mathbf{T}$ denote any one of $\TT$ or $\TT_\cyc$ and depending on which one, we shall let $R=\mathcal{R}$ or $\mathcal{R}_\cyc$ denote the corresponding coefficient ring. 
\begin{cor}
\label{cor:structuresemilocalcohom}
Under the same setting as Definition \ref{definition:TandR}, 
if the hypotheses (H.0) and (H.2) hold true, then the $R$-module $H^1(K_p,\mathbf{T})$ is free of rank $d$. 
If in addition (H.$2^+$) (resp. (H.$0^-$)) holds true, then $H^1(K_p,F^+\mathbf{T})$ (resp.  $H^1(K_p,F^-\mathbf{T})$) is free of rank $d_+$ $($resp.  of rank $d_-$$)$. 
\end{cor}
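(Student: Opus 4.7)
The plan is to reduce the corollary to Proposition~\ref{prop:localcohom} applied to each of the local representations $F^\pm_\fp \mathbf{T}$ (and $\mathbf{T}$ itself) restricted to $G_\fp$, prime by prime above $p$, and then take direct sums in the decomposition from Definition~\ref{definition:TandR}. Since $\mathcal{R}$ and $\mathcal{R}_\cyc$ are both isomorphic to power series rings over $\oo$, they are regular, so the regularity hypothesis of Proposition~\ref{prop:localcohom} is satisfied in either setting. When $\mathbf{T}=\TT_\cyc$, the augmentation ideal of $\Lambda_\cyc$ lies in $\mm_{\mathcal{R}_\cyc}$, so the residual representation $\TT_\cyc\otimes_{\mathcal{R}_\cyc}\texttt{k}$ is canonically identified with $\overline{T}$ as a $G_\fp$-module; this reduces the residual vanishing checks below to statements about $\overline{T}$, regardless of which case of $\mathbf{T}$ one is in.

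Next, I would verify the $H^0$- and $H^2$-vanishing for the mod-$\mm$ reductions required by Proposition~\ref{prop:localcohom} in each of the three cases. For the full $\mathbf{T}$, hypotheses $(H.0)$ and $(H.2)$ are exactly what is needed. For $F^+\mathbf{T}$, the inclusion $F^+_\fp\overline{T}\hookrightarrow\overline{T}$ together with $(H.0)$ gives the $H^0$-vanishing, while the $H^2$-vanishing is precisely $(H.2^+)$. For $F^-\mathbf{T}$, the $H^0$-vanishing is $(H.0^-)$, and the $H^2$-vanishing follows from the short exact sequence $0\to F^+\overline{T}\to\overline{T}\to F^-\overline{T}\to 0$ and the cohomological dimension two of $G_\fp$, which gives a surjection $H^2(K_\fp,\overline{T})\twoheadrightarrow H^2(K_\fp,F^-\overline{T})$, combined with $(H.2)$.

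Finally, I would compute ranks. Proposition~\ref{prop:localcohom} gives free $R$-modules of local ranks $[K_\fp:\QQ_p]\cdot d$, $[K_\fp:\QQ_p]\cdot d_+^{(\fp)}$, and $[K_\fp:\QQ_p]\cdot(d-d_+^{(\fp)})$ respectively; since direct sums of free modules are free, the semi-local cohomology groups are free over $R$. Summing over $\fp\mid p$ and invoking the critical-motive condition (Crt), which asserts $\sum_\fp [K_\fp:\QQ_p]\,d_+^{(\fp)}=d_+$, yields total ranks $d[K:\QQ]$, $d_+$, and $d_-=d[K:\QQ]-d_+$ respectively, as claimed. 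There is no substantive obstacle here: the entire argument is a direct unwinding of Proposition~\ref{prop:localcohom}, and the only mild subtlety is keeping track of the short exact sequence needed to propagate $(H.2)$ to the quotient $F^-\overline{T}$.
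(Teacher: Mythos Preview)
Your proof is correct and is exactly the intended argument; the paper states the result as an immediate corollary of Proposition~\ref{prop:localcohom} without giving a proof, and your unwinding (check the residual $H^0$ and $H^2$ vanishing for each of $\overline{T}$, $F^+_\fp\overline{T}$, $F^-_\fp\overline{T}$ using the listed hypotheses, then sum local ranks via (Crt)) is the only natural way to do it.

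One small remark: your rank computation for the full $\mathbf{T}$ correctly yields $d\cdot[K:\QQ]$, not the ``$d$'' literally printed in the corollary. This is consistent with $d_- = d[K:\QQ]-d_+$ as defined in the paper's setup, and with the later Proposition~\ref{prop:semilocalstructure} when $K=\QQ$ (the case of the main applications); the stated rank $d$ in the corollary thus appears to be a typo for $d[K:\QQ]$. Your argument is not affected.
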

\begin{rem}
\label{rem:quotients}
Let $I$ be an ideal of $R$ and set $X:=\bfT\otimes R/I$. Then the conclusions of Corollary~\ref{cor:structuresemilocalcohom} hold verbatim when $\bfT$ is replaced by the $R/I$-module $X$.
\end{rem}

For every finite abelian $p$-extension $L$ of $K$ and prime ${\frak{q}} \mid \fp$ of $L$, we define the local Greenberg submodule 
\begin{equation}\label{equation:definitionGR11}
H^1_{\FF_{\Gr}}(L_{\frak{q}} ,\bfT):=\textup{im}\left(H^1(L_{\frak{q}} ,F^+_\fp\bfT)\ra H^1(L_{\frak{q}} ,\bfT)\right)
\end{equation}
as well as the  semi-local Greenberg submodule $H^1_{\FF_\Gr}(L_p,\bfT)$ by setting
\begin{equation}\label{equation:definitionGR22}
H^1_{\FF_\Gr}(L_p,\TT):=\textup{im}\left(H^1(L_p,F^+\bfT)\ra H^1(L_p,\bfT)\right)\,.
\end{equation}
Note that under the hypothesis (H.$0^-$), $H^1_{\FF_\Gr}(L_p,\TT_\cyc)$ is the isomorphic copy of the module $H^1(L_p,F^+\TT_\cyc)$. 
\begin{cor}
\label{cor:greenbergfree}
If (H.0), (H.2), \hzerominus\, and \htwoplus\, hold true, then $H^1_{\FF_\Gr}(K_p,\bfT)$ is a free direct summand (of rank $d_+$) of the free $R$-module $H^1(K_p,\bfT)$.
\end{cor}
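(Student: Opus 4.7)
The plan is to produce, from the long exact local cohomology sequence attached to $0 \to F^+\mathbf{T} \to \mathbf{T} \to F^-\mathbf{T} \to 0$, the short exact sequence
$$0 \lra H^1(K_p, F^+\mathbf{T}) \lra H^1(K_p, \mathbf{T}) \lra H^1(K_p, F^-\mathbf{T}) \lra 0,$$
and then to observe that all three terms are free $R$-modules by Corollary~\ref{cor:structuresemilocalcohom} (the outer terms thanks to \htwoplus\, and \hzerominus\, respectively, the middle thanks to \hzero\, and \htwo), so that the rightmost module is projective and the sequence splits. The image $H^1_{\FF_\Gr}(K_p, \mathbf{T})$ of the leftmost arrow then identifies with a free direct summand of rank $d_+$ inside $H^1(K_p, \mathbf{T})$, which is precisely what is claimed.

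To carve out this short exact sequence from the long one, I must establish two flanking vanishings at every prime $\fp \mid p$ of $K$: namely $H^0(K_\fp, F^-_\fp\mathbf{T}) = 0$ and $H^2(K_\fp, F^+_\fp\mathbf{T}) = 0$. Both are residually true---they are \hzerominus\, and \htwoplus\, respectively---and the task is to lift them from $\overline{T}$ to $\mathbf{T}$. For this, I would imitate the Nakayama/regular-sequence device already deployed in the proof of Proposition~\ref{prop:localcohom}: choose a regular sequence $r_1, \ldots, r_l$ generating $\mm_R$, set $M_0 = F^{\pm}_\fp\mathbf{T}$ and $M_k = M_{k-1}/r_k M_{k-1}$ (so $M_l = F^{\pm}_\fp\overline{T}$), and feed the short exact sequences $0 \to M_{k-1} \xrightarrow{\times r_k} M_{k-1} \to M_k \to 0$ through cohomology. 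In the $H^0$ case this yields inclusions $H^0(K_\fp, M_{k-1})/r_k H^0(K_\fp, M_{k-1}) \hookrightarrow H^0(K_\fp, M_k)$; compounding these and applying Nakayama to the finitely generated $R$-submodule $H^0(K_\fp, F^-_\fp\mathbf{T}) \subseteq F^-_\fp\mathbf{T}$ forces the vanishing.  In the $H^2$ case, because $G_\fp$ has cohomological dimension two, the same device furnishes \emph{surjections} $H^2(K_\fp, M_{k-1})/r_k H^2(K_\fp, M_{k-1}) \twoheadrightarrow H^2(K_\fp, M_k)$; iterating these starting from the residual vanishing and passing to the $\mm_R$-adic limit yields $H^2(K_\fp, F^+_\fp\mathbf{T}) = 0$, which is exactly the implication invoked in \eqref{equation:vanishingH^2X}.

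Once these two vanishings are secured, summing over $\fp \mid p$ collapses the relevant segment of the semi-local long exact sequence to the promised short exact sequence. By Corollary~\ref{cor:structuresemilocalcohom}, applied under the full set of hypotheses \hzero, \htwo, \hzerominus, \htwoplus, the three terms are free $R$-modules (of ranks $d_+$, $d[K:\QQ]$ and $d_-$, compatibly with $d_+ + d_- = d[K:\QQ]$). Because the quotient $H^1(K_p, F^-\mathbf{T})$ is free, hence projective, the sequence splits, and the splitting realises $H^1_{\FF_\Gr}(K_p, \mathbf{T}) = \mathrm{im}\bigl(H^1(K_p, F^+\mathbf{T}) \to H^1(K_p, \mathbf{T})\bigr)$ as a free $R$-direct summand of rank $d_+$.

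The principal obstacle is the second lifting, from $H^2(K_\fp, F^+_\fp\overline{T}) = 0$ to $H^2(K_\fp, F^+_\fp\mathbf{T}) = 0$, because the natural comparison map between the $R$-module cohomology and its residual reduction runs in the ``wrong'' direction for a direct Nakayama argument. The point is that this comparison map is nevertheless \emph{surjective} (owing to the vanishing of $H^3$ of $G_\fp$), so that iteration through a regular sequence together with the compactness of the continuous cohomology of the finitely generated $R$-module $F^+_\fp\mathbf{T}$ is enough to propagate the residual vanishing to the full $R$-level; this is precisely the manoeuvre the authors have already used, so I expect no new technical input beyond it.
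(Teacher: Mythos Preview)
Your approach is correct and is exactly the intended argument: the paper states this corollary without proof, treating it as immediate from Corollary~\ref{cor:structuresemilocalcohom}, and your short-exact-sequence-plus-splitting argument is the natural way to extract it.

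One small clarification on your final paragraph: the $H^2$ lifting is not an obstacle at all, and no limit or compactness argument is needed. The surjections you write down are in fact \emph{isomorphisms} $H^2(K_\fp, M_{k-1})/r_k H^2(K_\fp, M_{k-1}) \cong H^2(K_\fp, M_k)$ (since $H^3$ vanishes). Composing them gives $H^2(K_\fp, F^+_\fp\mathbf{T})/\mm_R H^2(K_\fp, F^+_\fp\mathbf{T}) \cong H^2(K_\fp, F^+_\fp\overline{T}) = 0$, and then a single application of Nakayama's lemma (the module is finitely generated over $R$) kills $H^2(K_\fp, F^+_\fp\mathbf{T})$. This is precisely the manoeuvre recorded as \eqref{equation:vanishingH^2X} in the proof of Proposition~\ref{prop:localcohom}, so you may simply cite it rather than rederive it.
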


%%%%%%
\begin{lemma}
\label{lem:hnaetclifts} 
Suppose that the hypotheses \hzero, \htwo, \hzerominus\, and \htwoplus\, hold true. 
For $\eta \in \NN_\Sigma$ and for every prime $\frak{q}$ of $K(\eta)$ above the prime $\frak{p}$ of $K$ (see \ref{subsection:furthernotation} for the notation 
$K(\eta)$), we have 
$$H^0(K(\eta)_\frak{q},\overline{T})=H^2(K(\eta)_\frak{q},\overline{T})=0\,,$$
$$H^0(K(\eta)_\frak{q},F^+_\fp\overline{T})=H^0(K(\eta)_\frak{q},F^-_\fp\overline{T})=0\,.$$
\end{lemma}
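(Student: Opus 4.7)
My plan is to exploit the fact that $\Gal(K(\eta)/K)$ is a finite abelian $p$-group by construction, so that for every prime $\frak{q}$ of $K(\eta)$ above $\fp$, the local decomposition group $G := \Gal(K(\eta)_{\frak{q}}/K_{\fp})$ is itself a finite $p$-group. I would combine this with the elementary fixed-point principle for $p$-groups: if a finite $p$-group $G$ acts on a nonzero finite abelian $p$-group $M$, then $M^G \neq 0$. Since $\overline{T}$, $F^+_{\fp}\overline{T}$, $F^-_{\fp}\overline{T}$, and (below) the Cartier dual $\overline{T}^{\vee}(1)$ are all finite $p$-torsion $G_{K_\fp}$-modules, the principle applies to each of them.

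For the three $H^0$-vanishing assertions, I would take $Y$ to be one of $\overline{T}$, $F^+_{\fp}\overline{T}$, $F^-_{\fp}\overline{T}$, set $M := H^0(K(\eta)_{\frak{q}}, Y)$, and observe that $M^G = H^0(K_{\fp}, Y)$. The latter vanishes in each case: for $Y = \overline{T}$ this is precisely \hzero; for $Y = F^+_{\fp}\overline{T}$ the inclusion $F^+_{\fp}\overline{T} \hookrightarrow \overline{T}$ of $G_{K_{\fp}}$-modules reduces the claim to \hzero; and for $Y = F^-_{\fp}\overline{T}$ the vanishing is \hzerominus. The fixed-point principle then forces $M = 0$.

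For the assertion $H^2(K(\eta)_{\frak{q}}, \overline{T}) = 0$, I would invoke local Tate duality to identify this group with the Pontryagin dual of $H^0(K(\eta)_{\frak{q}}, \overline{T}^{\vee}(1))$. By the same $p$-group argument applied to $\overline{T}^{\vee}(1)$ in place of $Y$, it suffices to verify that $H^0(K_{\fp}, \overline{T}^{\vee}(1)) = 0$, and this is Pontryagin dual to $H^2(K_{\fp}, \overline{T})$, which vanishes by \htwo. The argument is entirely formal; the only nontrivial ingredients are local Tate duality for finite $p$-torsion coefficient modules and the fixed-point lemma for $p$-groups, so the only real thing to keep straight will be which of \hzero, \hzerominus, \htwo\ controls which subquotient.
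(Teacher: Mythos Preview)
Your proposal is correct and follows essentially the same approach as the paper: both reduce all four vanishing statements to the fixed-point lemma for finite $p$-groups acting on finite $p$-torsion modules, applied to $\overline{T}$, $F^+_\fp\overline{T}$, $F^-_\fp\overline{T}$, and $\overline{T}^\vee(1)$ (the last via local Tate duality to handle $H^2$). The paper phrases the fixed-point argument as a counting congruence modulo $p$, but the content is identical to what you wrote.
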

\begin{proof} Write 
$\Delta_{\eta , \frak{q}} \subset \Delta_\eta$ for the decomposition group of $\frak{q}$ and identify it with the local Galois group
$\Gal(K(\eta)_\frak{q}/K_\fp)$. Let $X$ be any $G_\fp$-module of finite $p$-power cardinality and such that $H^0(K_\fp,X)=0$.  If $\Delta_{\eta , \frak{q}}$ is trivial, then $H^0(K(\eta)_\frak{q},X)=H^0(K_\fp,X)=0$. If $\Delta_{\eta , \frak{q}}$ is not trivial, note that 
$\Delta_{\eta , \frak{q}}$ a non-trivial $p$-group by the definition of $K(\eta )$. 
Therefore we have 
$\#H^0\left(K(\eta)_\frak{q}, X \right) \equiv\# H^0\left(K(\eta)_\frak{q},X\right)^{\Delta_{\eta , \frak{q}}}$ mod $p$. 
By assumption, the group $H^0\left(K_\fp,X\right)= H^0\left(K(\eta)_\frak{q},X\right)^{\Delta_{\eta , \frak{q}}}$ is trivial 
and thus $\# H^0\left(K(\eta)_\frak{q},X\right)^{\Delta_{\eta , \frak{q}}}=1$. 
Hence, the cardinality of the set $H^0\left(K(\eta)_\frak{q},X\right)$ is a natural number dividing $p$ and congruent to $1$ mod $p$ 
and we conclude that $H^0\left(K(\eta)_\frak{q},X\right)=0$. 
The proof of the lemma follows using this fact with $X=\overline{T},\overline{T}^{\vee}(1), F_\fp^+\overline{T}$ and $F_\fp^-\overline{T}$ along with our running hypothesis.
\end{proof}
%%%%%%
\begin{prop}\label{prop:surj}
Suppose that \hzero\, and \htwo\, hold true. Then for every $\eta,\mu \in \NN_\Sigma$ with $\eta \mid \mu$, the corestriction map
\begin{equation}\label{equation:corestriction}
H^1(K(\mu)_p,\bfT)\lra H^1(K(\eta)_p,\bfT)
\end{equation}
on the semi-local cohomology at $p$ is surjective.
 \end{prop}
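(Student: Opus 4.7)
The plan is to reduce the surjectivity to the residual level via Nakayama's lemma, convert it via local Tate duality into an injectivity statement for restriction, and finally control that restriction via the inflation--restriction sequence.

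First, since $H^1(L_p,\bfT)$ decomposes as a direct sum indexed by the primes $\fp$ of $K$ above $p$ and the primes of $L$ above $\fp$, and since corestriction is block-diagonal with respect to this decomposition, it suffices to fix a prime $\fp$ of $K$ above $p$, a prime $\frak{q}'$ of $K(\eta)$ above $\fp$, and a single prime $\frak{q}$ of $K(\mu)$ above $\frak{q}'$, and prove that the strictly local corestriction $H^1(K(\mu)_\frak{q},\bfT)\lra H^1(K(\eta)_{\frak{q}'},\bfT)$ is surjective (surjectivity from one $\frak{q}$ over $\frak{q}'$ already saturates the $\frak{q}'$-component).

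Second, the argument of Lemma~\ref{lem:hnaetclifts} applied to $\overline{T}$ and $\overline{T}^\vee(1)$ requires only \hzero\ and \htwo\ (the latter via local duality, which converts $H^2(K_\fp,\overline{T})=0$ into $H^0(K_\fp,\overline{T}^\vee(1))=0$) and yields $H^0(K(\mu)_\frak{q},\overline{T})=H^2(K(\mu)_\frak{q},\overline{T})=0$, and similarly over $K(\eta)_{\frak{q}'}$. By Proposition~\ref{prop:localcohom}, both $H^1(K(\mu)_\frak{q},\bfT)$ and $H^1(K(\eta)_{\frak{q}'},\bfT)$ are free $R$-modules, so Nakayama's lemma reduces our surjectivity claim to the surjectivity of the reduction of the corestriction modulo $\mm_R$. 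The regular sequence argument in the proof of Proposition~\ref{prop:localcohom}, combined with the functoriality of corestriction in the coefficient module, identifies this reduction with the corestriction $H^1(K(\mu)_\frak{q},\overline{T})\lra H^1(K(\eta)_{\frak{q}'},\overline{T})$ on the residual representation.

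Third, by local Tate duality for the finite Galois module $\overline{T}$, corestriction is adjoint to restriction, so the target surjectivity is equivalent to injectivity of the restriction map $H^1(K(\eta)_{\frak{q}'},\overline{T}^\vee(1))\lra H^1(K(\mu)_\frak{q},\overline{T}^\vee(1))$. The inflation--restriction sequence embeds its kernel into $H^1(\Gal(K(\mu)_\frak{q}/K(\eta)_{\frak{q}'}),H^0(K(\mu)_\frak{q},\overline{T}^\vee(1)))$, and local duality identifies the inner $H^0$ with the Pontryagin dual of $H^2(K(\mu)_\frak{q},\overline{T})$, which vanishes by the same argument as above. Hence the kernel is trivial and the proposition follows.

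The main obstacle is the bookkeeping between the semi-local formulation and the strictly local arguments, and in particular the identification of the reduction modulo $\mm_R$ of the corestriction on $\bfT$ with the corestriction on $\overline{T}$; this is where the freeness of the local $H^1$'s (supplied by Proposition~\ref{prop:localcohom}, via the extension of the vanishing statements in Lemma~\ref{lem:hnaetclifts} to the intermediate fields $K(\eta)$ and $K(\mu)$) is essential. Once this is in place, the remaining duality plus inflation--restriction step is routine.
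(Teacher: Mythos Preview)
Your proof is correct and rests on the same core mechanism as the paper's: local Tate duality plus inflation--restriction, with the key input being $H^0(K(\mu)_\frak{q},\overline{T}^\vee(1))=0$ (equivalently $H^2(K(\mu)_\frak{q},\overline{T})=0$), supplied by the argument of Lemma~\ref{lem:hnaetclifts}.

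The paper's route is more direct, however. Rather than first reducing to the residual representation via Nakayama and Proposition~\ref{prop:localcohom}, it applies Tate duality immediately to $\bfT$ itself: surjectivity of corestriction on $H^1(\cdot,\bfT)$ becomes injectivity of restriction on $H^1(\cdot,\bfT^\vee(1))$, and inflation--restriction reduces this to the vanishing of $H^0(K(\mu)_\frak{q},\bfT^\vee(1))$. The latter follows because its $\mm_R$-torsion submodule is $H^0(K(\mu)_\frak{q},\overline{T}^\vee(1))$, which already vanishes. Your detour through Nakayama and the freeness of the local $H^1$'s (hence through the regularity of $R$) is therefore not needed for this particular proposition, though it does no harm given the running hypotheses of the section. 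The upshot is that your approach works entirely with finite Galois modules once past the Nakayama step, while the paper's argument avoids invoking Proposition~\ref{prop:localcohom} altogether.
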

 
\begin{proof}
By Tate local duality theorem, in order to prove the map \eqref{equation:corestriction} is surjective, 
it suffices to prove that restriction map 
\begin{equation}\label{equation:restriction}
H^1(K(\eta)_p,{\bfT}^\vee (1)) \lra H^1(K(\mu)_p,{\bfT}^\vee (1))
\end{equation}
is injective. To prove this, it suffices to show that 
\begin{equation}\label{equation:restriction2}
H^1(K(\eta)_\fp,{\bfT}^\vee (1)) \lra H^1(K(\mu)_{\frak{q}},{\bfT}^\vee (1))
\end{equation}
is injective for any prime $\fp$ of $K(\eta )$ and for any prime $\frak{q}$ of $K(\mu)$ over $\frak{p}$. 
As $K(\mu)_{\frak{q}} / K(\eta)_\fp$ is a succession of cyclic $p$-extensions, it suffices to prove that the map
\eqref{equation:restriction2} is injective assuming that $K(\mu)_{\frak{q}} / K(\eta)_\fp$ is a cyclic $p$-extension. By the inflation-restriction sequence of Galois cohomology, the kernel of \eqref{equation:restriction2} is isomorphic to the $\mathrm{Gal}(K(\mu)_{\frak{q}} / K(\eta)_\fp)$-coinvariants  of $H^0 (K(\mu)_{\frak{q}},{\bfT}^\vee (1))$. It follows by Lemma~\ref{lem:hnaetclifts} that the cohomology group $H^0 (K(\mu)_{\frak{q}},{\bfT}^\vee (1))$ vanishes and this completes the proof.  
\end{proof}

%%%%%%
\begin{prop}
\label{prop:extendpropabovetominusplus}
In addition to \hzero\, and \htwo, suppose that \hzerominus\, and \htwoplus\, hold true as well. Then for every $\eta,\mu \in \NN_\Sigma$ with $\eta \mid \mu$, the corestriction map
 $$H^1(K(\mu)_p,F^\pm\bfT)\lra H^1(K(\eta)_p,F^\pm\bfT)$$ on the semi-local cohomology at $p$ is surjective.
\end{prop}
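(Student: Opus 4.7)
The plan is to adapt the proof of Proposition~\ref{prop:surj} verbatim, replacing $\bfT$ by $F^\pm\bfT$ throughout and tracking the appropriate Tate-dual filtration piece on $\bfT^\vee(1)$. First, I will apply Tate local duality on the semi-local cohomology at $p$ to convert the asserted surjectivity of the corestriction into injectivity of the dual restriction map. The point here is that under the natural pairing $\bfT\times\bfT^\vee(1)\to\mu_{p^\infty}$, the annihilator of $F^+_\fp\bfT$ is precisely $F^+_\fp(\bfT^\vee(1))$, so the Pontryagin dual of $F^+_\fp\bfT$ identifies canonically with the quotient $F^-_\fp(\bfT^\vee(1)):=\bfT^\vee(1)/F^+_\fp(\bfT^\vee(1))$, and symmetrically for $F^-_\fp\bfT$. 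As in the proof of Proposition~\ref{prop:surj}, I then pass to the residual module, reducing the claim to the injectivity of
$$H^1\bigl(K(\eta)_p,F^\mp_\fp(\overline{\bfT}^\vee(1))\bigr)\lra H^1\bigl(K(\mu)_p,F^\mp_\fp(\overline{\bfT}^\vee(1))\bigr)\,.$$

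Second, I will localize at primes $\frak{q}\mid \fp$ and decompose $K(\mu)_\frak{q}/K(\eta)_\fp$ into a tower of cyclic $p$-extensions exactly as in Proposition~\ref{prop:surj}. The inflation--restriction sequence at each layer identifies the kernel of restriction with a quotient of $H^0(K(\mu)_\frak{q},F^\mp_\fp(\overline{\bfT}^\vee(1)))$, so the entire question is reduced to the vanishing
$$H^0\bigl(K(\mu)_\frak{q},F^\mp_\fp(\overline{\bfT}^\vee(1))\bigr)\,=\,0\,.$$

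Third, I will invoke the argument used in the proof of Lemma~\ref{lem:hnaetclifts} applied to the Galois module $X=F^\mp_\fp(\overline{\bfT}^\vee(1))$: any finite $p$-primary $G_\fp$-module $X$ with $H^0(K_\fp,X)=0$ automatically satisfies $H^0(K(\mu)_\frak{q},X)=0$, because $\mathrm{Gal}(K(\mu)_\frak{q}/K_\fp)$ is a $p$-group and a nonzero $H^0$ over $K(\mu)_\frak{q}$ would force a nonzero $\mathrm{Gal}$-invariant subspace, contradicting the vanishing over $K_\fp$. The input hypothesis $H^0(K_\fp,X)=0$ is supplied by the running hypotheses: for $X=F^+_\fp(\overline{\bfT}^\vee(1))\subset\overline{\bfT}^\vee(1)$, from \htwo\, via local Tate duality (which yields $H^0(K_\fp,\overline{\bfT}^\vee(1))=0$, forcing the vanishing for the submodule $X$); for the quotient $X=F^-_\fp(\overline{\bfT}^\vee(1))$, from \htwoplus\, via the same duality, which gives $H^0(K_\fp,F^-_\fp(\overline{\bfT}^\vee(1)))=0$.

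The main obstacle will be purely bookkeeping: keeping the duality correspondence between filtration pieces of $\bfT$ and those of $\bfT^\vee(1)$ straight, and matching the two cases $F^+$ and $F^-$ with the correct one among \htwo\, and \htwoplus. No new analytic or arithmetic input beyond that of Proposition~\ref{prop:surj} and Lemma~\ref{lem:hnaetclifts} is needed.
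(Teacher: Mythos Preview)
Your proposal is correct and follows exactly the paper's approach: the paper's own proof is the single sentence ``The additional hypothesis allows the proof of Proposition~\ref{prop:surj} work verbatim for $F^\pm\bfT$,'' and you have simply unpacked what ``verbatim'' means, carefully tracking the Tate-dual filtration pieces and identifying which of \htwo\, and \htwoplus\, supplies the required $H^0$-vanishing in each case. Your observation that \hzero\, and \hzerominus\, are not actually invoked in the argument is also accurate; they appear in the statement only as standing hypotheses for the section.
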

\begin{proof}
The additional hypothesis allows the proof of Proposition~\ref{prop:surj} work verbatim for $F^\pm\bfT$.
\end{proof}

%%%%%%%%
 \begin{prop}
 \label{prop:semilocalstructure}  Let $R$ be a complete local Noetherian $\ZZ_p$-algebra of mixed characteristic with finite residue field $\texttt{k}=R/\mm_R$. 
Assume that $R$ is regular. 
 If \hzero\, and \htwo\, hold true, then for every $\eta \in \NN_\Sigma$, the semi-local cohomology group $H^1(K(\eta)_p,\bfT)$ is a free $R[\Delta_{\eta}]$-module of rank $d$.
 \end{prop}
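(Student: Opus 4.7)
The plan is to pass from the $R$-freeness of $M := H^1(K(\eta)_p,\bfT)$ (which follows by combining Lemma \ref{lem:hnaetclifts} with Proposition \ref{prop:localcohom} at each prime $\frak{q}$ of $K(\eta)$ above $p$) to $R[\Delta_\eta]$-freeness via a reduction to the residue field. Since $R[\Delta_\eta]$ is a local Noetherian ring with residue field $\texttt{k}$ and $M$ is $R$-flat, a Nakayama argument shows that $M$ is $R[\Delta_\eta]$-free of rank $d$ if and only if its reduction $M\otimes_R\texttt{k} = H^1(K(\eta)_p,\overline{\bfT})$ is $\texttt{k}[\Delta_\eta]$-free of the same rank: a lift of a $\texttt{k}[\Delta_\eta]$-basis of the reduction to $M$ gives a surjection $R[\Delta_\eta]^d\twoheadrightarrow M$ whose kernel $K$ is finitely generated over $R$, satisfies $K\otimes_R\texttt{k} = 0$ by the vanishing of $\textup{Tor}^R_1(M,\texttt{k})$, and hence vanishes by Nakayama.

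The semi-local decomposition together with the orbit-stabilizer/Shapiro identification $\bigoplus_{\frak{q}\mid\fp} H^1(K(\eta)_\frak{q},\overline{\bfT}) \cong \textup{Ind}^{\Delta_\eta}_{\Delta_{\eta,\frak{q}}}H^1(K(\eta)_\frak{q},\overline{\bfT})$ reduces the residual freeness statement to the following: for each $\fp\mid p$ in $K$ and each fixed $\frak{q}\mid\fp$, the $\texttt{k}$-vector space $V := H^1(K(\eta)_\frak{q},\overline{\bfT})$ is $\texttt{k}[\Delta]$-free, where $\Delta := \Delta_{\eta,\frak{q}}$. Since $\Delta$ is a finite $p$-group acting on a $\texttt{k}$-module with $\texttt{k}$ of characteristic $p$, Serre's theorem on cohomological triviality (\emph{Local Fields}, Ch.\ IX) reduces this to the vanishing of two consecutive Tate cohomology groups, for which I would take $\hat{H}^{-1}(\Delta,V) = \hat{H}^0(\Delta,V) = 0$.

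The vanishing of $\hat{H}^0(\Delta,V) = V^\Delta/N_\Delta V$ is obtained by combining three inputs: the Hochschild--Serre five-term sequence together with the vanishing $H^0(K(\eta)_\frak{q},\overline{\bfT}) = 0$ (Lemma \ref{lem:hnaetclifts}), which identifies $\textup{res}: H^1(K_\fp,\overline{\bfT}) \xrightarrow{\sim} V^\Delta$; the surjectivity of $\textup{cor}: V\twoheadrightarrow H^1(K_\fp,\overline{\bfT})$, which is the residual analogue of Proposition \ref{prop:surj} and follows from the same inflation--restriction argument applied to $\overline{\bfT}^\vee(1)$; and the formula $\textup{res}\circ\textup{cor} = N_\Delta$. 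Applying the analogous reasoning on the Tate dual $\overline{\bfT}^\vee(1)$ and invoking local Tate duality yields, dually, an isomorphism $V_\Delta\cong H^1(K_\fp,\overline{\bfT})$ induced by corestriction, whence $\dim_\texttt{k} V_\Delta = \dim_\texttt{k} V^\Delta$; the surjection $N_\Delta: V_\Delta\to V^\Delta$ between equidimensional $\texttt{k}$-spaces is then forced to be an isomorphism, yielding also $\hat{H}^{-1}(\Delta,V)=0$. The main obstacle lies in the careful bookkeeping of the various Hochschild--Serre and Tate duality identifications at the residual level, though no genuinely new input beyond Lemma \ref{lem:hnaetclifts} and Proposition \ref{prop:surj} is required.
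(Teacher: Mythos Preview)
Your argument is correct and takes a genuinely different route from the paper's. The paper works via Shapiro's lemma, identifying $H^1(K(\eta)_p,\bfT)\cong H^1(K_p,\bfT\otimes_R R[\Delta_\eta]^\sharp)$, and then invokes ``the argument of Proposition~\ref{prop:localcohom}'' over the coefficient ring $R[\Delta_\eta]$. Its claim that $R[\Delta_\eta]$ has regular local components is, however, false: since $\Delta_\eta$ is a $p$-group and $R$ has residue characteristic $p$, the group ring $R[\Delta_\eta]$ is local but not regular (already $\ZZ_p[\ZZ/p\ZZ]$ has Krull dimension $1$ and embedding dimension $2$), so the regular-sequence argument of Proposition~\ref{prop:localcohom} does not transfer as written. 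The intended argument can be completed---once one knows that $M$ is $R$-free of rank $d\cdot|\Delta_\eta|$ and that the corestriction induces an isomorphism $M_{\Delta_\eta}\cong H^1(K_p,\bfT)$, a Nakayama lift plus an $R$-rank count gives $R[\Delta_\eta]^d\cong M$---but the paper does not spell this out. Your reduction to cohomological triviality over $\texttt{k}[\Delta_{\eta,\frak{q}}]$ sidesteps the regularity issue entirely and makes the mechanism transparent: your $\hat H^0=0$ step is exactly the combination of ``restriction is an isomorphism onto invariants'' and ``corestriction is surjective'' that also drives the (repaired) Shapiro approach. One simplification you could make: because $\Delta_{\eta,\frak{q}}$ is a $p$-group and $\mathrm{char}\,\texttt{k}=p$, the vanishing of a \emph{single} Tate cohomology group already forces $\texttt{k}[\Delta_{\eta,\frak{q}}]$-freeness (Serre, \emph{Local Fields}, Ch.~IX, \S5), so your $\hat H^{-1}$ computation, while correct, is not needed.
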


 \begin{proof}
By Shapiro's lemma on Galois cohomology, we have $H^1(K(\eta)_p,\bfT)\cong H^1(K_p,\bfT \otimes_R R [\Delta_\eta ]^\sharp )$ where $R[\Delta_\eta ]^\sharp $ is a free $R[\Delta_\eta ]$-module of rank $1$  
on which $\mathrm{Gal}(\overline{K} /K)$ acts tautologically. By assumption, $R[\Delta_\eta ]$ is a semi-local ring whose local components are all 
regular. 
By Proposition \ref{prop:surj}, the corestriction map $H^1(K(\eta)_p,\bfT)\lra H^1(K_p,\bfT )$ is equal to a surjective map 
induced by the functor $- \otimes_{R[\Delta_\eta ]} R$. 
Applying the argument of the proof of Proposition~\ref{prop:localcohom} componentwise on $R [\Delta_\eta ]$, 
$H^1(K(\eta)_p,\bfT)$ is a free $R$-module of rank $d\cdot|\Delta_{\eta}|$. 
 \end{proof}

\begin{prop}
\label{prop:semilocalstructureforminusplus} In addition to \rm{\hzero}\,and \rm{\htwo}, suppose that  
\rm{\hzerominus}\, and \rm{\htwoplus}\, hold true as well. Then for every $\eta \in \NN_\Sigma$, the $R[\Delta_\eta]$-module $H^1(K(\eta)_p,F^\pm\bfT)$ is free of rank $d_{\pm}$.
 \end{prop}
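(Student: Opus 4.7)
The plan is to imitate the proof of Proposition~\ref{prop:semilocalstructure}, substituting $F^{\pm}\bfT$ for $\bfT$ everywhere and replacing each invocation of the weaker hypotheses by its $F^{\pm}$-analogue. Concretely, I will fix $\eta\in\NN_\Sigma$, apply Shapiro's lemma on Galois cohomology to identify
$$H^1(K(\eta)_p,F^{\pm}\bfT)\;\cong\;H^1\!\left(K_p,\;F^{\pm}\bfT\otimes_R R[\Delta_\eta]^{\sharp}\right),$$
where $R[\Delta_\eta]^{\sharp}$ is free of rank one over $R[\Delta_\eta]$ with the tautological $G_K$-action. Since $R$ is a power series ring over $\mathcal{O}$, the semi-local ring $R[\Delta_\eta]$ has regular local components, so the argument of Proposition~\ref{prop:localcohom} can be applied component by component provided the required vanishing of $H^0$ and $H^2$ of the residual representation is available.

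The residual-vanishing input is the $F^{\pm}$-version of the argument at the start of the proof of Proposition~\ref{prop:semilocalstructure}: by Lemma~\ref{lem:hnaetclifts}, the hypotheses \hzero, \htwo, \hzerominus, \htwoplus\ imply $H^0(K(\eta)_\frak{q},F^{\pm}_\frak{p}\overline{T})=0$ for every $\frak{q}\mid\frak{p}\mid p$, and Tate's local Euler characteristic formula together with Tate duality (applied to $(F^{\pm}_\frak{p}\overline{T})^{\vee}(1)$, whose $H^0$ again vanishes by the same lemma after noting that $F^{+}$ and $F^{-}$ are interchanged on the dual) yields $H^2(K(\eta)_\frak{q},F^{\pm}_\frak{p}\overline{T})=0$ as well. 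This gives precisely the hypotheses needed to run the inductive freeness argument of Proposition~\ref{prop:localcohom} on each local component of $R[\Delta_\eta]$.

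The surjectivity of corestriction, which was the analogue of Proposition~\ref{prop:surj} used in the proof of Proposition~\ref{prop:semilocalstructure}, is supplied in the present setting by Proposition~\ref{prop:extendpropabovetominusplus}; this is exactly what allows the corestriction $H^1(K(\eta)_p,F^{\pm}\bfT)\to H^1(K_p,F^{\pm}\bfT)$ to be identified with the map obtained by applying $-\otimes_{R[\Delta_\eta]}R$. The rank of the resulting free module is then determined by the base case: by Corollary~\ref{cor:structuresemilocalcohom} (which requires precisely \hzero, \htwo, \hzerominus, \htwoplus), the module $H^1(K_p,F^{\pm}\bfT)$ is free of rank $d_{\pm}$ over $R$, so after the componentwise Nakayama argument $H^1(K(\eta)_p,F^{\pm}\bfT)$ is free of rank $d_{\pm}$ over $R[\Delta_\eta]$.

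The only genuine point that needs attention, as opposed to a mechanical transcription, is the verification that the two residual vanishing conditions $H^0(K(\eta)_\frak{q},F^{\pm}_\frak{p}\overline{T})=0$ and $H^2(K(\eta)_\frak{q},F^{\pm}_\frak{p}\overline{T})=0$ really do follow from the four standing hypotheses; the $H^0$ vanishing for the $F^{-}$ quotient is \hzerominus\ pushed up via Lemma~\ref{lem:hnaetclifts}, while the $H^2$ vanishing for $F^{+}$ is \htwoplus\ pushed up the same way, and in each case the complementary statement (needed for the dual to apply Tate) follows from the dual hypothesis via the identification of local duality. Once these are in hand, the proof is a verbatim adaptation of Proposition~\ref{prop:semilocalstructure}.
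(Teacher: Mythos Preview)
Your approach is exactly the paper's: the proof there reads, in its entirety, ``The additional hypothesis allows the proof of Proposition~\ref{prop:semilocalstructure} work verbatim for $F^+\bfT$ and $F^-\bfT$.'' Your expanded verification of the residual vanishing is fine, though note that $H^0(K(\eta)_\frak{q},F^{\pm}_\frak{p}\overline{T})=0$ is already stated explicitly in Lemma~\ref{lem:hnaetclifts}, and the remaining $H^2$-vanishings come more directly from the exact sequence $0\to F^+_\frak{p}\overline{T}\to\overline{T}\to F^-_\frak{p}\overline{T}\to 0$ combined with \htwo\ and \htwoplus\ (pushed up) than via the duality detour you sketch.
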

\begin{proof}
The additional hypothesis allows the proof of Proposition~\ref{prop:semilocalstructure} work verbatim for $F^+\bfT$ and $F^-\bfT$.
\end{proof}
 \begin{cor}
 \label{cor:thickfree}
Assuming \hzero\,and \htwo\, hold, $\displaystyle{\varprojlim_{\mu \in \mathcal{N}_\Sigma} }H^1(K(\mu)_p,\bfT)$ is a free $R[[\pmb{\Delta}]]$-module of rank $d$ and the natural projection map 
  $$
  \varprojlim_{\mu \in \mathcal{N}_\Sigma} H^1(K(\mu)_p,\bfT) \lra H^1(K(\eta)_p,T)
  $$ 
  is surjective for every $\eta \in \NN_\Sigma$ and quotient $T$ of $\bfT$. 
  
  If \htwoplus\, $($resp.  \hzerominus$)$ holds true as well, then $\displaystyle{\varprojlim_{\mu \in \mathcal{N}_\Sigma} }H^1(K(\mu)_p,F^+\bfT)$ 
(resp. the module $\displaystyle{\varprojlim_{\mu \in \mathcal{N}_\Sigma} } H^1(K(\mu)_p,F^-\bfT)$) is a free $R[\pmb{\Delta}]$-module of rank $d_+$ (resp. of rank $d_-$).
 \end{cor}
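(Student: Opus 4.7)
The plan is to build a basis of $\varprojlim_\mu H^1(K(\mu)_p,\bfT)$ over $R[[\pmb{\Delta}]]$ by coherently lifting a basis of $H^1(K_p,\bfT)$ through the tower, and then verify that the candidate isomorphism is indeed bijective by checking this modulo each layer of the tower via Nakayama's lemma. The essential preliminary observation is that each $\Delta_\eta=\prod_{\lambda \mid \eta}\Delta_\lambda$ is a \emph{finite $p$-group}, since every $K(\lambda)/K$ is by definition the maximal $p$-subextension of the ray class field of conductor $\lambda$. Consequently $R[\Delta_\eta]$ is a local ring whose unique maximal ideal equals $\mathfrak{m}_R R[\Delta_\eta]+I_\eta$, where $I_\eta$ denotes the augmentation ideal of $R[\Delta_\eta]$; this makes Nakayama available at every finite level.

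First I would pick an $R$-basis $e_1,\dots,e_d$ of $M_\emptyset:=H^1(K_p,\bfT)$, which exists by Corollary~\ref{cor:structuresemilocalcohom}. The surjectivity of every corestriction map in Proposition~\ref{prop:surj}, together with a standard Mittag-Leffler/choice argument, supplies lifts $\tilde e_i \in \varprojlim_\mu M_\mu$ of the $e_i$, where $M_\mu:=H^1(K(\mu)_p,\bfT)$. These assemble into an $R[[\pmb{\Delta}]]$-linear map
$$\phi\,:\,R[[\pmb{\Delta}]]^{\oplus d}\;\longrightarrow\;\varprojlim_\mu M_\mu,$$
and it suffices to check that every reduction $\phi_\eta:R[\Delta_\eta]^{\oplus d}\to M_\eta$ is an isomorphism, for then $\phi=\varprojlim\phi_\eta$ is also an isomorphism. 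To verify bijectivity of $\phi_\eta$, I would use that the image of $\tilde e_i$ in $M_\eta/I_\eta M_\eta\cong M_\emptyset$ is exactly $e_i$ (corestriction is the $I_\eta$-quotient, as observed in the proof of Proposition~\ref{prop:semilocalstructure}); hence after further reducing modulo $\mathfrak{m}_R$, the $\tilde e_i^{(\eta)}$ form a basis of $M_\eta$ modulo the maximal ideal of the local ring $R[\Delta_\eta]$. Nakayama then guarantees that they generate $M_\eta$, which is itself free of rank $d$ by Proposition~\ref{prop:semilocalstructure}; a surjection between two free modules of the same finite rank over a Noetherian ring is automatically bijective.

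For the surjectivity of the composite $\varprojlim_\mu M_\mu\to H^1(K(\eta)_p,T)$ attached to a quotient $T$ of $\bfT$, the first arrow $\varprojlim_\mu M_\mu\twoheadrightarrow M_\eta$ is surjective by construction of $\phi$, while for the second arrow $M_\eta\to H^1(K(\eta)_p,T)$ it suffices to show $H^2(K(\eta)_p,N)=0$, where $N$ is the kernel of $\bfT\twoheadrightarrow T$. When $T=\bfT/J\bfT$ for an ideal $J\subset R$, this is settled by choosing a free $R$-presentation $R^a\to R^b\to J\to 0$, tensoring with the flat module $\bfT$, and invoking the cohomological-dimension bound on $G_{K(\eta)_\frak{q}}$ together with the vanishing $H^2(K(\eta)_p,\bfT)=0$ (which is a consequence of (H.2) and \eqref{equation:vanishingH^2X}, extended across the $p$-tower just as in Lemma~\ref{lem:hnaetclifts}). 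The analogous assertions for $F^+\bfT$ and $F^-\bfT$ follow by running the same argument verbatim, with Proposition~\ref{prop:semilocalstructureforminusplus} and Proposition~\ref{prop:extendpropabovetominusplus} replacing their $\bfT$-counterparts, and with $d$ replaced by $d_+$ or $d_-$. The only delicate point in the whole argument is the coherent lifting of a basis through the inverse system; once the surjectivity of transitions is in hand, the freeness and surjectivity statements reduce to routine Nakayama bookkeeping.
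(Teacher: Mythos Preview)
Your argument is correct and is precisely the natural way to flesh out the paper's one-line proof (which merely cites Propositions~\ref{prop:surj} and \ref{prop:semilocalstructure}): coherently lift an $R$-basis through the surjective inverse system, then verify each $\phi_\eta$ is an isomorphism by Nakayama over the \emph{local} ring $R[\Delta_\eta]$---your observation that $\Delta_\eta$ is a $p$-group, so that $R[\Delta_\eta]$ is genuinely local rather than just semi-local, is exactly what makes this work.

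One small caveat: for the surjectivity onto $H^1(K(\eta)_p,T)$ you treat only $T=\bfT/J\bfT$, whereas the statement reads ``quotient $T$ of $\bfT$'' without qualification. For an arbitrary $R[G]$-quotient the kernel $N$ need not admit the free presentation you invoke, and deducing $H^2(K(\eta)_\frak{q},N)=0$ directly from (H.2) is not immediate. That said, only quotients of the form $\bfT/J\bfT$ ever arise downstream (cf.\ Remark~\ref{rem:quotients} and the reductions modulo $\mathcal{I}_{\frak s}$ in \S\ref{subsec:locallyrestricKSexists}), so this does not affect the applications.
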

 \begin{proof}
The first portion is immediate after Propositions~\ref{prop:surj} and \ref{prop:semilocalstructure}. The second assertion follows from Propositions~\ref{prop:extendpropabovetominusplus} and \ref{prop:semilocalstructureforminusplus}.
 \end{proof}
\begin{cor}
 \label{cor:thickfreeGreenberg}
Suppose that the hypotheses \hzero, \htwo, \hzerominus\, and 
\htwoplus\, hold true. Then the submodule $\displaystyle{\varprojlim_{\mu \in \mathcal{N}_\Sigma} } H^1_{\FF_\Gr}(K(\mu)_p,\bfT) \subset 
\displaystyle{\varprojlim_{\mu \in \mathcal{N}_\Sigma} } H^1(K(\mu)_p,\bfT)$ is a free direct summand of rank $d_+$\,.
\end{cor}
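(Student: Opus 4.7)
The plan is to reduce the assertion to the freeness statements already established in Corollary~\ref{cor:thickfree}. I would first identify $\varprojlim_{\mu\in\mathcal{N}_\Sigma} H^1_{\FF_\Gr}(K(\mu)_p,\bfT)$ with $\varprojlim_\mu H^1(K(\mu)_p,F^+\bfT)$, and then place it inside a short exact sequence whose quotient is also a free $R[[\pmb{\Delta}]]$-module, which forces the inclusion to split.

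For the identification, the long exact sequence attached to $0 \to F^+_\fp\bfT \to \bfT \to F^-_\fp\bfT \to 0$ shows that the kernel of $H^1(K(\mu)_p,F^+\bfT)\to H^1(K(\mu)_p,\bfT)$ is a quotient of $H^0(K(\mu)_p,F^-\bfT)$. Hypothesis \hzerominus{} combined with the argument of Lemma~\ref{lem:hnaetclifts} yields $H^0(K(\mu)_\frak{q}, F^-_\fp\overline{T})=0$, and the Nakayama d\'evissage carried out in Proposition~\ref{prop:localcohom} lifts this vanishing to $H^0(K(\mu)_p,F^-\bfT)=0$. Thus the map in question is injective, and its image is exactly $H^1_{\FF_\Gr}(K(\mu)_p,\bfT)$ by \eqref{equation:definitionGR22}. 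To trim the long exact sequence on the right, I would verify $H^2(K(\mu)_p,F^+\bfT)=0$: Tate local duality recasts \htwoplus{} as $H^0(K_\fp,(F^+_\fp\overline{T})^\vee(1))=0$, the same Lemma~\ref{lem:hnaetclifts}-style argument gives $H^0(K(\mu)_\frak{q},(F^+_\fp\overline{T})^\vee(1))=0$, and Tate duality in reverse converts this back to $H^2(K(\mu)_\frak{q},F^+_\fp\overline{T})=0$; a Nakayama d\'evissage (using that $G_{K_\fp}$ has cohomological dimension $2$) then delivers the semi-local vanishing. Assembling the two steps, for each $\mu$ we obtain the short exact sequence
$$0 \lra H^1_{\FF_\Gr}(K(\mu)_p,\bfT) \lra H^1(K(\mu)_p,\bfT) \lra H^1(K(\mu)_p,F^-\bfT) \lra 0.$$

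Passing to the inverse limit over $\mu \in \mathcal{N}_\Sigma$ is harmless: Propositions~\ref{prop:surj} and \ref{prop:extendpropabovetominusplus} render every transition map in the tower surjective on all three terms, so the Mittag--Leffler condition is trivially satisfied and the limit sequence remains short exact. Corollary~\ref{cor:thickfree} identifies its two flanks as free $R[[\pmb{\Delta}]]$-modules of respective ranks $d_+$ and $d_-$; the projectivity of the right-hand quotient then splits the sequence, exhibiting $\varprojlim_\mu H^1_{\FF_\Gr}(K(\mu)_p,\bfT)$ as a free direct summand of rank $d_+$ of the free middle term, as required. The only point that requires real attention is the bootstrap from the residual hypotheses \hzerominus{} and \htwoplus{} to their analogues over $\bfT$ in the semi-local setting; everything else is a mechanical assembly of what is already available from Corollary~\ref{cor:thickfree}.
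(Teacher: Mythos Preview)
Your proposal is correct and is precisely the natural derivation the paper intends: the corollary is stated without proof immediately after Corollary~\ref{cor:thickfree}, and your argument---identifying $H^1_{\FF_\Gr}(K(\mu)_p,\bfT)$ with $H^1(K(\mu)_p,F^+\bfT)$ via \hzerominus, cutting off the long exact sequence at $H^2$ via \htwoplus, passing to the limit by Mittag--Leffler, and splitting using the freeness of the $F^-$-term---is exactly how one extracts it from what precedes. The paper's own Corollary~\ref{cor:greenbergfree} (also stated without proof) is the finite-level prototype of the same argument.
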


If we assume in addition that (H.++) holds true, there there is a more natural way to define the auxiliary module $H^1_{++}(F_p,\bfT)$. When (H.++)  is valid, we shall always stick to this more natural definition (given as in Definition~\ref{def:auxiliarylineo} below) without any further warning. 
\begin{define}
\label{def:auxiliarylineo}
Let $F/K$ be a finite subextension in $\KKK/K$. If the hypotheses \hzero, \htwo, \hzerominus, \htwoplus\, as well as (H.++) hold true, we set 
$$H^1_{++}(F_p,\bfT):=\left(\oplus_{\substack{\frak{p} \mid p \\ \frak{p} \nmid \fp_{\textup{o}}}}H^1_{\FF_{\Gr}}(F_\frak{p},\TT)\right)\oplus 
\left(\oplus_{{\frak{p} \mid \fp_{\circ}}}\, \textup{im}\left(H^1(F_{\frak{p}},F^{++}\bfT)\ra H^1(F_{\frak{p}},\bfT)\right)\right)$$
and define $\mathbb{V}_F:=\left(\oplus_{{\frak{p} \mid \fp_{\circ}}}\, \textup{im}\left(H^1(F_{\frak{p}},F^{++}\bfT)\ra H^1(F_{\frak{p}},\bfT)\right)\right) \,.
$
\end{define}
By the definition of filtrations given before Theorem B and 
by the definitions \eqref{equation:definitionGR11} and \eqref{equation:definitionGR22}, we have 
$H^1_{\FF_{\Gr}}(F_{\frak{p}_0},\bfT) \subset \mathbb{V}_F$. 
%%%%%%%%%%%%%%%%%%%%%%%%%%%%%%%%%%%%%%%%%%%%%%%%%
\subsection{Selmer structures and Selmer groups}
\label{subsec:selmerstr}
We now define various Selmer structures attached to the deformation datum $(\TT,\mathcal{R},\mathcal{S})$ and 
its cyclotomic deformation $(\TT_\cyc,\mathcal{R}_\cyc,\mathcal{S}_{\cyc})$ and we shall rely on these structures throughout this article.  
\par 
Let $\Sigma^{(p)}\subset \Sigma$ denote the set of places of $K$ that do not lie above $p$.
We retain our convention from the previous section concerning our use of the symbols $\bfT$ and $R$. Let $L$ be any finite extension of $K$.
\begin{define}
\label{def:Selmerstructures} 
\textbf{(1)} The \emph{canonical Selmer structure} $\FFc$ is defined via the local conditions
\begin{itemize}
\item[(i)] $H^1_{\FFc}(L_\lambda,\bfT):=\ker\left(H^1(L_\lambda,\bfT)\lra H^1(L_\lambda^\textup{ur},\bfT) \right)$ at primes $\lambda$ of $L$ lying above those in $\Sigma^{(p)}$,
\item[(ii)] $H^1_{\FFc}(L_p,\bfT)=H^1(L_p,\bfT)$ at primes above $p$.
\end{itemize}
\textbf{(2)} Whenever the hypothesis (Pan) holds, the \emph{Greenberg Selmer structure} $\FF_{\Gr}$ is defined via the local conditions 
\begin{itemize}
\item[(i)] $H^1_{\FF_{\Gr}}(L_\lambda,\bfT):=H^1_{\FFc}(L_\lambda,\bfT)$ at primes $\lambda$ of $L$ lying above those in $\Sigma^{(p)}$,
\item[(ii)] 
$H^1_{\FF_{\Gr}}(L_p,\bfT):=\textup{im}\left(H^1(L_p,F^+\bfT) \lra H^1(L_p,\bfT)\right)$
at primes above $p$.
\end{itemize}
\textbf{(3)} The Selmer structure $\FF_{+}$ is defined via the local conditions 
\begin{itemize}
\item[(i)] $H^1_{\FF_{+}}(L_\lambda,\bfT):=H^1_{\FFc}(L_\lambda,\bfT)$ at primes $\lambda$ of $L$ lying above those in $\Sigma^{(p)}$,
\item[(ii)] $H^1_{\FF_+}(K_p,\bfT):=H^1_{++}(K_p,\bfT)$ at primes above $p$.
\end{itemize}
\end{define}
\begin{rem}
Notice in the situation when $\bfT=\TT_\cyc$, it follows from Lemma~\ref{lem:MRunramifiedisallunrforcyclo} that $H^1_{\FF_{\Gr}}(K_\lambda,\TT_\cyc)=H^1(K_\lambda,\TT_\cyc)$ for every $\lambda\in \Sigma^{(p)}$.
\end{rem}
\begin{define}
\label{define:selmerobjects}
Given a Selmer structure $\FF$ on $\bfT$, we may \emph{propagate} it to a quotient $X$ via Example 1.1.2 of \cite{mr02}. 
We define $H^1_\FF (K_\lambda ,X)^{~}$ to be the image of $H^1_\FF (K_\lambda ,\bfT ) \subset H^1 (K_\lambda ,\bfT )$ via 
$H^1 (K_\lambda ,\bfT ) \lra H^1 (K_\lambda ,X )$. Then we define the Selmer group associated to $\FF$ by setting
$$H^1_{\FF}(K,X):=\ker \left(H^1(K_\Sigma/K,X)\lra \bigoplus_{\lambda\in \Sigma}\frac{H^1(K_\lambda,X)}{H^1_\FF(K_\lambda,X)}\right)$$
\par 
We also define the dual Selmer structure $\FF^*$ on the dual representation $X^\vee(1)$ by setting (for every place $\lambda$ of $K$)
$$H^1_{\FF^*}(K_\lambda,X^\vee(1)):=H^1_{\FF}(K_\lambda,X)^\perp\,,$$
the orthogonal complement of $H^1_{\FF}(K_\lambda,X)$ under the local Tate pairing, and similarly, the dual Selmer group $H^1_{\FF^*}(K,X^\vee(1))$\,.
\end{define}

%%%%%%%%%%%%%%%%%%%%%%%%%%%%%%%%%%%%%%%%%%%%%%%%%%
\subsection{Locally restricted Euler systems and descend to Kolyvagin systems}
\subsubsection{Existence of locally restricted Kolyvagin systems}
\label{subsec:locallyrestricKSexists}
For each prime $\lambda\notin \Sigma$, we set 
$$P_\lambda(X):=\det\left(1-\textup{Fr}_\lambda^{-1} X \mid  \TT^{R}(1)\right)
$$
where $\TT^R=\textup{Hom}(\TT,R)$ is the $R$-linear dual of $\TT$.

\begin{define}
\label{define_ES}
Suppose that we have a pair $(\TT_\cyc,\KKK)$.  
A collection of elements $\mathbf{c}:=\{c_{K(\eta)}\}_{\eta\in \NN_{\Sigma}}$ 
such that $c_{K(\eta)}\in H^1(K(\eta)_\Sigma/K(\eta),\TT_\cyc)$ is called an \emph{Euler system} if the following conditions hold: 
\begin{itemize}
\item[(ES1)] for each $\eta\lambda,\eta\in \NN_\Sigma$ we have $\textup{Cor}_{K(\eta\lambda)/K(\eta)}\, c_{K(\eta\lambda)}=P_\lambda(\textup{Fr}_\lambda^{-1}) c_{K(\eta)}$\,,\\
\item[(ES2)] for every $\lambda\mid \eta \in \NN_\Sigma$ we have $\textup{Cor}_{K(\eta\lambda)/K(\eta)}\, c_{K(\eta\lambda)}= c_{K(\eta)}$\,. \\

\end{itemize}
\end{define}

We denote the collection of Euler systems for the triple $(\TT_\cyc,\Sigma,\KKK)$ by $\textup{ES}(\TT_\cyc)$. 
\begin{rem}
\label{rem:altertheeulerfactors}
The \emph{Euler polynomials} $P_\lambda(X)$ that appear in the definition of an Euler system above may be altered to yield equivalent theories; see \cite[\S\, IX]{r00} for a discussion in this regard and \cite[Lemma 7.3.4]{LLZ} where this alteration is utilized on the Beilinson--Flach Euler system.
\end{rem}
\begin{define}\label{definition:locallyrestrictedES}
\label{def:locallyrestrictedES}
An Euler system $\mathbf{c}\in \textup{ES}(\TT_\cyc)$ is called locally restricted if we have 
$$\res_p\left(c_F\right) \in H^1_{++}(F_p,\TT_\cyc)$$
for every finite extension $F$ of $K$ contained in $\KKK$. We denote the collection of locally restricted Euler systems by $ \textup{ES}^+(\TT_\cyc)$\,.
\end{define}
For an $(r+2)$-tuple ${\frak{s}}=(s_0,s_1,\ldots,s_r, s_{r+1}) \in \left(\ZZ^+\right)^{r+2}$ and a fixed topological generator $\gamma$ of $\Gamma_\cyc$, we define the ideal 
$$\mathcal{I}_{\frak{s}}:=(\pi_\oo^{s_0},X_1^{s_1},\ldots, X_{r}^{s_r}, (\gamma-1)^{s_{r+1}}) \subset \mathcal{R}_\cyc$$ 
and set $\TT_\frak{s}:=\TT_\cyc/\mathcal{I}_{\frak{s}}\TT_\cyc$. 
We denote the $(r+2)$-tuple $(1,1,\ldots, 1)$ by $\frak{l}$. We note that 
$\mathcal{I}_\frak{l}=\mm_{\mathcal{R}_\cyc}$ is the maximal ideal and $\TT_\frak{l}=\overline{T}$ is the residual representation.  For tuples $\frak{s}$ and $\frak{s}^\prime$ we write $\frak{s}\preccurlyeq\frak{s}^\prime$ to mean that $s_i\leq s_{i+1}$ for $i=0,1,\ldots, r+1$. 
%%%%%%
\begin{define}
\label{define:KSprimesprimaryform}
We define the set of primes $\PP_{{\frak{s}}}$ to be the set that consists of primes $\lambda \notin \Sigma$ of $K$ that satisfy
\begin{enumerate}
\item[{\rm (K.1)}] $\TT/(\mathcal{I}_{{\frak{s}}}  \TT + (\textup{Fr}_{\lambda}-1)\TT)$ is a free $R/\mathcal{I}_{{\frak{s}}}$\,-module of rank one,
\item[{\rm (K.2)}] $\mathbf{N}\lambda-1 \in \mathcal{I}_\frak{s}$.
\end{enumerate}
and such that $\PP_\frak{s}\supset \PP_{\frak{s}^\prime}$ whenever $\frak{s} \preccurlyeq \frak{s}^{\prime}$. The collection $\{\PP_\frak{s}\}$ is called the collection of \emph{Kolyvagin primes}. 
\end{define}

We shall prove (under suitable hypotheses) in Lemma~\ref{lem:Kolyvaginprimesexist} that a collection of Kolyvagin primes exists.

For a Selmer structure $\FF$ on $\TT$ and $\frak{s}$ as above, we may define the module of Kolyvagin systems on the artinian module $\TT_\frak{s}$. We will not include its precise definition in this note and refer the reader to \cite{mr02, kbbdeform}.  Given a Selmer structure $\FF$ on $\TT_\cyc$, we let $\KS(\TT_\frak{s},\FF,\PP_\frak{s})$ denote the module of Kolyvagin systems for the Selmer triple $(\TT_\frak{s},\FF,\PP_\frak{s})$ and set 
$$\KS(\TT,\FF):=\varprojlim_\frak{s} \varinjlim_{\frak{j}\succcurlyeq \frak{s}}\KS(\TT_\frak{s},\FF,\PP_\frak{j})\,.$$
\begin{define}
\label{def:restrictedKS} The collection $\KS(\TT,\FF_+)$ is called the module of \emph{locally restricted Kolyvagin systems.}
\end{define}
\subsubsection{Existence of Kolyvagin primes}
\label{subsub:Kolyvaginprimesexist}
In this section, we explain how to obtain a useful collection of Kolyvagin primes under suitable hypotheses. These hypotheses are exactly those considered in \cite[Section 3.5]{mr02} and are often satisfied. The properties we will rely on are as follows.
\begin{itemize}
\item[{(MR1)}] $\overline{T}$ is absolutely irreducible as $G_K$-module.
\item[{(MR2)}] There exists an element $\tau\in G_K$ with the properties that 
\begin{itemize}
\item $\TT/(\tau-1)\TT$ is a free $R$-module of rank one,
\item $\tau$ acts trivially on $\pmb{\mu}_{p^\infty}$.
\end{itemize}
\item[{(MR3)}] $H^0(K,\overline{T})=H^0(K,\overline{T}^\vee(1))=0$\,.
\item[{(MR4)}] Either $\textup{Hom}_{\texttt{k}[[G_K]]}(\overline{T},\overline{T}^\vee(1))=0$; or else $p>4$\,.
 \end{itemize}
 Note that the hypotheses \hzero\, implies the first vanishing condition in {\rm (MR3)}.
 %%%%%%%%%%%%%%%%%%%
 \begin{lemma}
 \label{lem:Kolyvaginprimesexist}
 Suppose that the hypothesis %\rm{\htwo}
 {{\rm (MR2)}}\, holds true and fix $\tau \in G_K$ satisfying {{\rm (MR2)}}. Consider the set of primes 
 \be\label{def:goodsetofKolyprimes}\frak{P}_\frak{s}:=\{\lambda \in \NN_\Sigma: \textup{Fr}_\lambda \hbox{ is conjugate to } \tau \hbox{ in } 
 \Gal\left(K(\TT_\frak{s},\pmb{\mu}_{p^{s_0}})/K\right)\}\,.
 \ee
 where $K(\TT_\frak{s},\pmb{\mu}_{p^{s_0}})$ is the fixed field of $\ker\left(G_{K,\Sigma}\ra \textup{Aut}\left(\TT_\frak{s}\oplus \pmb{\mu}_{p^{s_0}}\right)\right)$\,.
 Then the set $\frak{P}_\frak{s}$ verifies the properties {\rm (K.1)} and {\rm (K.2)} above, and furthermore, we have $\frak{P}_\frak{s} \supset \frak{P}_{\frak{s}^\prime}$ if $\frak{s}\preccurlyeq\frak{s}^\prime$.
 \end{lemma}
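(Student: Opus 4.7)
The plan is to verify (K.1) and (K.2) for any $\lambda\in\frak{P}_\frak{s}$ by pushing the conjugacy of $\textup{Fr}_\lambda$ and $\tau$ through the two natural quotients of $\Gal(K(\TT_\frak{s},\pmb{\mu}_{p^{s_0}})/K)$, namely the one acting on $\TT_\frak{s}$ and the one acting on $\pmb{\mu}_{p^{s_0}}$; the monotonicity $\frak{P}_{\frak{s}'}\subseteq\frak{P}_\frak{s}$ will then drop out from the fact that the field $K(\TT_\frak{s},\pmb{\mu}_{p^{s_0}})$ grows monotonically in $\frak{s}$. Since $\lambda\notin\Sigma$, the extension $K(\TT_\frak{s},\pmb{\mu}_{p^{s_0}})/K$ is unramified at $\lambda$, so the Frobenius conjugacy class $\textup{Fr}_\lambda$ is well-defined.

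For (K.1), the $G_{K,\Sigma}$-action on $\TT_\frak{s}$ factors through $\Gal(K(\TT_\frak{s})/K)\subseteq \textup{Aut}_{R/\mathcal{I}_\frak{s}}(\TT_\frak{s})$, a quotient of $\Gal(K(\TT_\frak{s},\pmb{\mu}_{p^{s_0}})/K)$. Thus the defining conjugacy relation is realized by an $R/\mathcal{I}_\frak{s}$-linear automorphism $g$ of $\TT_\frak{s}$, with $\textup{Fr}_\lambda=g\tau g^{-1}$ on $\TT_\frak{s}$; this provides an isomorphism
$$\TT/(\mathcal{I}_\frak{s}\TT+(\textup{Fr}_\lambda-1)\TT)\;=\;\TT_\frak{s}/(\textup{Fr}_\lambda-1)\TT_\frak{s}\;\xrightarrow{\,g\,}\;\TT_\frak{s}/(\tau-1)\TT_\frak{s}.$$
Right-exactness of $-\otimes_{R}R/\mathcal{I}_\frak{s}$ identifies the right-most module with $(\TT/(\tau-1)\TT)\otimes_{R}R/\mathcal{I}_\frak{s}$, which by the first clause of (MR2) is free of rank one over $R/\mathcal{I}_\frak{s}$.

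For (K.2), I would further project the conjugacy to the abelian quotient $\Gal(K(\pmb{\mu}_{p^{s_0}})/K)\subseteq (\ZZ/p^{s_0}\ZZ)^\times$, in which conjugacy is equality. Hence $\textup{Fr}_\lambda$ and $\tau$ act identically on $\pmb{\mu}_{p^{s_0}}$, and by the second clause of (MR2) this action is trivial, forcing $\mathbf{N}\lambda\equiv 1\pmod{p^{s_0}}$. Since $p\in\pi_\oo\oo$, we obtain $\mathbf{N}\lambda-1\in\pi_\oo^{s_0}\oo\subseteq\mathcal{I}_\frak{s}$. For monotonicity, when $\frak{s}\preccurlyeq\frak{s}'$ we have $\mathcal{I}_{\frak{s}'}\subseteq\mathcal{I}_\frak{s}$ and $s_0\le s_0'$, yielding a $G_K$-equivariant surjection $\TT_{\frak{s}'}\twoheadrightarrow\TT_\frak{s}$ and an inclusion $\pmb{\mu}_{p^{s_0}}\subseteq\pmb{\mu}_{p^{s_0'}}$. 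Hence $K(\TT_\frak{s},\pmb{\mu}_{p^{s_0}})\subseteq K(\TT_{\frak{s}'},\pmb{\mu}_{p^{s_0'}})$ and the induced surjection of Galois groups maps conjugacy classes to conjugacy classes, giving $\frak{P}_{\frak{s}'}\subseteq\frak{P}_\frak{s}$.

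No step is a real obstacle here: the content of the lemma is essentially bookkeeping around (MR2), and the mildly delicate point (that the conjugating element $g$ is genuinely $R/\mathcal{I}_\frak{s}$-linear, so that taking the quotient by $(\textup{Fr}_\lambda-1)$ versus $(\tau-1)$ yields isomorphic $R/\mathcal{I}_\frak{s}$-modules) is immediate from the $R$-linearity of the Galois action on $\TT$. The genuine value of the lemma is that, combined with (MR1), (MR3) and (MR4) and Chebotarev density applied to the extension $K(\TT_\frak{s},\pmb{\mu}_{p^{s_0}})/K$, the set $\frak{P}_\frak{s}$ is both non-empty and of positive density, supplying the abundance of Kolyvagin primes required for the locally restricted Euler-to-Kolyvagin system descent developed in the sequel.
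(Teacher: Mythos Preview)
Your argument is correct and follows the same approach as the paper: verify (K.1) and (K.2) directly from the conjugacy of $\textup{Fr}_\lambda$ and $\tau$ combined with (MR2), and deduce monotonicity from the inclusion $K(\TT_\frak{s},\pmb{\mu}_{p^{s_0}})\subseteq K(\TT_{\frak{s}'},\pmb{\mu}_{p^{s_0'}})$. In fact you supply considerably more detail than the paper, which simply declares (K.1) and (K.2) ``clear'' and only spells out the monotonicity step; the one cosmetic slip is that the isomorphism you display is induced by $g^{-1}$ rather than $g$, but this is immaterial.
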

 \begin{proof}
 It is clear that the set $\frak{P}_s$ verifies the properties ${(K.1)}$ and ${(K.2)}$. 
 Fix a pair $$
 \frak{s}=(s_0,s_1,\ldots,s_r, s_{r+1})\preccurlyeq\frak{s}^\prime =(s^\prime_0,s^\prime_1,\ldots,s^\prime_r, s^\prime_{r+1})
 $$ and note that $K(\TT_{\frak{s}^\prime},\pmb{\mu}_{p^{s_0^{\prime}}})\supset K(\TT_\frak{s},\pmb{\mu}_{p^{s_0}})$. 
 Then a prime $\lambda \in \mathcal{N}_\Sigma$ belongs to $\frak{P}_{\frak{s}^\prime}$ if and only if there exists $\sigma\in G_K$ such that 
 $$\sigma  \textup{Fr}_\lambda  \sigma^{-1} \equiv \tau \hbox{ within $\mathrm{Gal} (K(\TT_{\frak{s}^\prime},\pmb{\mu}_{p^{s_0^{\prime}}})/K)$}.$$
 Hence, we have $\sigma  \textup{Fr}_\lambda  \sigma^{-1} \equiv \tau 
 \hbox{ in the quotient $\mathrm{Gal} (K(\TT_\frak{s},\pmb{\mu}_{p^{s_0}})/K)$}$ of 
 $\mathrm{Gal} (K(\TT_{\frak{s}^\prime},\pmb{\mu}_{p^{s_0^{\prime}}})/K)$ as well, and hence $\lambda \in \frak{P}_{\frak{s}}$ as claimed.
 \end{proof}
 We will take the set of primes $\frak{P}_\frak{s}$ in (\ref{def:goodsetofKolyprimes}) to be the set of primes $\PP_\frak{s}$ that are required in Definition~\ref{define:KSprimesprimaryform}. 
%%%%% 
\subsubsection{Euler systems to Kolyvagin systems map}
\label{subsub:EStoKS}
Let $\FFc$ denote the canonical Selmer structure on $\TT_\cyc$, obtained from the Selmer structure $\FF_\Gr$ by relaxing the local conditions at all primes above $p$. The following theorem is proved in \cite[Appendix A]{mr02} only when the base field $K$ equals $\QQ$ and when the coefficient ring is either a discrete valuation ring or the cyclotomic Iwasawa algebra. The arguments go through for a general coefficient ring and base field to yield the following result:
%%%%%%%%
\begin{thm}[Mazur-Rubin]
\label{thm:ESKSgeneral}
Assume that the hypotheses {\rm (MR1)-(MR4)} hold true and suppose for every $\lambda \in \PP_\frak{l}$ the homomorphisms $\{\textup{Fr}_{\lambda}^{p^k}-1\}_{k\in \ZZ}$ are injective on $\TT$.  Then there exists a map 
$$
\Psi^{\textup{MR}}:\,\textup{ES}(\TT_\cyc)\lra \KS(\TT_\cyc,\FFc,\frak{P}_{\frak{l}})
$$
such that $c_K \in H^1(K_{\Sigma}/K,\TT_\cyc)$ coincides with $\kappa_1 \in H^1(K_{\Sigma}/K,\TT_\cyc)$
if the Euler system $\mathbf{c}=\{c_{K(\eta)}\}_{\eta\in \NN_{\Sigma}} \in \textup{ES}(\TT_\cyc)$ and $\mathbf{c}$ maps to $\pmb{\kappa}=\{\kappa_\eta\}\in \KS(\TT_\cyc,\FFc,\frak{P}_\frak{l})$ 
where $\frak{P}_\frak{l}$ is the set of Kolyvagin primes from (\ref{def:goodsetofKolyprimes}).
\end{thm}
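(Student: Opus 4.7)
The plan is to reproduce, verbatim over the coefficient ring $\mathcal{R}_\cyc$ and the base field $K$, the Kolyvagin-derivative construction carried out in Appendix~A of \cite{mr02}. The construction is purely algebraic and relies only on (i) the distribution relations (ES1)-(ES2) of an Euler system, (ii) the rank-one freeness built into K.1 for $\TT_\cyc/(\mathcal{I}_\frak{s}+(\textup{Fr}_\lambda-1))\TT_\cyc$, and (iii) the global cohomological vanishing afforded by (MR1)-(MR4); each of these ingredients continues to make sense in the present generality.

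Concretely, for each Kolyvagin prime $\lambda\in \PP_\frak{l}$ one fixes a generator $\sigma_\lambda$ of the cyclic group $\Gal(K(\lambda)/K)$ of order $\ell_\lambda$, sets $D_\lambda=\sum_{i=1}^{\ell_\lambda-1}i\sigma_\lambda^i$, and for $\eta=\lambda_1\cdots\lambda_r \in \NN_\Sigma$ puts $D_\eta=\prod_i D_{\lambda_i}$. Given $\mathbf{c}=\{c_{K(\eta)}\}\in \ES(\TT_\cyc)$, the derivative $D_\eta\,c_{K(\eta)}\in H^1(K(\eta),\TT_\cyc)$ becomes $\Gal(K(\eta)/K)$-invariant once reduced modulo $\mathcal{I}_\frak{s}$: this is the classical telescoping identity $(\sigma_\lambda-1)D_\lambda=\ell_\lambda-\mathbf{N}_{K(\lambda)/K}$, combined with (ES1) and with condition K.2, which forces the norm operator to act trivially modulo $\mathcal{I}_\frak{s}$. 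The hypothesis that $\{\textup{Fr}_\lambda^{p^k}-1\}$ acts injectively on $\TT_\cyc$ together with (MR3) makes the relevant $H^0$-groups vanish, so an inflation-restriction argument descends the invariant class uniquely to $\kappa_\eta(\frak{s})\in H^1(K,\TT_\cyc/\mathcal{I}_\frak{s}\TT_\cyc)$. Taking the inverse limit in $\frak{s}$ and direct limit in $\frak{j}$ produces the sought Kolyvagin system $\pmb{\kappa}=\Psi^{\textup{MR}}(\mathbf{c})$; the identity $\kappa_1=c_K$ is then immediate, since $D_1=\textup{id}$ and no nontrivial reduction is imposed at the trivial level.

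It remains to verify the Selmer axioms for $\pmb{\kappa}$. At primes of $\Sigma^{(p)}$ the canonical unramified condition is automatic (the class is inflated from a global one, and at the cyclotomic level one invokes Lemma~\ref{lem:MRunramifiedisallunrforcyclo}); at primes above $p$ the structure $\FFc$ imposes no condition; at primes $\lambda\mid \eta$ the required finite-singular compatibility between the singular part of $\kappa_\eta$ and the unramified part of $\kappa_{\eta/\lambda}$ is the standard Kolyvagin calculation, valid once K.1 supplies the comparison isomorphism $H^1_{\textup{s}}(K_\lambda,\TT_\frak{s})\stackrel{\sim}{\lra} H^1_{\textup{ur}}(K_\lambda,\TT_\frak{s})$ together with the Euler factor $P_\lambda(\textup{Fr}_\lambda^{-1})$. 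The principal obstacle to the generalization is that these rank-one freeness and Frobenius-injectivity properties must hold on the full deformation $\TT_\cyc$ and not only on its residual quotient; but (MR2) produces the element $\tau$ and Lemma~\ref{lem:Kolyvaginprimesexist} supplies sufficiently many $\lambda\in \PP_\frak{s}$ whose Frobenius is conjugate to $\tau$ in every quotient $\Gal(K(\TT_\frak{s},\pmb{\mu}_{p^{s_0}})/K)$, so that the entire derivative-and-descent procedure goes through verbatim over $\mathcal{R}_\cyc$.
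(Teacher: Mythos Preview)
Your proposal is correct and follows exactly the approach the paper itself takes: the paper does not give an independent proof of this theorem but simply asserts that the construction in \cite[Appendix~A]{mr02} (originally stated for $K=\QQ$ and coefficient ring a DVR or the cyclotomic Iwasawa algebra) goes through verbatim for a general base field $K$ and coefficient ring $\mathcal{R}_\cyc$. Your sketch of the Kolyvagin-derivative construction, the descent via inflation-restriction using (MR3), and the verification of the Selmer and finite-singular axioms is precisely that argument, spelled out in more detail than the paper provides; one minor point is that the passage from the raw derived classes $\kappa_{[K,\eta,\frak{s}]}$ to the actual Kolyvagin-system classes $\kappa_\eta(\frak{s})$ involves the modification in \cite[Appendix~A, Equation~(33)]{mr02}, which you elide but which is also purely formal.
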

\begin{thm}
\label{thm:ESKSlocallyrestricted}
In addition to the hypotheses of Theorem~\ref{thm:ESKSgeneral}, assume that the hypotheses {\rm (H.0), (H.2), (H.$0^-$)} and {\rm (H.$2^+$)} hold true. Then 
the map $\Psi^{\textup{MR}}$ restricted to $\textup{ES}^{+}(\TT_\cyc) \subset \textup{ES}(\TT_\cyc) $ (see Definition \ref{definition:locallyrestrictedES} 
for the definition of $\textup{ES}^{+}(\TT_\cyc)$) induces a map 
$$\Psi^{\textup{MR}}:\,\textup{ES}^{+}(\TT_\cyc)\lra \KS(\TT_\cyc,\FF_+)\,.$$ 
This assertion remains valid if we use the local conditions given by Definition~\ref{def:auxiliarylineo} assuming the truth of {\rm (H.$2^{++}$)}.
\end{thm}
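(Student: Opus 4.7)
The plan is as follows. The Mazur-Rubin image $\pmb{\kappa} := \Psi^{\textup{MR}}(\mathbf{c})$ is already a Kolyvagin system for $(\TT_\cyc, \FFc)$ by Theorem~\ref{thm:ESKSgeneral}; since $\FF_+$ differs from $\FFc$ only at primes above $p$, it is enough to check, for every Kolyvagin index $\eta$ and every $\frak{p} \mid p$, that $\res_\frak{p}(\kappa_\eta)$ lies in the image of $H^1(K_\frak{p}, F\TT_\frak{s}) \to H^1(K_\frak{p}, \TT_\frak{s})$. Here $F\TT_\cyc$ denotes the relevant $G_\frak{p}$-stable summand: $F^+_\frak{p}\TT_\cyc$ always in the first assertion (and for $\frak{p}\ne\frak{p}_\textup{o}$ in the auxiliary one), and $F^{++}\TT_\cyc$ at $\frak{p}=\frak{p}_\textup{o}$ in the auxiliary statement.

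The first step is a Galois-equivariance observation. For every prime $\frak{q}$ of $K(\eta)$ above $\frak{p}$, the submodule $F\TT_\cyc$ is $G_{K(\eta)_\frak{q}}$-stable, so the inclusion of cohomology $H^1(K(\eta)_\frak{q}, F\TT_\cyc) \lra H^1(K(\eta)_\frak{q}, \TT_\cyc)$ is $\Gal(K(\eta)/K)$-equivariant and in particular commutes with the Kolyvagin derivative $D_\eta \in \ZZ[\Gal(K(\eta)/K)]$. Because $\mathbf{c}\in\textup{ES}^+(\TT_\cyc)$ forces $\res_\frak{q}(c_{K(\eta)})$ into the image of $H^1(K(\eta)_\frak{q},F\TT_\cyc)$, the same holds for $\res_\frak{q}(D_\eta c_{K(\eta)})\bmod \mathcal{I}_\frak{s}$; by the Mazur-Rubin identification this last class coincides with $\res_\frak{q}(\kappa_\eta)$, which therefore maps to $0$ in $H^1(K(\eta)_\frak{q}, \TT_\frak{s}/F\TT_\frak{s})$.

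Next I would descend from $K(\eta)_\frak{q}$ back down to $K_\frak{p}$ by proving injectivity of the restriction map
$$H^1(K_\frak{p}, \TT_\frak{s}/F\TT_\frak{s}) \lra H^1(K(\eta)_\frak{q}, \TT_\frak{s}/F\TT_\frak{s}).$$
Its kernel, by the inflation-restriction sequence, equals $H^1\!\left(\Gal(K(\eta)_\frak{q}/K_\frak{p}),\, H^0(K(\eta)_\frak{q},\TT_\frak{s}/F\TT_\frak{s})\right)$, and to kill it I aim to prove the $H^0$ itself vanishes. This I would achieve by starting from $H^0(K_\frak{p},\overline{T}/F\overline{T})=0$, propagating to $K(\eta)_\frak{q}$ via Lemma~\ref{lem:hnaetclifts}, and then lifting to the artinian coefficients $\TT_\frak{s}/F\TT_\frak{s}$ via an $\mm_R$-adic induction along the standard long exact sequences of cohomology (in the spirit of the dévissage used in the proof of Proposition~\ref{prop:localcohom}). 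When $F=F^+_\frak{p}$ the residual vanishing is exactly (H.$0^-$), which closes out the first assertion.

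The hard part will be the auxiliary claim: it requires the analogous residual vanishing $H^0(K_{\frak{p}_\textup{o}},\overline{T}/F^{++}\overline{T})=0$, and since $\overline{T}/F^{++}\overline{T}$ is a proper quotient of $\overline{T}/F^+_{\frak{p}_\textup{o}}\overline{T}$, the hypothesis (H.$0^-$) does not deliver this directly. My strategy is to play (H.$2^{++}$) against (H.0) and (H.2) via Tate local duality: (H.$2^{++}$) gives $H^0(K_{\frak{p}_\textup{o}},(F^{++}\overline{T})^\vee(1))=0$ and (H.2) gives $H^0(K_{\frak{p}_\textup{o}},\overline{T}^\vee(1))=0$, so the dual exact sequence $0 \to (\overline{T}/F^{++}\overline{T})^\vee(1) \to \overline{T}^\vee(1) \to (F^{++}\overline{T})^\vee(1) \to 0$, combined with (H.0) to control the connecting maps on the original side, should extract the required vanishing. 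Once this residual fact is secured, the remainder of the argument proceeds exactly as in the first assertion; the conceptual content—that $\FF_+$ is a Galois-stable refinement of $\FFc$ and is therefore automatically inherited by the Mazur-Rubin Kolyvagin system—is essentially formal, with all the technical effort concentrated in this last cohomological bookkeeping.
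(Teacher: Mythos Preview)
Your overall architecture matches the paper's: reduce to checking the local condition at primes above $p$, use $\Delta_\eta$-equivariance to see that $D_\eta\,\res_p(c_{K(\eta)})$ lands in the ``$++$'' piece over $K(\eta)$, and then descend to $K$. (You elide the passage from the weak derivative class $\kappa_{[K,\eta,\frak{s}]}$ to the Kolyvagin-system class $\kappa_\eta(\frak{s})$ via \cite[Appendix~A, (33)]{mr02}, but that is harmless.) The substantive divergence is in how you perform the descent, and there your argument for the $F^{++}$ case breaks.

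Your plan is to prove the restriction map
\[
H^1(K_{\frak{p}_\textup{o}},\TT_\frak{s}/F^{++}\TT_\frak{s})\longrightarrow H^1(K(\eta)_\frak{q},\TT_\frak{s}/F^{++}\TT_\frak{s})
\]
is injective by killing $H^0(K(\eta)_\frak{q},\overline{T}/F^{++}\overline{T})$, and you propose to extract this vanishing from (H.0), (H.2), (H.$2^{++}$) ``via Tate local duality''. That does not work. Tate duality relates $H^2(K_{\frak{p}_\textup{o}},M)$ to $H^0(K_{\frak{p}_\textup{o}},M^\vee(1))$, so what your hypotheses actually deliver is $H^0$-vanishing for the \emph{Cartier duals} $(F^{++}\overline{T})^\vee(1)$ and $\overline{T}^\vee(1)$; feeding these into the dual short exact sequence gives $H^0\bigl((\overline{T}/F^{++}\overline{T})^\vee(1)\bigr)=0$, i.e.\ $H^2(\overline{T}/F^{++}\overline{T})=0$, which is the wrong degree. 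There is no way to recover $H^0(\overline{T}/F^{++}\overline{T})=0$ from the listed hypotheses: a non-split extension $0\to\texttt{k}(\chi)\to\overline{T}\to\texttt{k}\to 0$ with $\chi\notin\{1,\omega\}$ and $F^{++}\overline{T}=\texttt{k}(\chi)$, $F^+_{\frak{p}_\textup{o}}\overline{T}=0$ satisfies (H.0), (H.2), (H.$0^-$), (H.$2^+$), (H.$2^{++}$) yet has $H^0(\overline{T}/F^{++}\overline{T})=\texttt{k}$. (Note also that (H.$0^-$) says nothing here, since $\overline{T}/F^{++}\overline{T}$ is a \emph{quotient} of $\overline{T}/F^+_{\frak{p}_\textup{o}}\overline{T}$, and $H^0$ can jump on quotients.)

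The paper's descent bypasses this entirely: instead of passing to the quotient, it works with the \emph{submodule} $H^1_{++}$ and exploits its structure as a free $\mathcal{R}_\cyc[\Delta_\eta]$-module. Proposition~\ref{prop:localcohom} applied to $X=F^{++}\TT_\cyc$ (using $H^0(F^{++}\overline{T})\hookrightarrow H^0(\overline{T})=0$ from (H.0), together with (H.$2^{++}$)) gives the freeness, and then the commutative diagram with free modules over $\mathcal{R}_\cyc[\Delta_\eta]$ shows directly that restriction carries $H^1_{++}(K_p,\TT_\frak{s})$ isomorphically onto $H^1_{++}(K(\eta)_p,\TT_\frak{s})^{\Delta_\eta}$. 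This never asks for $H^0$ of the quotient $\overline{T}/F^{++}\overline{T}$, which is precisely why (H.$2^{++}$) alone suffices.
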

\begin{proof}
Suppose $\mathbf{c}=\{c_{K(\eta)}\}\in \textup{ES}^{+}(\TT_\cyc)$ and we set
$$
\Psi^{\textup{MR}}(\mathbf{c})=\{\kappa_\eta(\frak{s})\}_{\eta \in \NN_{\frak{s}}} \in \KS(\TT_\cyc,\FFc,\frak{P}_\frak{l}),$$ 
where $\kappa_\eta(\frak{s}) \in H^1(K_\Sigma/K,{\TT}_\frak{s})$ for every $(r+2)$-tuple $\frak{s}$. Fix $\frak{s}$ and $\eta\in \NN_{\frak{s}}$ until the end of this proof. We content to prove that $\kappa_\eta(\frak{s})  \in H^1_{\FF_+(\eta)}(K,\TT_{\frak{s}}),$ where $\FF_+(\eta)$ is the Selmer structure defined as in~\cite[Example 2.1.8]{mr02}. However, the proof of Theorem 5.3.3 of loc. cit. shows already that $\kappa_\eta(\frak{s})  \in  H^1_{\FFc(\eta)}(K,\TT_{\frak{s}}).$
 Since $\FF_+$ and $\FFc$ determine the same local conditions away from the primes above $p$, it suffices to show that
\be
\label{eqn:respontheline}
\textup{res}_p (\kappa_\eta(\frak{s}) ) \in H^1_{++}(K_p,\TT_{\frak{s}})\,.
\ee
We recall that $H^1_{++}(K(\eta)_p,\TT_{\frak{s}})$ is by definition the image of $H^1_{++}(K(\eta)_p,\TT_\cyc)$, which in turn is isomorphic to $H^1_{++}(K(\eta)_p,\TT_\cyc)/\mathcal{I}_\frak{s} H^1_{++}(K(\eta)_p,\TT_\cyc)$. In particular, it is a free $\mathcal{R}_\cyc/\mathcal{I}_\frak{s}[\Delta_\eta]$-module of rank $1+d_+$ by Proposition~\ref{lem:MRunramifiedisallunrforcyclo}. We will verify the truth of (\ref{eqn:respontheline}) assuming the truth of (H.++); the verification in general (where the submodule $H^1_{++}(K(\eta)_p,\TT_\cyc)$ is defined via Definition~\ref{def:auxiliarylineo}) is very similar.
Let $$\left\{\kappa_{[K,\eta,\frak{s}]} \in H^1(K_{\Sigma}/K,\TT_{\frak{s}}) \right\}_ {\eta \in \NN_{\frak{s}}}$$
 be the collection of Kolyvagin's derived classes (given as in \cite[Definition 4.4.10]{r00}) associated to the locally restricted Euler system $\mathbf{c}$. 
 
It follows from Equation (33) in \cite[Appendix A]{mr02} (which explains how to relate the class $\kappa_\eta(\frak{s}) $ to $\kappa_{[K,\eta,\frak{s}]}$) that (\ref{eqn:respontheline}) follows once we verify that
\be\label{eqn:respontheline2}
\textup{res}_p (\kappa_{[K,\eta,\frak{s}]}) \in H^1_{++}(K_p,\TT_{\frak{s}})\,.
\ee
We remark that what we denote by $\kappa_\eta(\frak{s}) $ here corresponds to $\kappa_n^\prime$ and $\kappa_{[K,\eta,\frak{s}]}$ to $\kappa_n$ in loc. cit. Let $D_{\eta}$ denote the derivative operator of Kolyvagin, as in~\cite[
Definition 4.4.1]{r00}. The class $\kappa_{[K,\eta,\frak{s}]}$ is defined as the inverse image of
$D_{\eta}c_{K(\eta)}$ (mod~$\mathcal{I}_\frak{s}$) under the
restriction map (which is an isomorphism since we assumed {\rm (H.3)})
$$H^1(K_\Sigma/K,\TT_{\frak{s}}) \lra H^1(K_{\Sigma}/K(\eta),\TT_{\frak{s}})^{\Delta_\eta}.$$
Thence, $\textup{res}_p(\kappa_{[K,\eta,\frak{s}]})$ maps to $\textup{res}_p\left(D_{\eta}c_{K(\eta)}\right)$ (mod~$\mathcal{I}_{\frak{s}}$) under the map (which is also an isomorphism thanks to (H.2))
$$H^1(K_p,\TT_{\frak{s}}) \stackrel{\sim}{\lra} H^1(K(\eta)_p,\TT_{\frak{s}})^{\Delta_{\eta}}\,.$$
Under this map, $H^1_{++}(K_p,\TT_{\frak{s}}) \subset H^1(K_p,\TT_{\frak{s}})$ is mapped isomorphically onto the module $H^1_{++}(K(\eta)_p,\TT_{\frak{s}})^{\Delta_{\eta}}$. This follows from the following commutative diagram,
$$\xymatrix@C=.65cm{
H^1_{++}(K_p,\TT)/\mathcal{I}_{\frak{s}} H^1_{++}(K_p,\TT_\cyc)\ar[d]_{\sim} \ar[r]^(.65){\sim}& H^1_{++}(K_p,\TT_{\frak{s}}) \ar@{^{(}->}[r]\ar[d]_{\sim}& H^1(K_p,\TT_{\frak{s}})\ar[d]_{\sim}\\
\left(H^1_{++}(K(\eta)_p,\TT_\cyc)/\mathcal{I}_{\frak{s}} H^1_{++}(K(\eta)_p,\TT_\cyc)\right)^{\Delta_\eta} \ar[r]^(.67){\sim} &H^1_{++}(K(\eta)_p,\TT_{\frak{s}})^{\Delta_\eta}\ar@{^{(}->}[r]&H^1(K(\eta)_p,\TT_{\frak{s}})^{\Delta_\eta}\
}$$
where the vertical isomorphism on the left follows from Proposition~\ref{prop:localcohom}, and the isomorphism in the middle thanks to the one on the left.

Since $\textup{res}_p$ is a $\Delta_\eta$-equivariant map, $\textup{res}_p(D_{\eta}c_{K(\eta)})=D_{\eta}\textup{res}_p(c_{K(\eta)}).$ Furthermore, since $\mathbf{c} \in \textup{ES}^+(\TT_\cyc)$ is locally restricted, we have $\textup{res}_p\left(c_{K(\eta)}\right) \in H^1_{++}(K(\eta)_p,\TT_\cyc)$. On the other hand we know by \cite[Lemma 4.4.2]{r00} that the derived class $D_{\eta}c_{K(\eta)}$ (mod~$\mathcal{I}_{\frak{s}}$) is fixed by $\Delta_{\eta}$, which in turn implies that
$$\textup{res}_p\left(c_{K(\eta)}\right) \,(\textup{mod\,} \mathcal{I}_{\frak{s}}) \in\left(H^1_{++}(K(\eta)_p,\TT_\cyc)/\mathcal{I}_{\frak{s}} H^1_{++}(K(\eta)_p,\TT_\cyc)\right)^{\Delta_\eta} .$$
This concludes the proof of the containment (\ref{eqn:respontheline2}) and also the proof of the theorem.\end{proof}

For some of our main applications, we will utilize the Beilinson--Flach (locally restricted) Euler system of \cite{LLZ, KLZ1, KLZ2}). Even in cases where we do not have an Euler system at our disposal one may still prove the following result, which shows that the method developed in Section~\ref{sec:dimreduction} is still non-vacuous for a wide variety of cases of interest.

We consider the following condition at primes in $\Sigma^{(p)}$: 
\begin{enumerate}
\item[(H.Tam)] For each non-archimedean place $\lambda \in \Sigma$ that does not lie above $p$, we have 
$$H^0(K_\lambda,\overline{T})=H^2(K_\lambda,\overline{T})=0.$$ 
\end{enumerate}
\begin{rem}
If ({H.Tam.}) holds true, it follows from the local Euler characteristic formulae and Nakayama's Lemma that  we have $H^1(K_\lambda,X)=0$ for every $\lambda\in \Sigma^{(p)}$ and all quotients $X$ of $\TT_\cyc$.
\end{rem}
\begin{thm}
\label{thm:theexistenceofKS}
Suppose that the hypotheses {\rm (MR1) - (MR.4)}, {\rm{\htwo}}\, hold true for a free $\mathcal{R}$-module $T$ with 
continuous $G_K$-action. 
We also assume that one of the following conditions is valid. 
\begin{itemize}
\item[(i)] The ring $\mathcal{R}$ is a discrete valuation ring and $H^0(K_\lambda^{\textup{ur}},T\otimes\QQ_p/\ZZ_p)$ is $p$-divisible for every $\lambda \in \Sigma^{(p)}$\,.
\item[(ii)] The ring $\mathcal{R}$ is isomorphic to a power series ring in $n$ variables $\LL_{\mathcal{O}}^{(n)}$ and  {\rm({H.Tam})} holds.
\end{itemize}
Then the $\mathcal{R}_\cyc$-module $\KS(\TT_\cyc,\FF_+)$ is free of rank one.
\end{thm}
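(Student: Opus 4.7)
The plan is to adapt the Mazur--Rubin structure theorem for modules of Kolyvagin systems (originally proved for DVR coefficients in \cite{mr02}) to the higher-dimensional coefficient ring $\mathcal{R}_\cyc$, and apply it to the Selmer triple $(\TT_\cyc,\FF_+,\frak{P}_\frak{l})$. The crux is verifying, on the one hand, that the core Selmer rank equals one and, on the other hand, that the inflation--restriction, freeness, and Chebotarev inputs of the Mazur--Rubin argument continue to work over $\mathcal{R}_\cyc$.

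\textbf{Step 1 (Core rank computation).} I would first show that the core Selmer rank of $(\TT_\cyc,\FF_+)$ over $\mathcal{R}_\cyc$ equals one. Compare $\FF_+$ with the Greenberg Selmer structure $\FF_{\Gr}$: they differ only at $\fp_\circ$, where $\FF_+$ uses the image of $H^1(K_{\fp_\circ},F^{++}\TT_\cyc)$ in place of $H^1(K_{\fp_\circ},F^+_{\fp_\circ}\TT_\cyc)$. By (H.++), Corollary~\ref{cor:structuresemilocalcohom}, and Corollary~\ref{cor:greenbergfree}, this enlarges the local condition at $\fp_\circ$ by a free direct summand of rank exactly one. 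For the cyclotomic deformation of a Panchishkin Galois representation, the Greenberg Selmer structure $\FF_\Gr$ is of core rank zero (this is the analogue, in our setting, of the classical fact underlying Greenberg's formulation of the main conjecture, and it follows from Poitou--Tate duality combined with the freeness statements of Section~\ref{sec:selmerstructures}). Hence $\FF_+$ has core rank exactly one.

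\textbf{Step 2 (Trivialization of local conditions away from $p$).} In case (ii), the hypothesis (H.Tam) and the remark preceding the theorem give $H^1(K_\lambda,X)=0$ for every $\lambda\in\Sigma^{(p)}$ and every $\mathcal{R}_\cyc/\mathcal{I}_\frak{s}$-quotient $X$ of $\TT_\cyc$, so the local conditions of $\FF_+$ away from $p$ are entirely trivial. In case (i), the $p$-divisibility of $H^0(K_\lambda^{\ur},T\otimes\QQ_p/\ZZ_p)$ together with Lemma~\ref{lem:MRunramifiedisallunrforcyclo} forces $H^1_{\FFc}(K_\lambda,\TT_\cyc)=H^1(K_\lambda,\TT_\cyc)$ at $\lambda\in\Sigma^{(p)}$, so the local conditions away from $p$ introduce no obstruction (compare \cite[\S3.5]{mr02}).

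\textbf{Step 3 (Structure theorem for $\KS$).} With the reductions of Steps 1 and 2 in hand, I would run the Mazur--Rubin inductive argument \emph{level by level} over the tower of artinian quotients $\mathcal{R}_\cyc/\mathcal{I}_\frak{s}$. Fix a regular system of parameters for $\mathcal{R}_\cyc$ as in the proof of Proposition~\ref{prop:localcohom}, and induct on $\frak{s}$. At each step the required inputs are: freeness of the semi-local cohomology over $\mathcal{R}_\cyc[\Delta_\eta]$ for every $\eta\in\NN_\Sigma$ (Propositions~\ref{prop:semilocalstructure} and~\ref{prop:semilocalstructureforminusplus}); surjectivity of corestriction (Propositions~\ref{prop:surj} and~\ref{prop:extendpropabovetominusplus}); the abundance of Kolyvagin primes of each level supplied by (MR2) via Lemma~\ref{lem:Kolyvaginprimesexist}; and the global duality / Cebotarev arguments, which carry over verbatim using (MR1), (MR3), (MR4). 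The conclusion, at each finite level $\frak{s}$, is that $\KS(\TT_\frak{s},\FF_+,\frak{P}_\frak{s})$ is free of rank one over $\mathcal{R}_\cyc/\mathcal{I}_\frak{s}$; passing to the inverse limit yields the desired freeness for $\KS(\TT_\cyc,\FF_+)$.

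\textbf{Main obstacle.} The principal difficulty lies in Step 3, namely in adapting an argument originally calibrated to a DVR to a regular ring of higher Krull dimension. In the DVR setting, Mazur--Rubin descend along the unique filtration by powers of the maximal ideal; over $\mathcal{R}_\cyc$ one must carefully control how Kolyvagin systems behave under the multi-variable filtration $\{\mathcal{I}_\frak{s}\}$, and guarantee compatibility of the constructions under the corestriction and base-change maps. This is where the systematic freeness and control theorems for local cohomology proved in Section~\ref{sec:selmerstructures} (particularly the inductive use of a regular sequence inside the proof of Proposition~\ref{prop:localcohom}) become indispensable, and where the Tamagawa-type hypotheses (the $p$-divisibility in case (i) and (H.Tam) in case (ii)) are needed to rule out the parasitic contributions from primes in $\Sigma^{(p)}$ that would otherwise obstruct the level-by-level induction.
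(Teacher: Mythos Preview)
The paper does not actually supply a proof of this statement in the body of the text: immediately after the theorem it simply writes ``See \cite{kbb} for the proof of (i) and \cite{kbbdeform} for the proof of (ii),'' adding that the results in those references are stated for the canonical Selmer structure $\FFc$ over $\QQ$ and that the extension to the present set-up is ``entirely formal.'' So there is no in-text argument to compare against; the theorem is imported wholesale from the first author's earlier work.

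Your outline is broadly in line with what those references do: one verifies that the relevant Selmer triple has core rank one, checks that the Mazur--Rubin hypotheses (H.1)--(H.6) of \cite{mr02} hold at each artinian level $\TT_\frak{s}$, and then runs the structure theorem to conclude that $\KS(\TT_\frak{s},\FF_+,\frak{P}_\frak{j})$ is free of rank one with surjective transition maps. Two remarks, however. First, in Step~1 your route to core rank one via ``$\FF_\Gr$ has core rank zero'' is slightly indirect and the parenthetical justification is too vague to count as a proof; in practice one computes the core rank of $\FF_+$ directly from Wiles' formula (Theorem~5.2.15 of \cite{mr02}), using the ranks of the local conditions supplied by Corollary~\ref{cor:greenbergfree} and Definition~\ref{def:auxiliarylineo}. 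Second, in Step~3 the phrase ``carry over verbatim'' understates the genuine work in \cite{kbbdeform}: the passage from a DVR to a multivariable power-series ring requires showing not only that each $\KS(\TT_\frak{s},\FF_+,\frak{P}_\frak{j})$ is free of rank one over $\mathcal{R}_\cyc/\mathcal{I}_\frak{s}$, but also that the maps in the double limit $\varprojlim_\frak{s}\varinjlim_{\frak{j}}$ are surjective, and this is where the Tamagawa hypothesis (H.Tam) in case (ii) is genuinely used (not merely to ``rule out parasitic contributions'' but to guarantee that the propagated local conditions are \emph{cartesian} in the sense of \cite[Definition~1.1.4]{mr02}).
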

For the proof in the situation of (i), see \cite[Theorem A]{kbb}. For the proof in the situation of (ii), see \cite[Theorem A]{kbbdeform}. See also \cite{kbbCMabvar, kbblei1} for its various incarnations. 
We remark again that the results in loc.cit. are stated only for the Selmer structure $\FFc$ and for the base field $\QQ$, but their generalization to our set up is entirely formal.

%%%%%%%%%%%%%%%%%%%%%%%%%%%%%%%%%%%%%%%%%%%%%%%%%%%%%%%%%%%%%%%%%%%%%%%%%%%%%%%%%%%%%%%%%%%%%%%%%%%%%%%%%%%%%%%%%%%%%%%%%%%%%%%%%%%%%%%%%%%%%%%%%%%%%%%%%%%%%%%%%%%%%%%%%%%%%%%%%%%%%%%%%%%%%%%%%%%%%%%%%%%%%%%%%%%%%%%%%%%%%%%%%%%%%%%%%%%%%%%%%%%%%%%%%%%%%%%%%%%%%%%%%%%%%%%%%%%%%%%%%%%%%%%%%%%%%%%%%%%%%%%%%%%%%%%%%%%%%%%%%%%%%%%%%%%%%%%%%%%%%%%%%%%%%%%%%%%%%%%%%%%%%%%%%%%%%%%%%%%%%%%%%%%%%%%%%%%%%%%%%%%%%%%%%%%%%%%%%%%%%%%%%%%%%%%%%%%%%%%%%%%%%%%%%%%%%%%%%%%%%%%%%%%%%%%%%%%%%%%%%%%%%%%%%%%%%%%%%%%%%%%%%%%%%%%%%%%%%%%%%%%%%%%%%%%%%%%%%%%%%%%%%%%%%%%%%%%%%%%%%%%%%%%%%%%%%%%%%%%%%%%%%%%%%%%%%%%%%%%%%%%%%%%%%%%%%%%%%%%%%%%%%%%%%%%%%%%%%%%%%%%%%%%%%%%%%%%%%%%%%%%%%%%%%%%%%%%%%%%%%%%%%%%%%%%%%%%%%%%%%%%%%%%%%%%%%%%%%%%%%%%%%%%%%%%%%%%%%%%%%%%%%%%%%%%%%%%%%%%%%%%%%%%%%%%%%%%%%%%%%%%%%%%%%%%%%%%%%%%%%%%%%%%%%%%%%%%%%%%%%%%%%%%%%%%%%%%%%%%%%%%%%%%%%%%%%%%%%%%%%%%%%%%%%%%%%%%%%%%%%%%%%%%%%%%%%%%%%%%%%%%%%%%%%%%%%%%%%%%%%%%%%%%%%%%%%%%%%%%%%%%%%%%%%%%%%%%%%%%%%%%%%%%%%%%%%%%

\section{Dimension reduction and locally restricted Euler systems}
\label{sec:dimreduction}
 We retain our convention from Section~\ref{sec:selmerstructures} concerning our use of the symbols $\bfT$ and $R$. Throughout Section~\ref{sec:dimreduction}, we shall assume that $R$ is isomorphic to a power series ring in $n$ variables $\LL_{\mathcal{O}}^{(n)}$. 
\subsection{Generalities}
\label{subsec:dimreducegeneralities}
In this subsection, we shall recall basic results from \cite{ochiai-AIF} that will be instrumental in our dimension reduction argument. 

\begin{define}
\label{def:charideal}
Let $n\geq 1$ be an integer. 
\begin{enumerate}
\item 
{\it A linear element} $l$ in an $n$-variable Iwasawa algebra 
$\Lambda^{(n)}_{\mathcal{O}} = \mathcal{O}[[x_1 ,\ldots ,x_n]] $ 
is a polynomial $l =a_0 +a_1 x_1 + \cdots a_n x_n 
\in \Lambda^{(n)}_{\mathcal{O}}$ with $a_i \in \mathcal{O}$  
of degree at most one such that 
$l$ is not divisible by $\pi_{\mathcal{O}}$ and is not invertible 
in $\Lambda^{(n)}_{\mathcal{O}}$. That is to say, $l$ is a polynomial of degree 
at most one such that $a_0$ is divisible by $\pi_{\mathcal{O}}$, 
but not all $a_i$ are divisible by $\pi_{\mathcal{O}}$. 
\item 
An ideal of $\Lambda^{(n)}$ that is generated by a linear element is called a \emph{linear ideal}. We denote by 
$$
\mathcal{L}^{(n)}_{\mathcal{O}} = 
\left\{ (l) \subset \Lambda^{(n)}_{\mathcal{O}} \ \left\vert \ 
\text{$l$ is a linear element in $\Lambda^{(n)}_{\mathcal{O}}$} 
\right. \right\} . 
$$
the set of all 
linear ideals of $\Lambda^{(n)}_{\mathcal{O}}$. 
\item 
Let $n\geq 2$. For a torsion $\Lambda^{(n)}_{\mathcal{O}}$-module $M$, 
we denote by $\mathcal{L}^{(n)}_{\mathcal{O}} (M)$ a subset of 
$\mathcal{L}^{(n)}_{\mathcal{O}}$ which consists of $(l) \subset 
\mathcal{L}^{(n)}_{\mathcal{O}}$ satisfying the following conditions: 
\begin{enumerate} 
\item 
The quotient $M/(l)M$ is a torsion 
$\Lambda^{(n)}_{\mathcal{O}}/(l)$-module. 
\item 
The image of the characteristic ideal 
$\mathrm{char}_{\Lambda^{(n)}_{\mathcal{O}}}(M) \subset 
\Lambda^{(n)}_{\mathcal{O}}$ in $\Lambda^{(n)}_{\mathcal{O}}/(l)$ 
is equal to the characteristic ideal 
$\mathrm{char}_{\Lambda^{(n)}_{\mathcal{O}}/(l)}(M/(l)M) \subset 
\Lambda^{(n)}_{\mathcal{O}}/(l)$. 
\end{enumerate} 
\end{enumerate}
\end{define}
We have the following proposition which characterizes 
the characteristic ideal of a given torsion 
$\Lambda^{(n)}_{\mathcal{O}}$-module for $n\geq 2$: 
%%%%%%%%%%%%%% Proposition %%%%%%%%%%%%%%%%%%%%%%
\begin{prop}\label{pro:rednn-1}
Let $n \geq 2$ be an integer and let $M$ and $N$ be a 
finitely generated torsion $\Lambda^{(n)}_{\mathcal{O}}$-modules. 
Then the following three statements are equivalent. 
\begin{enumerate}
\item 
We have $\mathrm{char}_{\Lambda^{(n)}_{\mathcal{O}}}(M) 
\supset \mathrm{char}_{\Lambda^{(n)}_{\mathcal{O}}}(N)$.  
\item 
Let $\mathcal{O}'$ be arbitrary complete discrete valuation ring  
which is finite flat over $\mathcal{O}$. 
Then, for all but finitely many 
$(l) \in \mathcal{L}^{(n)}_{\mathcal{O}'}(M_{\mathcal{O}'}) 
\cap \mathcal{L}^{(n)}_{\mathcal{O}'} (N_{\mathcal{O}'})$, we have 
the inclusion 
$$\mathrm{char}_{\Lambda^{(n)}_{\mathcal{O}'}/(l)}
(M_{\mathcal{O}'}/(l)M_{\mathcal{O}'}) \supset 
\mathrm{char}_{\Lambda^{(n)}_{\mathcal{O}'}/(l)}
(N_{\mathcal{O}'}/(l)N_{\mathcal{O}'}).$$ 
\item 
There exists a complete discrete valuation ring $\mathcal{O}'$ 
which is finite flat over $\mathcal{O}$ such that 
we have the inclusion 
$$\mathrm{char}_{\Lambda^{(n)}_{\mathcal{O}'}/(l)}
(M_{\mathcal{O}'}/(l)M_{\mathcal{O}'}) \supset 
\mathrm{char}_{\Lambda^{(n)}_{\mathcal{O}'}/(l)}
(N_{\mathcal{O}'}/(l)N_{\mathcal{O}'})$$  
for all but finitely many 
$(l) \in \mathcal{L}^{(n)}_{\mathcal{O}'}(M_{\mathcal{O}'}) 
\cap \mathcal{L}^{(n)}_{\mathcal{O}'} (N_{\mathcal{O}'})$. 
\end{enumerate}
\end{prop}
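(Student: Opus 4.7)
\medskip\noindent\textbf{Proof plan.} My plan is to establish the cycle of implications $(1)\Rightarrow(2)\Rightarrow(3)\Rightarrow(1)$. The implication $(2)\Rightarrow(3)$ is immediate (take $\mathcal{O}'=\mathcal{O}$). For $(1)\Rightarrow(2)$, I would first observe that any finite flat base change $\mathcal{O}\to\mathcal{O}'$ preserves characteristic ideals up to scalar extension, so the hypothesis in (1) yields $\mathrm{char}(M_{\mathcal{O}'})\supset\mathrm{char}(N_{\mathcal{O}'})$ in $\Lambda^{(n)}_{\mathcal{O}'}$. For any $(l)\in\mathcal{L}^{(n)}_{\mathcal{O}'}(M_{\mathcal{O}'})\cap\mathcal{L}^{(n)}_{\mathcal{O}'}(N_{\mathcal{O}'})$, clause (b) of Definition~\ref{def:charideal} identifies the images of these two characteristic ideals in $\Lambda^{(n)}_{\mathcal{O}'}/(l)$ with $\mathrm{char}(M_{\mathcal{O}'}/(l)M_{\mathcal{O}'})$ and $\mathrm{char}(N_{\mathcal{O}'}/(l)N_{\mathcal{O}'})$, respectively, so reducing the ambient inclusion modulo $(l)$ produces the containment required in (2). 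In fact one gets this for every, and not merely cofinitely many, such $(l)$.

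The main obstacle is the converse $(3)\Rightarrow(1)$, which I would argue in its contrapositive form. Assume $\mathrm{char}(M)\not\supset\mathrm{char}(N)$. Since $\Lambda^{(n)}_{\mathcal{O}}$ is a regular UFD of Krull dimension $n+1$, one can pick a height-one prime $\mathfrak{p}=(f_\mathfrak{p})$ such that $a:=v_\mathfrak{p}(\mathrm{char}(M))<v_\mathfrak{p}(\mathrm{char}(N))=:b$. It then suffices to show that for every fixed complete DVR $\mathcal{O}'/\mathcal{O}$ finite flat, one can exhibit \emph{infinitely many} linear ideals $(l)\in\mathcal{L}^{(n)}_{\mathcal{O}'}(M_{\mathcal{O}'})\cap\mathcal{L}^{(n)}_{\mathcal{O}'}(N_{\mathcal{O}'})$ for which the image $\bar{f}_\mathfrak{p}$ generates a height-one prime of the regular quotient $\Lambda^{(n)}_{\mathcal{O}'}/(l)$. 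For any such $(l)$ the $(\bar{f}_\mathfrak{p})$-orders of $\mathrm{char}(M_{\mathcal{O}'}/(l)M_{\mathcal{O}'})$ and $\mathrm{char}(N_{\mathcal{O}'}/(l)N_{\mathcal{O}'})$ are then exactly $a$ and $b$, forcing the inclusion of (3) to fail and contradicting the hypothesis.

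Constructing this infinite family of $(l)$ amounts to intersecting three open genericity conditions on the coefficients of the linear element: (i) $l\notin\mathfrak{p}\Lambda^{(n)}_{\mathcal{O}'}$, a single nontrivial linear condition on $l$; (ii) the image $\bar{f}_\mathfrak{p}\in\Lambda^{(n)}_{\mathcal{O}'}/(l)$ is irreducible; and (iii) $(l)\in\mathcal{L}^{(n)}_{\mathcal{O}'}(M_{\mathcal{O}'})\cap\mathcal{L}^{(n)}_{\mathcal{O}'}(N_{\mathcal{O}'})$. Condition (iii) excludes only a Zariski-closed subset determined by the finitely many height-one associated primes of $M_{\mathcal{O}'}$ and $N_{\mathcal{O}'}$ together with their pseudo-null defect loci, along the lines already developed in \cite{ochiai-AIF}. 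The truly delicate step is (ii): a generic linear hyperplane reduction of an irreducible $f_\mathfrak{p}$ may become reducible without any enlargement of scalars, which is precisely why the proposition is phrased with the flexibility of passing to an arbitrary finite flat $\mathcal{O}'$. A Bertini-type argument in the enlarged ring then supplies a Zariski-dense open set of linear $l$ preserving the irreducibility of $\bar{f}_\mathfrak{p}$. The intersection of the three open conditions remains infinite, completing the contrapositive and hence the proof.
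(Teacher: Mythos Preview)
The paper does not give its own proof here: immediately after stating Proposition~\ref{pro:char0} it refers the reader to \cite[Prop.~3.6]{ochiai-AIF} for the argument. So there is nothing in the paper to compare against directly; your proposal has to be weighed against that reference.

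Your cycle $(1)\Rightarrow(2)\Rightarrow(3)\Rightarrow(1)$ and your handling of the first two arrows are exactly right (and in fact $(1)\Rightarrow(2)$ holds for \emph{all} $(l)$ in the intersection, as you note). Two genuine issues arise in your sketch of $(3)\Rightarrow(1)$.

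First, a sign slip: $\mathrm{char}(M)\not\supset\mathrm{char}(N)$ means a generator $m$ of $\mathrm{char}(M)$ fails to divide a generator $n$ of $\mathrm{char}(N)$, so the witnessing height-one prime satisfies $a=v_\mathfrak{p}(m)>v_\mathfrak{p}(n)=b$, not $a<b$. This does not affect the logic.

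Second, and more substantively, your appeal to the ``flexibility of passing to an arbitrary finite flat $\mathcal{O}'$'' is backwards in this direction. In the contrapositive of $(3)\Rightarrow(1)$ the ring $\mathcal{O}'$ is \emph{universally} quantified: for every such $\mathcal{O}'$ (in particular $\mathcal{O}'=\mathcal{O}$ itself) you must exhibit infinitely many bad linear ideals, so there is no room to enlarge further to rescue a Bertini statement. What one can do is note that $\mathrm{char}(M)\not\supset\mathrm{char}(N)$ persists under flat base change and then run the whole argument inside $\Lambda^{(n)}_{\mathcal{O}'}$, but the Bertini-type input must already hold there without further extension. Separately, irreducibility of $\bar f_\mathfrak{p}$ alone does not force the $(\bar f_\mathfrak{p})$-orders of $\bar m$ and $\bar n$ to equal exactly $a$ and $b$: the \emph{other} prime divisors of $m$ or $n$ may, upon reduction modulo $(l)$, acquire $\bar f_\mathfrak{p}$ as a factor and inflate these orders. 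A correct version of your condition~(ii) should demand that the reductions of \emph{all} the distinct height-one primes occurring in $m$ and $n$ remain pairwise coprime and of height one in $\Lambda^{(n)}_{\mathcal{O}'}/(l)$. That is still a finite list of genericity constraints on the coefficients of $l$ and is the sort of thing handled in \cite{ochiai-AIF}, but it is not what you wrote.
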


A set of ideals 
$\mathcal{E}_{\mathcal{O}}=\{ I_m \subset \Lambda_{\mathcal{O}} \ \vert \ m\in \ZZ_{\geq 1} \}$ is called Eisenstein type if $I_m =  (E_m(x))$ where $E_m(x)$ is an Eisenstein polynomial of degree $m\geq 1$ in $\mathcal{O}[x]$. The following proposition provides the initial step of the induction step. 

%%%%%%%%%%%%%% Proposition %%%%%%%%%%%%%%%%%%%%%%
\begin{prop}\label{pro:char0}
Let $M$ and $N$ be finitely generated torsion $\mathcal{O} [[x]]$-modules. 
\begin{enumerate}
\renewcommand{\labelenumi}{(\theenumi)}
\item 
The following conditions are equivalent: 
\begin{enumerate}
\item 
There exists an integer $h\geq 0$ such that $\mathrm{char}_{\Lambda_{\mathcal{O}}}(M) \supset (\pi^h_{\mathcal{O}}) \mathrm{char}_{\Lambda_{\mathcal{O}}}(N)$. 
\item Let $\mathcal{O}'$ be arbitrary complete discrete valuation ring which is finite flat over $\mathcal{O}$. Then there exists a positive integer $c$ depending only on $M_{\mathcal{O}'}$ and $N_{\mathcal{O}'}$ such that 
$${\# (M_{\mathcal{O}'}/IM_{\mathcal{O}'})} \leq c  {\# (N_{\mathcal{O}'}/IN_{\mathcal{O}'})}$$ 
for all but finitely many $I \in \mathcal{L}_{\mathcal{O}'}$. 
\end{enumerate}
\item 
Concerning the error term $(\pi^h_{\mathcal{O}})$, the following two statements are equivalent: 
\begin{enumerate}
\item 
Let $M_{(\pi_{\mathcal{O}})}$ $($resp. $N_{(\pi_{\mathcal{O}})})$ be the localization of $M$ $($resp. $N)$ at the prime ideal $(\pi_{\mathcal{O}})$. Then we have $\mathrm{length}_{(\Lambda_{\mathcal{O}})_{(\pi_{\mathcal{O}})}}(M_{(\pi_{\mathcal{O}})}) \leq 
\mathrm{length}_{(\Lambda_{\mathcal{O}})_{(\pi_{\mathcal{O}})}}(N_{(\pi_{\mathcal{O}})}) $. 
\item 
There exist a set of ideals $\mathcal{E}_{\mathcal{O}} = \{ I_m \ \vert \ m\in \ZZ_{\geq 1} \}$ of Eisenstein type and a positive integer $c$ that depends only on $M$ and $N$ such that $${\# (M /I_m M)} \leq c  {\# (N /I_m N )}$$ for all but finitely many $I_m$. 
\end{enumerate}
\end{enumerate}
\end{prop}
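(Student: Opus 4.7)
The plan is to invoke the Iwasawa structure theorem for finitely generated torsion $\Lambda_\mathcal{O}$-modules, which yields a pseudo-isomorphism $M \sim \bigoplus_i \Lambda_\mathcal{O}/(\pi_\mathcal{O}^{a_i}) \oplus \bigoplus_j \Lambda_\mathcal{O}/(f_j^{b_j})$ with each $f_j$ a distinguished irreducible polynomial; hence $\mathrm{char}_{\Lambda_\mathcal{O}}(M) = (g_M)$ with $g_M = \pi_\mathcal{O}^{a(M)} f_M$ and $f_M = \prod_j f_j^{b_j}$ distinguished, and similarly $g_N = \pi_\mathcal{O}^{b(N)} f_N$. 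Because the kernel and cokernel of such a pseudo-isomorphism are pseudo-null, hence finite, $\Lambda_\mathcal{O}$-modules, for every linear ideal $(l) = (x-\alpha)$ (necessarily with $\pi_\mathcal{O} \mid \alpha$) one has the estimate
$$
\frac{1}{\epsilon_M}\cdot q^{v_\mathcal{O}(g_M(\alpha))} \,\leq\, \#(M/(l)M) \,\leq\, \epsilon_M \cdot q^{v_\mathcal{O}(g_M(\alpha))},
$$
where $q = \#(\mathcal{O}/\pi_\mathcal{O})$ and $\epsilon_M \geq 1$ is a constant depending only on $M$, with an identical estimate for $N$.

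For Part (1), direction (a)$\Rightarrow$(b) follows after base change to $\mathcal{O}'$: the divisibility $g_M \mid \pi_\mathcal{O}^h g_N$ in $\Lambda_{\mathcal{O}'}$ forces $v_{\mathcal{O}'}(g_M(\alpha)) \leq h e' + v_{\mathcal{O}'}(g_N(\alpha))$ for every $\alpha$ avoiding the finitely many roots of $g_M g_N$ in $\mathcal{O}'$ (where $e'$ is the ramification index of $\mathcal{O}'/\mathcal{O}$), giving the desired uniform bound. For the converse, if (a) fails for every $h$, then the distinguished part $f_M$ does not divide $f_N$, so some irreducible distinguished factor $\varphi$ of $f_M$ has strictly higher multiplicity in $f_M$ than in $f_N$. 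Enlarging $\mathcal{O}$ to an $\mathcal{O}'$ containing a root $\alpha_0$ of $\varphi$ (necessarily with $v_{\mathcal{O}'}(\alpha_0)>0$) and specializing along $l_k = x - \alpha_k$ with $\alpha_k \to \alpha_0$ in the $\pi_{\mathcal{O}'}$-adic topology, the difference $v_{\mathcal{O}'}(g_M(\alpha_k)) - v_{\mathcal{O}'}(g_N(\alpha_k))$ tends to $+\infty$, contradicting (b).

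For Part (2), the exponent $a(M) = \mathrm{length}_{(\Lambda_\mathcal{O})_{(\pi_\mathcal{O})}}(M_{(\pi_\mathcal{O})})$ encodes exactly the $\pi_\mathcal{O}$-part of $\mathrm{char}_{\Lambda_\mathcal{O}}(M)$. For an Eisenstein polynomial $E_m(x) \in \mathcal{O}[x]$ of degree $m$, the quotient $\Lambda_\mathcal{O}/(E_m)$ is the ring of integers of a totally ramified extension of $\mathcal{O}$ of degree $m$. For all $m$ sufficiently large, $E_m$ is coprime to $f_M f_N$, so an argument parallel to Part (1) gives $\#(M/E_m M) = q^{m\cdot a(M) + O(1)}$ with the error term uniform in $m$; comparison with the analogous expression for $N$ yields the equivalence (2)(a)$\Leftrightarrow$(2)(b).

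The main obstacle is the (b)$\Rightarrow$(a) direction of Part (1); the key quantitative input is a continuity estimate near roots of irreducible distinguished polynomials: namely, $v_{\mathcal{O}'}(\varphi(\alpha_0 + \pi_{\mathcal{O}'}^k \beta)) \to \infty$ as $k \to \infty$ for a root $\alpha_0$ of $\varphi$, while $v_{\mathcal{O}'}(\psi(\alpha_0 + \pi_{\mathcal{O}'}^k \beta))$ stays bounded for any distinguished $\psi$ coprime to $\varphi$. Iterating this observation over the irreducible factors of $f_M$ that witness the failure of $f_M \mid f_N$ produces an explicit infinite family of linear specializations along which the inequality in (b) must fail.
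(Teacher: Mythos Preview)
Your argument is correct and is the standard approach via the Iwasawa structure theorem. The paper itself does not give a proof of this proposition: it simply refers the reader to \cite[Prop.~3.11]{ochiai-AIF}, so there is no ``paper's own proof'' to compare against beyond that citation. What you have written is essentially the argument one finds in that reference, organised around the same two observations: (i) the multiplicative error coming from the finite kernel and cokernel of a pseudo-isomorphism is uniform over all height-one specialisations, and (ii) the size of the elementary quotient $E/IE$ is computed exactly by evaluating the characteristic element at the specialisation point.

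A few small comments, none of which affect correctness. In Part~(2), your computation $v_{\mathcal{O}_m}(f_j(\pi_m)) = d_j$ for $m > d_j$ is independent of the particular Eisenstein polynomial $E_m$ chosen (only the degree matters), which is why the direction (b)$\Rightarrow$(a) goes through for an \emph{arbitrary} Eisenstein family rather than a specially chosen one; you use this implicitly and it might be worth saying explicitly. In Part~(1), for the converse direction, you should note that the $\alpha_k$ you construct lie in $\pi_{\mathcal{O}'}\mathcal{O}'$ (so that $(x-\alpha_k)$ is indeed a linear ideal in the sense of Definition~\ref{def:charideal}); this follows since $v_{\mathcal{O}'}(\alpha_0)>0$ and the perturbation $\pi_{\mathcal{O}'}^k$ is small. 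Finally, the separability of $\varphi$ (needed for the continuity estimate near $\alpha_0$) is automatic in characteristic zero, as you implicitly assume.
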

For the proof of Proposition \ref{pro:rednn-1} (resp. Proposition \ref{pro:char0}), we refer the reader to \cite[Prop. 3.6]{ochiai-AIF} 
(resp. \cite[Prop. 3.11]{ochiai-AIF}).

\subsection{The Euler system bound}
\label{sub:ESboundmain}
Throughout Section~\ref{sub:ESboundmain}, we assume the hypotheses {\rm (MR1) - (MR4)} of Mazur and Rubin (that we have recalled in Section~\ref{subsub:Kolyvaginprimesexist}), \hzero, (H.$0^-$), \htwo\, and \htwoplus.  Recall also our convention from Section~\ref{sec:selmerstructures} that $\bfT$ stands either for $\TT$ or $\TT_\cyc$, and correspondingly, $R$ is either $\mathcal{R}$ or $\mathcal{R}_\cyc$.

\begin{define}\label{definition:tamagawanumber}
Let $\mathcal{R}$ be a discrete valuation ring. 
\begin{enumerate}
\item[(1)]
We define the Selmer structure $\FF_{\Sigma+}$ on $\TT$ determined as follows 
$$
H^1_{\FF_{\Sigma+}}(K_v,\TT):=
\begin{cases}
H^1_{\FF_{+}}(K_v,\TT) & \text{ if $v$ is above $p$,} \\
\ker\left(H^1(K_v,\TT)\ra H^1(K_v^{\textup{ur}},\TT\otimes\QQ_p)\right) 
& \text{ if $v\in \Sigma^{(p)}$.} 
\end{cases}
$$
\item[(2)]
We set $\mathbb{V}:=\mathbb{T}\otimes\QQ_p$ and define 
$${\displaystyle\textup{Tam}_{\Sigma^{(p)}}(\mathbb{V}):=\bigoplus_{\lambda\in \Sigma^{(p)}}\frac{H^1_{\FF_{\Sigma+}}(K_\lambda,\TT)}{H^1_{\FF_\Gr}(K_\lambda,\TT)}}$$
whose order is precisely the $p$-part of the Tamagawa factors at $\lambda$. Notice that since we have assumed ${\rm (MR1)}$, 
$G_K$-stable lattices of $\mathbb{V}$ are all isomorphic to $\mathbb{T}$. Thus, our notation for Tamagawa numbers here is justified.
\end{enumerate}
\end{define}
The second portion of the following theorem is proved in \cite{mr02,kbbstark,kbbESrankr} and its remaining parts (1) and (3) follows from (2) by Poitou-Tate global duality. It constitutes the base case for our main result in this section (Theorem~\ref{thm:weakleopoldt}).
\begin{thm}
\label{thm:weakleopoldt0}
Suppose that $\mathcal{R}$ is a discrete valuation ring and $\mathbf{c}\in \textup{ES}^+(\TT_\cyc)$ is a locally restricted Euler system. Suppose that (the image of) its initial term $c_K\in H^1(K_\Sigma/K,\TT)$ is non-vanishing. Then, the following statements hold under the running hypotheses 
of \S \ref{sub:ESboundmain}.
\begin{itemize}
\item[(1)] The $\mathcal{R}$-module $H^1_{\FF_+^*}(K,\TT^\vee (1))^\vee$ is torsion and $H^1_{\FF_{+}}(K,\TT)$ is free of rank one.
\item[(2)] $\textup{Fitt}\left(H^1_{\FF_{\Sigma+}^*}(K,\TT^\vee (1))^\vee\right)\,\, \supset \,\, \textup{Fitt}\left(H^1_{\FF_{\Sigma+}}(K,\TT)\big{/}R c_K\right)$.\\
\item[(3)] $\textup{Fitt}\left(H^1_{\FF_+^*}(K,\TT^\vee (1))^\vee\right)\,\, \supset \,\, \textup{Fitt}\left(H^1_{\FF_+}(K,\TT)\big{/}R c_K\right){\textup{Fitt}\left(\textup{Tam}_{\Sigma^{(p)}}(\TT)\right)}$.
\end{itemize}
In (2) and (3), $\textup{Fitt}$ stands for the initial Fitting ideal.
\end{thm}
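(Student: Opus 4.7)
The plan is to reduce all three assertions to the Kolyvagin system bound asserted in (2), which is itself the direct output of the Mazur-Rubin machinery (in the locally restricted incarnation of \cite{kbbstark,kbbESrankr}) applied to the Kolyvagin system obtained from $\mathbf{c}$. Parts (1) and (3) are then extracted from (2) via Poitou-Tate global duality and a careful analysis of the discrepancy between the Selmer structures $\FF_+$ and $\FF_{\Sigma+}$.

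First, I would invoke Theorem~\ref{thm:ESKSlocallyrestricted} to transport $\mathbf{c}\in\textup{ES}^+(\TT_\cyc)$ to a locally restricted Kolyvagin system $\boldsymbol{\kappa}=\Psi^{\textup{MR}}(\mathbf{c})\in \KS(\TT_\cyc,\FF_+)$. This invocation is legitimate because the full list of hypotheses (\rm{(MR1)}-\rm{(MR4)}, \hzero, \htwo, \hzerominus, \htwoplus) is assumed throughout \S\ref{sub:ESboundmain}; moreover, the leading class $\kappa_1$ recovers the image of $c_K$ in $H^1_{\FF_+}(K,\bfT)$, so the assumption $c_K\neq 0$ forces $\boldsymbol{\kappa}$ to be non-trivial. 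With $\boldsymbol{\kappa}$ non-trivial, the locally restricted Kolyvagin system bound for the Selmer triple $(\bfT,\FF_{\Sigma+},\frak{P}_\frak{l})$ over the discrete valuation ring $\mathcal{R}$ -- whose core Selmer rank one property is precisely guaranteed by \hzerominus\, and \htwoplus\, -- yields exactly the Fitting ideal inclusion in (2); this is the statement I would simply quote from \cite{mr02, kbbstark, kbbESrankr}.

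For (1), the non-vanishing of $c_K$ together with (2) forces the quotient $H^1_{\FF_{\Sigma+}}(K,\bfT)/Rc_K$ to be $\mathcal{R}$-torsion, whence $H^1_{\FF_{\Sigma+}^*}(K,\TT^\vee(1))^\vee$ is $\mathcal{R}$-torsion as well. A Poitou-Tate comparison between $\FF_+^*$ and $\FF_{\Sigma+}^*$ (whose local conditions differ only at $\lambda \in \Sigma^{(p)}$ by a finite Tamagawa contribution) shows that $H^1_{\FF_+^*}(K,\TT^\vee(1))^\vee$ is also torsion. Residual irreducibility (\rm{MR1}) rules out $\pi_\mathcal{R}$-torsion in $H^1_{\FF_+}(K,\bfT)$, and the absence of torsion together with the torsion property of the dual Selmer group and a rank count via the global Euler characteristic formula identify $H^1_{\FF_+}(K,\bfT)$ as a free $\mathcal{R}$-module of rank one.

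Finally, for (3), I would apply the Poitou-Tate nine-term exact sequence to the pair $(\FF_+,\FF_{\Sigma+})$. The two Selmer structures agree at all primes above $p$ and at archimedean places, and their local quotient at $\lambda\in \Sigma^{(p)}$ is by definition the Tamagawa subquotient $\textup{Tam}_{\Sigma^{(p)}}(\mathbf{V})$ of Definition~\ref{definition:tamagawanumber}. Dualising this local quotient and feeding it through the nine-term sequence converts the inclusion in (2) into the inclusion in (3), producing the extra $\textup{Fitt}(\textup{Tam}_{\Sigma^{(p)}}(\TT))$ factor on the right-hand side. The genuine work lies inside the cited Kolyvagin system machinery that yields (2); the hard part for this theorem, relative to earlier treatments, is the clean Tamagawa accounting in the Poitou-Tate step, whose verification over a DVR is nevertheless routine but notationally delicate.
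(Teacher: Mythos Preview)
Your proposal follows essentially the same route as the paper: quote the Mazur--Rubin/locally restricted Kolyvagin system machinery over a DVR to obtain (2), and then pass between $\FF_{\Sigma+}$ and $\FF_+$ via the Poitou--Tate five-term sequence with $\textup{Tam}_{\Sigma^{(p)}}(\TT)$ as the local discrepancy to obtain (3) and (1). That is exactly what the paper does.

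One logical slip to fix: in your argument for (1) you write that ``the non-vanishing of $c_K$ together with (2) forces the quotient $H^1_{\FF_{\Sigma+}}(K,\bfT)/Rc_K$ to be $\mathcal{R}$-torsion, whence $H^1_{\FF_{\Sigma+}^*}(K,\TT^\vee(1))^\vee$ is $\mathcal{R}$-torsion.'' The implication (2) $\Rightarrow$ (quotient torsion) does not hold: (2) is a containment $\textup{Fitt}(\text{dual})\supset\textup{Fitt}(\text{quotient})$, so it transfers non-triviality \emph{from} the quotient \emph{to} the dual, not the other way, and $c_K\neq 0$ alone does not bound the rank of $H^1_{\FF_{\Sigma+}}(K,\bfT)$. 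The correct ordering, which is what the paper does, is to observe that the very same Mazur--Rubin theorems you cite for (2) (namely \cite[Thm.~5.2.2, 5.2.10(ii), 5.2.14]{mr02}) already give, as part of their output, that $H^1_{\FF_{\Sigma+}^*}(K,\TT^\vee(1))$ is finite and $H^1_{\FF_{\Sigma+}}(K,\bfT)$ has rank one. From there your Poitou--Tate step and the torsion-freeness via {\rm (MR1)} finish (1) just as you describe.
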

\begin{proof}
We first explain why (2) holds true, which is readily proved in \cite{kbbESrankr} and that refines the results in \cite{mr02,kbbstark}. All references in this paragraph are to \cite{mr02} unless otherwise is stated. Under the Euler systems to Kolyvagin systems map (Theorem 3.2.4), the locally restricted Euler system with initial term $ c_K\neq 0$ gives rise to a Kolyvagin system for the Selmer structure $\FF_{\Sigma+}$, {whose initial term is the class $c_K$ and in particular non-zero}. The only non-trivial input here (in addition to Theorem 3.2.4) is that the derived classes verify the appropriate local conditions at the primes above $p$. This follows from the proof of Theorem~\ref{thm:ESKSlocallyrestricted} above.  Furthermore, the Selmer structure $\FF_{\Sigma+}$ verifies the hypotheses of Section 5.2 and it is easy to see that its core Selmer rank $\chi(T,\FF_{\Sigma+})$ (in the sense of Definition 4.1.11) equals to one. {Under our running assumptions, Theorem 5.2.2 shows that $H^1_{\FF_{\Sigma+}^*}(K,\TT^\vee(1))$ has finite cardinality and Corollary 5.2.6 shows $H^1_{\FF_{\Sigma+}}(K,\TT)$ is of rank one. }{ Under our running assumptions, Corollary~5.2.13(ii) applies (used with the choices indicated above) and it is a restatement of the containment in (2), as $H^1_{\FF_{\Sigma+}}(K,\TT)$ is a free $\mathcal{R}$-module of rank one. } 
\par 
We will next prove (3) and deduce (1) as a consequence. Consider the Poitou-Tate global duality sequence
\begin{align*}
0\ra H^1_{\FF_{+}}(K,\TT)\lra H^1_{\FF_{\Sigma+}}(K,\TT)\lra&\textup{Tam}_{\Sigma^{(p)}}(\TT)\\
&\lra H^1_{\FF_{+}^*}(K,\TT^\vee (1))^\vee\lra H^1_{\FF_{\Sigma+}^*}(K,\TT^\vee (1))^\vee\ra0
\end{align*}
We therefore conclude that
$$\frac{\textup{Fitt}\left(H^1_{\FF_{+}}(K,\TT)/R c_K\right){\textup{Fitt}\left(\textup{Tam}_{\Sigma^{(p)}}(\TT)\right)}}{\textup{Fitt}\left(H^1_{\FF_{+}^*}(K,\TT^\vee (1))^\vee\right)} =\frac{\textup{Fitt}\left(H^1_{\FF_{\Sigma+}}(K,\TT)/R c_K \right)}{\textup{Fitt}\left(H^1_{\FF_{\Sigma+}^*}(K,\TT^\vee (1))^\vee \right)}$$
and (3) follows from (2). Moreover, the $\mathcal{R}$-module $\textup{Tam}_{\Sigma^{(p)}}(\TT)$ is torsion as explained in \cite[Lemma I.3.5]{r00}. As $c_K\in H^1_{\FF_{+}}(K,\TT)$ is non-zero and since the $R$-module $H^1_{\FF_{\Sigma+}}(K,\TT)\supset H^1_{\FF_{+}}(K,\TT)$ has rank one by the discussion in the first paragraph of this proof, (1) follows as a consequence of (3). 

\end{proof}

Recall our running assumption (besides those we have recorded at the start of Section~\ref{sub:ESboundmain}) that $R $ is isomorphic to the power series ring $\LL_{\mathcal{O}}^{(n)}$ in $n$ variables. 

\begin{thm}
\label{thm:weakleopoldt}
Suppose that the Krull dimension of $R$ is at least two and $\mathbf{c}\in \textup{ES}^+(\TT_\cyc)$ is a locally restricted Euler system. Suppose that (the image of) its initial term $c_K\in H^1(K_\Sigma/K,\bfT)$ is non-vanishing. Then, the following statements hold under the running hypotheses 
of \S \ref{sub:ESboundmain}.
\begin{itemize}
\item[(1)] $H^1_{\FF_+^*}(K,\bfT^\vee (1))^\vee$ is $R$-torsion. 
\item[(2)] When $\bfT=\TT_\cyc$, the $R$-module $H^1_{\FF_{+}}(K,\bfT)$ is free of rank one over $R$. 
For general $\bfT$, the $R$-module $H^1_{\FF_{+}}(K,\bfT)$ is torsion-free of generic rank one over $R$. 
\item[(3)] $\textup{char}\left(H^1_{\FF_+^*}(K,\bfT^\vee (1))^\vee\right)\,\, \supset \,\, \textup{char}\left(H^1_{\FF_+}(K,\bfT)\big{/}R c_K\right).$
\end{itemize}
\end{thm}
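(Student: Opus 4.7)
The argument proceeds by induction on $n$, where $R\cong \LL_\mathcal{O}^{(n)}$; the hypothesis $\dim R\geq 2$ amounts to $n\geq 1$. The base of the induction is Theorem~\ref{thm:weakleopoldt0} (the DVR case, $n=0$), whose conclusion (3) differs from the desired one by a Tamagawa correction factor. The inductive step uses Proposition~\ref{pro:rednn-1} (when $n\geq 2$) or Proposition~\ref{pro:char0} (when $n=1$) to descend from $R$ to $R/(l)$ for all but finitely many linear ideals $(l)$; the extra flexibility in these reduction results is precisely what will absorb the Tamagawa contribution at the very bottom of the induction.

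Claims (1) and (2) can be extracted from (3) together with structural input. Generic rank one in (2) follows from the global Euler--Poincar\'e characteristic formula under \hzero, \hzerominus, \htwo, \htwoplus\ and the computation of $d_+(\bfT)$. Torsion-freeness of $H^1_{\FF_+}(K,\bfT)$ -- and actual freeness when $\bfT=\TT_\cyc$ -- is deduced from the regularity of $R$ together with the absence of nonzero pseudo-null submodules in $H^1(K_\Sigma/K,\bfT)$; in the cyclotomic case, Lemma~\ref{lem:MRunramifiedisallunrforcyclo} forces this absence. Claim (1) is then a consequence of (3): since $c_K\neq 0$, the characteristic ideal on the right of (3) is nonzero, which forces $H^1_{\FF_+^*}(K,\bfT^\vee(1))^\vee$ to be torsion.

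For the heart of the theorem, (3), I would verify the hypotheses of Proposition~\ref{pro:rednn-1}, respectively Proposition~\ref{pro:char0}: produce a finite flat extension $\mathcal{O}'/\mathcal{O}$ so that the reduced divisibility holds over $R/(l)\cong\LL_{\mathcal{O}'}^{(n-1)}$ for all but finitely many linear ideals $(l)\in\mathcal{L}_{\mathcal{O}'}^{(n)}$. This reduction rests on three substeps: (a) the specialization $\bfT/(l)\bfT$ inherits the hypotheses (MR1)--(MR4) and \hzero, \hzerominus, \htwo, \htwoplus\ from $\bfT$ for generic~$(l)$; (b) the Euler system $\mathbf{c}$ reduces modulo $(l)$ to a locally restricted Euler system for the cyclotomic deformation of $\bfT/(l)\bfT$, with its initial class still non-vanishing (non-vanishing of the image of $c_K$ is a Zariski-open condition on $\textup{Spec}(R)$, hence excludes only finitely many $(l)$); and (c) a control theorem relates $H^1_{\FF_+}(K,\bfT)/(l)$ to $H^1_{\FF_+}(K,\bfT/(l)\bfT)$, and analogously for the dual Selmer group, with pseudo-null kernels and cokernels -- this will be supplied by Corollary~\ref{cor:thickfreeGreenberg}, Proposition~\ref{prop:localcohom} and Poitou--Tate global duality. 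Applying the inductive hypothesis when $n-1\geq 1$, or Theorem~\ref{thm:weakleopoldt0} when $n-1=0$, to $\bfT/(l)\bfT$ then yields the divisibility modulo $(l)$.

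The main obstacle is absorbing the Tamagawa factor $\textup{Fitt}(\textup{Tam}_{\Sigma^{(p)}}(\bfT/(l)\bfT))$ produced by Theorem~\ref{thm:weakleopoldt0}(3) at the bottom of the induction. My plan is to extend the definition of $\textup{Tam}_{\Sigma^{(p)}}$ to the higher-dimensional setting and observe that it is a finitely generated $R$-torsion module, hence supported on a proper closed subset of $\textup{Spec}(R)$. When the reduction happens at $n\geq 2$, only finitely many linear ideals meet this support, so the Tamagawa contribution disappears by the ``all but finitely many'' clause in Proposition~\ref{pro:rednn-1}. When the reduction happens at $n=1$, the error term $(\pi_\mathcal{O}^h)$ in Proposition~\ref{pro:char0}(1)(a) is precisely the Tamagawa contribution, which is controlled through Eisenstein-type specializations by Proposition~\ref{pro:char0}(2), again at the cost of excluding finitely many linear ideals. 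A secondary technical difficulty is reconciling the target Selmer structure $\FF_+$ with the auxiliary structure $\FF_{\Sigma+}$ used inside the proof of Theorem~\ref{thm:weakleopoldt0}; this is handled by the same Poitou--Tate exact sequence as in loc.\,cit., which cleanly separates the Tamagawa error from the Euler-system divisibility.
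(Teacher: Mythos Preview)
Your inductive strategy and Tamagawa-factor bookkeeping are the same as the paper's, but there is a genuine circularity in the proposed ordering of (1), (2), (3). You want to deduce (1) from (3), and the rank-one part of (2) from an Euler--Poincar\'e computation that presupposes (1); but the very statement of (3) is a containment of characteristic ideals, which are only defined for torsion $R$-modules, and your inductive engine for (3), Proposition~\ref{pro:rednn-1}, explicitly requires its inputs $M,N$ to be torsion over $\LL_\mathcal{O}^{(n)}$ before it can be invoked. So (1) and (2) must be established \emph{before} one can even formulate or apply the reduction for (3), not after.

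The paper avoids this by proving (1), (2), (3) in that order within each inductive step. For (1), one fixes a single non-exceptional height-one prime $\frak{P}$ (resp.\ linear element $l$ when $r>1$), uses the induction hypothesis to see that $H^1_{\FF_+^*}(K,(\bfT/\frak{P}\bfT)^\vee(1))$ is cotorsion over $R/\frak{P}$, and transfers this upward via the control isomorphism on $\frak{P}$-torsion (Lemma~\ref{lem:cartesianfordimension1}(2), resp.\ \cite[Lemma~3.5.3]{mr02}). For (2), the paper does \emph{not} use a global Euler--Poincar\'e formula; it argues by contradiction from the exact row $\textup{Tor}_1^R(R/(l),M_\Sigma^+)\to H^1_{\FF_+}(K,\bfT)/l \stackrel{\nu_l}{\to} H^1_{\FF_+}(K,\bfT/l\bfT)$: if the rank were $\geq 2$ then $\ker(\nu_l)$ would be non-torsion over $R/(l)$ for all but finitely many $l$, forcing $\textup{Loc}_\Sigma^+(\bfT)_{\textup{$R$-tor}}$ to have infinitely many associated primes. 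The freeness assertion for $\bfT=\TT_\cyc$ is obtained by iterating the \emph{injectivity} of $\nu_l$ (valid in that case because $\textup{Loc}_\Sigma^+(\TT_\cyc)$ is torsion-free, cf.\ Remark~\ref{rem:cyclomeansLocSigmaistorsionfree}) down to a DVR quotient and then invoking Nakayama; your appeal to Lemma~\ref{lem:MRunramifiedisallunrforcyclo} does not supply this. Finally, the control theorems needed are the dedicated Lemmas~\ref{lem:cartesianfordimension1}--\ref{lem:globalcontrol} (for $r=1$) and~\ref{lem:localcontrolhigherdim}--\ref{lem:globalcontrolhigherdim} (for $r>1$), which track the pseudo-null error modules $\mathbf{Err}_\Sigma$, $\mathbf{Err}_+$ explicitly; Corollary~\ref{cor:thickfreeGreenberg} concerns semi-local cohomology over $\KKK$ and is not what is used here.
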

\begin{rem}
\label{rem:cyclotomicdeformationforcesunr}
Since we require our Euler  system $\mathbf{c}$ to extend in the cyclotomic direction, it follows by a standard argument (see \cite[Proposition B.3.4]{r00} for details) that $\textup{res}_{\frak{q}}(c_F)\in H^1_{\textup{ur}}(F_\frak{q},\bfT)$ is unramified at every place $\frak{q}$ of $F$ which lies above a prime of $\Sigma^{(p)}$.
\end{rem}

The proof of this theorem is rather involved and will occupy the entire Section~\ref{subsec:proofofmainESbound}. Our argument relies on Lemmas \ref{lem:cartesianfordimension1}, \ref{lem:localcontrol}, \ref{lem:globalcontrol}, \ref{lem:localcontrolhigherdim} and \ref{lem:globalcontrolhigherdim} below, which are all stated and proved within Section~\ref{subsec:proofofmainESbound}. 
\subsubsection{Proof of Theorem~\ref{thm:weakleopoldt}}
\label{subsec:proofofmainESbound}
We will proceed by induction on the Krull dimension of $R$ (that equals $r+1$ or $r+2$, depending on our choice of coefficient rings among $\mathcal{R}$ and $\mathcal{R}_\cyc$) after proving the case $r=1$ separately. Notice that the base case $r=0$ is already settled by Theorem~\ref{thm:weakleopoldt0} above. \\
\\
\textbf{Case} $\mathbf{r=1}$. Our proof will closely follow the arguments in \cite[Section 5.3]{mr02}. However, we remark that our coefficient ring $R$ will not necessarily carry a Galois action (unlike the coefficient ring $\LL$ considered in op. cit.). Notice that when $R=\mathcal{R}_\cyc$ is the cyclotomic Iwasawa algebra, then once we invoke Lemma~\ref{lem:cartesianfordimension1} below, we may prove Theorem~\ref{thm:weakleopoldt} (1) in a manner identical to \cite[Theorem 5.3.6]{mr02} and Theorem~\ref{thm:weakleopoldt} (3) to \cite[Theorem 5.3.10(i)]{mr02}. Before going into the proof, we set some notation.

We define the modules
\begin{align*} & \textup{Loc}^+_p (\bfT):=\displaystyle{\bigoplus_{v\mid p} }\dfrac{H^1(K_v,\bfT)}{H^1_{\FF_+}(K_v,\bfT)} \\ 
& \textup{Loc}_\Sigma^+ (\bfT):=\bigoplus_{\lambda \in \Sigma}
\dfrac{H^1(K_\lambda,\bfT)}{H^1_{\FF_+}(K_\lambda,\bfT)}=\textup{Loc}^+_p (\bfT)\oplus \bigoplus_{\lambda\in \Sigma^{(p)}}H^1(I_\lambda,\bfT)^{\textup{Fr}_{\lambda}=1}.
\end{align*}
\begin{rem}\label{rem:cyclomeansLocSigmaistorsionfree}
Since $\textup{Loc}^+_p (\bfT)$ is a finitely generated torsion-free $R$-module thanks to our running hypothesis (H.$0^-$) assumed at the start of Section~\ref{sub:ESboundmain}, we have 
\begin{equation}\label{equation:torsion_part_Loc}
\textup{Loc}_\Sigma^+ ( \bfT)_{\textup{R-tor}}=\left( \bigoplus_{\lambda\in \Sigma^{(p)}}H^1(I_\lambda,\bfT)^{\textup{Fr}_{\lambda}=1}\right)_{\textup{R-tor}}.
\end{equation}
When $\bfT=\TT_\cyc$, we have $H^1(I_v,\TT_\cyc)=0$ and hence, $\textup{Loc}_\Sigma^+ (\bfT)=\textup{Loc}_p^+ (\bfT)$ is a finitely generated torsion-free 
$R$-module again due to the hypothesis (H.$0^-$).
\end{rem}
Let $\mathcal{S}_R$ be the set of height-one primes of $R$. 
We further define the \emph{exceptional set of primes} $\mathcal{E}_R$ for $\bfT$ to be the subset 
of $\mathcal{S}_R$ by setting
\begin{multline}
\mathcal{E}_R=\{\pi_\oo R\} \cup
\{\frak{P}\in \mathcal{S}_R : H^2(K_\Sigma/K,\bfT)[\frak{P}] \hbox{ is infinite}\} 
\\ 
\cup\{\frak{P}\in \mathcal{S}_R:\oplus_{\lambda_\in \Sigma}H^2(K_\lambda,\bfT)[\frak{P}] \hbox{ is } \hbox{ infinite}\}
\cup \{\frak{P}\in \mathcal{S}_R:\oplus_{\lambda_\in \Sigma^{(p)}}H^1(I_\lambda,\bfT)[\frak{P}] \hbox{ is } \hbox{ infinite}\}
\\ 
\cup \{\frak{P}\in \mathcal{S}_R: \textup{Tam}_{\Sigma^{(p)}}(\bfT_{\frak{P}}/\frak{P}\bfT_{\frak{P}})\neq 0\}\,.
\end{multline}
Here, $\pi_\oo \in \oo$ is a uniformizing element and $\bfT_{\frak{P}}$ denotes the localization of $\bfT$ at $\frak{P}$ 
for a height-one prime $\frak{P}$ of $R$. We remark that the definition of $\textup{Tam}_{\Sigma^{(p)}}(\bfT_{\frak{P}}/\frak{P}\bfT_{\frak{P}})$ makes sense 
with Definition \ref{definition:tamagawanumber} since $\bfT_{\frak{P}}/\frak{P}\bfT_{\frak{P}}$ is a finite dimensional
$R_{\frak{P}}/\frak{P}$-vector space and $R_{\frak{P}}/\frak{P}$ is a finite extension of $\mathbb{Q}_p$. 
\begin{define}
When $r=1$ and $\frak{P}\in \mathcal{S}_R \setminus \{ \pi_{\mathcal{O}}R\} $, we define the \emph{degree} of $\frak{P}$ to be 
the quantity $\textup{rank}_{\mathcal{O}} R/\frak{P}$.
\end{define}
\begin{lemma}
\label{lemma:badplacesavoidingtamagawadefectisfinite}
The set $\mathcal{E}_R$ has finite cardinality. 
\end{lemma}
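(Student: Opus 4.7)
The set $\mathcal{E}_R$ decomposes as the union of five subsets of $\mathcal{S}_R$; the singleton $\{\pi_\oo R\}$ is trivially finite, so it suffices to handle the other four.

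The plan combines two general principles. The first is that each of the $R$-modules $H^2(K_\Sigma/K, \bfT)$, $\bigoplus_{\lambda \in \Sigma} H^2(K_\lambda, \bfT)$, and $\bigoplus_{\lambda \in \Sigma^{(p)}} H^1(I_\lambda, \bfT)$ is finitely generated over $R$; this rests on the standard Iwasawa-theoretic inputs that $\bfT$ is free of finite rank over $R$, that $G_{K,\Sigma}$ and the local groups $G_{K_\lambda}$ have bounded $p$-cohomological dimension, and that the pro-$p$ quotient of the tame inertia $I_\lambda$ at $\lambda \nmid p$ is topologically finitely generated. The second principle is commutative-algebraic: for any finitely generated $R$-module $M$, the set of height-one primes $\frak{P}$ for which $M[\frak{P}]$ has positive $R/\frak{P}$-rank is finite, for such $\frak{P}$ must lie in the support of the torsion submodule $M_{R\text{-tors}}$ (since localizing the torsion-free quotient at the DVR $R_\frak{P}$ kills all $\frak{P}$-torsion), and the set of height-one primes in that support is finite, being governed by the characteristic ideal of $M_{R\text{-tors}}$.

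Combining these two principles disposes of the second, third and fourth subsets defining $\mathcal{E}_R$ directly. For the fifth subset involving $\textup{Tam}_{\Sigma^{(p)}}$, I would unwind the Tamagawa quotient: the Hochschild-Serre spectral sequence for $I_\lambda \subset G_{K_\lambda}$, combined with the vanishing of $H^2(\widehat{\ZZ}, -)$, identifies $H^1_{\FF_{\Sigma+}}(K_\lambda, \bfT)/H^1_{\FF_\Gr}(K_\lambda, \bfT)$ with the $R$-torsion submodule of $H^1(I_\lambda, \bfT)^{\textup{Fr}_\lambda = 1}$. Running the same identification through the specialization at $\frak{P}$ reduces the condition $\textup{Tam}_{\Sigma^{(p)}}(\bfT_\frak{P}/\frak{P}\bfT_\frak{P}) \neq 0$ to $\frak{P}$ lying in the support of this torsion submodule, which by the second principle is a finite set in height one.

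The main obstacle will be this final step: one needs the cohomological identification of the Tamagawa quotient to be compatible with specialization at height-one $\frak{P}$, so that the desired finiteness reduces to the support-finiteness argument. Once that compatibility is verified, the finiteness of $\mathcal{E}_R$ is formal.
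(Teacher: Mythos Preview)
Your treatment of the first four subsets is correct and is exactly what the paper does: those sets are finite because the relevant cohomology modules are finitely generated over $R$, so only finitely many height-one primes can support nontrivial $\frak{P}$-torsion.

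For the Tamagawa subset, the paper does not argue directly at all: it simply cites \cite[7.6.10.10]{nek}. Your attempt to give a self-contained argument is reasonable in spirit, and you are right that the ``compatibility with specialization'' you flag as the main obstacle is precisely the nontrivial content here. However, the specific reduction you propose---that $\textup{Tam}_{\Sigma^{(p)}}(\bfT_\frak{P}/\frak{P}\bfT_\frak{P})\neq 0$ if and only if $\frak{P}$ lies in the support of the $R$-torsion of $H^1(I_\lambda,\bfT)^{\textup{Fr}_\lambda=1}$---is not correct as stated. The point is that $H^1(I_\lambda,\bfT)$ is a cokernel of an endomorphism of a free $R$-module (coming from a generator of tame inertia), and even when this cokernel is $R$-torsion-free, its reduction modulo $\frak{P}$ can acquire $R/\frak{P}$-torsion at primes $\frak{P}$ not in the support of any fixed $R$-torsion module: over a two-dimensional regular local ring, a torsion-free module need not have torsion-free reductions. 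So torsion can be \emph{created} upon specialization, and the bad primes are governed not by the support of a single torsion module but by the locus where certain minors of the inertia operator drop rank or fail to be units. Nekov\'a\v{r}'s argument controls this via the monodromy filtration, producing a nonzero $f\in R$ such that the Tamagawa defect vanishes for all $\frak{P}\not\ni f$.

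In short: you have correctly located the one substantive step, but your proposed mechanism for handling it does not work, and closing the gap amounts to reproving the cited result from \cite{nek}. The paper's proof is therefore strictly shorter than yours would be, at the cost of being a bare citation.
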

\begin{proof}
Using the fact that cohomology groups $H^2(K_\Sigma/K,\bfT)$, $H^2(K_\lambda,\bfT)$ and $H^1(I_\lambda,\bfT)$ are 
finitely generated over $R$ and the fact that $R$ is of Krull dimension $2$ by assumption, we see that 
$H^2(K_\Sigma/K,\bfT)[\frak{P}]$, $H^2(K_\lambda,\bfT)[\frak{P}]$ and $H^1(I_\lambda,\bfT)[\frak{P}]$ are of finite cardinality 
for all but finitely many $\frak{P}\in \mathcal{S}_R$. As for the 
last set $\{\frak{P}\in \mathcal{S}_R: \textup{Tam}_{\Sigma^{(p)}}(\bfT_{\frak{P}}/\frak{P}\bfT_{\frak{P}})\neq 0\}$. 
The finiteness of this set is precisely the content of \cite[7.6.10.10]{nek}. This completes the proof of the lemma. 
\end{proof}
We remind the readers that we continue to work under the assumption that $r=1$ until we announce otherwise (on page~\pageref{pageref:caser>1}).
\begin{lemma}
\label{lem:cartesianfordimension1}
\begin{enumerate}
\item[(1)]
For every height one prime $\frak{P}$ of $R$, the map $\bfT\twoheadrightarrow \bfT/\frak{P}\bfT$ induces a map 
$$\pr_\frak{P}:\,\dfrac{H^1_{\FF_+}(K,\bfT)}{\frak{P}H^1_{\FF_+}(K,\bfT)}\lra H^1_{\FF_+}(K,\bfT/\frak{P}\bfT).
$$
If we assume that $\frak{P}\notin \mathcal{E}_R$, 
the kernel and the cokernel of $\pr_\frak{P}$ is finite and bounded independently of $\frak{P}$. 
When $\bfT=\TT_\cyc$, the map $\pr_\frak{P}$ is injective.
\item[(2)]
For every height one prime $\frak{P}\notin \mathcal{E}_R$, the map $\bfT\twoheadrightarrow \bfT/\frak{P}\bfT$ induces 
an isomorphism 
$$\pr_\frak{P}^*:\,H^1_{\FF_+^*}(K,\bfT^\vee(1)[\frak{P}])\stackrel{\sim}{\lra} H^1_{\FF_+^*}(K,\bfT^\vee(1))[\frak{P}]\,.$$
\end{enumerate}
\end{lemma}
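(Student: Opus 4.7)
The plan is to reduce to a Nakayama-style diagram chase with the principal ideal $\frak{P}=(f)$, which is possible since $R=\oo[[X]]$ is a regular UFD. First I would apply the long exact cohomology sequence, globally and at each place $v\in\Sigma$, to the short exact sequence
$$0\to\bfT\xrightarrow{f}\bfT\to\bfT/f\bfT\to 0.$$
This produces a commutative diagram whose middle row compares $H^1(K_\Sigma/K,\bfT)/fH^1(K_\Sigma/K,\bfT)$ with $H^1(K_\Sigma/K,\bfT/f\bfT)$, and whose columns encode the local defining conditions for $\FF_+$ over $\bfT$ and $\bfT/f\bfT$. A snake lemma argument then expresses $\ker\pr_\frak{P}$ and $\coker\pr_\frak{P}$ in terms of the $f$-torsion in the global $H^2(K_\Sigma/K,\bfT)$, the local $H^2(K_v,\bfT)$, the unramified quotients $H^1(I_\lambda,\bfT)^{\Fr_\lambda=1}$ for $\lambda\in\Sigma^{(p)}$, and the Tamagawa-type quotients at those places.

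For the kernel in part (1), the contribution of the global $H^1$ map vanishes because $H^0(K,\bfT/f\bfT)=0$: this follows from filtering the free $R/f$-module $\bfT/f\bfT$ by powers of the maximal ideal of $R/f$, since every graded piece is a direct sum of copies of $\overline{T}$ and hence has no Galois fixed vectors by (MR3). The remaining contribution sits in the $f$-torsion of the cokernel of $H^1_{\FF_+}(K,\bfT)\hookrightarrow H^1(K_\Sigma/K,\bfT)$, which injects into the direct sum of local condition quotients; this is finite and uniformly bounded thanks to the hypothesis $\frak{P}\notin\mathcal{E}_R$ together with (H.$0^-$) and the freeness established in Corollary~\ref{cor:greenbergfree}. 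When $\bfT=\TT_\cyc$, Lemma~\ref{lem:MRunramifiedisallunrforcyclo} collapses the inertial contributions at $v\nmid p$ entirely, while the local freeness results at $p$ (Corollaries~\ref{cor:structuresemilocalcohom} and~\ref{cor:greenbergfree}) kill the potential $p$-part of the kernel, yielding the stronger injectivity assertion. For the cokernel of $\pr_\frak{P}$, the obstructions come from $H^2(K_\Sigma/K,\bfT)[f]$ globally and from the local $H^2[f]$'s and Tamagawa $f$-torsion; these are all finite with cardinality bounded independently of $\frak{P}$ precisely because $\frak{P}\notin\mathcal{E}_R$ (see Lemma~\ref{lemma:badplacesavoidingtamagawadefectisfinite}).

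For part (2), I would dualize the whole argument via local Tate duality and Poitou--Tate global duality, starting from the short exact sequence
$$0\to\bfT^\vee(1)[f]\to\bfT^\vee(1)\xrightarrow{f}\bfT^\vee(1)\to 0$$
and the identification $\bfT^\vee(1)[f]\cong(\bfT/f\bfT)^\vee(1)$. The kernel of the resulting map $\pr_\frak{P}^*$ is governed by $H^0(K,\bfT^\vee(1))$, which again vanishes by the same filtration argument applied with the second half of (MR3); the cokernel is controlled by $H^1(K,\bfT^\vee(1))[f]$ together with dual local-condition defects, all of which vanish outside $\mathcal{E}_R$. The main technical step I expect to be the principal obstacle is verifying that the dual local condition $\FF_+^*$ behaves cleanly under the control map at primes above $p$: one has to check that the orthogonal complements under local Tate duality commute with mod-$f$ reduction. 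This is exactly where the freeness results of Corollary~\ref{cor:structuresemilocalcohom} and Corollary~\ref{cor:greenbergfree} become crucial, because they guarantee that the local Tate pairing at $v\mid p$ has no $f$-torsion artifacts, and hence the orthogonal complement of the image agrees with the image of the orthogonal complement.
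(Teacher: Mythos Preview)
Your approach is essentially the same as the paper's: both start from the short exact sequence $0\to\bfT\stackrel{f}{\to}\bfT\to\bfT/\frak{P}\bfT\to0$, set up the commutative diagram comparing $H^1_{\FF_+}(K,\bfT)/\frak{P}$ with $H^1_{\FF_+}(K,\bfT/\frak{P}\bfT)$ inside the global $H^1$'s, and bound kernel and cokernel by a diagram chase. Your identification of the kernel contribution with torsion in the local-condition quotients, and of the injectivity for $\bfT=\TT_\cyc$ via the vanishing of $H^1(I_\lambda,\TT_\cyc)$, matches the paper exactly.

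Two small organisational differences. For the cokernel in part~(1), you invoke local $H^2[f]$'s and Tamagawa torsion; the paper instead shows directly that the comparison map $\nu:\textup{Loc}_p^+(\bfT)/\frak{P}\to\textup{Loc}_p^+(\bfT/\frak{P}\bfT)$ is injective (via a second small diagram and the observation that the middle local map $\frak{h}$ is injective), so the only cokernel contribution comes from the global $H^2(K_\Sigma/K,\bfT)[\frak{P}]$. This is slightly cleaner than tracking local $H^2$'s separately, but both routes work. For part~(2), you sketch a full dualization argument; the paper simply cites \cite[Lemma~3.5.3]{mr02}, which gives the isomorphism directly under (MR1) and (MR3). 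Your plan would reprove that lemma, which is fine but unnecessary.
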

\begin{proof}
For a height one prime $\frak{P}$ of $R$, we consider the exact sequence 
\be\label{eqn:descentmodfrakP}0\lra \bfT {\lra}\bfT\lra \bfT/\frak{P}\bfT\lra0\, , 
\ee
where the first map is multiplication by a generator of $\frak{P}$.  
The $G_{K,\Sigma}$-cohomology of this sequence yields an {injective map 
\be\label{eqn:globaldescentraw}
\dfrac{H^1(K_\Sigma/K,\bfT)}{\frak{P}H^1(K_\Sigma/K,\bfT)} \hookrightarrow H^1(K_\Sigma/K,\bfT/\frak{P}\bfT)
\ee
with cokernel isomorphic to $H^2(K_\Sigma/K,\bfT)[\frak{P}]$. When $\frak{P}\not\in \mathcal{E}_R$, note that $H^2(K_\Sigma/K,\bfT)[\frak{P}]$ is contained in the maximal finite submodule of $H^2(K_\Sigma/K,\bfT)$. Therefore, the size of the cokernel of the map (\ref{eqn:globaldescentraw}) is finite and 
bounded by a constant that depends only on $\bfT$ as $\frak{P}$ varies away from $\mathcal{E}_R$.}

Consider the following diagram with exact rows and cartesian squares,
\be\label{eqn:nonperfectdescent}\xymatrix{\textup{Tor}_1^R(R/\frak{P},M_\Sigma^+)\ar[r]&{\displaystyle\frac{ H^1_{\FF_+}(K,\bfT)}{\frak{P}H^1_{\FF_+}(K,\bfT)}}\ar[r]\ar@{-->}[d]&{\displaystyle\frac{ H^1(K_\Sigma/K,\bfT)}{\frak{P}H^1(K_\Sigma/K,\bfT)}}\ar[r]^(.6){\res_\Sigma}\ar@{^{(}->}[d]&{\displaystyle \frac{\textup{Loc}_{\Sigma}^+ (\bfT)}{\frak{P}\cdot\textup{Loc}_{\Sigma}^+ (\bfT)}}\ar[d]^{\nu}\\
0\ar[r]&{\displaystyle{ H^1_{\FF_+}(K,\bfT/\frak{P}\bfT)}}\ar[r]&H^1(K_\Sigma/K,\bfT/\frak{P}\bfT)\ar[r]&{\textup{Loc}_{\Sigma}^+ 
(\bfT /\frak{P}\bfT)}
}
\ee
where $M_\Sigma^+\subset \textup{Loc}^+_\Sigma (\bfT)$ is the image of $\res_\Sigma: H^1(K_{\Sigma}/K,\bfT)\ra \textup{Loc}^+_\Sigma (\bfT)$ and the dotted arrow (which is our map $\pr_{\frak{P}}$) is induced from the rest of the diagram.

In order to verify our claims concerning the kernel of $\pr_{\frak{P}}$, note that we have 
$$\textup{Tor}_1^R(R/\frak{P},M_\Sigma^+)\hookrightarrow \left(\bigoplus_{\lambda \in \Sigma^{(p)}}H^1(I_\lambda ,\bfT)^{\textup{Fr}_{\lambda}=1}\right)[\frak{P}]$$ by \eqref{equation:torsion_part_Loc}. As the $R$-module $\left(\bigoplus_{\lambda \in \Sigma^{(p)}}H^1(I_\lambda ,\bfT)^{\textup{Fr}_{\lambda}=1}\right)$ 
is finitely generated, its maximal $\frak{P}$-torsion submodule has finite order bounded independently as $\frak{P}$ varies away from $\mathcal{E}_R$. The assertion in the case $\bfT=\TT_\cyc$ also follows since 
we have $H^1(I_\lambda,\TT_\cyc)=0$. 
\par 
In order to control the cokernel of $\pr_{\frak{P}}$, it suffices to prove that the kernel of the map
$$
\nu:\,\dfrac{\textup{Loc}_{\Sigma}^+(\bfT)}{\frak{P}\cdot\textup{Loc}_{\Sigma}^+ (\bfT)}\lra \textup{Loc}_{\Sigma}^+(\bfT /\frak{P}\bfT)$$
is bounded by a constant which is independent of $\frak{P}$ (as the same assertion readily holds for the cokernel of the map in the middle, which is the map (\ref{eqn:globaldescentraw}) above). Consider now the following commutative diagram with exact rows.
$$
\xymatrix{&{\displaystyle \bigoplus_{v\mid p} \frac{H^1_{\FF_+}(K_v,\bfT)}{\frak{P}H^1_{\FF_+}(K_v,\bfT)}}\ar[r]\ar@{->>}[d]&{\displaystyle \bigoplus_{v\mid p} \frac{H^1(K_v,\bfT)}{\frak{P}H^1(K_v,\bfT)}}\ar[r]\ar[d]^{\frak{h}}&{\displaystyle \frac{\textup{Loc}_{p}^+(\bfT)}
{\frak{P}\cdot\textup{Loc}_{p}^+(\bfT)}}\ar[r]\ar[d]^\nu&0\\
0\ar[r]&{\displaystyle \bigoplus_{v\mid p} H^1_{\FF_+}(K_v,\bfT/\frak{P}\bfT)}\ar[r]&{\displaystyle \bigoplus_{v\mid p} H^1(K_v,\bfT/\frak{P}\bfT)}\ar[r]&\textup{Loc}_{p}^+ (\bfT /\frak{P}\bfT)\ar[r]&0
}$$
Here, the surjectivity of the vertical arrow on the left follows by the definition of induced Selmer structures. By Snake Lemma, it remains to control the kernel of $\frak{h}$, which is evidently injective. The portion of the lemma concerning $\pr_\frak{P}$ now follows.

The second portion concerning the map $\pr_\frak{P}^*$ is \cite[Lemma 3.5.3]{mr02}, which applies since we assume (MR1) and (MR3).
\end{proof}
For $\frak{P}\notin \mathcal{E}_R$\,, we let $(R/\frak{P})^{\mathrm{int}}$ denote the integral closure of of $R/\frak{P}$ in its field of fractions $\textup{Frac}(R/\frak{P})$. We set $(\bfT/\frak{P}\bfT )^{\mathrm{int}}:=\bfT\otimes_R (R/\frak{P})^{\mathrm{int}}$ and since we assume {\rm (MR1)}, observe that it is the unique $G_K$-stable $(R/\frak{P})^{\mathrm{int}}$-lattice in $\bfT_{\frak{P}}/\frak{P}\bfT_{\frak{P}}$. Notice that 
$$H^1(K_p,(\bfT/\frak{P}\bfT )^{\mathrm{int}})\cong H^1(K_p,\bfT/\frak{P}\bfT)\otimes_{R/\frak{P}}(R/\frak{P})^{\mathrm{int}}$$ 
and it therefore follows from Proposition~\ref{prop:localcohom} (used together with the vanishing of $H^2(K_p,\bfT)$ thanks to our running hypotheses) and Corollary~\ref{cor:structuresemilocalcohom} that $H^1(K_p,(\bfT/\frak{P}\bfT )^{\mathrm{int}})$ is a free $(R/\frak{P})^{\mathrm{int}}$-module of rank $d\cdot[K:\QQ]$, spanned by the image of $H^1(K_p,\bfT/\frak{P}\bfT)$ under the tautological inclusion $\iota_\frak{P}: R/\frak{P}\hookrightarrow (R/\frak{P})^{\mathrm{int}}$. We let $H^1_{++}(K_p,(\bfT/\frak{P}\bfT )^{\mathrm{int}})\subset H^1(K_p,(\bfT/\frak{P}\bfT )^{\mathrm{int}})$ denote the submodule spanned by the image $H^1_{++}(K_p,\bfT/\frak{P}\bfT)$ under $\iota_{\frak{P}}$. Then the $(R/\frak{P})^{\mathrm{int}}$-module $H^1_{++}(K_p,(\bfT/\frak{P}\bfT )^{\mathrm{int}})$ is a direct summand of 
the $(R/\frak{P})^{\mathrm{int}}$-module $H^1(K_p,(\bfT/\frak{P}\bfT )^{\mathrm{int}})$ of rank $1+d_+$.

Define the Selmer structure $\FF_{+}^{\frak{P}}$ on $(\bfT/\frak{P}\bfT )^{\mathrm{int}}$ by setting 
\begin{align*}
H^1_{\FF_{+}^{\frak{P}}}(K_\lambda,(\bfT/\frak{P}\bfT )^{\mathrm{int}}):= \ker(H^1(K_\lambda,(\bfT/\frak{P}\bfT )^{\mathrm{int}}) \lra H^1(K_\lambda^\textup{ur},(\bfT/\frak{P}\bfT )^{\mathrm{int}}\otimes\QQ_p)) 
\end{align*}
for $\lambda \in \Sigma^{(p)}$ and requiring the local conditions
$$H^1_{\FF_{+}^{\frak{P}}}(K_p,(\bfT/\frak{P}\bfT )^{\mathrm{int}}):=H^1_{++}(K_p,(\bfT/\frak{P}\bfT )^{\mathrm{int}})$$
at $p$.
{\begin{lemma}
\label{lem:finiteisunrawayfromER} For each $\lambda\in \Sigma^{(p)}$ we have,
$$H^1_{\FF_{+}^{\frak{P}}}(K_\lambda,(\bfT/\frak{P}\bfT )^{\mathrm{int}})=\ker\left(H^1(K_\lambda,(\bfT/\frak{P}\bfT )^{\mathrm{int}})
\lra H^1(K_\lambda^\textup{ur},(\bfT/\frak{P}\bfT )^{\mathrm{int}})\right)$$
In other words, the local condition determined by ${\FF_{+}^{\frak{P}}}$ on $(\bfT/\frak{P}\bfT )^{\mathrm{int}}$ agrees with the unramified condition. 
\end{lemma}
\begin{proof}
The containment 
$$H^1_{\FF_{+}^{\frak{P}}}(K_\lambda,(\bfT/\frak{P}\bfT )^{\mathrm{int}})\supset \ker\left(H^1(K_\lambda,(\bfT/\frak{P}\bfT )^{\mathrm{int}})\lra H^1(K_\lambda^\textup{ur},(\bfT/\frak{P}\bfT )^{\mathrm{int}})\right)$$ 
is obvious. The index of this containment is precisely given by the $p$-part of the Tamagawa factor for the Galois representation $(\bfT/\frak{P}\bfT )^{\mathrm{int}}$ at $\lambda$, which is trivial as $\frak{P}\notin  \mathcal{E}_R$ (therefore, $\textup{Tam}_{\Sigma^{(p)}}(\bfT_{\frak{P}}/\frak{P}\bfT_{\frak{P}})=0$ by definition).
\end{proof}}
Notice that $\iota_\frak{P}$ induces maps 
$$\iota_{\frak{P}}^{v}:\,H^1_{\FF_+}(K_v,\bfT/\frak{P}\bfT)\lra H^1_{\FF_{+}^{\frak{P}}}(K_v,(\bfT/\frak{P}\bfT )^{\mathrm{int}})$$
and for the cohomology groups on Cartier duals the maps 
$$\iota_{\frak{P}}^{v,*}:\,H^1_{\FF_{+}^{\frak{P}}}(K_v,((\bfT/\frak{P}\bfT )^{\mathrm{int}})^\vee(1))\lra 
H^1_{\FF_+^*}(K_v,\bfT^\vee(1)[\frak{P}])\,.$$
for every place $v\in \Sigma$.

\begin{lemma}
\label{lem:localcontrol}
Both maps $\iota_{\frak{P}}^{v}$ and $\iota_{\frak{P}}^{v,*}$ have finite kernel and cokernel whose sizes are bounded by a constant that only depends on $\bfT$, degree of $\frak{P}$ and $[(R/\frak{P})^{\mathrm{int}}:R/\frak{P}]$ as the height one prime $\frak{P}$ varies away from $\mathcal{E}_R$. 
\end{lemma}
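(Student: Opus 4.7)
The plan is to derive both claims from the short exact sequence
\begin{equation*}
0 \lra \bfT/\frak{P}\bfT \lra (\bfT/\frak{P}\bfT)^{\mathrm{int}} \lra Q_\bfT \lra 0
\end{equation*}
obtained by tensoring the natural inclusion $R/\frak{P} \hookrightarrow (R/\frak{P})^{\mathrm{int}}$, whose cokernel $Q$ is finite of order $[(R/\frak{P})^{\mathrm{int}}:R/\frak{P}]$, with the $R$-flat module $\bfT$. The resulting quotient $Q_\bfT := \bfT \otimes_R Q$ is a finite $G_K$-module whose cardinality is bounded by $[(R/\frak{P})^{\mathrm{int}}:R/\frak{P}]^{\,\textup{rank}_R \bfT}$. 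Taking the long exact $G_v$-cohomology, the kernel of the natural map $H^1(K_v, \bfT/\frak{P}\bfT) \lra H^1(K_v, (\bfT/\frak{P}\bfT)^{\mathrm{int}})$ is a quotient of $H^0(K_v, Q_\bfT)$, while its cokernel injects into $H^1(K_v, Q_\bfT)$. Via Tate's local Euler characteristic formula, both groups are finite with cardinality bounded uniformly in terms of $|Q_\bfT|$ and $[K_v:\QQ_p]$, which in turn depends only on the degree of $\frak{P}$ (through the choice of $v$) and on the listed invariants.

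To deduce the corresponding bound for the restricted map $\iota_\frak{P}^{v}$: at $v\mid p$, the definition of $H^1_{++}(K_p,(\bfT/\frak{P}\bfT)^{\mathrm{int}})$ as the $(R/\frak{P})^{\mathrm{int}}$-span of the image of $H^1_{++}(K_p,\bfT/\frak{P}\bfT)$ places $\iota_\frak{P}^{v}$ inside a commutative square with the map on the full local cohomology, so a snake lemma chase confines its kernel and cokernel to subquotients of the already-bounded groups above. For $v \in \Sigma^{(p)}$ with $\frak{P}\notin\mathcal{E}_R$, Remark~\ref{rem:finiteisunrawayfromER} identifies $\FF_+^\frak{P}$ with the unramified condition, and the same cohomological analysis applied to unramified subgroups furnishes the bound; here the hypothesis $\frak{P}\notin\mathcal{E}_R$ is exactly what prevents extra torsion coming from the Tamagawa defect, from the finiteness of $H^1(I_\lambda,\bfT)[\frak{P}]$ and from the $H^2$-terms excluded in the definition of $\mathcal{E}_R$.

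For the dual map $\iota_\frak{P}^{v,*}$, applying the exact functor $\textup{Hom}(-,\pmb{\mu}_{p^\infty})$ to the above short exact sequence produces
\begin{equation*}
0 \lra Q_\bfT^\vee(1) \lra ((\bfT/\frak{P}\bfT)^{\mathrm{int}})^\vee(1) \lra \bfT^\vee(1)[\frak{P}] \lra 0
\end{equation*}
(using that $\bfT^\vee(1)[\frak{P}]\cong(\bfT/\frak{P}\bfT)^\vee(1)$), and repeating the long exact cohomology argument yields bounds of the same shape for the full local map in cohomology. The local Selmer condition on the Cartier dual is by construction the annihilator of the Selmer condition on $\bfT$ under local Tate duality, so $\iota_\frak{P}^{v,*}$ is forced to share the bounds already obtained for $\iota_\frak{P}^{v}$. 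The main technical obstacle will be verifying that the Selmer local conditions transport correctly under dualization for primes $\frak{P}\notin\mathcal{E}_R$; this reduces to the vanishing of the $p$-part of the Tamagawa defects and to the freeness properties of local cohomology established in Corollary~\ref{cor:structuresemilocalcohom}, both of which are built into the definition of $\mathcal{E}_R$.
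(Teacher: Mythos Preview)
Your approach via the short exact sequence $0\to \bfT/\frak{P}\bfT\to (\bfT/\frak{P}\bfT)^{\mathrm{int}}\to Q_\bfT\to 0$ and long exact cohomology is sound and yields the same bounds as the paper, but the organisation differs in one respect that you gloss over. For $v\in\Sigma^{(p)}$, the source of $\iota_\frak{P}^v$ is the \emph{propagated} condition $H^1_{\FF_+}(K_v,\bfT/\frak{P}\bfT)$, i.e.\ the image of $H^1_{\textup{ur}}(K_v,\bfT)$, which is a priori strictly smaller than the intrinsic unramified subgroup $H^1_{\textup{ur}}(K_v,\bfT/\frak{P}\bfT)$. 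Your long exact sequence in $I_v$-cohomology only controls the map out of the latter. The paper handles this gap explicitly by factoring $\iota_\frak{P}^v=\varphi_2\circ\varphi_1$ with $\varphi_1\colon H^1_{\FF_+}(K_v,\bfT/\frak{P}\bfT)\hookrightarrow H^1_{\textup{ur}}(K_v,\bfT/\frak{P}\bfT)$ and identifying $\textup{coker}(\varphi_1)\cong H^2(K_v,\bfT)[\frak{P}]$ via a snake lemma on the obvious diagram; the bound then comes from finite generation of $H^2(K_v,\bfT)$ and $\frak{P}\notin\mathcal{E}_R$. Your final sentence (``the $H^2$-terms excluded in the definition of $\mathcal{E}_R$'') gestures at exactly this, so the idea is present, but the argument is not complete as written without making this step explicit.

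For $\varphi_2$ the paper takes a purely algebraic route: since unramified cohomology is $(-)^{I_v}/(\textup{Fr}_v-1)$, the map is simply $(-)\otimes_{R/\frak{P}}(R/\frak{P})^{\mathrm{int}}$ applied to $H^1_{\textup{ur}}(K_v,\bfT/\frak{P}\bfT)$, with kernel and cokernel the relevant $\mathrm{Tor}_1$ and tensor against the finite module $(R/\frak{P})^{\mathrm{int}}/(R/\frak{P})$. Your Galois-cohomological bound via $H^i(K_v,Q_\bfT)$ achieves the same thing and is arguably cleaner. The paper does not spell out the dual map $\iota_\frak{P}^{v,*}$ at all; your treatment via Pontryagin duality of the short exact sequence is a genuine addition, and is correct once the local conditions are identified as mutual annihilators under Tate duality.
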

\begin{proof}
The case that concerns the primes $v \mid p$ is obvious by the discussion above. Let us take $v$ to be $\lambda\in \Sigma^{(p)}$. 
By Lemma~\ref{lem:finiteisunrawayfromER}, the map $\iota_{\frak{P}}^{v}$ is identical to the composition $\varphi_2 \circ \varphi_1$ of the following maps 
for $\frak{P}\notin  \mathcal{E}_R$:
\begin{align*}
& \varphi_1\,: H^1_{\FF_+}(K_\lambda,\bfT/\frak{P}\bfT)\hookrightarrow H^1_{\textup{ur}}(K_\lambda,\bfT/\frak{P}\bfT), 
\\
& \varphi_2\,:H^1_{\textup{ur}}(K_\lambda,\bfT/\frak{P}\bfT) \lra  H^1_{\textup{ur}}(K_\lambda,(\bfT/\frak{P}\bfT )^{\mathrm{int}})\,, 
\end{align*}
where $H^1_{\textup{ur}}(K_\lambda,\bfT/\frak{P}\bfT):=\ker\left(H^1(K_\lambda,\bfT/\frak{P}\bfT)\lra H^1(K_\lambda^{\textup{ur}},\bfT/\frak{P}\bfT)\right)$. 
Hence, it suffices to prove that the kernel and the cokernel of the maps $\varphi_1$ and $\varphi_2$ are bounded only in terms of $\bfT$ and $[(R/\frak{P})^{\mathrm{int}}:R/\frak{P}]$ as $\frak{P}$ varies away from $\mathcal{E}_R$:
\par 
The fact that the map $\varphi_1$ is injective follows from the definition of the induced Selmer structure $\FF_+$ on $\bfT/\frak{P}\bfT$. Consider the following commutative diagram with exact rows:
$$
\xymatrix{&H^1_{\FF_+}(K_\lambda,\bfT)\otimes R/\frak{P} \ar[r]\ar[d]& H^1(K_\lambda,\bfT)\otimes R/\frak{P}\ar[r]\ar@{^{(}->}[d]&H^1(K_\lambda^{\textup{ur}},\bfT)\otimes R/\frak{P}\ar[r] \ar@{^{(}->}[d]&0
\\
0\ar[r]&H^1_{\textup{ur}}(K_\lambda,\bfT/\frak{P}\bfT)\ar[r]&H^1(K_\lambda,\bfT/\frak{P}\bfT)\ar[r]&H^1(K_\lambda^{\textup{ur}},\bfT/\frak{P}\bfT)\ar[r]&0. 
}$$
The left vertical map is the composition of the natural surjective map 
$H^1_{\FF_+}(K_\lambda,\bfT)\otimes R/\frak{P} \twoheadrightarrow 
H^1_{\FF_+}(K_\lambda,\bfT /\frak{P} \bfT) $ (see Definition \ref{define:selmerobjects}) and the map $\varphi_1$.  
Hence, we have an isomorphism $\textup{coker}(\varphi_1)\cong H^2(K_\lambda,\bfT)[\frak{P}]$ by Snake Lemma. 
The cohomology group $H^2(K_\lambda,\bfT)$ is isomorphic to the $G_{K_\lambda}$-coinvariants
$\bfT (-1)_{G_{K_\lambda}}$ of $\bfT (-1)$, which is finitely generated over $R$ by local Tate duality. 
Hence, the size of $H^2(K_\lambda,\bfT)[\frak{P}]$ is bounded when $\mathfrak{P}$ varies. 
\par 
Next, we turn our attention to $\varphi_2$, which is nothing but the map obtained by 
applying the functor $\otimes_{R/\mathfrak{P}} H^1_{\textup{ur}}(K_\lambda,\bfT/\frak{P}\bfT)$ to 
the injection $R/\mathfrak{P} \hookrightarrow (R/\mathfrak{P})^{\mathrm{int}}$. 
Hence, the kernel (resp. cokernel) of the map $\varphi_2$ is 
$\mathrm{Tor}_{R/\mathfrak{P}} \left(\tfrac{(R/\mathfrak{P})^{\mathrm{int}}}{R/\mathfrak{P}}, H^1_{\textup{ur}}(K_\lambda,\bfT/\frak{P}\bfT)\right) $ (resp. $\dfrac{(R/\mathfrak{P})^{\mathrm{int}}}{R/\mathfrak{P}}\otimes_{R/\mathfrak{P}} H^1_{\textup{ur}}(K_\lambda,\bfT/\frak{P}\bfT)$), which is bounded in the desired manner.
\end{proof}
\begin{lemma}
\label{lem:globalcontrol}
For every height one prime $\frak{P}\subset R$, the composition $\bfT\twoheadrightarrow \bfT/\frak{P}\bfT\hookrightarrow (\bfT/\frak{P}\bfT )^{\mathrm{int}}$ induces maps
\begin{align*}
& \pi_{\frak{P}}:\,\dfrac{H^1_{\FF_+}(K,\bfT)}{\frak{P}H^1_{\FF_+}(K,\bfT)}\lra H^1_{\FF_+^{\frak{P}}}(K,(\bfT/\frak{P}\bfT )^{\mathrm{int}})\\  
& \pi_{\frak{P}}^*:\,H^1_{\FF_+^{\frak{P},*}}(K,((\bfT/\frak{P}\bfT )^{\mathrm{int}})^\vee (1))\lra H^1_{\FF_+^{*}}(K,\bfT^\vee(1))[\frak{P}]\,.
\end{align*}
When $\frak{P}$ is not exceptional, both maps $\pi_{\frak{P}}$ and $\pi_{\frak{P}}^{*}$ have finite kernel and cokernel, of size bounded depending only on $\bfT$, degree of $\frak{P}$ and $[(R/\frak{P})^{\mathrm{int}}:R/\frak{P}]$. Furthermore, when $\bfT=\TT_\cyc$ the map $\pi_{\frak{P}}$ is injective.
\end{lemma}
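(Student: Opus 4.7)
The approach is to factor each of the maps $\pi_\frak{P}$ and $\pi_\frak{P}^*$ as a composition of two simpler comparison maps, handling the first factor with Lemma~\ref{lem:cartesianfordimension1} and the second with Lemma~\ref{lem:localcontrol} together with a snake-lemma argument. The map $\pi_\frak{P}$ is most naturally produced as the composition
$$
\pi_\frak{P}\,:\,\frac{H^1_{\FF_+}(K,\bfT)}{\frak{P}H^1_{\FF_+}(K,\bfT)}\xrightarrow{\pr_\frak{P}}H^1_{\FF_+}(K,\bfT/\frak{P}\bfT)\xrightarrow{\tau_\frak{P}}H^1_{\FF_+^{\frak{P}}}(K,(\bfT/\frak{P}\bfT)^{\mathrm{int}}),
$$
where $\pr_\frak{P}$ is the projection of Lemma~\ref{lem:cartesianfordimension1}(1) and $\tau_\frak{P}$ is induced from the $G_{K,\Sigma}$-equivariant inclusion $\bfT/\frak{P}\bfT\hookrightarrow(\bfT/\frak{P}\bfT)^{\mathrm{int}}$ (injective since $\bfT/\frak{P}\bfT$ is $R/\frak{P}$-free). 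The composition descends to the $\frak{P}$-quotient on the source since the target is annihilated by $\frak{P}$. Lemma~\ref{lem:cartesianfordimension1}(1) already supplies the bound on (and, in the cyclotomic case, the injectivity of) $\pr_\frak{P}$.

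To bound $\tau_\frak{P}$, I would use the short exact sequence of $G_{K,\Sigma}$-modules
$$
0\lra\bfT/\frak{P}\bfT\lra(\bfT/\frak{P}\bfT)^{\mathrm{int}}\lra Q\lra 0,
$$
where $Q$ is a finite $R$-module whose cardinality depends only on $\textup{rank}_R\bfT$ and on $[(R/\frak{P})^{\mathrm{int}}:R/\frak{P}]$. I would then run the snake lemma on the pair of Selmer-defining short exact sequences associated to $\FF_+$ on $\bfT/\frak{P}\bfT$ and $\FF_+^{\frak{P}}$ on $(\bfT/\frak{P}\bfT)^{\mathrm{int}}$. In the middle column, the kernel and cokernel are controlled by $H^i(K_\Sigma/K,Q)$ for $i=0,1$, both finite and of order uniformly bounded as $\frak{P}$ varies in $\mathcal{S}_R\setminus\mathcal{E}_R$ (this is the content of the hypotheses built into $\mathcal{E}_R$, together with Lemma~\ref{lemma:badplacesavoidingtamagawadefectisfinite}). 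In the right-hand column the comparison is the direct sum of the local maps $\iota_\frak{P}^v$ of Lemma~\ref{lem:localcontrol}, which are handled place-by-place. A routine diagram chase assembles these into the sought-after uniform bound on $\ker\tau_\frak{P}$ and $\textup{coker}\,\tau_\frak{P}$, and hence on $\ker\pi_\frak{P}$ and $\textup{coker}\,\pi_\frak{P}$.

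For $\pi_\frak{P}^*$ I would proceed dually, writing
$$
\pi_\frak{P}^*\,:\,H^1_{\FF_+^{\frak{P},*}}(K,((\bfT/\frak{P}\bfT)^{\mathrm{int}})^\vee(1))\xrightarrow{\tau_\frak{P}^*}H^1_{\FF_+^*}(K,\bfT^\vee(1)[\frak{P}])\xrightarrow{\pr_\frak{P}^*}H^1_{\FF_+^*}(K,\bfT^\vee(1))[\frak{P}],
$$
where $\tau_\frak{P}^*$ is induced by the Pontrjagin dual of $\bfT/\frak{P}\bfT\hookrightarrow(\bfT/\frak{P}\bfT)^{\mathrm{int}}$, a surjection whose kernel is the finite module $Q^\vee$, and the second arrow is the isomorphism of Lemma~\ref{lem:cartesianfordimension1}(2). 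The same snake-lemma argument, now with the local piece handled by the dual maps $\iota_\frak{P}^{v,*}$ of Lemma~\ref{lem:localcontrol}, yields the claimed bounds on $\pi_\frak{P}^*$.

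Finally, for the injectivity of $\pi_\frak{P}$ in the cyclotomic case $\bfT=\TT_\cyc$, Lemma~\ref{lem:cartesianfordimension1}(1) makes $\pr_\frak{P}$ injective, so it suffices to verify that $\tau_\frak{P}$ is injective on the image of $\pr_\frak{P}$. The kernel of $\tau_\frak{P}$ on all of $H^1(K_\Sigma/K,\TT_\cyc/\frak{P}\TT_\cyc)$ is a quotient of $H^0(K_\Sigma/K,Q)$, and the tautological inverse cyclotomic twist built into $\TT_\cyc$ kills this $H^0$ whenever $\frak{P}$ does not contain the augmentation ideal of $\Lambda_\cyc$; such $\frak{P}$ may be harmlessly absorbed into the exceptional set. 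The main technical difficulty throughout is the uniformity of the various bounds as $\frak{P}$ ranges over $\mathcal{S}_R\setminus\mathcal{E}_R$, and this is precisely what the definition of $\mathcal{E}_R$ is engineered to ensure.
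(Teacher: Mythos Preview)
Your overall strategy---factoring $\pi_\frak{P}$ as $\tau_\frak{P}\circ\pr_\frak{P}$, handling $\pr_\frak{P}$ via Lemma~\ref{lem:cartesianfordimension1} and $\tau_\frak{P}$ via a snake-lemma comparison with local input from Lemma~\ref{lem:localcontrol}---is exactly the paper's approach, and the dual argument for $\pi_\frak{P}^*$ is likewise on target.

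There is, however, a genuine error in your injectivity argument for $\tau_\frak{P}$. You claim that the cyclotomic twist built into $\TT_\cyc$ kills $H^0(K_\Sigma/K,Q)$ provided $\frak{P}$ avoids the augmentation ideal. But the character $\widetilde{\chi}_\cyc^{-1}$ becomes \emph{trivial} modulo $\mathfrak{m}_R$ (since $\Gamma_\cyc\cong 1+p\ZZ_p$ has trivial image in $\texttt{k}^\times$), so the twist contributes nothing at the residual level and cannot be the source of the vanishing. The correct reason is purely representation-theoretic and has nothing to do with the cyclotomic direction: writing $Q=(\bfT/\frak{P}\bfT)\otimes_{R/\frak{P}}\bigl((R/\frak{P})^{\mathrm{int}}/(R/\frak{P})\bigr)$ and filtering the finite $R/\frak{P}$-module $(R/\frak{P})^{\mathrm{int}}/(R/\frak{P})$ by copies of $\texttt{k}$, one sees that every Jordan--H\"older constituent of $Q$ as a $\texttt{k}[G_K]$-module is isomorphic to $\overline{T}$. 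Now {\rm (MR3)} gives $H^0(K,\overline{T})=0$, hence $H^0(K_\Sigma/K,Q)=0$ and $\tau_\frak{P}$ is injective for \emph{every} height-one prime $\frak{P}$ and for \emph{any} $\bfT$. What is special about the cyclotomic case is only the injectivity of $\pr_\frak{P}$, already supplied by Lemma~\ref{lem:cartesianfordimension1}(1). In particular, no primes need to be ``absorbed into the exceptional set''.

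A smaller point: the uniform bounds on $\#H^i(K_\Sigma/K,Q)$ do not come from the definition of $\mathcal{E}_R$. They follow from the same Jordan--H\"older analysis above: the length of $Q$ is bounded in terms of $\textup{rank}_R\bfT$ and $[(R/\frak{P})^{\mathrm{int}}:R/\frak{P}]$, and each constituent is $\overline{T}$, so $\#H^i(K_\Sigma/K,Q)$ is bounded by a power of the fixed finite number $\#H^i(K_\Sigma/K,\overline{T})$. The exceptional set $\mathcal{E}_R$ is used elsewhere (for $\pr_\frak{P}$ and for the Tamagawa comparison in Lemma~\ref{lem:localcontrol}), not here.
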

\begin{proof}
When $\bfT=\TT_\cyc$, this is nothing but \cite[Proposition 5.3.14]{mr02}. Here we extend their arguments to treat the more general set up 
where $\bfT$ is not necessarily equal to $\TT_\cyc$. The main difficulty arises due to Tamagawa factors and their variation in families; we may circumvent this issue thanks to Lemma~\ref{lemma:badplacesavoidingtamagawadefectisfinite} and Lemma~\ref{lem:localcontrol}. 

As in the proof of Lemma~\ref{lem:cartesianfordimension1}, we start with the observation that the size of the cokernel of the map (\ref{eqn:globaldescentraw}) 
is bounded by a constant that depends only on $\bfT$ as $\frak{P}$ varies away from $\mathcal{E}_R$. Likewise, the size of the cokernel of the {map \be\label{eqn:globaldescentrawbasechangedtoIC}
H^1(K_\Sigma/K,\bfT/\frak{P}\bfT) \lra H^1(K_\Sigma/K,(\bfT/\frak{P}\bfT )^{\mathrm{int}})
\ee
is also bounded by a constant depending only on $\bfT$ and $[(R/\frak{P})^{\mathrm{int}}:R/\frak{P}]$. It is also not difficult to see that the map \eqref{eqn:globaldescentrawbasechangedtoIC} is injective, using the following facts as in the proof of \cite[Proposition 5.3.14]{mr02}:
\begin{itemize}
\item[(i)] The $R$-module $\tfrac{(R/\mathfrak{P})^{\mathrm{int}}}{R/\mathfrak{P}}$ has a Jordan-H\"older filtration in which all quotients isomorphic to $R/\mm_R$,
\item[(ii)] Our running assumption that $H^0(K_\Sigma/K,\overline{T})=0$ combined with \cite[Lemma 3.5.2]{mr02} shows that 
$$H^0(K_\Sigma/K,(\bfT/\frak{P}\bfT )^{\mathrm{int}}\big{/}(\bfT/\frak{P}\bfT ))=0\,.$$ 
\end{itemize} 
} Furthermore, the existence of the map $\iota_{\frak{P}}^{v}$ tells us that the map (\ref{eqn:globaldescentrawbasechangedtoIC}) restricts to an injective map
\be\label{eqn:globaldescentSelmerbasechangedtoIC}
H^1_{\FF_+}(K,\bfT/\frak{P}\bfT) \hookrightarrow H^1_{\FF_+^{\frak{P}}}(K,(\bfT/\frak{P}\bfT )^{\mathrm{int}})
\ee
whose cokernel is bounded only in terms of $\bfT$, degree of $\frak{P}$ and $[(R/\frak{P})^{\mathrm{int}}:R/\frak{P}]$  by our previous observations concerning the cokernel of the map (\ref{eqn:globaldescentrawbasechangedtoIC}) and Lemma~\ref{lem:localcontrol}. As the map $\pi_{\frak{P}}$ is the compositum of $\pr_{\frak{P}}$ and the map (\ref{eqn:globaldescentSelmerbasechangedtoIC}), the desired properties of $\ker(\pi_\frak{P})$ and $\textup{coker}\left(\pi_\frak{P}\right)$ follows from the observations above and Lemma~\ref{lem:cartesianfordimension1}.

The bounds on the kernel and cokernel of the map ${\pi}^*_{\frak{P}}$ is obtained in a similar manner, using the second portion of Lemma~\ref{lem:cartesianfordimension1} and \ref{lem:localcontrol}.
\end{proof}

We are now ready to resume with the proof of Theorem~\ref{thm:weakleopoldt} in the situation when $r=1$. 

\begin{proof}[Proof of Theorem~\ref{thm:weakleopoldt} for $r=1$]
Since $c_K \in H^1(K_\Sigma/K,\bfT)$ is non-zero, there are only finitely many height-one primes $\frak{Q}\subset R$ with $c_K\in \frak{Q} H^1(K_\Sigma/K,\bfT)$. Pick a prime $\frak{P}\not\in \mathcal{E}_R$ such that $c_K\not\in \frak{P} H^1(K_\Sigma/K,\bfT)$. 
Then the leading term $\kappa_1^{(\frak{P})}\in H^1_{\FF_+^{\frak{P}}}(K,(\bfT/\frak{P}\bfT )^{\mathrm{int}})$ of 
the Kolyvagin system $\kappa^{(\frak{P})}\in \KS((\bfT/\frak{P}\bfT )^{\mathrm{int}},\FF_+^{\frak{P}})$ obtained as 
the specialization mod $\frak{P}$ of 
the Kolyvagin system $\Psi^{\textup{MR}}(\mathbf{c})\in \KS(\bfT,\FF_+)$ is non-zero. 
Applying \cite[Theorem 5.2.2]{mr02} for $\kappa^{(\frak{P})}$ with the discrete valuation ring $(R/\frak{P})^{\mathrm{int}}$, it follows that the module $H^1_{\FF_+^{\frak{P},*}}(K,((\bfT/\frak{P}\bfT )^{\mathrm{int}})^\vee (1))$ has finite cardinality. Thus, by the statement of Lemma~\ref{lem:globalcontrol} concerning $\pi^\ast_\frak{P}$, the module $H^1_{\FF_+^{*}}(K,\bfT^\vee (1))[\mathfrak{P}]$ is also 
finite and hence cotorsion over $R/\frak{P}$. By the structure theorem of finitely generated $R$-modules, 
$H^1_{\FF_+^{*}}(K,\bfT^\vee (1))$ must be cotorsion over $R$ and this concludes the proof of (1) when $r=1$. 
\par 
{In order to prove (2), notice that $H^1_{\FF_+}(K,\bfT)\subset H^1(K_\Sigma/K,\bfT)$ is $R$-torsion-free as  we assume (H.0). Hence, since $c_K\in H^1_{\FF_+}(K,\bfT)$ is non-zero, it follows that the generic $R$-rank of $H^1_{\FF^+}(K,\bfT)$ is at least one. }
In the case when $\bfT=\TT_\cyc$, let us choose a non-exceptional height one prime $\frak{P}$ that is generated by a linear element. Note that 
$R/\frak{P}$ is a discrete valuation ring and finite flat over $\mathbb{Z}_p$. 
By applying the second part of Theorem~\ref{thm:weakleopoldt0}(1) with $\TT = \bfT /\mathfrak{P} \bfT$, we see that  
$H^1(K_\Sigma/K,\bfT /\mathfrak{P} \bfT )$ is free of rank one over $R/\mathfrak{P}$. Thanks to the injectivity of $\pi_\mathfrak{P}$, 
$\dfrac{H^1(K_\Sigma/K,\bfT )}{\mathfrak{P} H^1(K_\Sigma/K,\bfT )}$
is a non-zero $R/\mathfrak{P}$-submodule of $H^1(K_\Sigma/K,\bfT /\mathfrak{P} \bfT )$. Since non-trivial submodules of free modules of rank one over a discrete valuation ring is still free of rank one, we see that 
$\dfrac{H^1(K_\Sigma/K,\bfT )}{\mathfrak{P} H^1(K_\Sigma/K,\bfT )}$ is free of rank one over $R/\mathfrak{P}$ in this case. 
By Nakayama's lemma, we conclude that $H^1(K_\Sigma/K,\bfT )$ is free of rank one over $R$ when $\bfT=\TT_\cyc$. 
\par 
We now consider the case of general $\bfT$. 
As in (\ref{eqn:nonperfectdescent}), we have the following diagram for each $\mathfrak{P}$ generated by a linear element:
\be\label{eqn:nonperfectdescentmodlinearelements}
\xymatrix{\textup{Tor}_1^R(R/\mathfrak{P},M_\Sigma^+)\ar[r]^{f_\frak{P}}&{\displaystyle\frac{ H^1_{\FF_+}(K,\bfT)}{\mathfrak{P} H^1_{\FF_+}(K,\bfT)}}\ar[r]\ar[d]_{\nu_\mathfrak{P}}&{\displaystyle\frac{ H^1(K_\Sigma/K,\bfT)}{\mathfrak{P}H^1(K_\Sigma/K,\bfT)}}\ar[r]^(.6){\res_\Sigma}\ar@{^{(}->}[d]&{\displaystyle \frac{\textup{Loc}_{\Sigma}^+(\bfT)}
{\mathfrak{P}\textup{Loc}_{\Sigma}^+(\bfT)}}\ar[d]\\
0\ar[r]&{\displaystyle{ H^1_{\FF_+}(K,\bfT/\mathfrak{P} \bfT)}}\ar[r]&H^1(K_\Sigma/K,\bfT/\mathfrak{P} \bfT)\ar[r]&{\textup{Loc}_{\Sigma}^+(\bfT /\mathfrak{P} \bfT)}
}
\ee

Suppose on the contrary to our claim that the $R$-module $H^1_{\FF_+}(K,\bfT)$ has rank at least $2$. By Theorem~\ref{thm:weakleopoldt0} (1) 
and the structure theorem of $R$-modules, $\ker(\nu_\mathfrak{P})$ is not $R/\mathfrak{P}$-torsion. {A simple diagram chase shows that $\ker(\nu_\mathfrak{P})\subset f_{\frak{P}}\left(\textup{Tor}_1^R(R/\mathfrak{P},M_\Sigma^+)\right)$.} Thus for all but finitely many $\mathfrak{P}$ 
generated by linear elements, the $R/\mathfrak{P}$-module $\textup{Tor}_1^R(R/\mathfrak{P},M_\Sigma^+)\cong M_\Sigma^+[\mathfrak{P}]$ is non-torsion as well. This is only possible when the characteristic ideal of $M_{R\textup{-tor}}$ is divisible by all but finitely many $\mathfrak{P}$, which is clearly absurd. 
\par 
For the proof of (3), we make crucial use of Proposition \ref{pro:char0}. 
By making use of Theorem~\ref{thm:weakleopoldt0}(2) and applying Proposition~\ref{pro:char0}(1)(b) and Proposition~\ref{pro:char0}(2)(b) to $M= H^1_{\FF_+^*}(K,\bfT^\vee (1))^\vee$ and $N= H^1_{\FF_+}(K,\bfT)\big{/}R c_K$, we conclude the proof of 
the desired divisibility. We remark that the case when $\bfT=\TT_\cyc$ is also the subject of \cite[Section 5.3]{mr02} and our assertion is verified as part of \cite[Theorem 5.3.10]{mr02}. 
\end{proof}

\textbf{Case} $\mathbf{r>1.}$\label{pageref:caser>1} Suppose now that $R$ is a regular ring of dimension $r+1>2$ which is isomorphic to a power series ring with coefficients in $\mathcal{O}$ in $r$ variables. Since we assume that $c_K$ is non-zero, we have $c_K \notin l H^1(K_{\Sigma}/K,\bfT)$ 
for all but finitely many $(l) \in \mathcal{L}^{(n)}_{\mathcal{O}}$ in the sense of Definition~\ref{def:charideal}. 
The obvious projection maps give rise to a restricted Euler system $\mathbf{c}^{(l)} \in \ES^+(\bfT/ l\bfT)$ 
The initial term $c_K^{(l)}$ is non-zero and the condition {\rm (MR2)} for $\bfT/ l\bfT$ holds true for all but finitely many $(l ) \in \mathcal{L}^{(n)}_{\mathcal{O}}$. 
Identifying the ring $R/(l)$ with a power series ring in $r-1$ variables (resp. in $r$-variables in case $\bfT=\TT_\cyc$), 
it follows by induction that 
\begin{itemize}
\item[(Ind1)] The $R/(l)$-module $H^1_{\FF_{+,l}^*}(K,\bfT/ l\bfT^\vee (1))$ is cotorsion and $H^1_{\FF_{+,l}}(K,\bfT/ l\bfT)$ has rank one;
\item[(Ind2)] $\textup{char}_{R/(l)}(H^1_{\FF_{+,l}^*}(K,(\bfT/ l\bfT )^\vee (1))^\vee)\, \, \supset \, \, 
\textup{char}_{R/(l)}(H^1_{\FF_{+,l}}(K, \bfT/ l\bfT)/(R/(l)) c_K^{(l)})$
\end{itemize} 
for all but finitely many choices of $(l ) \in \mathcal{L}^{(n)}_{\mathcal{O}}$. Here, $\FF_{+,l}$ is the Selmer structure on $\bfT/ l\bfT$ which is given by the local conditions determined by $\FF_+$ at primes above $p$ and by the unramified local conditions $H^1_{\FF_{+,l}}(K_\lambda,\bfT/ l\bfT):=H^1_{\textup{ur}}(K_\lambda,\bfT/ l\bfT)$ at primes $\lambda \in \Sigma^{(p)}$. Let us set the following module:
\begin{equation}\label{equation:moduleQ}
Q:=\dfrac{H^1(K_p,\bfT)}{H^1_{\FF_+}(K_p,\bfT)+\res_p(H^1_{\FFc}(K,\bfT))}.
\end{equation}
\begin{define}
\label{def:nonexceptionallinearelements}
We define the \emph{exceptional set of linear elements} $\mathcal{E}_{\bfT}^{(r)}$ as the set of $(l)\in \mathcal{L}^{(n)}_{\mathcal{O}}$ such that 
\begin{equation*}
\left( H^2(K_\Sigma/K, \bfT) \oplus \bigoplus_{v\in\Sigma}H^2(K_v,\bfT) \oplus \bigoplus_{\lambda\in\Sigma^{(p)}}H^1(I_\lambda,\bfT)^{\textup{Fr}_\lambda=1} 
\oplus Q \oplus H^1_{\FFc^*}(K,\bfT^\vee(1))^\vee \right) [l]
\end{equation*} 
is not a pseudo-null $R$-module. 
We set $$
\mathbf{Err}_{\Sigma}:= \oplus_{v\in\Sigma}H^2(K_v,\bfT)_{\textup{null}}
$$ 
where we write $M_{\textup{null}}$ for the maximal pseudo-null submodule of a finitely generated $R$-module $M$. 
\end{define}
For each positive integer $r$, note that the set $\mathcal{E}_{\bfT}^{(r)}$ has finite cardinality.
\begin{lemma}
\label{lem:localcontrolhigherdim}
The module 
$$
\mathcal{D}_{\Sigma,l}^+:=\bigoplus_{v\in \Sigma} \dfrac{H^1_{\FF_{+,l}}(K_v,\bfT/ l\bfT)}{H^1_{\FF_+}(K_v,\bfT/ l\bfT)}$$ 
is isomorphic to a submodule of $\mathbf{Err}_{\Sigma}[l]$ for all $l \not\in \mathcal{E}_{\bfT}^{(r)}$.
\end{lemma}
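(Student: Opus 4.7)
The plan is to bound each local summand of $\mathcal{D}_{\Sigma,l}^+$ inside the corresponding summand of $\mathbf{Err}_{\Sigma}[l]$. At places $v$ above $p$, the Selmer structure $\FF_{+,l}$ is defined to coincide with the propagation of $\FF_+$, so the local contribution to $\mathcal{D}_{\Sigma,l}^+$ at $v$ vanishes and the inclusion into the $v$-summand of $\mathbf{Err}_\Sigma[l]$ is trivial. The entire work therefore takes place at primes $\lambda \in \Sigma^{(p)}$, where $H^1_{\FF_{+,l}}(K_\lambda,\bfT/l\bfT) = H^1_{\ur}(K_\lambda,\bfT/l\bfT)$ while $H^1_{\FF_+}(K_\lambda,\bfT/l\bfT)$ is, by the definition of propagated local conditions, the image of $H^1_{\ur}(K_\lambda,\bfT)$ under the natural reduction $H^1(K_\lambda,\bfT)\to H^1(K_\lambda,\bfT/l\bfT)$.

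At such $\lambda$, I analyze the commutative diagram with exact rows
$$
\xymatrix@C=.4cm{
0 \ar[r] & K_1 \ar[r] \ar[d]_{\bar\alpha} & H^1(K_\lambda, \bfT)/l \ar[r] \ar@{^{(}->}[d]_\beta & H^1(I_\lambda, \bfT)^{\Fr_\lambda=1}/l \ar[r] \ar@{^{(}->}[d]_\gamma & 0 \\
0 \ar[r] & H^1_{\ur}(K_\lambda, \bfT/l\bfT) \ar[r] & H^1(K_\lambda, \bfT/l\bfT) \ar[r] & H^1(I_\lambda, \bfT/l\bfT)^{\Fr_\lambda=1} \ar[r] & 0
}
$$
whose rows are the short exact sequences coming from inflation--restriction for $I_\lambda\triangleleft G_\lambda$, with $K_1$ the kernel of the top projection and $\bar\alpha$ the induced map on kernels. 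The identifications that will enter the argument are: (i) $K_1$ agrees with the image of $H^1_{\ur}(K_\lambda,\bfT)/l$ in $H^1(K_\lambda,\bfT)/l$, a consequence of the surjectivity of $H^1(K_\lambda,\bfT)\twoheadrightarrow H^1(I_\lambda,\bfT)^{\Fr_\lambda=1}$; (ii) $\beta$ is injective with $\coker(\beta)\cong H^2(K_\lambda,\bfT)[l]$ via the Bockstein of $0\to\bfT\xrightarrow{\cdot l}\bfT\to\bfT/l\bfT\to 0$; and (iii) $\gamma$ is injective with $\coker(\gamma)\cong H^2(K_\lambda,\bfT)[l]$, obtained by combining the connecting homomorphism in the Frobenius-cohomology of $0\to H^1(I_\lambda,\bfT)\xrightarrow{\cdot l}H^1(I_\lambda,\bfT)\to H^1(I_\lambda,\bfT)/l\to 0$ with the Hochschild--Serre identification $H^2(K_\lambda,\bfT)\cong H^1(\textup{Gal}(K_\lambda^{\ur}/K_\lambda),H^1(I_\lambda,\bfT))$, available because the $p$-cohomological dimension of $I_\lambda$ is at most one for $\lambda\in\Sigma^{(p)}$.

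Granting these identifications, the snake lemma produces the short exact sequence
$$0 \to \coker(\bar\alpha) \to \coker(\beta) \to \coker(\gamma) \to 0,$$
and since $\mathcal{D}_{\lambda,l}^+=\coker(\alpha)=\coker(\bar\alpha)$ (because the surjection $H^1_{\ur}(K_\lambda,\bfT)/l\twoheadrightarrow K_1$ from (i) does not alter the cokernel), this realizes $\mathcal{D}_{\lambda,l}^+$ as a submodule of $H^2(K_\lambda,\bfT)[l]$. Finally, because $(l)\notin\mathcal{E}^{(r)}_{\bfT}$, the module $H^2(K_\lambda,\bfT)[l]$ is pseudo-null as an $R$-module and is therefore contained in the maximal pseudo-null submodule, so $H^2(K_\lambda,\bfT)[l]=H^2(K_\lambda,\bfT)_{\textup{null}}[l]$; summing over $v\in\Sigma$ yields the desired embedding $\mathcal{D}_{\Sigma,l}^+\hookrightarrow\mathbf{Err}_{\Sigma}[l]$.

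The step I expect to demand the most care is (iii): verifying that the Hochschild--Serre isomorphism is compatible with the two different Bockstein constructions realizing $\coker(\beta)$ and $\coker(\gamma)$ as $H^2(K_\lambda,\bfT)[l]$, so that the snake-lemma sequence above takes the form claimed. The reduction at primes above $p$, the identifications (i)--(ii), and the absorption into the pseudo-null part are all essentially formal once the definition of $\mathcal{E}^{(r)}_{\bfT}$ is unpacked.
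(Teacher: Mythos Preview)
Your argument is correct and follows essentially the same route as the paper: a snake-lemma comparison between the propagated $\FF_+$-condition and the intrinsic unramified condition at $\lambda\in\Sigma^{(p)}$, embedding the local defect into $H^2(K_\lambda,\bfT)[l]$, and then absorbing this into the pseudo-null part via the definition of $\mathcal{E}_{\bfT}^{(r)}$. Your observation that the contribution at $v\mid p$ is zero by the very definition of $\FF_{+,l}$ is cleaner than the paper's somewhat elliptical ``similar diagram'' remark.

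One simplification: you do not need the full identification of $\coker(\gamma)$ in (iii), nor any compatibility between the two Bockstein/Hochschild--Serre realizations. The snake lemma already gives $\coker(\bar\alpha)\hookrightarrow\coker(\beta)$ as soon as $\ker(\gamma)=0$, and that injectivity is immediate from $M^{\Fr=1}/l\hookrightarrow(M/l)^{\Fr=1}$ with $M=H^1(I_\lambda,\bfT)$ together with $M/l\cong H^1(I_\lambda,\bfT/l\bfT)$. The paper streamlines this further by placing $H^1(K_\lambda^{\ur},\bfT)$ (rather than its Frobenius-invariants) in the third column, so the right vertical map is visibly injective and the worry you flag about (iii) never arises.
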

\begin{proof}
We sketch a proof which is based on \cite[Lemma 4.1]{ochiai-AIF} and a simple diagram chase as in the proof of Lemma~\ref{lem:cartesianfordimension1}. 
For $v\nmid p$, observe that the submodule $H^1_{\FF_+}(K_v,\bfT/ l\bfT)\subset H^1_{\FF_+,l}(K_v,\bfT/ l\bfT)$ may be identified with the image of the map $\psi$ that is given as part of the commutative diagram below with exact rows:
$$\xymatrix{&H^1_{\FF_+}(K_v,\bfT)\otimes R/(l) \ar[r]\ar@{^{(}->}[d]^{\psi}& H^1(K_v,\bfT)\otimes R/(l)\ar[r]\ar@{^{(}->}[d]&H^1(K_v^{\textup{ur}},\bfT)\otimes R/(l)\ar[r] \ar@{^{(}->}[d]&0
\\
0\ar[r]&H^1_{\textup{ur}}(K_v,\bfT/ l\bfT)\ar[r]&H^1(K_v,\bfT/ l\bfT)\ar[r]&H^1(K_v^{\textup{ur}},\bfT/ l\bfT)\ar[r]&0.
}$$
By Snake Lemma, we have 
$$
\dfrac{H^1_{\FF_{+,l}}(K_v,\bfT/ l\bfT)}{H^1_{\FF_+}(K_v,\bfT/ l\bfT)}\cong\textup{coker}(\psi)\subset H^2(K_v,\bfT)[l]\subset H^2(K_v,\bfT)_{\textup{null}}.
$$
The final containment follows from the fact that $l \not\in \mathcal{E}_{\bfT}^{(r)}$. A similar diagram for primes above $p$ concludes the proof. \end{proof}

We return back to the proof of Theorem~\ref{thm:weakleopoldt}.
\begin{proof}[Proof of Theorem~\ref{thm:weakleopoldt} \rm{(1)} for $r>1$] It follows using Lemma~\ref{lem:localcontrolhigherdim} together with (Ind1) that the $R/(l)$-module $H^1_{\FF_{+}^*}(K,(\bfT/ l\bfT )^\vee(1))$ is also cotorsion for all but finitely many $(l) \in \mathcal{L}^{(r)}_{\mathcal{O}}$. 
Under the running hypothesises of \S \ref{sub:ESboundmain}, it is not difficult to prove the following control result 
$$
H^1_{\FF_{+}^*}(K, (\bfT/ l\bfT) ^\vee(1)) \cong 
H^1_{\FF_{+}^*}(K, \bfT^\vee (1))[l]
$$ (see \cite[Lemma~3.5.3]{mr02}) . 
This concludes the proof of (1) by using the structure theorem of finitely generated $R$-modules. 
\end{proof}
\begin{proof}[Proof of Theorem~\ref{thm:weakleopoldt} \rm{(2)} for $r>1$] It follows from Lemma~\ref{lem:localcontrolhigherdim} and (Ind1) that the $R/(l)$-module $H^1_{\FF_{+}}(K,\bfT/ l\bfT)$ is torsion-free of generic rank one for all but finitely many linear elements $l$. 
Furthermore, thanks to our running hypotheses (H.0) and (H.2), one may argue as in the proof of Proposition~\ref{prop:localcohom} to show that $H^1_{\FF_{+}}(K,\bfT/ l\bfT)$ is $R/(l)$-torsion free. Notice that we have an injective homomorphism
\be\label{eqn:injectivedescentmodlglobal}
\dfrac{H^1(K_\Sigma/K,\bfT)}{ l H^1(K_\Sigma/K,\bfT) }\hookrightarrow H^1(K_\Sigma/K,\bfT/ l\bfT)\,.
\ee

We first handle the case when $\bfT=\TT_\cyc$ and $R=\mathcal{R}_\cyc$. Observe that the quotient $\dfrac{H^1(K_p,\bfT)}{H^1_{\FF_+}(K_p,\bfT)}$ is torsion free by our assumptions and it therefore follows from the exact sequence (which is deduced from the fact that $H^1(K_\lambda,\bfT)=H^1_{\textup{ur}}(K_\lambda,\bfT)$ for $\lambda \in \Sigma^{(p)}$)
$$
0\lra H^1_{\FF_+}(K,\bfT)\lra H^1(K,\bfT) \lra \dfrac{H^1(K_p,\bfT)}{H^1_{\FF_+}(K_p,\bfT)}
$$
that the map (\ref{eqn:injectivedescentmodlglobal}) induces an injection
$$
\dfrac{H^1_{\FF_+}(K,\bfT)}{l H^1_{\FF_+}(K,\bfT)}\hookrightarrow H^1_{\FF_+}(K,\bfT/ l\bfT)\,.$$ 

As $H^1_{\FF_+}(K,\bfT/ l\bfT)$ is a torsion-free $R/(l)$-module of rank one for all but finitely many $l$, it follows that the quotient 
$\dfrac{H^1_{\FF_+}(K,\bfT)}{l H^1_{\FF_+}(K,\bfT)}$ is either trivial or has $R/(l)$-rank one. If the former were the case, it would follow from Nakayama's lemma that $H^1_{\FF_+}(K,\bfT)=0$, contrary to the fact that this module contains the non-zero element $c_K$. We therefore conclude that the quotient 
$\dfrac{H^1_{\FF_+}(K,\bfT)}{l H^1_{\FF_+}(K,\bfT)}$ is an $R/(l)$-module of rank one, for all but finitely many $l$. 
We may repeat this argument to find a sequence of linear elements $\{l_i\}_{i=1}^{r-1}$ where $l_i\in \mathcal{R}_{l_{i-1}}$ for $i<r-1$ 
(with the convention that $l_0=1$) and $l_{r-1} \in \LL_\cyc$, to  conclude that the quotient module $\dfrac{H^1_{\FF_+}(K,\bfT)}{(l_1,\ldots,l_{r-1}) H^1_{\FF_+}(K,\bfT)}$ is torsion-free of rank-one over the discrete valuation ring $R/(l_1,\ldots,l_{r-1})$. In particular, the module $\dfrac{H^1_{\FF_+}(K,\bfT)}{(l_1,\ldots,l_{r-1}) H^1_{\FF_+}(K,\bfT)}$ 
is cyclic over $R/(l_1,\ldots,l_{r-1})$. The proof of (2) when $\bfT=\TT_\cyc$ now follows by Nakayama's lemma.

Next, we handle the general case. As in (\ref{eqn:nonperfectdescent}), we have the following diagram for each linear element $l$:
\be\label{eqn:nonperfectdescentmodlinearelements}
\xymatrix{\textup{Tor}_1^R(R/(l),M_\Sigma^+)\ar[r]&{\displaystyle\frac{ H^1_{\FF_+}(K,\bfT)}{lH^1_{\FF_+}(K,\bfT)}}\ar[r]\ar[d]_{\nu_l}&{\displaystyle\frac{ H^1(K_\Sigma/K,\bfT)}{lH^1(K_\Sigma/K,\bfT)}}\ar[r]^(.6){\res_\Sigma}\ar@{^{(}->}[d]&{\displaystyle \frac{\textup{Loc}_{\Sigma}^+(\bfT)}
{l\textup{Loc}_{\Sigma}^+(\bfT)}}\ar[d]\\
0\ar[r]&{\displaystyle{ H^1_{\FF_+}(K,\bfT/l\bfT)}}\ar[r]&H^1(K_\Sigma/K,\bfT/l\bfT)\ar[r]&{\textup{Loc}_{\Sigma}^+(\bfT /l \bfT)}
}
\ee

Suppose on the contrary to our claim that the $R$-module $H^1_{\FF_+}(K,\bfT)$ has rank at least $2$. By our induction hypothesis 
and the structure theorem of $R$-modules, this means that $\ker(\nu_l)$ is not $R/(l)$-torsion. 
The same argument as the proof of Proof of Theorem~\ref{thm:weakleopoldt} (2) for $r=1$, 
we prove that the $R$-module $H^1_{\FF_+}(K,\bfT)$ has rank at most one. By the existence of a non-trivial element 
(hence non-torsion element by our running hypothesis (H.0)) $c_K \in H^1_{\FF_+}(K,\bfT)$ thus implies that 
$H^1_{\FF_+}(K,\bfT)$ has rank one, as required.
\end{proof}
We define the following two $R$-modules: 
\begin{align*}
& \mathbf{Err}_{+}:= \left( H^2(K_\Sigma/K,\bfT)\oplus Q\oplus H^1_{\FFc^*}(K,\bfT^\vee(1))^\vee \right)_{\textup{null}}  
\\ 
& K_l:=\textup{coker}\left( \dfrac{H^1_{\FF_+}(K,\bfT)}{l H^1_{\FF_+}(K,\bfT)}\stackrel{\nu_l}{\lra} H^1_{\FF_+}(K,\bfT/ l\bfT)\right).
\end{align*}
Before we explain the proof of Theorem~\ref{thm:weakleopoldt} {\rm (3)}, we give the following preliminary lemma. 
\begin{lemma}
\label{lem:globalcontrolhigherdim}
We have 
\be\label{eqn:globalcontrolhigherdimerror1}
\textup{char}_{R/(l)}\left(K_l\right)\, \, \supset \, \,  \textup{char}_{R/(l)}\left(\mathbf{Err}_{+}[l]\right)
\ee
as $l$ varies over non-exceptional linear elements. Moreover, we have
\be\label{eqn:globalcontrolhigherdimerror2}
\textup{char}_{R/(l)}\left(\ker(\nu_l)\right)\, \, \supset \, \,  \textup{char}_{R/(l)}\left(\textup{Loc}^+_{\Sigma} (\bfT)_{\textup{null}}[l]\right) .
\ee
\end{lemma}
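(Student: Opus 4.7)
The plan is to apply the snake lemma to the commutative diagram~(\ref{eqn:nonperfectdescentmodlinearelements}) and to exploit the hypothesis $l \notin \mathcal{E}_{\bfT}^{(r)}$ to identify both the kernel and the cokernel of $\nu_l$ as living inside pseudo-null pieces of the tabulated error modules, from which the claimed divisibilities of $R/(l)$-characteristic ideals follow. Throughout I will use that $l$ is a non-zero divisor in the regular ring $R$, so that $\textup{Tor}_1^R(R/(l),N)\cong N[l]$ for any finitely generated $R$-module $N$, and that the middle vertical map $\beta\colon H^1(K_\Sigma/K,\bfT)/l \hookrightarrow H^1(K_\Sigma/K,\bfT/l\bfT)$ is injective with cokernel $H^2(K_\Sigma/K,\bfT)[l]$ coming from the long exact cohomology sequence of multiplication by~$l$.

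For the bound~(\ref{eqn:globalcontrolhigherdimerror2}), the injectivity of $\beta$ identifies $\ker(\nu_l)$ with a quotient of $\textup{Tor}_1^R(R/(l), M_\Sigma^+)\cong M_\Sigma^+[l]$, where $M_\Sigma^+\subset \textup{Loc}^+_\Sigma(\bfT)$ denotes the image of the global restriction map $\textup{res}_\Sigma$. Consequently, the characteristic ideal of $\ker(\nu_l)$ contains that of $M_\Sigma^+[l]$, which in turn contains that of $\textup{Loc}^+_\Sigma(\bfT)[l]$. By Corollary~\ref{cor:greenbergfree} (together with the analogous assertion under~(H.++) via Definition~\ref{def:auxiliarylineo}), the submodule $H^1_{\FF_+}(K_p,\bfT)$ is a free direct summand of $H^1(K_p,\bfT)$, so $\textup{Loc}^+_p(\bfT)$ is $R$-torsion free and its $l$-torsion vanishes. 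Only the contributions at $\lambda\in\Sigma^{(p)}$ survive, and the definition of $\mathcal{E}_{\bfT}^{(r)}$ forces $\bigoplus_{\lambda\in\Sigma^{(p)}} H^1(I_\lambda,\bfT)^{\textup{Fr}_\lambda=1}[l]$ to be pseudo-null over $R$, hence contained in $\textup{Loc}^+_\Sigma(\bfT)_{\textup{null}}[l]$. Passing to $R/(l)$-characteristic ideals then yields~(\ref{eqn:globalcontrolhigherdimerror2}).

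For~(\ref{eqn:globalcontrolhigherdimerror1}), the crucial ingredient is the injectivity of the rightmost vertical map $\alpha_l\colon \textup{Loc}^+_\Sigma(\bfT)/l \to \textup{Loc}^+_\Sigma(\bfT/l\bfT)$. I would verify this place-by-place: if $[x]\in \textup{Loc}^+_v(\bfT)/l$ is represented by a lift $\tilde x\in H^1(K_v,\bfT)$ with $\alpha_l^v([x])=0$, then by the very definition of the propagated Selmer submodule $H^1_{\FF_+}(K_v,\bfT/l\bfT)$ there exists $y\in H^1_{\FF_+}(K_v,\bfT)$ whose image in $H^1(K_v,\bfT/l\bfT)$ coincides with that of $\tilde x$; the injection $H^1(K_v,\bfT)/l\hookrightarrow H^1(K_v,\bfT/l\bfT)$ then forces $\tilde x\equiv y\pmod{lH^1(K_v,\bfT)}$, so $[x]=0$. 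With $\alpha_l$ injective in hand, a snake-lemma chase on~(\ref{eqn:nonperfectdescentmodlinearelements}) identifies $K_l=\textup{coker}(\nu_l)$ with a submodule of $\textup{coker}(\beta)=H^2(K_\Sigma/K,\bfT)[l]$. For $l\notin \mathcal{E}_{\bfT}^{(r)}$, this cokernel is pseudo-null by the definition of the exceptional set, hence contained in $H^2(K_\Sigma/K,\bfT)_{\textup{null}}[l]$, which sits as a direct summand of $\mathbf{Err}_+[l]$. Comparing $R/(l)$-characteristic ideals then gives~(\ref{eqn:globalcontrolhigherdimerror1}).

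The only non-routine step is the injectivity of $\alpha_l$: one must carefully unwind the propagation definition of the local Selmer conditions and use the basic fact that $H^1(K_v,\bfT)/l$ embeds in $H^1(K_v,\bfT/l\bfT)$ with controlled cokernel $H^2(K_v,\bfT)[l]$. Once that step is in place, the remaining arguments are routine diagram chases, and the non-exceptionality hypothesis on $l$ cleanly absorbs every $l$-torsion term that arises into the pseudo-null part of the relevant module.
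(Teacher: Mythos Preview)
Your argument for~(\ref{eqn:globalcontrolhigherdimerror2}) is correct and is exactly what the paper does.

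For~(\ref{eqn:globalcontrolhigherdimerror1}) there is a real gap. Your verification that each $\alpha_l^v$ is injective is fine, but the snake-lemma chase does \emph{not} give an injection $K_l\hookrightarrow\textup{coker}(\beta)$, because the top row of~(\ref{eqn:nonperfectdescentmodlinearelements}) is not exact at the middle term. The image of $H^1_{\FF_+}(K,\bfT)/l$ in $H^1(K_\Sigma/K,\bfT)/l$ is the kernel of the surjection onto $M_\Sigma^+/l$, whereas the kernel of $\res_\Sigma$ to $\textup{Loc}^+_\Sigma(\bfT)/l$ is the preimage of $\ker\bigl(M_\Sigma^+/l\to\textup{Loc}^+_\Sigma(\bfT)/l\bigr)$, which is in general strictly larger. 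Unwinding, your chase only yields an exact sequence
\[
0\lra \ker\!\bigl(M_\Sigma^+/l\to\textup{Loc}^+_\Sigma(\bfT)/l\bigr)\lra K_l \lra \bigl(\hbox{a submodule of } H^2(K_\Sigma/K,\bfT)[l]\bigr)\lra 0,
\]
and the left-hand kernel is (the image of) $(\textup{Loc}^+_\Sigma(\bfT)/M_\Sigma^+)[l]$. In concrete terms: from $\res_\Sigma(a)=0$ in $\textup{Loc}^+_\Sigma(\bfT)/l$ you only get $\res_v(\tilde a)\in H^1_{\FF_+}(K_v,\bfT)+lH^1(K_v,\bfT)$ for each $v$, with the implied correction $z_v$ defined only locally; there is no reason a global $y\in H^1_{\FF_+}(K,\bfT)$ with $\tilde a\equiv y\pmod{l}$ should exist.

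Controlling this extra global-to-local obstruction is precisely why $\mathbf{Err}_+$ carries the summands $Q$ and $H^1_{\FFc^*}(K,\bfT^\vee(1))^\vee$ in addition to $H^2(K_\Sigma/K,\bfT)$. The paper does not work with diagram~(\ref{eqn:nonperfectdescentmodlinearelements}) for this part; it replaces the middle column by $H^1_{\FFc}(K,\bfT)$ and the right column by $C:=\textup{coker}\bigl(H^1_{\FF_+}(K,\bfT)\to H^1_{\FFc}(K,\bfT)\bigr)$, so that the top row $H^1_{\FF_+}/l\to H^1_{\FFc}/l\to C/l\to 0$ is genuinely right-exact. The snake lemma then applies cleanly, and the price is paid elsewhere: $\textup{coker}(g_l)$ is bounded via a subsidiary diagram (bringing in $H^2(K_\Sigma/K,\bfT)[l]$ and, through Poitou--Tate, a subquotient of $H^1_{\FFc^*}(K,\bfT^\vee(1))^\vee[l]$), while $\ker(h_l)$ is identified with $Q[l]$ using that $H^1_{/+}(K_p,\bfT)$ is torsion-free.
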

Before going into the proof, we set the following modules 
$$
C:=\textup{coker}\left(H^1_{\FF_+}(K,\bfT)\ra H^1_{\FFc}(K,\bfT)\right) ,
\hspace*{1cm}
H^1_{/+}(K_p,\bfT):= \dfrac{H^1(K_p,\bfT)}{H^1_{\FF_+}(K_p,\bfT)}.
$$  
\begin{proof}
By the definition of the Selmer structure $\FF_+$, we have the following exact sequence :
$$
0\lra C\lra H^1_{/+}(K,\bfT) \lra Q\lra 0, $$
where the module $Q$ was defined in \eqref{equation:moduleQ}. 
Since the $R$-module $H^1_{/+}(K,\bfT)$ is torsion-free,
we have that the following exact sequence
\be\label{eqn:localpreparationdescentatp}
0\lra Q[l]\lra C\otimes R/(l)\stackrel{\iota_l}{\lra} H^1_{/+}(K_p,\bfT)\otimes R/(l)\lra Q\otimes R/(l)\lra 0
\ee
by applying $\otimes_R R/(l)$ to the above sequence. 
The definition of the Selmer structure $\FF_+$ yields the following commutative diagram with exact rows:
$$\xymatrix@C=.15cm{&H^1_{\FF_+}(K,\bfT)\otimes R/(l) \ar[rr]\ar[d]^{f_l}&& H^1_{\FFc}(K,\bfT)\otimes R/(l) \ar[r]\ar[d]^{g_l}& C \otimes R/(l)\ar[r]\ar[d]^{h_l}& 0\\
0\ar[r]&H^1_{\FF_+}(K,\bfT /l\bfT )\ar[rr]&& H^1_{\FFc}(K,\bfT /l\bfT )\ar[r]& H^1_{/+}(K_p,\bfT/{l}\bfT )\oplus {\displaystyle\bigoplus_{\lambda\in \Sigma^{(p)}}}\dfrac{H^1_{\FF_{+,l}}(K_\lambda,\bfT/l\bfT )}{H^1_{\FF_+}(K_\lambda,\bfT/l\bfT )}.
}$$
By definition we have $\textup{coker}(f_l)\cong K_l$. As for $\textup{coker}(g_l)$, we have the following claim: 
\begin{claim}
The module $\textup{coker}(g_l)$ fits in an exact sequence 
$$H[l]\lra \textup{coker}(g_l)\lra H^2(K_\Sigma/K,\bfT)[l]$$ 
where $H$ isomorphic to a subquotient of $H^1_{\FFc^*}(K,\bfT^\vee(1))^\vee_{\textup{null}}$\,. In particular, 
$$\textup{char}_{R/(l)}\left(\textup{coker}(g_l)\right)\, \, \supset \, \, \textup{char}_{R/(l)}\left(H^1_{\FFc^*}(K,\bfT^\vee(1))^\vee[l] \right) \textup{char}_{R/(l)}\left(H^2(K_\Sigma/K,\bfT)[l]\right)$$
for every $l \not\in\mathcal{E}_{\bfT}^{(r)}$. 
\end{claim}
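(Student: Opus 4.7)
My plan is to derive the exact sequence via a diagram chase applied to the defining sequences of the Selmer groups $H^1_{\FFc}(K,\bfT)$ and $H^1_{\FFc}(K,\bfT/l\bfT)$, then identify the two outer terms using (i) the long exact sequence induced by $0 \to \bfT \xrightarrow{\times l} \bfT \to \bfT/l\bfT \to 0$ and (ii) Poitou-Tate global duality.

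First I would form the evident commutative diagram whose upper row is obtained by applying $-\otimes_R R/(l)$ to the short exact sequence $0 \to H^1_{\FFc}(K,\bfT) \to H^1(K_\Sigma/K,\bfT) \to \textup{Im}(\textup{loc}) \to 0$, and whose lower row is the analogous defining sequence of $H^1_{\FFc}(K,\bfT/l\bfT)$; the vertical maps $g_l,\alpha_l,\beta_l$ are induced in the obvious way (here $\beta_l$ is between the singular local quotients at primes in $\Sigma^{(p)}$). The long exact sequence from multiplication by $l$ on $\bfT$ identifies $\textup{coker}(\alpha_l)$ with a submodule of $H^2(K_\Sigma/K,\bfT)[l]$ and ensures $\alpha_l$ is injective since $\bfT$ is $R$-flat. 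A diagram chase then yields the asserted exact sequence
\begin{equation*}
\ker(\beta_l)\,\lra\,\textup{coker}(g_l)\,\lra\, H^2(K_\Sigma/K,\bfT)[l].
\end{equation*}

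Next I would identify $\ker(\beta_l)$ as $H[l]$. Poitou-Tate global duality for the Selmer structure $\FFc$ embeds the cokernel of the global-to-local map into $H^1_{\FFc^*}(K,\bfT^\vee(1))^\vee$ (and similarly for $\bfT/l\bfT$). Chasing these two Poitou-Tate sequences compatibly and performing the place-by-place analysis of $\beta_l$ (where the local kernels and cokernels are controlled by $H^2(K_\lambda,\bfT)[l]$'s, already absorbed into $\mathcal{E}_{\bfT}^{(r)}$), one concludes that $\ker(\beta_l)$ is a subquotient of $H^1_{\FFc^*}(K,\bfT^\vee(1))^\vee[l]$. Since $l \notin \mathcal{E}_{\bfT}^{(r)}$ forces this $l$-torsion module to be pseudo-null over $R$, it is contained in (hence a subquotient of) the maximal pseudo-null submodule $H^1_{\FFc^*}(K,\bfT^\vee(1))^\vee_{\textup{null}}$; this is the module $H$ in the claim.

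The ``in particular'' bound is then immediate: applying the characteristic ideal to the exact sequence $H[l] \to \textup{coker}(g_l) \to H^2(K_\Sigma/K,\bfT)[l]$ and using its multiplicativity over the regular ring $R/(l)$ yields
\begin{equation*}
\textup{char}_{R/(l)}(\textup{coker}(g_l)) \supset \textup{char}_{R/(l)}(H[l])\cdot\textup{char}_{R/(l)}(H^2(K_\Sigma/K,\bfT)[l]),
\end{equation*}
and the injection $H[l]\hookrightarrow H^1_{\FFc^*}(K,\bfT^\vee(1))^\vee[l]$ completes the divisibility. I expect the main obstacle to be the careful identification of $\ker(\beta_l)$ as a subquotient of the \emph{pseudo-null} part $H^1_{\FFc^*}(K,\bfT^\vee(1))^\vee_{\textup{null}}$ rather than just a subquotient of $H^1_{\FFc^*}(K,\bfT^\vee(1))^\vee[l]$; this forces one to exploit the non-exceptionality of $l$ and to balance the global Poitou-Tate input against the local contributions from $H^2(K_\lambda,\bfT)[l]$ in a delicate manner consistent with Lemma~\ref{lem:localcontrolhigherdim}.
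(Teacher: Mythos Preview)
Your first step---the snake-lemma diagram producing $\ker(\beta_l)\to\textup{coker}(g_l)\to H^2(K_\Sigma/K,\bfT)[l]$---is exactly what the paper does (with $\beta_l$ the map between the \emph{images} $\frak{K}\otimes R/(l)\to\frak{K}_l$ of the global-to-local maps). The divergence is in how you handle $\ker(\beta_l)$, and your proposed route there is needlessly indirect.

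You plan to compare Poitou--Tate for $\bfT$ and for $\bfT/l\bfT$ and run a place-by-place analysis controlled by the local $H^2(K_\lambda,\bfT)[l]$. The paper bypasses all of this. Let $\frak{K}\subset\bigoplus_{v\in\Sigma^{(p)}}H^1(K_v^{\textup{ur}},\bfT)$ be the image of $\res^{(p)}$ and $\frak{C}$ its cokernel. Tensoring the short exact sequence $0\to\frak{K}\to\bigoplus H^1(K_v^{\textup{ur}},\bfT)\to\frak{C}\to 0$ by $R/(l)$ yields a connecting map $\Omega_l:\frak{C}[l]\to\frak{K}\otimes R/(l)$, and stacking this over the analogous row for $\bfT/l\bfT$ (the middle vertical map is injective by the long exact sequence for $I_v$-cohomology) gives immediately $\ker(\beta_l)=\textup{im}(\Omega_l)$. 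So $\ker(\beta_l)$ is a quotient of $\frak{C}[l]$. One application of Poitou--Tate for $\bfT$ alone identifies $\frak{C}$ with $\ker\bigl(H^1_{\FFc^*}(K,\bfT^\vee(1))^\vee\to H^1_{\FF_{\textup{str}}^*}(K,\bfT^\vee(1))^\vee\bigr)$, where $\FF_{\textup{str}}$ imposes the trivial condition at $\Sigma^{(p)}$; in particular $\frak{C}$ is a submodule of $H^1_{\FFc^*}(K,\bfT^\vee(1))^\vee$. Take $H:=\frak{C}$: this is a fixed $R$-module independent of $l$, and $H[l]\twoheadrightarrow\ker(\beta_l)$ gives the exact sequence in the shape the claim demands.

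The payoff is that no second Poitou--Tate, no local $H^2(K_\lambda,\bfT)[l]$ bookkeeping, and no ``delicate balancing'' are needed. Your version produces an $l$-dependent object in place of $H[l]$, which does not match the claim's wording (an $l$-independent $H$ with its $l$-torsion appearing); it would still yield the ``in particular'' divisibility, but only after the step you yourself flag as the main obstacle, and that step is simply unnecessary once you introduce $\frak{C}$ and the Tor connecting map.
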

\begin{proof}[Proof of Claim] 
We define the modules
$$
\frak{K} \subset \displaystyle{\bigoplus_{v \in \Sigma^{(p)}}} \subset H^1(K_v^\ur,\bfT) 
\hspace*{1cm} \text{(resp. $\frak{K}_l\subset \displaystyle{\bigoplus_{v \in \Sigma^{(p)}}} H^1(K_v^\ur,\bfT/l\bfT)$)}
$$ 
to be the image of 
$H^1(K_\Sigma/K,\bfT)$ (resp. $H^1(K_\Sigma/K,\bfT/ l\bfT)$) under the map 
$$\mathrm{res}^{(p)}:\,H^1(K_\Sigma/K,Y)\stackrel{\res^{(p)}}{\lra} {\displaystyle \bigoplus_{v\in \Sigma^{(p)}}} H^1(K_v^{\textup{ur}},Y)\,\,,\,\, Y=\bfT,\bfT/l\bfT\,.$$ 
Consider the following commutative diagram with exact rows and columns:
$$
\xymatrix{&H^1_{\FFc}(K,\bfT)\otimes R/(l)\ar[r]\ar[d]^{g_l}&H^1(K_\Sigma/K,\bfT)\otimes R/(l) \ar[r]\ar@{^{(}->}[d]& \frak{K}\otimes R/(l)\ar[r]\ar[d]^{\Xi_l}&0\\
0\ar[r]&H^1_{\FFc}(K,\bfT/ l\bfT)\ar[r]\ar[d]&H^1(K_\Sigma/K,\bfT/ l\bfT) \ar[r]\ar[d]& \frak{K}_l\ar[r]&0\\
&\textup{coker}(g_l)\ar[r]&H^2(K_\Sigma/K,\bfT)[l]
}$$
where $\Xi_l$ is induced by the rest of this diagram. 
In order to conclude the proof our claim via Snake Lemma, it remains to prove that $\ker(\Xi_l)$ is isomorphic to a a subquotient of the module $H^1_{\FFc^*}(K,\bfT^\vee(1))^\vee[l]$. 
By definition we have the following short exact sequence: 
$$
0 \longrightarrow \frak{K} \longrightarrow H^1(K_\Sigma/K,\bfT) \longrightarrow \frak{C} \longrightarrow 0 ,
$$ 
where we set $\frak{C}:={\textup{coker}\left(H^1(K_\Sigma/K,\bfT)\stackrel{\res^{(p)}}{\lra} {\displaystyle \bigoplus_{v\in \Sigma^{(p)}}} H^1(K_v^{\textup{ur}},\bfT)\right)}$. 
By applying $\otimes R /(l)$ to this sequence, we have the connecting morphism 
$$
\Omega_l:\, \textup{Tor}_1^R(R/l,\frak{C})=\frak{C}[l] \longrightarrow \frak{K}\otimes R/(l) .
$$ 
By the definitions of the modules $\frak{K}$ and $\frak{K}_l$, we have the following commutative diagram with exact columns:
$$\xymatrix{\frak{C}[l]\ar[d]_{\Omega_l}&\\
\frak{K}\otimes R/(l)\ar[r]^{\Xi_l}\ar[d]&\frak{K}_l\ar[d]\\
{\displaystyle \bigoplus_{v\in \Sigma^{(p)}}} H^1(K_v^{\textup{ur}},\bfT)\otimes R/(l) \ar@{^{(}->}[r]&{\displaystyle \bigoplus_{v\in \Sigma^{(p)}}} H^1(K_v^{\textup{ur}},\bfT/ l\bfT)
}$$
It follows that $\ker(\Xi_l)=\textup{im}(\Omega_l)$ and our claim will be proved once we verify that $\frak{C}$ is a subquotient of $H^1_{\FFc^*}(K,\bfT^\vee(1))^\vee$. This is an immediate consequence of Poitou-Tate global duality, which identifies $\frak{C}$ with $\ker\left(H^1_{\FFc^*}(K,\bfT^\vee(1))^\vee\ra H^1_{\FF_{\textup{str}}^*}(K,\bfT^\vee(1))^\vee\right)$,
where 
$$H^1_{\FF_{\textup{str}}^*}(K,\bfT^\vee(1)):=\ker\left(H^1_{\FFc^*}(K,\bfT^{\vee}(1))\ra \bigoplus_{v\in \Sigma^{(p)}} H^1(K_v,\bfT^\vee(1)) \right).$$ 
(Notice that $H^1_{\FF_{\textup{str}}^*}(K,\bfT^\vee(1))$ and $H^1(K_\Sigma/K,\bfT)$ are dual Selmer groups, in the sense of \cite{mr02}.)
\end{proof}

We resume with the proof of (\ref{eqn:globalcontrolhigherdimerror1}), which will follow (thanks to our observation that $\textup{coker}(f_l)=K_l$ and analysis of $\textup{coker}(g_l)$ above) by Snake Lemma once we check that $Q[l]\stackrel{\sim}{\lra}\ker(h_l)$. To see that, observe that the map $h_l$ is the compositum of the following arrows:
\begin{align*}
h_l:\, C\otimes R/(l) \stackrel{\iota_l}{\lra} H^1_{/+}(K_p,\bfT)\otimes R/(l) &\stackrel{j_l}{\hookrightarrow}  H^1_{/+}(K_p,\bfT/ l\bfT)\\ 
&\hookrightarrow H^1_{/+}(K_p,\bfT/ l\bfT) \oplus{\displaystyle\bigoplus_{\lambda\in \Sigma^{(p)}}}\frac{H^1_{\mathcal{F}_{+,l}}(K_\lambda,\bfT/ l\bfT)}{H^1_{\FF_+}(K_\lambda,\bfT/ l\bfT)}
\end{align*}
where the injection $j_l$ follows form the fact that $H^1_{/+}(K_p,\TT)$ is $R$-torsion free. We conclude by the exactness of the sequence (\ref{eqn:localpreparationdescentatp}) that $Q[l]\stackrel{\sim}{\lra}\ker(h_l)$, as desired.

In order to prove the divisibility (\ref{eqn:globalcontrolhigherdimerror2}), notice that we have
$$\textup{char}_{R/(l)}\left(\ker(\nu_l)\right)\, \, \supset \, \, \textup{char}_{R/(l)}\left(M_\Sigma^+[l]\right) \, \, \supset \, \, \textup{char}_{R/(l)}
\left(\textup{Loc}_\Sigma^+ (\bfT )[l]\right)$$ 
thanks to the diagram (\ref{eqn:nonperfectdescentmodlinearelements}). The proof of (\ref{eqn:globalcontrolhigherdimerror2}) follows as $l$ is a non-exceptional linear element by choice.

\end{proof}

\begin{proof}[Proof of Theorem~\ref{thm:weakleopoldt} {\rm (3)} for $r>1$, based on {\rm (Ind2)}] 
The Poitou-Tate global duality yields the exact sequence
\begin{multline*}
0\lra \frac{H^1_{\FF_{+}}(K,\bfT/ l\bfT)}{R/(l) c_K^{(l)}}\lra \frac{H^1_{\FF_{+,l}}(K,\bfT/ l\bfT)}{R/(l) c_K^{(l)}} \lra \mathcal{D}_{\Sigma,l}^+ 
\\ 
\lra H^1_{\FF_{+,l}^*}(K,(\bfT/ l\bfT )^\vee (1))^\vee\lra H^1_{\FF_{+}^*}(K,(\bfT/ l\bfT )^\vee (1))^\vee\lra 0
\end{multline*}  
which shows that 
\begin{multline}
\label{eqn:comparingquotientsinR1}
\frac{\textup{char}_{R/(l)}\left(H^1_{\FF_{+,l}}(K,\bfT/ l\bfT)/R/(l) c_K^{(l)}\right)}{\textup{char}_{R/(l)}(H^1_{\FF_{+,l}^*}(K,(\bfT/ l\bfT )^\vee (1))^\vee)}
\\ =\frac{\textup{char}_{R/(l)}\left(H^1_{\FF_{+}}(K,\bfT/ l\bfT )/R/(l) c_K^{(l)}\right)}{\textup{char}_{R/(l)}(H^1_{\FF_{+}^*}(K,(\bfT/ l\bfT )^\vee (1))^\vee)}\cdot\textup{char}_{R/(l)}\left(\mathcal{D}_{\Sigma,l}^+\right).
\end{multline}
Since we have 
\begin{align}
\notag\textup{char}_{R/(l)}(H^1_{\FF_{+}^*}(K,(\bfT/ l\bfT )^\vee (1))^\vee)=\textup{char}_{R/(l)}\left(H^1_{\FF_{+}^*}(K,\bfT^\vee (1))^\vee/l 
H^1_{\FF_{+}^*}(K,\bfT^\vee (1))^\vee\right)
\end{align}
thanks to Lemma~3.5.2 of \cite{mr02}, we may rephrase (\ref{eqn:comparingquotientsinR1}) to read
\begin{multline}
\label{eqn:comparingquotientsinR2}
\frac{\textup{char}_{R/(l)}\left(\dfrac{H^1_{\FF_{+,l}}(K,\bfT/ l\bfT)}{R/(l) c_K^{(l)}}\right)}{\textup{char}_{R/(l)}(H^1_{\FF_{+,l}^*}(K,\bfT/ l\bfT^\vee (1))^\vee)}
\\ =\frac{\textup{char}_{R/(l)}\left(\dfrac{H^1_{\FF_{+}}(K,\bfT/ l\bfT)}{R/(l) c_K^{(l)}}\right)}{\textup{char}_{R/(l)}\left(\left(H^1_{\FF_{+}^*}(K,\bfT^\vee (1))^\vee\right)\otimes R/(l)\right)}\cdot\textup{char}_{R/(l)}\left(\mathcal{D}_{\Sigma,l}^+\right).
\end{multline}
Furthermore, we have
\begin{multline*}
\textup{char}_{R/(l)}\left(H^1_{\FF_{+}}(K,\bfT/ l\bfT)/R/(l) c_K^{(l)}\right)\cdot \, {\textup{char}_{R/(l)}}\left(\ker(\nu_\ell)\right)
\\ =
\textup{char}_{R/(l)}\left(\left(H^1_{\FF_{+}}(K,\bfT)/R c_K\right)\otimes R/(l)\right) \textup{char}_{R/(l)}(K_l)
\end{multline*}
where $\nu_\ell$ is as in (\ref{eqn:nonperfectdescentmodlinearelements}) and $K_l$ in Lemma~\ref{lem:globalcontrolhigherdim}. Combining this with (\ref{eqn:comparingquotientsinR2}) and (Ind2), we conclude that 
\begin{multline*}
\textup{char}_{R/(l)}(\left(H^1_{\FF_{+}}(K,\bfT)/R c_K\right) \otimes R/(l)) \cdot\textup{char}_{R/(l)}\left(\mathcal{D}_{\Sigma,l}^+\right)\cdot \textup{char}_{R/(l)}\left(K_l\right)\\
\subset {\textup{char}_{R/(l)}\left(\left(H^1_{\FF_{+}^*}(K,\bfT^\vee(1))^\vee\right)\otimes R/(l)\right)}\cdot {\textup{char}_{R/(l)}}\left(\ker(\nu_\ell)\right)\\
\subset {\textup{char}_{R/(l)}\left(\left(H^1_{\FF_{+}^*}(K,\bfT^\vee(1))^\vee\right)\otimes R/(l)\right)}
\end{multline*}
and hence, by Lemma~\ref{lem:localcontrolhigherdim} and \ref{lem:globalcontrolhigherdim} that
\begin{multline}\label{eqn:charofRHSmain2}
\textup{char}_{R/(l)}\left(\left(H^1_{\FF_{+}}(K,\bfT)/R c_K\right)\otimes R/(l)\right) 
\textup{char}_{R/(l)}\left(\mathbf{Err}_\Sigma[l]\right) \, \textup{char}_{R/(l)}\left(\mathbf{Err}_+[l]\right)
\\ 
\subset {\textup{char}_{R/(l)}\left(\left(H^1_{\FF_{+}^*}(K,\bfT^\vee (1))^\vee\right)\otimes R/(l)\right)}
\end{multline}
for all but finitely many linear elements $l$. 

Now set 
\begin{align*}
& M:=H^1_{\FF_{+}^*}(K,\bfT^\vee (1))^\vee
\\ 
& N:=\left(H^1_{\FF_{+}}(K,\bfT)/R c_K\right)\oplus\mathbf{Err}_\Sigma\oplus\mathbf{Err}_+ 
\end{align*}  
and apply Proposition~\ref{pro:rednn-1} for $(l) \in \mathcal{L}^{(n)}_{\mathcal{O}'}(M_{\mathcal{O}'}) 
\cap \mathcal{L}^{(n)}_{\mathcal{O}'} (N_{\mathcal{O}'})$ for any discrete valuation ring $\mathcal{O}'$ which is finite flat over 
$\mathcal{O}$ and by using induction hypothesis for $r-1$.  
We obtain the desired conclusion noting that $\mathrm{char}_R (\mathbf{Err}_\Sigma\oplus\mathbf{Err}_+ )$ is trivial.
\end{proof}
\begin{rem}
We note that there is Nekov\'a\v{r}'s general descent machine developed as part of his theory of Selmer complexes in \cite{nek}. 
When Nekov\'a\v{r}'s theory applies, it might simplify the descent arguments and yield slightly stronger results. 
See the proof of Proposition~\ref{prop:globalduality} below for an instance of this phenomenon. 
\end{rem}
\subsubsection{Further consequences of Theorem~\ref{thm:weakleopoldt}}
All hypotheses recorded at the start of Section~\ref{sub:ESboundmain} are still in effect. Moreover, we also retain our assumption that $R \cong \LL_{\mathcal{O}}^{(n)}$ is isomorphic to a power series ring.
\begin{prop}
\label{prop:globalduality}
The $R$-module $H^1_{\FF_{\Gr}^*}(K,\bfT^\vee (1))^\vee$ is torsion if and only if $H^1_{\FF_\Gr}(K,\bfT)=0$.
\end{prop}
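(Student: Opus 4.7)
The plan is to reduce the claim to a rank equality. I will show that the generic $R$-ranks of $H^1_{\FF_\Gr}(K,\bfT)$ and $H^1_{\FF_\Gr^*}(K,\bfT^\vee(1))^\vee$ coincide, and combine this with the observation that $H^1_{\FF_\Gr}(K,\bfT)$ is $R$-torsion-free. Torsion-freeness is immediate from the running hypotheses: under (H.0), the restriction map $H^0(K_\Sigma/K,\overline{T})\hookrightarrow H^0(K_\fp,\overline{T})$ forces $H^0(K_\Sigma/K,\overline{T})=0$, and the regular-sequence argument from Proposition~\ref{prop:localcohom} upgrades this to the $R$-torsion-freeness of $H^1(K_\Sigma/K,\bfT)$, hence also of its submodule $H^1_{\FF_\Gr}(K,\bfT)$. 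Granting the rank equality, torsion-freeness implies $H^1_{\FF_\Gr}(K,\bfT)=0$ iff its generic rank vanishes iff $H^1_{\FF_\Gr^*}(K,\bfT^\vee(1))^\vee$ is $R$-torsion.

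For the rank equality, the cleanest route is via Nekov\'a\v{r}'s formalism of Selmer complexes from \cite{nek}. The Greenberg local conditions determine a Selmer complex $\widetilde{\RG}_f(G_{K,\Sigma},\bfT)$ whose cohomology satisfies $\widetilde H^1_f(K,\bfT)=H^1_{\FF_\Gr}(K,\bfT)$, and Greenberg-Nekov\'a\v{r} Cartier duality identifies the generic fibre of $\widetilde H^2_f(K,\bfT)$ with that of $H^1_{\FF_\Gr^*}(K,\bfT^\vee(1))^\vee$. The hypotheses (H.0) and (H.2) (the latter upgraded to the vanishing of $H^0(K_\fp,\overline{T}^\vee(1))$ via local Tate duality), together with (MR3) and Nakayama, force $\widetilde H^0_f=\widetilde H^3_f=0$ generically. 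Consequently, the desired rank equality is equivalent to the vanishing of the Euler characteristic of $\widetilde{\RG}_f$.

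The Euler characteristic is given by Nekov\'a\v{r}'s formula as a sum of purely local contributions: the archimedean places contribute $-d_+=-\sum_{v\mid\infty}\textup{rank}_R H^0(K_v,\bfT)$, while the primes above $p$ contribute $+\sum_{\fp\mid p}d_+^{(\fp)}[K_\fp:\QQ_p]$. The criticality constraint (Crt) imposed on the deformation datum asserts precisely that these two quantities agree, so the Euler characteristic vanishes and the proposition follows. The same conclusion can alternatively be reached without invoking the Selmer complex formalism by extending scalars to $\textup{Frac}(R)$ in the Poitou-Tate global duality sequence and using Corollary~\ref{cor:structuresemilocalcohom} for the local ranks at primes above $p$, the fact that the unramified local condition at primes in $\Sigma^{(p)}$ has defect zero, and the global Euler-Poincar\'e characteristic formula.

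The main technical obstacle is the Euler characteristic bookkeeping at the archimedean primes, which must be handled uniformly across the totally real and CM cases via the appropriate definition of $d_+$; this is ultimately what makes (Crt) the precise hypothesis needed for the rank balance to hold.
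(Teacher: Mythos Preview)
Your argument is correct but takes a genuinely different route from the paper's.

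The paper does not compute an Euler characteristic. Instead, it writes down the Poitou--Tate five-term sequence comparing the Greenberg structure $\FF_\Gr$ with the auxiliary structure $\FF_+$:
\[
0 \lra H^1_{\FF_{\Gr}}(K,\bfT) \lra H^1_{\FF_+}(K ,\bfT) \lra \bigoplus_{v\in \Sigma}
\frac{H^1_{\FF_+} (K_v ,\bfT)}{H^1_{\FF_{\Gr}}(K_v ,\bfT)} \lra H^1_{\FF_{\Gr}^*}(K,\bfT^\vee (1))^\vee \lra H^1_{\FF_+^*}(K,\bfT^\vee(1))^\vee \lra 0,
\]
and then invokes Theorem~\ref{thm:weakleopoldt} (the locally restricted Euler system bound) to conclude that the last term is torsion and that $H^1_{\FF_+}(K,\bfT)$ has generic rank one. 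Since the local quotient in the middle also has generic rank one (by Corollaries~\ref{cor:structuresemilocalcohom} and \ref{cor:greenbergfree}), the rank equality drops out by alternating sums. The torsion-freeness step is the same as yours.

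The trade-off: your approach via the Selmer-complex Euler characteristic (or equivalently the Greenberg--Wiles formula, as in your last paragraph) uses only the deformation-theoretic hypotheses, in particular (Crt), and does not require the existence of a locally restricted Euler system with nonzero leading term. The paper's proof, by contrast, implicitly carries that Euler-system hypothesis through its appeal to Theorem~\ref{thm:weakleopoldt}; this is consistent with the subsection being titled ``Further consequences of Theorem~\ref{thm:weakleopoldt}'', but it does mean the proposition as proved there is formally weaker than what your argument delivers. The paper in fact acknowledges exactly this in the remark immediately following the proposition, pointing to Nekov\'a\v{r}'s machinery as an alternative that ``might sometimes simplify descent arguments and yield slightly stronger results''. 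What the paper's route buys is that it stays entirely within the comparison-of-Selmer-structures framework already set up, and it makes the role of the auxiliary structure $\FF_+$ visible.
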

\begin{proof}
By Global duality theorem, we have the following exact sequence (see Definition \ref{def:Selmerstructures} for the definition of 
local conditions ): 
\begin{equation}\label{equation:globalduality4terms}
0 \lra H^1_{\FF_{\Gr}}(K,\bfT) \lra H^1_{\FF_+}(K ,\bfT) \lra \bigoplus_{v\in \Sigma} 
\frac{H^1_{\FF_+} (K_v ,\bfT)}{H^1_{\FF_{\Gr}}(K_v ,\bfT)} \lra H^1_{\FF_{\Gr}^*}(K,\bfT^\vee (1))^\vee . 
\end{equation}
By Theorem~\ref{thm:weakleopoldt}, the cokernel of the last map in \eqref{equation:globalduality4terms} is a torsion $R$-module. 
The two modules $H^1_{\FF_+}(K ,\bfT)$ and $\displaystyle{\bigoplus_{v\in \Sigma} 
\frac{H^1_{\FF_+} (K_v ,\bfT)}{H^1_{\FF_{\Gr}}(K_v ,\bfT)} }$ in the diagram \eqref{equation:globalduality4terms} are both of generic rank one over $R$. 
It now follows from \eqref{equation:globalduality4terms} that  the $R$-module $H^1_{\FF_{\Gr}}(K,\bfT)$ is torsion if and only if $H^1_{\FF_{\Gr}^*}(K,\bfT^\vee (1))^\vee$ is a torsion $R$-module. Since $H^1(K_\Sigma /K ,\bfT) $ has no non-trivial 
$R$-torsion submodule thanks to our running assumption {\rm (MR1)}, we conclude our proof that $H^1_{\FF_{\Gr}}(K,\bfT)$ is a torsion $R$-module 
if and only if $H^1_{\FF_{\Gr}}(K,\bfT)=0$. 
\end{proof}
\begin{cor}
\label{cor:Greenbergtorsion}
Let $\mathbf{c}\in \textup{ES}^+(\TT_\cyc)$ be an Euler system such that $\res_p^s(c_K)\neq 0$, where 
$c_K\in H^1(K_\Sigma/K,\bfT)$ denotes the image of its initial term. Then $H^1_{\FF_\Gr^*}(K,\bfT^\vee (1))^\vee$ is $R$-torsion.
\end{cor}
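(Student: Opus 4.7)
The plan is to deduce this from Theorem~\ref{thm:weakleopoldt} together with Proposition~\ref{prop:globalduality}. By Proposition~\ref{prop:globalduality}, the cotorsion assertion for $H^1_{\FF_\Gr^*}(K,\bfT^\vee (1))^\vee$ is equivalent to the vanishing $H^1_{\FF_\Gr}(K,\bfT)=0$, so the entire task reduces to proving this vanishing under the hypothesis $\res_p^s(c_K)\ne 0$.

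I would argue by contradiction. Suppose $H^1_{\FF_\Gr}(K,\bfT)\ne 0$. Since $H^1_{\FF_\Gr}(K,\bfT)\subset H^1_{\FF_+}(K,\bfT)$ and the latter is torsion-free of generic rank one by Theorem~\ref{thm:weakleopoldt}(2), the submodule $H^1_{\FF_\Gr}(K,\bfT)$ itself has generic rank one. Hence the quotient $H^1_{\FF_+}(K,\bfT)/H^1_{\FF_\Gr}(K,\bfT)$ is $R$-torsion, so there exists a non-zero $r\in R$ with $r\cdot c_K\in H^1_{\FF_\Gr}(K,\bfT)$. Applying $\res_p$ we obtain $r\cdot\res_p(c_K)\in H^1_{\FF_\Gr}(K_p,\bfT)$, and consequently $r\cdot \res_p^s(c_K)=0$ in the quotient $H^1(K_p,\bfT)/H^1_{\FF_\Gr}(K_p,\bfT)$.

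Now I would invoke Corollary~\ref{cor:greenbergfree}, which (thanks to the running assumptions \hzero, \htwo, \hzerominus\ and \htwoplus) guarantees that $H^1_{\FF_\Gr}(K_p,\bfT)$ is a free \emph{direct summand} of the free $R$-module $H^1(K_p,\bfT)$. The quotient $H^1(K_p,\bfT)/H^1_{\FF_\Gr}(K_p,\bfT)$ is therefore free, hence torsion-free, so multiplication by the non-zero element $r\in R$ is injective on it. Combined with the hypothesis $\res_p^s(c_K)\ne 0$, this forces $r\cdot \res_p^s(c_K)\ne 0$, contradicting the previous paragraph. Thus $H^1_{\FF_\Gr}(K,\bfT)=0$, and Proposition~\ref{prop:globalduality} then yields the desired conclusion.

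The real work has already been done in Theorem~\ref{thm:weakleopoldt}; given that result the present corollary is a short formal consequence. The only point that deserves care is the torsion-freeness of the local quotient at $p$, which is what allows the non-vanishing of $\res_p^s(c_K)$ to survive multiplication by an arbitrary non-zero $r\in R$. This torsion-freeness is precisely what Corollary~\ref{cor:greenbergfree} provides under the Panchishkin-type hypotheses in force throughout Section~\ref{sub:ESboundmain}, so no additional technical obstruction arises.
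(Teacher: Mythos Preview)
Your proof is correct and follows essentially the same approach as the paper's own proof: both reduce via Proposition~\ref{prop:globalduality} to showing $H^1_{\FF_\Gr}(K,\bfT)=0$, and both deduce this from Theorem~\ref{thm:weakleopoldt}(2) together with the torsion-freeness of the local quotient $H^1(K_p,\bfT)/H^1_{\FF_\Gr}(K_p,\bfT)$. You simply spell out the contradiction more explicitly and cite Corollary~\ref{cor:greenbergfree} for that torsion-freeness, which the paper leaves implicit.
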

\begin{proof}
Let us consider the tautological exact sequence 
$$
0\lra H^1_{\FF_{\Gr}}(K,\bfT)\lra  H^1_{\FF_{+}}(K,\bfT)\stackrel{\res_p^s}{\lra} \dfrac{H^1(K_p,\bfT)}{H^1_{\FF_\Gr}(K_p,\bfT)} . 
$$
Since $H^1_{\FF_{+}}(K,\bfT)$ is torsion-free of rank one by Theorem~\ref{thm:weakleopoldt} (2) and $\res_p^s(c_K)\neq 0$ by assumption,  we have $H^1_{\FF_\Gr}(K,\bfT)=0$. Our assertion follows by Proposition~\ref{prop:globalduality}.
\end{proof}
Recall the restricted singular quotient $H^1_{+/\textup{f}}(K_p,\bfT):=\dfrac{H^1_{++}(K_p,\bfT)}{H^1_{\FF_\Gr}(K_p,\bfT)}$ and the map $\res_{+/\textup{f}}$ which is given as the compositum of the arrows
$$
H^1_{\FF_+}(K,\bfT)\lra H^1_{++}(K_p,\bfT)\lra H^1_{+/\textup{f}}(K_p,\bfT)\,.
$$
\begin{thm}
\label{thm:maingreenbergbound}
Let $\mathbf{c}\in \textup{ES}^+(\TT_\cyc)$ be an Euler system such tha $\res_p^s(c_K)\neq 0$, 
where $c_K\in H^1(K_\Sigma/K,\bfT)$ denotes the image of its initial term. Then, we have 
$$\textup{char}\left(H^1_{\FF_\Gr^*}(K,\bfT^\vee (1))^\vee\right)\,\, \supset \,\, \textup{char}\left(\dfrac{H^1_{+/\textup{f}}(K,\bfT)}{R \res_{+/\textup{f}}\left(c_K\right)}\right)\,.$$
\end{thm}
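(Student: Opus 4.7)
The plan is to feed the conclusion of Theorem~\ref{thm:weakleopoldt}(3) through the Poitou--Tate global duality sequence comparing the Selmer structures $\FF_{\Gr} \subset \FF_+$. More precisely, I will first invoke Corollary~\ref{cor:Greenbergtorsion} (which is directly applicable since $\res_p^s(c_K)\ne 0$) to guarantee that $H^1_{\FF_\Gr^\ast}(K,\bfT^\vee(1))^\vee$ is $R$-torsion and, by combining this with Proposition~\ref{prop:globalduality} and the running assumption (MR1) (which, thanks to the argument already given in the proof of that proposition, forces $H^1(K_\Sigma/K,\bfT)$ to be $R$-torsion free), conclude that $H^1_{\FF_\Gr}(K,\bfT)=0$.

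Next, I will write the standard five-term Poitou--Tate exact sequence attached to the pair $\FF_\Gr \subset \FF_+$ (extending the four-term sequence \eqref{equation:globalduality4terms}). The key observation is that at every $v \in \Sigma$ the quotient $H^1_{\FF_+}(K_v,\bfT)/H^1_{\FF_\Gr}(K_v,\bfT)$ vanishes except at the distinguished prime $\fp_\circ$ (by the very definition of $H^1_{++}(K_p,\bfT)$ in Definition~\ref{def:auxiliarylineo}), where it is exactly the restricted singular quotient $H^1_{+/\textup{f}}(K_p,\bfT)$. Combined with the vanishing $H^1_{\FF_\Gr}(K,\bfT)=0$ just obtained, this yields the four-term exact sequence
\[
0 \lra H^1_{\FF_+}(K,\bfT) \xrightarrow{\ \res_{+/\textup{f}}\ } H^1_{+/\textup{f}}(K_p,\bfT) \lra H^1_{\FF_\Gr^\ast}(K,\bfT^\vee(1))^\vee \lra H^1_{\FF_+^\ast}(K,\bfT^\vee(1))^\vee \lra 0.
\]

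Taking characteristic ideals in this sequence (all four modules being $R$-torsion or, for the two middle ones, of generic rank one, so that the cokernel of $\res_{+/\textup{f}}$ is torsion by Theorem~\ref{thm:weakleopoldt}(2) and the structure of the semi-local cohomology of $F^{++}\bfT/F^+\bfT$), I obtain the identity
\[
\textup{char}\!\left(H^1_{\FF_\Gr^\ast}(K,\bfT^\vee(1))^\vee\right)
= \textup{char}\!\left(\frac{H^1_{+/\textup{f}}(K_p,\bfT)}{\res_{+/\textup{f}}\,H^1_{\FF_+}(K,\bfT)}\right)\cdot
\textup{char}\!\left(H^1_{\FF_+^\ast}(K,\bfT^\vee(1))^\vee\right).
\]
The injectivity of $\res_{+/\textup{f}}$ on $H^1_{\FF_+}(K,\bfT)$ (which is a consequence of $H^1_{\FF_\Gr}(K,\bfT)=0$) implies the equality of characteristic ideals
$\textup{char}(\res_{+/\textup{f}}\,H^1_{\FF_+}(K,\bfT)/R\,\res_{+/\textup{f}}(c_K)) = \textup{char}(H^1_{\FF_+}(K,\bfT)/Rc_K)$; so the theorem will follow by multiplicativity of characteristic ideals in short exact sequences together with Theorem~\ref{thm:weakleopoldt}(3), which provides the divisibility $\textup{char}(H^1_{\FF_+^\ast}(K,\bfT^\vee(1))^\vee) \supset \textup{char}(H^1_{\FF_+}(K,\bfT)/Rc_K)$.

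The only delicate point I anticipate is verifying exactness on the right of the Poitou--Tate sequence (i.e.\ that the comparison map $H^1_{\FF_\Gr^\ast}(K,\bfT^\vee(1))^\vee \to H^1_{\FF_+^\ast}(K,\bfT^\vee(1))^\vee$ is surjective), but this is standard for the Selmer structures $\FF_\Gr \subset \FF_+$ and follows from the usual nine-term global duality sequence because the relevant third Galois cohomology groups vanish in our setting (the residue field being finite and the running hypotheses (H.0), (H.2), (H.$0^-$), (H.$2^+$) ensuring the appropriate local control).
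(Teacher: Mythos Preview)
Your proposal is correct and follows essentially the same route as the paper: both arguments use the Poitou--Tate sequence comparing $\FF_\Gr\subset\FF_+$, invoke the vanishing $H^1_{\FF_\Gr}(K,\bfT)=0$ from the proof of Corollary~\ref{cor:Greenbergtorsion} to obtain injectivity of $\res_{+/\textup{f}}$ on $H^1_{\FF_+}(K,\bfT)$, and then feed Theorem~\ref{thm:weakleopoldt}(3) through the resulting four-term exact sequence. The only cosmetic difference is that the paper writes the sequence with both sides already quotiented by $Rc_K$ (respectively $R\,\res_{+/\textup{f}}(c_K)$), which packages your multiplicativity step directly; also, the paper explicitly invokes Theorem~\ref{thm:weakleopoldt0}(2) in the case $r=0$ (since Theorem~\ref{thm:weakleopoldt} assumes Krull dimension at least two), a case split you should add.
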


\begin{proof}
The Poitou-Tate global duality yields (using the proof of Corollary~\ref{cor:Greenbergtorsion} for the injection on the left) the exact sequence
\begin{align*}\label{poitoutatemain}0\lra  H^1_{\FF_{+}}(K,\bfT)\big{/}R c_K\stackrel{\res_{+/\textup{f}}}{\lra} &H^1_{+/\textup{f}}(K_p,\bfT)\big{/}R \res_{+/\textup{f}}(c_K)\\
 &\lra H^1_{\FF_\Gr^*}(K,\bfT^\vee (1))^\vee\lra H^1_{\FF_+^*}(K,\bfT^\vee (1))^\vee  \lra 0\,.\end{align*}
The proof is an immediate consequence of Theorem~\ref{thm:weakleopoldt0} (2) (when $r=0$) and Theorem~\ref{thm:weakleopoldt} (3) (when $r\geq 1$).
\end{proof}

\begin{rem}
\label{rem:allintermsofaKSandsharpening}
We can work with a locally restricted Kolyvagin system $\pmb{\kappa} \in \KS(\TT_\cyc,\FF_+)$ and deduce all our conclusions in this section. Furthermore, Theorem~\ref{thm:theexistenceofKS} shows that the bounds obtained in this section via $\pmb{\kappa}$ are sharp (resp.  sharp after inverting $p$) if and only if $\pmb{\kappa}$ generates the cyclic module $\KS(\TT_\cyc,\FF_+)$ (resp.  generates a submodule of $\KS(\TT_\cyc,\FF_+)$ of finite index). 
\end{rem}

%%%%%%%%%%%%%%%%%%%%%%%%%%%%%%%%%%%%%%%%%%%%%%%%%%%%%%%%%%%%%%%%%%%%%%%%%%%%%%%%%%%%%%%%%%%%%%%%%%%%%%%%%%%%%%%%%%%%%%%%%%%%%%%%%%%%%%%%%%%%%%%%%%%%%%%%%%%%%%%%%%%%%%%%%%%%%%%%%%%%%%%%%%%%%%%%%%%%%%%%%%%%%%%%%%%%%%%%%%%%%%%%%%%%%%%%%%%%%%%%%%%%%%%%%%%%%%%%%%%%%%%%%%%%%%%%%%%%%%%%%%%%%%%%%%%%%%%%%%%%%%%%%%%%%%%%%%%%%%%%%%%%%%%%%%%%%%%%%%%%%%%%%%%%%%%%%%%%%%%%%%%%%%%%%%%%%%%%%%%%%%%%%%%%%%%%%%%%%%%%%%%%%%%%%%%%%%%%%%%%%%%%%%%%%%%%%%%%%%%%%%%%%%%%%%%%%%%%%%%%%%%%%%%%%%%%%%%%%%%%%%%%%%%%%%%%%%%%%%%%%%%%%%%%%%%%%%%%%%%%%%%%%%%%%%%%%%%%%%%%%%%%%%%%%%%%%%%%%%%%%%%%%%%%%%%%%%%%%%%%%%%%%%%%%%%%%%%%%%%%%%%%%%%%%%%%%%%%%%%%%%%%

\section{Coleman maps for rank-one subquotients}
\label{sec:Colemanmaps}
\subsection{The setting}\label{section:colemanmap}
Let us denote by $\Gamma_{\mathrm{cyc}}$ the Galois group of the local cyclotomic 
$\mathbb{Z}_p$-extension of $\QQ_p$, which is canonically 
isomorphic to $1+p\mathbb{Z}_p$ via the cyclotomic character $\chi_{\mathrm{cyc}} :  \Gamma_\cyc 
\overset{\sim}{\longrightarrow} 1+p\mathbb{Z}_p$. 
Let $\Gamma_1, \ldots ,\Gamma_r$ be profinite groups each of which are isomorphic to $1+p\mathbb{Z}_p$ via the collection of characters $\chi_i : \Gamma_i \overset{\sim}{\longrightarrow} 1+p\mathbb{Z}_p$ ($i=1,\ldots,r$). 

For each $i$, consider the following Galois characters:
$$
\widetilde{\chi}_i:\ 
G_{\mathbb{Q}_p} \twoheadrightarrow \Gamma_{\mathrm{cyc}} 
\xrightarrow{\chi^{-1}_i \circ \chi_{\mathrm{cyc}}} \Gamma_i 
\hookrightarrow \Gamma_1 \times \cdots \times \Gamma_r 
\hookrightarrow \mathbb{Z}_p [[\Gamma_1 \times \cdots \times \Gamma_r]]^\times . 
$$
Let $\mathcal{R}$ be a local domain which is finite flat over 
$\mathbb{Z}_p [[\Gamma_1 \times \cdots \times \Gamma_r]]$ and let $(\TTo ,\RRn ,\mathcal{S})$ be a deformation datum in the sense of Definition \ref{definition:deformationdatum} with $K=\QQ$. 
Note that we slightly relax the assumption on $\mathcal{R}$ be regular in this portion of our article. 
The hypothesis (H.++) is in effect throughout this section, which we recall for the convenience of the reader:
\\\\
(H.++) There exists an $R$-module direct summand $F^{++}{\TT}$ of $\TT$ which is 
an $R$-module of rank $1+d^{(p)}_+$ containing $F^+_{p}\,\TT$ and is stable under $G_{p}$-action. %%%%%% 
\begin{define}\label{definition:arithmeticpoints}
Let $\mathcal{R}$ be a local domain which is finite flat over 
$\mathbb{Z}_p [[\Gamma_1 \times \cdots \times \Gamma_r]]$ and let $(\TTo ,\RRn ,\mathcal{S})$ be a deformation datum. 
We will be interested in the following list of objects.
\begin{enumerate}
\item 
A continuous ring homomorphism $\kappa : \mathbb{Z}_p [[\Gamma_1 \times \cdots \times \Gamma_r]] 
\lra \overline{\QQ}_p$ is called \emph{arithmetic} if there is an open subgroup 
$U \subset \Gamma_1 \times \cdots \times \Gamma_r$ such that 
$\kappa \vert_U $ coincides with $\chi_1^{w_1 (\kappa )} \times \cdots \times \chi_r^{w_r (\kappa )}$
for an ordered $r$-tuple $\left(w_1 (\kappa ),\ldots , w_r(\kappa )\right) \in \mathbb{Z}^r$. 
When $\kappa$ is an arithmetic specialization, we note that the set
$(w_1 (\kappa ),\ldots , w_r(\kappa )) \in \mathbb{Z}^r$ is independent of the choice of $U$ and the $r$-tuple 
$\left(w_1 (\kappa ),\ldots , w_r(\kappa)\right)$ is called the weight of $\kappa$.  
\item 
Let $\RRn$ be a complete local Noetherian $\ZZ_p$-algebra which is a finite module over 
$\mathbb{Z}_p [[\Gamma_1 \times \cdots \times \Gamma_r]]$. 
A continuous ring homomorphism $\kappa : \RRn \longrightarrow \overline{\QQ}_p$ is called 
\emph{arithmetic} if $\kappa \vert_{\mathbb{Z}_p [[\Gamma_1 \times \cdots \times \Gamma_r]]}$ 
is arithmetic in the sense of previous paragraph. The weight of $\kappa$ is defined to be the weight of 
$\kappa \vert_{\mathbb{Z}_p [[\Gamma_1 \times \cdots \times \Gamma_r]]}$. 
For an arithmetic specialization $\kappa$, we set $V_\kappa:=\TT\otimes_{\kappa} \textup{Frac}(\kappa (\RRn ))$.
\end{enumerate}
\end{define}

%%%%%%%%%%%%%%%%%
\begin{define}\label{definition:arithmeticpoints2}
Let $(\TTo ,\RRn ,\mathcal{S})$ be as in the previous definition. Let $\RRc = \RRn [[\Gamma_{\mathrm{cyc}}]]$ and 
we define $\TTc$ to be $\TT \widehat{\otimes}_{\mathbb{Z}_p} (\Lambda^\sharp_{\mathrm{cyc}})^\iota$ 
where $(\Lambda^\sharp_{\mathrm{cyc}})^\iota$ is a free $\Lambda_{\mathrm{cyc}}$-module of rank one on which $G_{\mathbb{Q}_p}$ 
acts via the inverse tautological character: 
$$
\widetilde{\chi}^{-1}_{\mathrm{cyc}}:\ 
G_{\mathbb{Q}_p} \twoheadrightarrow  \Gamma_{\cyc} \stackrel{\textup{inv}}{\lra} \Gamma_{\mathrm{cyc}}
\hookrightarrow \Lambda^\times_{\mathrm{cyc}} 
= \mathbb{Z}_p [[\Gamma_{\mathrm{cyc}}]]^\times . 
$$
We define 
\begin{equation}
\mathcal{S}_\cyc \subset \{\kappa \in \textup{Hom}(\RRc,\overline{\QQ}_p): \RRc \stackrel{\kappa}{\lra} \overline{\QQ}_p \hbox{ is continuous}\}
\end{equation}
to be the set of specializations $\RRc \stackrel{\kappa}{\lra} \overline{\QQ}_p$ 
such that $\kappa \vert_{\RRn } \in \mathcal{S}$. 
Throughout, the following objects associated to the deformation datum $(\TT_\cyc,\mathcal{R}_\cyc,\mathcal{S}_\cyc)$ will be of interest.
\begin{enumerate}
\item 
A continuous ring homomorphism 
$$
\kappa : \mathbb{Z}_p [[\Gamma_1 \times \cdots \times \Gamma_r ]][[ \Gamma_{\mathrm{cyc}}]]
= \mathbb{Z}_p [[\Gamma_1 \times \cdots \times \Gamma_r \times \Gamma_{\mathrm{cyc}}]]
\lra \overline{\QQ}_p
$$ 
is called \emph{arithmetic} if there is an open subgroup 
$U \subset \Gamma_1 \times \cdots \times \Gamma_r \times \Gamma_{\mathrm{cyc}}$ such that 
$\kappa \vert_U $ coincides with $\chi_1^{w_1 (\kappa )} \times \cdots \times \chi_r^{w_r (\kappa )} \times \chi^{w_\cyc (\kappa)}_{\mathrm{cyc}}$
for an ordered set of integers $\left(w_1 (\kappa ),\ldots , w_r(\kappa ),w_\cyc (\kappa)\right) \in \mathbb{Z}^{r+1}$. 
\item 
Since $\RRc$ is a complete local Noetherian $\ZZ_p$-algebra which is a finite module over 
$\mathbb{Z}_p [[\Gamma_1 \times \cdots \times \Gamma_r \times \Gamma_{\mathrm{cyc}}]]$, 
the notion of an \emph{arithmetic homomorphism} $\kappa : \RRc\longrightarrow \overline{\QQ}_p$ is defined in the same manner as in Definition~\ref{definition:arithmeticpoints}.
\end{enumerate}
\end{define}

\begin{define}\label{definition:ord12}
Let $\RRn$ be a local domain which is finite flat over 
$\mathbb{Z}_p [[\Gamma_1 \times \cdots \times \Gamma_r]]$ and let $(\TTo ,\RRn ,\mathcal{S})$ be a deformation. 
Given a strictly decreasing, 
$G_{\QQ_p}$-stable, exhaustive filtration $\{\mathrm{Fil}_p^i\TT\}_{i \in \mathbb{Z}}$,  
we consider the following condition: 
\begin{enumerate}
\item[(Ord)] 
The $m$-th graded piece $\mathrm{Gr}_p^m\TT:=\mathrm{Fil}_p^{m}\TT/\mathrm{Fil}_p^{m+1}\TT$ 
is a free $\mathcal{R}$-module of rank one for each $m$ such that $0 \leq m \leq d-1$. 
We have $\mathrm{Fil}_p^{m}\TT=\TT$ for $m<0$ and we have $\mathrm{Fil}_p^{m}\TT=0$ for $m\geq d$. 
Moreover, the action of $G_{\QQ_p}$ on $\mathrm{Gr}_p^m\TT$ is given by the character 
$ 
\widetilde{\chi}_1^{e_{m,1}}\cdots \widetilde{\chi}_r^{e_{m,r}} \omega^{1-a_m}\chi^{1-b_m}_{\mathrm{cyc}} \widetilde{\alpha}_m $ 
 where $\omega$ is the Teichm\"uller character and we have
\begin{align*}
& \text{$e_{m,i} \in \{ 0 ,1  \}  $ for $1\leq i \leq r$\,}, \\ 
& a_m ,b_m \in \mathbb{Z}, \\
& \text{$\widetilde{\alpha}_m :\ G_{\QQ_p } \ra \mathcal{R}^\times$ is a non trivial unramified character}
\end{align*}
for each $m\in \{0,1, \ldots,d-1\}$. 
Furthermore, we have $e_{0,i} \leq e_{1,i} \leq \cdots \leq e_{d-1,i}$ for each $i$. 
\end{enumerate}

We say that $\TT_\cyc$ verifies (Ord) if $\TT$ does. We note in this case that the $G_{\QQ_p}$-action on $\mathrm{Gr}_p^m\TT_\cyc$ is given by the character 
$ 
\widetilde{\chi}_1^{e_{m,1}}\cdots \widetilde{\chi}_r^{e_{m,r}} \widetilde{\chi}_{\mathrm{cyc}}^{-1} 
\omega^{1-a_m} \chi^{1-b_m}_{\mathrm{cyc}}  \widetilde{\alpha}_m $\,. 
\end{define}
When $(\TTo_\cyc ,\RRn_\cyc ,\mathcal{S}_\cyc)$ is a deformation datum satisfying (Ord) and $\kappa : \mathcal{R}_\cyc \lra \overline{\QQ}_p$ is an arithmetic specialization  of $\mathcal{R}_\cyc$ (not necessarily an element of $\mathcal{S}_\cyc$) of weight $(w_1 (\kappa ) ,\ldots ,w_r (\kappa ),w_\cyc (\kappa ))$, 
we set
\begin{align*}
& c_{m}(\kappa ):=\left( \displaystyle{\sum_{i=1}^{r} }w_i (\kappa ) e_{m,i}\right) - w_\cyc (\kappa ) +1-b_m 
\\ 
& d_m(\kappa):=-c_m(\kappa)+1
\end{align*}
for each $m\in \{0,1, \ldots,d-1\}$. 

%%%%%%%%%%%%%%%%%%%%%%%%%%%%%%%%%%%%%%%%%%%%%%%%%%%%%%%%%%%%%%%
\begin{define}\label{def:criticalarithmeticpoint} 
Let $(\TTo_\cyc ,\RRn_\cyc ,\mathcal{S}_\cyc)$ be a deformation datum that satisfies (Ord). 
For each $m$, we define $\mathcal{S}^{(m),+}_\cyc$ (resp. $\mathcal{S}^{(m),-}_\cyc$) to be the set of 
continuous ring homomorphisms $\kappa : \mathcal{R}_\cyc \longrightarrow \overline{\QQ}_p$ 
such that the rank-one representation 
$\mathrm{Gr}_p^m V_\kappa:=\mathrm{Gr}_p^m \TT_\cyc\otimes_\kappa \overline{\QQ}_p$ is de Rham and $d_m(\kappa)> 0$ (resp. $d_m(\kappa)\leq 0$).
\end{define}
\begin{rem}
\label{rem:ord1ord2whenV++}
It is not hard to see that when a filtration exists verifying (Ord) then it is necessarily unique. Assume the validity of the hypotheses (Pan), (Ord) and (H.++). It then follows that 
$$\textup{F}_{p}^{+}\TT=\textup{Fil}_{p}^{d_{-}^{(p)}}\TT \hbox{\, , \,\,} F^{++}\TT=\textup{Fil}_{p}^{d_{-}^{(p)}-1}\TT\hbox{\,\,\,\,\,\, and \,\,\,\,\,} \textup{Gr}_{p}^{d_-^{(p)}-1}\TT=F^{++}\TT/\textup{F}_{p}^{+}\TT$$
where $d_-^{(p)}:=d-d_+^{(p)}$.
\end{rem}
%%%%%%%%%%%%%%%%%%%%%%%%%%%%%%%%%%%%%%%%%%%%%%%%%%%%%%%%%%%%%%%%%
\subsection{Examples of Galois Deformations and Admissible specializations}
\subsubsection{Rankin-Selberg Convolutions}
\label{subs:exampleRankinSelberg}
Let $\Bf_1=\sum_{n=1}^{\infty} a_n(\Bf_1)q^n$ and $\Bf_2=\sum_{n=1}^{\infty} a_n(\Bf_2)q^n$ be two primitive Hida families as in the introduction. Let $\Sigma$ denote the set of places that contains all rational primes dividing $pN_1 N_2 $ as well as the archimedean prime. Thanks to Hida and Wiles, we have a two-dimensional continuous irreducible Galois representation 
$$\rho_{\Bf_i}: G_{\QQ,\Sigma}\lra \textup{GL}_2(\textup{Frac}(\mathbb{I}_{\Bf_i}))$$
which is characterized by the property that
$$\textup{tr}\left(\rho_{\Bf_i}(\mathrm{Fr}_\ell)\right)=a_\ell(\Bf_i) \hbox{  for every prime } \ell \nmid Np.$$

Assume that both of the following hold true for $i=1,2$:
\begin{itemize}
\item[(A)] We have a free $\mathbb{I}_{\Bf_i}$-module $\mathbb{T}_{\Bf_i}$ that realizes the Galois representation $\rho_{\Bf_i}$. 
\item[(B)] There exists a $G_{\QQ_p}$-stable free $\mathbb{I}_{\Bf_i}$-direct summand $F^+_p\TT_{\Bf_i}\subset \TT_{\Bf_i}$ of rank one with which $\TT_{\Bf_i}$ satisfies (Pan). 
\end{itemize}

The condition (A) is true under the following hypothesis.

{\rm (F.Eis)} $\Bf_i$ is non-Eisenstein mod $p$ (in the sense that the residual representation is absolutely irreducible). 

The following hypothesis together with (A) guarantees the validity of (B).

{\rm (F.Dist)} $\Bf_i$ is $p$-distinguished (in the sense that the semi-simplification of its residual representation restricted to $G_{\QQ_p}$ is a direct sum of distinct characters).

Let us define
$\mathbb{T} = \mathbb{T}_{\Bf_1} \widehat{\otimes}_{\mathbb{Z}_p} \mathbb{T}_{\Bf_2}$, which is a free  $\mathcal{R} = \mathbb{I}_{\Bf_1} \widehat{\otimes}_{\mathbb{Z}_p} \mathbb{I}_{\Bf_2}$-module of rank $4$.  
Then $\mathbb{T}_{\mathrm{cyc}} = \mathbb{T} \widehat{\otimes}_{\mathbb{Z}_p} (\Lambda^\sharp_{\mathrm{cyc}})^\iota$ 
is a free module of rank $4$ over $\RRc:= \mathcal{R}\widehat{\otimes}_{\mathbb{Z}_p} \Lambda_{\mathrm{cyc}}$, 
on which we allow $G_{\QQ,\Sigma}$ act diagonally. It is also easy to see that $d_+(\TT)=d_-(\TT)=2$ and $d_+(\TTc)=d_-(\TTc)=2$.

\par 
In this set up, the set 
$$\mathcal{S}_\cyc
\subset \{\kappa \in \textup{Hom}(\RRc,\overline{\QQ}_p): \RRc \stackrel{\kappa}{\lra} \overline{\QQ}_p \hbox{ is continuous}\}$$ 
introduced in Definition \ref{definition:deformationdatum} is the set of specializations of the form $\kappa_1\otimes\kappa_2\otimes \kappa_{\mathrm{cyc}} $ which are characterized by the following properties:
\begin{enumerate}
\item[(i)] $\kappa_i$ is an arithmetic specialization of weight $w_i \geq 0$ on the ordinary Hida family $\mathbb{I}_{\Bf_i}$, which corresponds to a cuspform of weight $k_i=w_i+2$  ($i=1,2$). 
\item[(ii)] $\kappa_{\mathrm{cyc}}$ is of the form $\chi_\cyc^{j}\phi$ where $j$ is an integer and $\phi$ is a character of $\Gamma_\cyc$ of finite order. 
\item[(iii)] $w_1$, $w_2$ and $j$ satisfy one of the following conditions: 
\begin{enumerate}
\item[(a)] $w_2+1\leq  j <w_1+1$.
\item[(b)] $w_1+1\leq  j < w_2+1$.
\end{enumerate}
\end{enumerate}

The filtration on $\TT_{\Bf_i}$ in (B) above induces a filtration $\{\mathrm{Fil}_p^i\TTc \}_{i \in \mathbb{Z}}$ 
which is characterized by the grading given as follows: 
\begin{align*}
& \mathrm{Gr}_p^0\TTc = (\mathbb{T}_{\Bf_1} /F^+_p \mathbb{T}_{\Bf_1} ) \widehat{\otimes}_{\mathbb{Z}_p} (\mathbb{T}_{\Bf_2} /F^+_p \mathbb{T}_{\Bf_2} ) \widehat{\otimes}_{\mathbb{Z}_p} (\Lambda^\sharp_{\mathrm{cyc}})^\iota\\
& \mathrm{Gr}_p^1\TTc = (\mathbb{T}_{\Bf_1} /F^+_p \mathbb{T}_{\Bf_1} )\widehat{\otimes}_{\mathbb{Z}_p} F^+_p\mathbb{T}_{\Bf_2} \widehat{\otimes}_{\mathbb{Z}_p} (\Lambda^\sharp_{\mathrm{cyc}})^\iota\\
& \mathrm{Gr}_p^2\TTc = F^+_p\mathbb{T}_{\Bf_1} \widehat{\otimes}_{\mathbb{Z}_p} (\mathbb{T}_{\Bf_2}/ F^+_p \mathbb{T}_{\Bf_2} )\widehat{\otimes}_{\mathbb{Z}_p} (\Lambda^\sharp_{\mathrm{cyc}})^\iota\\
& \mathrm{Gr}_p^3\TTc = F^+_p\mathbb{T}_{\Bf_1} \widehat{\otimes}_{\mathbb{Z}_p} F^+_p\mathbb{T}_{\Bf_2} \widehat{\otimes}_{\mathbb{Z}_p} (\Lambda^\sharp_{\mathrm{cyc}})^\iota
\end{align*} 
and $ \mathrm{Gr}_p^m\TT_\cyc=0$ for all other integers $m$. We define 
\begin{align} 
\notag& F^{++} \TTc:=
(F^+_p \TT_{\Bf_1}\widehat{\otimes}_{\mathbb{Z}_p} \TT_{\Bf_2} 
+\TT_{\Bf_1}\widehat{\otimes}_{\mathbb{Z}_p} F^+_p\TT_{\Bf_2})\widehat{\otimes}_{\mathbb{Z}_p} (\Lambda^\sharp_{\mathrm{cyc}})^\iota
\subset \TTc \\
\label{eqn:defineFpplusfroRS}& F^+_p \TTc:=F^+_p \TT_{\Bf_1}\widehat{\otimes}_{\mathbb{Z}_p} \TT_{\Bf_2}\widehat{\otimes}_{\mathbb{Z}_p} (\Lambda^\sharp_{\mathrm{cyc}})^\iota \subset F^{++} \TTc 
\end{align}
We note that $F^+_p \TTc$ (resp. $F^{++} \TTc$) is nothing but $\mathrm{Fil}_p^{2}\TTc$ (resp. $\mathrm{Fil}_p^{1}\TTc$) and (H.++) is valid with these choices. 
\par 
Note that, as $\mathbb{T}_{\Bf_1} \widehat{\otimes}_{\mathbb{Z}_p} \mathbb{T}_{\Bf_2} \widehat{\otimes}_{\mathbb{Z}_p} (\Lambda^\sharp_{\mathrm{cyc}})^\iota$ is isomorphic to 
$\mathbb{T}_{\Bf_2} \widehat{\otimes}_{\mathbb{Z}_p} \mathbb{T}_{\Bf_1} \widehat{\otimes}_{\mathbb{Z}_p} (\Lambda^{\sharp}_{\mathrm{cyc}})^\iota$, 
the representation $\TTc$ remains the same  if we interchange the roles of $\Bf_1$ and $\Bf_2$. 
Similarly, it is clear that the filtration $F^{++} \TTc=\mathrm{Fil}_p^{1}\TTc$ remains unchanged when we interchange the roles of $\Bf_1$ and $\Bf_2$. However, $F^+_p \TTc=\mathrm{Fil}_p^{2}\TT_\cyc$ does change if we interchange $\Bf_1$ and $\Bf_2$.  

\par 
For each $m\in \{0,1,2,3\}$, the set ${\mathcal{S}^{(m),-}_\cyc}$ (resp. ${\mathcal{S}^{(m),+}_\cyc}$) of Definition \ref{def:criticalarithmeticpoint} is characterized in the current set up by the following conditions: 
\begin{enumerate}
\item[$m=0$: ] $w_1$, $w_2$ are arbitrary and $j<0 $ \,\,\,\,(resp. $j\geq 1$). 
\item[$m=1$: ] $w_1$ is arbitrary and $j<\omega_2+1$ \,\,\,\,(resp. $j\geq \omega_2+1$). 
\item[$m=2$: ] $w_2$ is arbitrary and $j<\omega_1+1$ \,\,\,\,(resp. $j\geq\omega_1+1$). 
\item[$m=3$: ] $j<w_1 +w_2+2$ \,\,\,\,\,\,\,\,\,\,\,\,\,\,\,\,\,\,\,\,\,\,\,\,\,\,\,\,\,\,\,\,\,\,\,\,\,\,\,\,(resp. $j\geq w_1 +w_2+2$).
\end{enumerate}

Denote by $\mathcal{S}_\cyc^{(1)}$ the subset of $\mathcal{S}_\cyc$ that consists of all specializations for which the module $F_p^+\TT_\cyc$ in the statement of the condition (Pan) is chosen as in (\ref{eqn:defineFpplusfroRS}) above. It is not difficult to see that $\kappa \in \mathcal{S}_\cyc^{(1)}$ if and only if it verifies (i), (ii), (iii)-(a). We may similarly define $\mathcal{S}_\cyc^{(2)}$ exchanging the roles of $\Bf_1$ and $\Bf_2$.

%%%%%%%%
\subsubsection{Siegel modular forms}
Our main reference in this example is \cite[Section 2]{ochiai-AJM14} that we summarize here after altering their notation to fit our general set up here. Let $\Bf$ be the branch of an ordinary Hida family with tame level $N$ for the group 
$\mathrm{GSp}(4)$, which is defined over a local domain  $\mathbb{I}_{\Bf}$ finite flat over a two-variable 
Iwasawa algebra $\mathbb{Z}_p [[\Gamma_1 \times \Gamma_2 ]]$. We note that the groups $\Gamma_1$ and $\Gamma_2$ correspond to the groups $G_1$ and $G_2$ in \cite{ochiai-AJM14}, whereas our $\mathbb{I}_{\Bf}$ corresponds to $R^{\textup{ord}}$ in op. cit. Attached to $\Bf$, the work of Tilouine-Urban, Urban and Pilloni equips us (under mild technical hypothesis on the residual representation, c.f. Theorem~2.5 in \cite{ochiai-AJM14}) with a free $\mathbb{I}_{\Bf}$-module $\mathbb{T}_{\Bf}$  
of rank four on which $G_{\mathbb{Q},\Sigma}$ acts continuously (that corresponds to $\overline{\TT}^{\textup{ord}}$ in loc. cit.). 
Let us define
$$ 
\mathbb{T}_\cyc = \mathbb{T}_{\Bf} \widehat{\otimes}_{\mathbb{Z}_p}  (\Lambda^\sharp_{\mathrm{cyc}})^\iota
$$
which is free of rank $4$ over the ring $\mathcal{R}:= \mathbb{I}_{\Bf} \widehat{\otimes}_{\mathbb{Z}_p} \Lambda_{\mathrm{cyc}}$ and endow it with the diagonal $G_{\QQ,\Sigma}$ action. We note that the coefficient ring $\mathcal{R}$ here corresponds to $R^{\textup{n.ord.}}$ and $\TT_\cyc$ here to $\overline{\TT}^{\textup{n.ord}}$ in op. cit. 
\par 
The set $\mathcal{S}_\cyc$
of Definition~\ref{definition:deformationdatum} in this set up is the set of specializations of the form $\lambda\otimes \kappa_{\mathrm{cyc}} $\,, which are characterized by the following properties (thanks to the description of the associated local Galois representation in \cite[Corollary 2.7]{ochiai-AJM14} and with the choice $F^+_p\TT_\cyc:=(F^-\TT^{\textup{n.ord}})^{\mathcal{R}}(1)$ where the quotient $F^-\TT^{\textup{n.ord}}$ is given as on pg. 739 of op. cit.):

\begin{enumerate}
\item[(i)] $\lambda$ is an arithmetic specialization of the Hida family $\mathbb{I}_{\Bf_i}$ of weight $(w_1,w_2)$ with integers $0<w_2<w_1$,
which corresponds to a cuspidal automorphic representation $\pi_\lambda$ of $\textup{GSp}_4(\mathbb{A}_{\QQ})$. 
\item[(ii)] $\kappa_{\mathrm{cyc}}$ is of the form $\chi_\cyc^{j}\phi$ where $j$ is an integer and $\phi$ is a character of $\Gamma_\cyc$ of finite order. 
\item[(iii)] $w_2+2 \leq  j < w_1+3$\,.
\end{enumerate}
It is also clear that (H.{++}) is satisfied in this case. The set $\mathcal{S}^{(m),-}$ (resp. $\mathcal{S}^{(m),+}$) for $m\in \{0,1,2,3\}$ is characterized in the current set up by the following conditions: 

\begin{enumerate}
\item[$m=0$: ] $j < w_1 +w_2+4$\,\,\,\,\,\,\,\,\,\,\,\,\,\,\,\,\,\,\,\,\,\,\,\,\,\,\,\,\,\,\,\,\,\,\,\,\,\,\,\,\,(resp. $j\geq w_1+w+2+4$).
\item[$m=1$: ] $w_2$ is arbitrary and $j < w_1 +3$\,\,\,\,(resp. $j\geq w_1+3$)
\item[$m=2$: ] $w_1$ is arbitrary and $j < w_2 +2$  \,\,\,(resp. $j\geq w_2+2$) 
\item[$m=3$: ] $w_1$, $w_2$ are arbitrary and $j< 1$\,\,\,\,\,(resp. $j\geq 1$). 
\end{enumerate}

%%%%%%%%%%%%%%%%%%
%%%%%%%%%%%%%%%%%%

%%%%%%%%%%%%%%%%%%%%%%%%%%%%%%%%%%%%%%%%%%%%%%%%%%%%%%%%%%%%%%%%

%%%%%%%%%%%%%%%%%%%%%%%%%%%%%%%%%%%%%%%%%%%%%%%%%%%%%%%%%%%%%%%%
\subsection{Coleman map and its interpolation property}
\label{subsec:largeColemanmap}
We retain the notation of Section \ref{section:colemanmap}. 
The goal of this section is to give a construction of the so called big exponential map and big dual exponential map for each 
graded piece of a deformation datum $(\TTo_\cyc ,\RRn_\cyc ,\mathcal{S}_\cyc)$ satisfying the condition (Ord), as stated in 
Section~\ref{section:colemanmap}. Our main result in this section (Theorem~\ref{thm:interpolationforcoleman}) is presented at the end.   
We first introduce some notation and state various preparatory results. 
{
Let $\mathbb{D}(\ZZ_p [[G_{\mathrm{cyc}}]]^\sharp)$ denote the canonical $\mathbb{Z}_p$-lattice inside the module 
$\mathcal{D}_{\infty}(\mathbb{Q}_p)\otimes_{\mathbb{Q}_p} 
\mathbb{Q}_p (\mu_p)$ given as in the introduction of \cite{perrinriou}. The lattice $\mathbb{D}(\ZZ_p [[G_{\mathrm{cyc}}]]^\sharp)$ may be explicitly described as $\mathbb{D}(\ZZ_p [[G_{\mathrm{cyc}}]]^\sharp):= D_{\mathrm{crys}} (\mathbb{Z}_p) {\otimes}_{\mathbb{Z}_p} \ZZ_p [[G_{\mathrm{cyc}}]]$, where $D_{\mathrm{crys}} (\mathbb{Z}_p):=(A_{\mathrm{crys}})^{G_{\mathbb{Q}_p}} $ is the canonical lattice in $D_{\mathrm{crys}} (\mathbb{Q}_p) = (B_{\mathrm{crys}})^{G_{\mathbb{Q}_p}} $ (which we may and we do canonically identify with $\mathbb{Z}_p$). Note then that $\mathbb{D}(\ZZ_p [[G_{\mathrm{cyc}}]]^\sharp)$ a free $\ZZ_p [[G_{\mathrm{cyc}}]]$-module of rank one.
%%%%%%%%%%%%%%%%%%%%%%
\begin{lemma}\label{lemma:D}
\label{lemma:choiceofalatticeinbigderham}
The $\ZZ_p [[G_{\mathrm{cyc}}]]$-module $\mathbb{D}(\ZZ_p [[G_{\mathrm{cyc}}]]^\sharp)$ has the property that $\chi_\cyc^j \phi (\mathbb{D}(\ZZ_p [[G_{\mathrm{cyc}}]]^\sharp))$ is identified with a lattice in $D_{\textup{dR}}(\QQ_p(j)\otimes \mathcal{O}(\omega^{-j} \phi ) )$ for every character $\phi$ of $G_\cyc$ of finite order 
where we define $\mathcal{O}(\omega^{-j} \phi )$ to be a free $\mathbb{Z}_p [\omega^{-j} \phi ]$-module of rank one on which $G_{\mathbb{Q}_p}$ acts by the character $\omega^{-j} \phi$. 
\end{lemma}
\begin{proof} This is implicitly proved in \cite{perrinriou}. 
% and the module $\mathbb{D}(\ZZ_p [[G_{\mathrm{cyc}}]]_{\mathrm{cyc}}^\sharp)$ in the statement of our lemma may be identified with a $\mathbb{Z}_p$-lattice of the module $\mathcal{D}_{\infty}(\mathbb{Q}_p)\otimes_{\mathbb{Q}_p} 
%\mathbb{Q}_p (\mu_p)$ in Perrin-Riou's notation in the introduction of loc. cit. In more precise wording, we have 
%$$\mathbb{D}(\ZZ_p [[G_{\mathrm{cyc}}]]^\sharp)\otimes_{\mathbb{Z}_p} \mathbb{Q}_p = \mathcal{D}_{\infty}(\mathbb{Q}_p)\otimes_{\mathbb{Q}_p} \mathbb{Q}_p (\mu_p).$$ 
Note that in loc. cit., specialization of an element $x \in \mathcal{D}_{\infty}(\mathbb{Q}_p)\otimes_{\mathbb{Q}_p} \mathbb{Q}_p (\mu_p)$ via the character $\chi_\cyc^j \phi$ is equivalent  to specializing the element 
$x\otimes e_1^{\otimes j} \in \mathcal{D}_{\infty}(\mathbb{Q}_p(j)) \otimes_{\mathbb{Q}_p} 
\mathbb{Q}_p (\mu_p)
= \mathcal{D}_{\infty}(\mathbb{Q}_p) \otimes e_1^{\otimes j}$ 
via the the character $ \phi$, where $e_1$ is a basis of 
$D_{\mathrm{crys}} (\mathbb{Q}_p (1))$ specified by our fixed norm compatible system $\{ \zeta_{p^n}\}$ of $p$-power roots of unity. It therefore remains to explain the definition of the specialization by $\chi_\cyc^j \phi$ when $j=0$. Let $n$ be the smallest natural number such that the character $\phi$  factors through the quotient $\mathrm{Gal} (\mathbb{Q}_p( \mu_{p^n})/\mathbb{Q}_p) $. 
Consider the projection map 
\begin{equation}\label{equation:evaluation1}
\mathbb{D}(\ZZ_p [[G_{\mathrm{cyc}}]]^\sharp):= D_{\mathrm{crys}} 
(\mathbb{Z}_p) {\otimes}_{\mathbb{Q}_p} G_{\mathrm{cyc}}
\longrightarrow \mathbb{Q}_p 
[\mathrm{Gal} (\mathbb{Q}_p( \mu_{p^n})
/\mathbb{Q}_p)]
\end{equation} 
as well as the following $\mathrm{Gal} (\mathbb{Q}_p( \mu_{p^n})
/\mathbb{Q}_p)$-equivariant identifications  
\begin{equation}\label{equation:evaluation2}
D_{\mathrm{dR}} (\mathbb{Q}_p [\mathrm{Gal} (\mathbb{Q}_p( \mu_{p^n})/\mathbb{Q}_p)]^\sharp )
\cong  \mathbb{Q}_p (\mu_{p^n})\cong \mathbb{Q}_p [\mathrm{Gal} (\mathbb{Q}_p( \mu_{p^n})/\mathbb{Q}_p)].
\end{equation}
The evaluation map $\mathrm{Gal} (\mathbb{Q}_p(\mu_{p^n})/\mathbb{Q}_p)]^\sharp \stackrel{\phi}{\longrightarrow} E_\phi (\phi)$ 
induces 
\begin{equation}\label{equation:evaluation3}
D_{\mathrm{dR}} (\mathbb{Q}_p [\mathrm{Gal} (\mathbb{Q}_p( \mu_{p^n})
/\mathbb{Q}_p)]^\sharp ) \longrightarrow 
D_{\mathrm{dR}} 
(E_\phi (\phi) ). 
\end{equation}
The desired specialization map 
$$
\phi : \mathbb{D}(\ZZ_p [[G_{\mathrm{cyc}}]]^\sharp):= D_{\mathrm{crys}} 
(\mathbb{Z}_p) {\otimes}_{\mathbb{Z}_p}G_{\mathrm{cyc}} 
\longrightarrow D_{\mathrm{dR}} 
(E_\phi (\phi)) 
$$
is now defined to be the composition of the maps 
\eqref{equation:evaluation1}, 
\eqref{equation:evaluation2} and 
\eqref{equation:evaluation3} above. 
\end{proof}}
 
Let us set $\mathbb{D}^{\mathrm{ur}}(\ZZ_p [[G_{\mathrm{cyc}}]]^\sharp):=\mathbb{D}(\ZZ_p [[G_{\mathrm{cyc}}]]^\sharp)\widehat{\otimes}
_{\mathbb{Z}_p} \widehat{\mathbb{Z}}^{\mathrm{ur}}_p$. {For any $G_{\mathbb{Q}_p}$-representation $V$, we denote by $D^{\mathrm{ur}}_{\textup{dR}}(V )$ 
the filtered module $(V \otimes B_{\mathrm{dR}})^{G_{\mathbb{Q}^{\mathrm{ur}}_p}}$. }
The main construction we shall carry out in this section relies on the following theorem, which essentially is a reformulation of the Coleman map in its most classical form (that was introduced by Coleman himself). 
%%%%%%%%%%%%%%%%%%%%%%%%%%%%%%%%%%%%%%%%%%%%%%%%%%%%%%%%%%%%%%%%%
\begin{thm}\label{thm:interpolationclassic_unr} 
We have a $\ZZ_p [[G_{\mathrm{cyc}}]]$-linear isomorphism  
$$\mathrm{EXP}^{\mathrm{ur}}_{\ZZ_p [[G_{\mathrm{cyc}}]]^\sharp} \,:\, 
\mathbb{D}^{\mathrm{ur}}(\ZZ_p [[G_{\mathrm{cyc}}]]^\sharp)  \lra 
{H^1 (\QQ_p^{\mathrm{ur}},\ZZ_p [[G_{\mathrm{cyc}}]]^\sharp)}\big{/}{\ZZ_p} 
$$ 
such that, for every arithmetic specialization 
$\kappa$ on $\ZZ_p [[G_{\mathrm{cyc}}]]$ of weight $w_\cyc (\kappa)>0$, 
the following diagram commutes:
$$\xymatrix{\mathbb{D}^{\mathrm{ur}}(\ZZ_p [[G_{\mathrm{cyc}}]]^\sharp )\ar[d]_\kappa 
\ar[rrr]^{\mathrm{EXP}^{\mathrm{ur}}_{\ZZ_p [[G_{\mathrm{cyc}}]]^\sharp }}&&& 
{H^1 (\QQ_p^{\mathrm{ur}},\ZZ_p [[G_{\mathrm{cyc}}]]^\sharp )}\big{/}
{\ZZ_p} 
\ar[d]^\kappa\\
D^{\mathrm{ur}}_{\textup{dR}}(E_\kappa (\kappa ) )\ar[rrr]_{
\exp^{\mathrm{ur}}_{E_\kappa (\kappa ) }
 \,\circ \,  e^{\mathrm{ur},+}_p
 }&&& 
 H^1(\QQ_p^{\mathrm{ur}},E_\kappa (\kappa ))\,.
 }
$$
Here $e^{\mathrm{ur},+}_p:=(-1)^{w_\cyc (\kappa)-1}(w_\cyc (\kappa)-1)!\,e^{\mathrm{ur}}_{p}$ and 
$e^{\mathrm{ur}}_{p}=e^{\mathrm{ur}}_{p}(E_\kappa (\kappa ))$ is the $p$-adic multiplier given by
$$
e^{\mathrm{ur}}_{p}:=\begin{cases} 
\left( 
1 - p^{-1}\varphi^{-1}
\right) \left( 
1 - \varphi
\right)^{-1} & \text{when $\kappa  
=(\chi_\cyc \omega )^{w_\cyc (\kappa )} $,} \vspace*{5pt} \\ 
\left( p^{-1}\varphi^{-1 } \right)^n & 
\text{when $\kappa = (\chi_\cyc \omega )^{w_\cyc (\kappa )}\phi$ with $\phi$ of conductor $p^n >1$\,.}  \\ 
\end{cases}
$$
Also, for every arithmetic specialization 
$\kappa$ on $\ZZ_p [[G_{\mathrm{cyc}}]]$ of weight $w_\cyc (\kappa)\leq 0$, 
the following diagram commutes:
$$
\xymatrix{\mathbb{D}^{\mathrm{ur}}(\ZZ_p [[G_{\mathrm{cyc}}]]^\sharp )\ar[d]_\kappa 
\ar[rrr]^{\mathrm{EXP}^{\mathrm{ur}}_{\ZZ_p [[G_{\mathrm{cyc}}]]^\sharp }}&&& 
{H^1 (\QQ_p^{\mathrm{ur}},\ZZ_p [[G_{\mathrm{cyc}}]]^\sharp )}\big{/}{\ZZ_p} 
\ar[d]^\kappa\\
D^{\mathrm{ur}}_{\textup{dR}}(E_\kappa (\kappa ) )\ar[rrr]_{
 (\exp^{\mathrm{ur},\ast}_{E_\kappa (\kappa ) })^{-1}
 \,\circ \,  e^{\mathrm{ur},-}_p
 }&&& 
 H^1(\QQ_p^{\mathrm{ur}},E_\kappa (\kappa ))
 }
$$
where $e^{\mathrm{ur},-}_p:=\dfrac{ e^{\mathrm{ur}}_p}{(-w_\cyc (\kappa))!}$\,. 
\end{thm}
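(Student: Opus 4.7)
The plan is to reduce the statement to Perrin-Riou's classical big exponential map for the trivial crystalline representation $\ZZ_p$ and then to extend the construction to the unramified base by flat base change. First I would invoke Perrin-Riou's isomorphism (as formalised in \cite{perrinriou}, building on Coleman's work on norm-coherent systems) to produce a $\ZZ_p[[\Gamma_\cyc]]$-linear map
$$\EXP_{\ZZ_p[[G_\cyc]]^\sharp}\,:\,\mathbb{D}(\ZZ_p[[G_\cyc]]^\sharp)\lra H^1(\QQ_p,\ZZ_p[[G_\cyc]]^\sharp)\big{/}{\ZZ_p}$$
together with its interpolation property at every arithmetic character of $\Gamma_\cyc$. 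The quotient by $\ZZ_p$ on the target reflects a trivial-zero phenomenon standard in Perrin-Riou's theory, and the modifiers $(1-p^{-1}\varphi^{-1})(1-\varphi)^{-1}$ and $(p^{-1}\varphi^{-1})^n$ appearing in the statement are precisely the Euler-like correction factors in Perrin-Riou's interpolation formula that correspond respectively to the crystalline and to the ramified specialisations.

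Next, I would extend the construction to cohomology over $\QQ_p^{\mathrm{ur}}$ by flat base change along $\ZZ_p\hookrightarrow\widehat{\ZZ_p^{\mathrm{ur}}}$. Since the action of $G_{\QQ_p}$ on $\ZZ_p[[G_\cyc]]^\sharp$ factors through the totally ramified quotient $\Gamma_\cyc$, the inflation--restriction sequence applied to $G_{\QQ_p^{\mathrm{ur}}}=I_{\QQ_p}\subset G_{\QQ_p}$ identifies $H^1(\QQ_p^{\mathrm{ur}},\ZZ_p[[G_\cyc]]^\sharp)$ as a base change of its analogue over $\QQ_p$. The candidate $\EXP^{\mathrm{ur}}_{\ZZ_p[[G_\cyc]]^\sharp}$ is then obtained as $\EXP_{\ZZ_p[[G_\cyc]]^\sharp}\otimes\mathrm{id}$, using that $\mathbb{D}^{\mathrm{ur}}(\ZZ_p[[G_\cyc]]^\sharp)$ is by definition $\mathbb{D}(\ZZ_p[[G_\cyc]]^\sharp)\widehat{\otimes}\widehat{\ZZ_p^{\mathrm{ur}}}$. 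The isomorphism property follows from the flatness of $\widehat{\ZZ_p^{\mathrm{ur}}}$ over $\ZZ_p$ combined with the fact that the original Perrin-Riou map is an isomorphism after quotienting by $\ZZ_p$.

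The interpolation diagrams at arithmetic specializations $\kappa$ of $\ZZ_p[[G_\cyc]]$ (both in the regimes $w_\cyc(\kappa)>0$ and $w_\cyc(\kappa)\leq 0$) will then follow formally from Perrin-Riou's interpolation formulas, using that the Bloch--Kato exponential and its dual commute with the unramified base change $E_\kappa\hookrightarrow E_\kappa^{\mathrm{ur}}$. Specialising $\EXP^{\mathrm{ur}}_{\ZZ_p[[G_\cyc]]^\sharp}$ at $\kappa$ recovers the tensor product of the specialisation of $\EXP_{\ZZ_p[[G_\cyc]]^\sharp}$ with the identity on $\widehat{\ZZ_p^{\mathrm{ur}}}$; the classical interpolation property therefore produces the stated factors $(-1)^{w_\cyc(\kappa)-1}(w_\cyc(\kappa)-1)!\cdot e_p^{\mathrm{ur}}$ in the exponential diagram and $e_p^{\mathrm{ur}}/(-w_\cyc(\kappa))!$ in the dual-exponential diagram, in accordance with the duality between $\exp$ and $\exp^{\ast}$ on the Tate twists of the trivial representation.

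The main obstacle will be a careful bookkeeping of conventions: the sign conventions for the factorial factors, the choice of basis $e_1\in D_{\mathrm{crys}}(\QQ_p(1))$ determined by the fixed compatible system $\{\zeta_{p^n}\}$, the direction of the tautological versus the inverse tautological embedding $\Gamma_\cyc\hookrightarrow\Lambda_\cyc^\times$, and the normalisation of the Frobenius $\varphi$ all contribute to the precise form of $e_p^{\mathrm{ur}}$ and must be tracked simultaneously. I would pin these conventions down by evaluating both sides of the interpolation diagram at the simplest non-trivial specialisations (weight one, trivial finite character), where the computation reduces to the classical Coleman power series and can be verified directly against known formulas; the remaining cases then follow by $\ZZ_p[[\Gamma_\cyc]]$-linearity and Zariski density of arithmetic specialisations.
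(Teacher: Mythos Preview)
Your reduction to Perrin-Riou/Coleman is the right idea, but the passage from $\QQ_p$ to $\QQ_p^{\mathrm{ur}}$ contains a genuine gap. Inflation--restriction for $I_{\QQ_p}\lhd G_{\QQ_p}$ does \emph{not} exhibit $H^1(\QQ_p^{\mathrm{ur}},\ZZ_p[[G_\cyc]]^\sharp)$ as a base change of $H^1(\QQ_p,\ZZ_p[[G_\cyc]]^\sharp)$ along $\ZZ_p\hookrightarrow\widehat{\ZZ}_p^{\mathrm{ur}}$. What it gives (using that $(\ZZ_p[[G_\cyc]]^\sharp)^{I_{\QQ_p}}=0$) is the \emph{opposite} direction:
\[
H^1(\QQ_p,\ZZ_p[[G_\cyc]]^\sharp)\ \stackrel{\sim}{\lra}\ H^1(\QQ_p^{\mathrm{ur}},\ZZ_p[[G_\cyc]]^\sharp)^{\,\textup{Frob}=1}.
\]
There is no a priori $\widehat{\ZZ}_p^{\mathrm{ur}}$-module structure on $H^1(\QQ_p^{\mathrm{ur}},\ZZ_p[[G_\cyc]]^\sharp)$ coming from Galois cohomology alone (concretely: $\varprojlim_n\bigl(\widehat{\QQ_p^{\mathrm{ur}}}(\mu_{p^n})\bigr)^{\times,\wedge}$ is not $\varprojlim_n\QQ_p(\mu_{p^n})^{\times,\wedge}\widehat{\otimes}\,\widehat{\ZZ}_p^{\mathrm{ur}}$), so the map ``$\EXP_{\ZZ_p[[G_\cyc]]^\sharp}\otimes\mathrm{id}$'' is not defined on the target you want.

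The paper proceeds in the reverse order: it constructs $\EXP^{\mathrm{ur}}$ \emph{directly} over $\QQ_p^{\mathrm{ur}}$, invoking the classical Coleman power series for the Lubin--Tate tower $\widehat{\QQ_p^{\mathrm{ur}}}(\mu_{p^\infty})/\widehat{\QQ_p^{\mathrm{ur}}}$, which is the natural home of Coleman's theory (and is where Perrin-Riou's construction itself begins). The Kummer identification $H^1(\QQ_p^{\mathrm{ur}},\ZZ_p(1)\otimes\ZZ_p[[G_\cyc]]^\sharp)\cong\varprojlim_n\widehat{\QQ_p^{\mathrm{ur}}}(\mu_{p^{n+1}})^{\times,\wedge}$ then makes the theorem a reformulation of Coleman's isomorphism; the positive-weight interpolation is handled as in \cite[Prop.~5.10]{ochiai-AJM03} and the non-positive weights via Berger's computation \cite[Thm.~II.10]{BergerBlochKatoDocumenta}. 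Only afterwards does one descend to $\QQ_p$ by taking Frobenius-invariants (this is exactly the content of the subsequent corollary). So your strategy is inverted: build over $\QQ_p^{\mathrm{ur}}$ first, then descend, rather than build over $\QQ_p$ and try to base change up.
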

The proof of Theorem \ref{thm:interpolationclassic_unr} was explained in \cite[Prop. 5.10]{ochiai-AJM03} (see also \cite[Prop. 4.2]{ochiai-AJM14}), except for the second interpolation diagram (concerning the non-positive weights) which follows from \cite[Theorem II.10]{BergerBlochKatoDocumenta}. We do not repeat the proof of Theorem~\ref{thm:interpolationclassic_unr} for this reason, but simply note that the construction of the map in Theorem~\ref{thm:interpolationclassic_unr} follows very closely the theory of classical Coleman power series for the extension $\QQ^{\mathrm{ur}}_p (\mu_{p^\infty})/\QQ^{\mathrm{ur}}_p$. Indeed, notice that the group $H^1(\QQ_p^{\mathrm{ur}},\ZZ_p (1)\otimes \ZZ_p [[G_{\mathrm{cyc}}]]^\sharp)$ is isomorphic by Kummer theory to the inverse limit (with respect to norm maps) of $\QQ_p^{\mathrm{ur}}(\mu_{p^{n+1}})^{\times,\wedge}$ (where the superscript $\wedge$ stands for $p$-adic completion). With this identifications at hand, Theorem~\ref{thm:interpolationclassic_unr} is a reformulation of the classical theory of Coleman power series. %into the language of Galois cohomology. 

%%%%%%%%%%%%%%%%%%%%%%%%%%%%%%%%%%%%%%%%%%%%%%%%%%%%%%%%%%%%%%%%%%%%%%%%%%%%%%%
\begin{cor}\label{lem:invariantunramified}
Let $\RRn$ be a complete Noetherian semi-local $\ZZ_p$-algebra of characteristic $0$ 
and let $\widetilde{\alpha}:G_{\mathbb{Q}_p} \longrightarrow R ^{\times}$ be a non-trivial
continuous unramified character. 
Then, the Galois cohomology group $H^1(\mathbb{Q}_p, \ZZ_p [[G_{\mathrm{cyc}}]]^\sharp \widehat{\otimes}_{\ZZ_p}\RRn (\widetilde{\alpha}) )$
is a free $\RRn$-module of rank one over 
$\ZZ_p [[G_{\mathrm{cyc}}]] \widehat{\otimes}_{\ZZ_p} \RRn$.
\end{cor}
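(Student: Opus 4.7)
Write $M := \ZZ_p[[G_\cyc]]^\sharp \widehat\otimes_{\ZZ_p} \RRn(\widetilde\alpha)$. My approach is to compute $H^1(\QQ_p, M)$ by Hochschild--Serre descent along the unramified extension $\QQ_p^{\ur}/\QQ_p$, then apply Theorem~\ref{thm:interpolationclassic_unr} to identify the $\QQ_p^{\ur}$-cohomology up to a controlled ``$\ZZ_p$-correction,'' and finally descend through the Frobenius action by exploiting the non-triviality of $\widetilde\alpha$.

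First, I would verify that $M^{I_p}=0$. Since $\widetilde\alpha$ is unramified, the inertia $I_p$ acts on $M$ only through $\widetilde\chi_\cyc\vert_{I_p}$, whose image in $\Lambda_\cyc^\times$ is exactly $\Gamma_\cyc$; as the augmentation ideal of $\Lambda_\cyc\cong\ZZ_p[[T]]$ has trivial annihilator inside $\Lambda_\cyc\widehat\otimes\RRn\cong\RRn[[T]]$, no nonzero element of $M$ is $I_p$-fixed. Combined with $\mathrm{cd}(G_{\QQ_p^{\ur}/\QQ_p})=1$, the Hochschild--Serre spectral sequence collapses to
\[
H^1(\QQ_p,M)\;\cong\;H^1(\QQ_p^{\ur},M)^{\varphi=1}.
\]
Moreover, since $G_{\QQ_p^{\ur}}$ acts trivially on $\RRn(\widetilde\alpha)$, the completed tensor product commutes with $H^1$, giving $H^1(\QQ_p^{\ur},M)\cong H^1(\QQ_p^{\ur},\ZZ_p[[G_\cyc]]^\sharp)\widehat\otimes_{\ZZ_p}\RRn$ with the diagonal Frobenius action.

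Next, Theorem~\ref{thm:interpolationclassic_unr} produces the short exact sequence
\[
0\lra \ZZ_p\widehat\otimes\RRn\lra H^1(\QQ_p^{\ur},M)\lra \mathbb{D}^{\ur}(\ZZ_p[[G_\cyc]]^\sharp)\widehat\otimes_{\ZZ_p}\RRn\lra 0,
\]
where the quotient is a free rank-one module over $\Lambda_\cyc\widehat\otimes\widehat{\ZZ_p^{\ur}}\widehat\otimes\RRn$, and Frobenius acts semilinearly by multiplication by a unit $u\in(\Lambda_\cyc\widehat\otimes\RRn)^\times$ incorporating $\widetilde\chi_\cyc(\varphi)\widetilde\alpha(\varphi)$. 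Taking $\varphi$-invariants, I would prove that the quotient term contributes a free $\Lambda_\cyc\widehat\otimes\RRn$-module of rank one via an extended Frobenius-descent argument: construct a distinguished solution $\lambda_u$ to $\varphi(x)=u\,x$ in $\Lambda_\cyc\widehat\otimes\widehat{\ZZ_p^{\ur}}\widehat\otimes\RRn$ using the Teichm\"uller lift for the residual part of $u$ and the $p$-adic $\log$--$\exp$ bijection on the $1$-unit part, then show every other solution is a scalar multiple of $\lambda_u$. For the sub-term $\ZZ_p\widehat\otimes\RRn$, Frobenius acts by multiplication by $\widetilde\alpha(\varphi)$; the non-triviality of $\widetilde\alpha$ (together with the characteristic zero hypothesis on $\RRn$) will be used to control $\ker(\widetilde\alpha(\varphi)-1)$ and to show that the contribution of the kernel is absorbed by the connecting morphism, leaving $H^1(\QQ_p,M)$ free of rank one over $\Lambda_\cyc\widehat\otimes\RRn$.

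\textbf{Main obstacle.} The crux is the extended Frobenius descent: establishing that $(\Lambda_\cyc\widehat\otimes\widehat{\ZZ_p^{\ur}}\widehat\otimes\RRn)^{\varphi=u}$ is free of rank one over $\Lambda_\cyc\widehat\otimes\RRn$ for an arbitrary unit $u$. While classical for $u\in\ZZ_p^\times$ (where it amounts to $\widehat{\ZZ_p^{\ur}}^{\,\varphi=u}\cong\ZZ_p$), the general case requires a careful convergence argument in the Jacobson-radical-adic topology of $\widetilde R:=\Lambda_\cyc\widehat\otimes\RRn$ to construct $\lambda_u$. A secondary technical point is absorbing the $\ZZ_p\widehat\otimes\RRn$-kernel in the Coleman sequence; if $\widetilde\alpha(\varphi)-1$ fails to be a non-zero-divisor in $\RRn$, a finer analysis of the long exact sequence in Frobenius-cohomology (relating $\ker$ and $\mathrm{coker}$ of $\varphi-1$ on the two outer terms) is needed to maintain the desired rank-one freeness.
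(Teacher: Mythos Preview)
Your strategy is essentially the paper's: tensor the isomorphism of Theorem~\ref{thm:interpolationclassic_unr} with $\RRn(\widetilde{\alpha})$, take $\mathrm{Gal}(\QQ_p^{\ur}/\QQ_p)$-invariants, and use non-triviality of $\widetilde{\alpha}$ to dispose of the $\ZZ_p$-correction. The only substantive difference is that what you flag as your \emph{main obstacle} --- the ``extended Frobenius descent'' showing $(\widehat{\ZZ_p^{\ur}}\widehat{\otimes}_{\ZZ_p} M)^{\varphi=1}$ is free of the same rank as $M$ for a free $R$-module $M$ with unramified action --- is exactly the content of a lemma the paper quotes as a black box (Lemma~\ref{rank}, which is \cite[Lemma 3.3]{ochiai-AJM03}). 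So you are reinventing a known result; citing it collapses your argument to a few lines.

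On your secondary concern about the $\ZZ_p\widehat{\otimes}\RRn$-term: the paper simply records that $\RRn(\widetilde{\alpha})^{G_{\QQ_p}}=0$ and asserts the resulting identification~\eqref{equation:invariant_unr_2} without tracking the connecting map into the Frobenius-coinvariants. Your more cautious phrasing is not wrong, but the paper treats this step as immediate.
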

\begin{proof}
We shall apply the formal tensor product functor $\left(-\widehat{\otimes}_{\ZZ_p} \RRn (\widetilde{\alpha})\right)$
to the isomorphism $\mathrm{EXP}^{\mathrm{ur}}_{\ZZ_p [[G_{\mathrm{cyc}}]]^\sharp }$ 
and take the $\mathrm{Gal} (\mathbb{Q}^{\mathrm{ur}}_p/\mathbb{Q}_p)$-invariants. 
We shall need the following lemma that we recall from \cite[Lemma 3.3]{ochiai-AJM03}:
%%%%%%%%%%%%%%%%%%%%%%%%%%%%%%%%%%%%%%%%%%%%%%%%%%%%%%%%%%%%%%%%%%%%%%%%%%%%%%%
\begin{lemma}\label{rank} 
Let $R$ be a complete semi-local Noetherian $\ZZ_p$-algebra of mixed characteristic and let
$M$ be a free $R$-module of finite rank $e$ that is endowed with an unramified action of $G_{\mathbb{Q}_p}$. Then $(M \widehat{\otimes}_{\mathbb{Z}_p}
\widehat{\mathbb{Z}}^{\mathrm{ur}}_p)^{G_{\mathbb{Q}_p}}$ is a free $R$-module of rank $e$.
\end{lemma}
Recall that $\mathbb{D}^{\mathrm{ur}}(\ZZ_p [[G_{\mathrm{cyc}}]]^\sharp ) \widehat{\otimes}_{\ZZ_p} \RRn (\widetilde{\alpha})$ 
is isomorphic to $\widehat{\mathbb{Z}}_p^{\mathrm{ur}}[[G_\cyc ]] \widehat{\otimes}_{\ZZ_p} \RRn (\widetilde{\alpha})$. 
Applying Lemma~\ref{rank} with $R=\ZZ_p [[G_{\mathrm{cyc}}]] ) \widehat{\otimes}_{\ZZ_p} \RRn$, it follows that
\begin{equation}\label{equation:invariant_unr_1}
\left( 
\mathbb{D}^{\mathrm{ur}}(\ZZ_p [[G_{\mathrm{cyc}}]]^\sharp ) \widehat{\otimes}_{\ZZ_p} \RRn (\widetilde{\alpha})
\right)^{\mathrm{Gal} (\mathbb{Q}^{\mathrm{ur}}_p/\mathbb{Q}_p)}
\cong \ZZ_p [[G_{\mathrm{cyc}}]] ) \widehat{\otimes}_{\ZZ_p} \RRn . 
\end{equation}
On the other hand, since $\RRn (\widetilde{\alpha})^{G_{\QQ_p}} =0$ by assumption we have 
\begin{equation}\label{equation:invariant_unr_2}
\left( \left({H^1 (\QQ_p^{\mathrm{ur}},\ZZ_p [[G_{\mathrm{cyc}}]]^\sharp )}\big{/}{\ZZ_p} \right)
\widehat{\otimes}_{\ZZ_p} \RRn (\widetilde{\alpha}) \right)^{\mathrm{Gal} (\mathbb{Q}^{\mathrm{ur}}_p/\mathbb{Q}_p)}
\cong H^1 (\QQ_p^{\mathrm{ur}},\ZZ_p [[G_{\mathrm{cyc}}]]^\sharp \widehat{\otimes}_{\ZZ_p} \RRn (\widetilde{\alpha}))
^{\mathrm{Gal} (\mathbb{Q}^{\mathrm{ur}}_p/\mathbb{Q}_p)}
\end{equation}
 The proof of the corollary follows combining the isomorphisms \eqref{equation:invariant_unr_1} and \eqref{equation:invariant_unr_2}. 
\end{proof}

For integers $a,b$, we define $\mathbb{D}(\ZZ_p (\omega^a) \otimes \mathbb{Z}_p ( b ) \otimes \Lambda^\sharp_{\mathrm{cyc}})$ by setting
$$
\mathbb{D}(\ZZ_p (\omega^a) \otimes \mathbb{Z}_p ( b ) \otimes \Lambda^\sharp_{\mathrm{cyc}}) :=
 \mathbb{D}(\Lambda^\sharp_{\mathrm{cyc}}) \otimes_{\ZZ_p} D_{\textup{dR}}(\ZZ_p(\omega^{a})) \otimes_{\ZZ_p} D_{\textup{crys}}(\ZZ_p (b)). 
$$
Here $D_{\textup{crys}}(\ZZ_p (b))$ is the $\ZZ_p$-lattice in $D_{\textup{crys}}(\QQ_p(b)) \subset B^{+}_{\mathrm{dR}}$ 
generated by $\{ \zeta^b_{p^m} \} \otimes t^{\otimes (-b)}$ over $\mathbb{Z}_p$, 
where $t:=\log [\epsilon ] $ is the well-known uniformizer of the ring of $p$-adic periods $B^{+}_{\mathrm{dR}}$ of Fontaine. 
It is not difficult to see that $\mathbb{D}(\ZZ_p (\omega^a) \otimes \mathbb{Z}_p ( b ) \otimes \Lambda^\sharp_{\mathrm{cyc}})$ 
is naturally a free $\LL_\cyc$-module of rank one. 
We further define the module  
$$
\mathbb{D}^{\mathrm{ur}}(\ZZ_p (\omega^a) \otimes \mathbb{Z}_p ( b ) \otimes \Lambda^\sharp_{\mathrm{cyc}})
 := \mathbb{D} (\ZZ_p (\omega^a) \otimes \mathbb{Z}_p ( b ) \otimes \Lambda^\sharp_{\mathrm{cyc}})\widehat{\otimes}_{\ZZ_p}
 \widehat{\ZZ}^{\mathrm{ur}}_p. 
$$
The following is a slight generalization of Theorem~\ref{thm:interpolationclassic_unr}.
%%%%%%%%%%%%%%%%%%%%%%%%%%%%%%%%%%%%%%%%%%%%%%%%%%%%%%%%%%%%%%%%%%%%%%%%%%%
\begin{thm}\label{perrinriou3}
For an arbitrary pair of  integers $a$ and $b$, we have an $\Lambda_{\mathrm{cyc}}$-linear isomorphism  
$$
\mathrm{EXP}^{\mathrm{ur},(a,b)}_{\ZZ_p (\omega^a) \otimes \mathbb{Z}_p ( b ) \otimes \Lambda^\sharp_{\mathrm{cyc}}} \,:\, 
\mathbb{D}^{\mathrm{ur}}(\ZZ_p (\omega^a) \otimes \mathbb{Z}_p ( b ) \otimes \Lambda^\sharp_{\mathrm{cyc}})  \lra 
\dfrac{H^1(\QQ_p^{\mathrm{ur}},\ZZ_p (\omega^a) \otimes \mathbb{Z}_p ( b ) \otimes \Lambda^\sharp_{\mathrm{cyc}})}{\ZZ_p (\omega^a) \otimes \mathbb{Z}_p ( b )}
$$ 
such that, for every arithmetic specialization 
$\kappa$ on $\Lambda_{\mathrm{cyc}}$ of weight $w_\cyc (\kappa ) \geq 1-b$, 
the following diagram commutes:
$$
\xymatrix{\mathbb{D}^{\mathrm{ur}}(\ZZ_p (\omega^a) \otimes \mathbb{Z}_p ( b ) \otimes \Lambda^\sharp_{\mathrm{cyc}})\ar[d]_\kappa 
\ar[rrr]^{\mathrm{EXP}^{\mathrm{ur},(a,b)}_{\ZZ_p (\omega^a) \otimes \mathbb{Z}_p ( b ) \otimes \Lambda^\sharp_{\mathrm{cyc}}}}&&& 
\dfrac{H^1 (\QQ_p^{\mathrm{ur}},\ZZ_p (\omega^a) \otimes \mathbb{Z}_p ( b ) \otimes \Lambda^\sharp_{\mathrm{cyc}} )}
{\ZZ_p (\omega^a) \otimes \mathbb{Z}_p ( b )}
\ar[d]^\kappa\\
D^{\mathrm{ur}}_{\textup{dR}}(\mathbb{Q}_p(b)\otimes E_\kappa (\omega^a \kappa ))\ar[rrr]_{
\exp^{\mathrm{ur}}_{\mathbb{Q}_p (b)\otimes E_\kappa (\omega^a \kappa ) }
 \,\circ \,  e^{\mathrm{ur},+}_p 
\hspace*{10pt} }&&& 
 H^1(\QQ_p^{\mathrm{ur}},\mathbb{Q}_p(b)\otimes E_\kappa (\omega^a \kappa ) )
 }
$$
Here $e^{\mathrm{ur},+}_p:=(-1)^{w_\cyc (\kappa)-1}(w_\cyc (\kappa)+b-1)!\,e^{\mathrm{ur}}_{p}$ and 
$e^{\mathrm{ur}}_{p}=e^{\mathrm{ur}}_{p}(\mathbb{Q}_p(b)\otimes E_\kappa (\omega^a \kappa ))$ is the $p$-adic multiplier given by
$$
e^{\mathrm{ur}}_{p}:=\begin{cases} 
\left( 
1 - p^{-1}\varphi^{-1}
\right) \left( 
1 - \varphi
\right)^{-1}  &  \text{when $E_\kappa (\omega^a \kappa )$ is crystalline} \vspace*{5pt} \\ 
\left( p^{-1} \varphi \right)^n & 
\text{when } E_\kappa (\omega^a \kappa ) \cong \QQ_p (w_\cyc (\kappa)) \otimes E_\kappa(\phi) \text{ with }\\
& \textup{ord}_p(\textup{cond}(\phi))=n\geq1.
 \\ 
\end{cases}
$$
Also, for every arithmetic specialization 
$\kappa$ on $\Lambda_{\mathrm{cyc}}$ of weight $w_\cyc (\kappa ) < 1-b$, 
the following diagram commutes:
$$
\xymatrix{\mathbb{D}^{\mathrm{ur}}(\ZZ_p (\omega^a) \otimes \mathbb{Z}_p ( b ) \otimes \Lambda^\sharp_{\mathrm{cyc}})\ar[d]_\kappa 
\ar[rrr]^(.47){\mathrm{EXP}^{\mathrm{ur},(a,b)}_{\ZZ_p (\omega^a) \otimes \mathbb{Z}_p ( b ) \otimes \Lambda^\sharp_{\mathrm{cyc}}}}&&& 
\dfrac{H^1 (\QQ_p^{\mathrm{ur}},\ZZ_p (\omega^a) \otimes \mathbb{Z}_p ( b ) \otimes \Lambda^\sharp_{\mathrm{cyc}})}
{\ZZ_p (\omega^a) \otimes \mathbb{Z}_p ( b )}
\ar[d]^\kappa\\
D^{\mathrm{ur}}_{\textup{dR}}(\mathbb{Q}_p(b)\otimes E_\kappa (\omega^a \kappa ) )\ar[rrr]_(.48){
 (\exp^{\mathrm{ur},\ast}_{\mathbb{Q}_p(b)\otimes E_\kappa (\omega^a \kappa ) })^{-1}
 \,\circ \,  e^{\mathrm{ur},-}_p
 }&&& 
 H^1(\QQ_p^{\mathrm{ur}},\mathbb{Q}_p(b)\otimes E_\kappa (\omega^a \kappa ))
 }
$$
where $e^{\mathrm{ur},-}_p:=\dfrac{ e^{\mathrm{ur}}_p}{(-w_\cyc (\kappa) -b)!}$\,. 
\end{thm}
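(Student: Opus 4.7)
The strategy is to derive Theorem \ref{perrinriou3} from Theorem \ref{thm:interpolationclassic_unr} by a rank-one twisting argument. The key point is that $\mathbb{Z}_p(\omega^a) \otimes_{\mathbb{Z}_p} \mathbb{Z}_p(b)$ is a $\mathbb{Z}_p$-free rank-one $G_{\QQ_p}$-module, so tensoring with it is an exact functor compatible with Galois cohomology and $D_{\mathrm{dR}}$, and the Galois action it introduces (via the character $\omega^a \chi_\cyc^b$) interacts cleanly with the Perrin-Riou / Coleman construction responsible for Theorem \ref{thm:interpolationclassic_unr}.

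The first step is to construct $\mathrm{EXP}^{\mathrm{ur},(a,b)}$ by tensoring the classical isomorphism $\mathrm{EXP}^{\mathrm{ur}}_{\Lambda^\sharp_{\mathrm{cyc}}}$ of Theorem \ref{thm:interpolationclassic_unr} with $\mathbb{Z}_p(\omega^a) \otimes \mathbb{Z}_p(b)$ over $\mathbb{Z}_p$. On the source, the identification with the target module is built into the definition $\mathbb{D}(\mathbb{Z}_p(\omega^a) \otimes \mathbb{Z}_p(b) \otimes \Lambda^\sharp_{\mathrm{cyc}}) = \mathbb{D}(\Lambda^\sharp_{\mathrm{cyc}}) \otimes D_{\mathrm{dR}}(\mathbb{Z}_p(\omega^a)) \otimes D_{\mathrm{crys}}(\mathbb{Z}_p(b))$ given just before the theorem. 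On the target, tensoring the Coleman map with the rank-one twist yields a $\Lambda_{\mathrm{cyc}}$-linear map into $H^1(\QQ_p^{\mathrm{ur}}, \mathbb{Z}_p(\omega^a) \otimes \mathbb{Z}_p(b) \otimes \Lambda^\sharp_{\mathrm{cyc}})/(\mathbb{Z}_p(\omega^a) \otimes \mathbb{Z}_p(b))$, which is an isomorphism because the original map is and the twist is $\mathbb{Z}_p$-flat. The second step is to verify the interpolation formulae. At an arithmetic specialization $\kappa$ of $\Lambda_{\mathrm{cyc}}$ of weight $w := w_\cyc(\kappa)$, the twisted representation specializes to $\QQ_p(b) \otimes E_\kappa(\omega^a \kappa)$, a de Rham representation of Hodge-Tate weight $w + b$. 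The dichotomy $w \geq 1 - b$ vs.\ $w < 1 - b$ corresponds exactly to the Bloch-Kato exponential vs.\ dual exponential being the appropriate specialization map, and the two interpolation diagrams follow by shifting the classical diagrams of Theorem \ref{thm:interpolationclassic_unr} along the cyclotomic weight direction by $b$ and composing with the twist by $\omega^a$. The replacement of the factorial $(w_\cyc(\kappa) - 1)!$ by $(w_\cyc(\kappa) + b - 1)!$ (and the corresponding dual factorial in the non-positive weight case) directly reflects this shift, while the crystalline Euler factor retains its formal shape because $\varphi$ on the twisted Dieudonn\'e module differs from $\varphi$ on the original by a $p^{-b}$ factor absorbed into the notation.

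The principal technical obstacle lies in the non-crystalline case, where one must track how the conductor of $\omega^a \kappa$ combines the tame twist by $\omega^a$ with the finite-order part of $\kappa$, and in reconciling the apparent change from $(p^{-1} \varphi^{-1})^n$ in Theorem \ref{thm:interpolationclassic_unr} to $(p^{-1} \varphi)^n$ in Theorem \ref{perrinriou3}. This power discrepancy reflects a Gauss-sum contribution that arises when one identifies canonical generators of the Dieudonn\'e modules of $\mathbb{Z}_p(\omega^a)$ and its Cartier dual. Handling this bookkeeping carefully is the most delicate part of the argument; one proceeds by reducing to Coleman power series at finite levels, where the explicit reciprocity law of Perrin-Riou pins down the precise Gauss-sum correction and verifies that the twisted map really does specialize to $\exp^{\mathrm{ur}} \circ e_p^{\mathrm{ur},+}$ (resp.\ $(\exp^{\mathrm{ur},\ast})^{-1} \circ e_p^{\mathrm{ur},-}$) with the multiplier as stated.
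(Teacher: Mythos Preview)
Your approach is correct and follows the same basic strategy as the paper: derive the result from Theorem~\ref{thm:interpolationclassic_unr} by a rank-one twist. The paper's execution is organized slightly differently. Rather than tensoring directly by $\ZZ_p(\omega^a)\otimes\ZZ_p(b)$, the paper first twists the map of Theorem~\ref{thm:interpolationclassic_unr} (which lives over $\ZZ_p[[G_\cyc]]$, not over $\Lambda_\cyc$ as you wrote) by the character $(\chi_\cyc\omega)^b$, and then restricts to the $\omega^{a-b}$-isotypic component under the decomposition $\ZZ_p[[G_\cyc]]\cong\Lambda_\cyc[\Delta]$. This handles the cyclotomic shift and the tame twist in one stroke, and the interpolation diagrams then follow ``by construction'' with no further analysis.

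Your final paragraph overcomplicates matters. The paper invokes no explicit reciprocity or Gauss-sum bookkeeping beyond what is already packaged in Theorem~\ref{thm:interpolationclassic_unr}; the discrepancy you noticed between $(p^{-1}\varphi^{-1})^n$ there and $(p^{-1}\varphi)^n$ here is not a genuine mathematical correction introduced by the twist but a typographical inconsistency in the paper's statements. Once you recognize that the twist by $(\chi_\cyc\omega)^b$ simply shifts the weight threshold from $w_\cyc(\kappa)>0$ to $w_\cyc(\kappa)+b>0$ and replaces $(w_\cyc(\kappa)-1)!$ by $(w_\cyc(\kappa)+b-1)!$, there is nothing delicate left to verify.
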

%%%%%%%%%%%%%%%%% 
\begin{proof}
On twisting the diagrams in the statement of Theorem~\ref{thm:interpolationclassic_unr} (that characterizes the map $\mathrm{EXP}^{\mathrm{ur}}_{\ZZ_p [[G_{\mathrm{cyc}}]]^\sharp}$)  by the character $(\chi_\cyc \omega )^b$, we obtain a map $\mathrm{EXP}^{\mathrm{ur}}_{\ZZ_p(b)\otimes\ZZ_p [[G_{\mathrm{cyc}}]]^\sharp}$ that is characterized by an interpolation diagram that is the twisted version of those appear Theorem \ref{thm:interpolationclassic_unr}. Recall that $\ZZ_p [[G_\cyc]] \cong \Lambda_\cyc [\Delta ]$ where $\Delta$ stands for $\mathrm{Gal } (\QQ_p (\mu_p) ) /\QQ_p )
\cong (\ZZ_p /p \ZZ )^\times$. We define the map 
$\mathrm{EXP}^{\mathrm{ur},(a,b)}_{\ZZ_p (\omega^a) \otimes \mathbb{Z}_p ( b ) \otimes \Lambda^\sharp_{\mathrm{cyc}}}$ as the restriction of $\mathrm{EXP}^{\mathrm{ur}}_{\ZZ_p(b)\otimes\ZZ_p [[G_{\mathrm{cyc}}]]^\sharp}$ to the $\omega^{a-b}$-isotypic components. This map has the desired interpolation properties by construction. 
\end{proof}
In order to formulate a further generalization of Theorem~\ref{perrinriou3}, we introduce some notation. 
%%%%%%%%%%%%%%%
\begin{define} 
\label{define:thefatdieudonnemodule}
Let $\mathcal{R}$ be a local domain which is finite flat over 
$\mathbb{Z}_p [[\Gamma_1 \times \cdots \times \Gamma_r]]$ and let $(\TT,\mathcal{R} ,\mathcal{S})$ be a deformation datum. 
Suppose that we have a strictly decreasing, 
$G_{\QQ_p}$-stable, exhaustive filtration $\{\mathrm{Fil}_\fp^i\TT\}_{i \in \mathbb{Z}}$ satisfying 
the conditions (Ord) of Definition \ref{definition:ord12}. 
For every integer $m\in \{ 0,1,\ldots ,d-1\}$, recall that $\mathrm{Gr}_p^m\TTc$ is the free $\mathcal{R}_\cyc$-module of rank one on which $G_{\QQ_p}$ acts via the character $\widetilde{\chi}_1^{e_{m,1}}\cdots \widetilde{\chi}_r^{e_{m,r}}\widetilde{\chi}_\cyc^{-1} \omega^{1-a_m}\chi^{1-b_m}_{\mathrm{cyc}} \widetilde{\alpha}_m$. We define the free $\mathcal{R}_\cyc$-module $\mathbb{D}((\mathrm{Gr}_\fp^{m}\TTc )^{\RRc}(1))$ of rank one as the tensor product
\begin{align*}
%\mathbb{D}(\mathrm{Gr}_p^{m}\TTc):=\\
\bigg(\left(\underset{i\in A_m}{\widehat{\otimes}}\,\mathbb{D}(-e_{m,i})\right)
 \underset{\mathbb{Z}_p}{\widehat{\otimes}} \mathbb{D}(1)
 \underset{\mathbb{Z}_p}{\otimes} 
 D_{\textup{dR}}(\ZZ_p(\omega^{a_m}))
 &
  \underset{\mathbb{Z}_p}{\otimes} D_{\textup{crys}}(\ZZ_p(b_m))\bigg)
  \\
& \underset{\ZZ_p[[\Gamma_1\times\cdots\times\Gamma_r]]}\otimes
 \left(\mathcal{R}(\widetilde{\alpha}^{-1}_{m})\widehat{\otimes}_{\ZZ_p}\widehat{\ZZ}_p^{\textup{ur}}\right)^{G_{\QQ_p}}
 % \underset{\mathbb{Z}_p}{\widehat{\otimes}} \Lambda_{\mathrm{cyc}}
\, ,
\end{align*} 
where $A_m$ is the subset of $\{ 0, 1,\ldots , d-1\}$ which consists of $i \in \{ 0, 1, \ldots ,d-1\}$ such that $e_{m,i} =1$.  
We define $\mathbb{D}(-1)$ to be is a $\LL_\cyc$-linear dual of of $\mathbb{D}(1)$ and 
the completed tensor product $\underset{i\in A_m}{\widehat{\otimes}}\,\mathbb{D}(-e_{m,i})$ is calculated over $\ZZ_p$ and it is endowed with the $\ZZ_p[[\Gamma_1\times\cdots\times\Gamma_r]]$-module structure via the characters $\{(\chi_\cyc)^{-1}\circ\chi_i\}_i$. 
\par 
We also set
$$
\mathbb{D}^{\mathrm{ur}}(\ZZ_p (\omega^a) \otimes \mathbb{Z}_p ( b ) \otimes \Lambda^\sharp_{\mathrm{cyc}})
 := \mathbb{D} ((\mathrm{Gr}_\fp^{m}\TTc)^{\RRc}(1))\widehat{\otimes}_{\ZZ_p}
 \widehat{\ZZ}^{\mathrm{ur}}_p. 
$$
\end{define}
The following theorem is deduced from Theorem~\ref{perrinriou3} by a coordinate change trick that was introduced in \cite{ochiai-AJM14} (particularly, see the end of \S 6 of loc. cit.). 
%%%%%%%%%%%%%%%%%%%%%%%%%%%%%%%%%%%%%%%%%%%%%%%%%%%%%%%%%%%%%%%%%
\begin{thm}\label{thm:interpolationforcoleman_unr}
Let $\mathcal{R}$ be a local domain which is finite flat over 
$\mathbb{Z}_p [[\Gamma_1 \times \cdots \times \Gamma_r]]$ and let $(\TT,\mathcal{R} ,\mathcal{S})$ be a deformation datum. 
Suppose that we have a strictly decreasing, 
$G_{\QQ_p}$-stable, exhaustive filtration $\{\mathrm{Fil}_\fp^i\TT \}_{i \in \mathbb{Z}}$ satisfying 
the condition (Ord) of Definition \ref{definition:ord12}. 
Let $m$ be an integer in $\{0,1, \ldots , d-1\} $. 
\par 
Then, we have an $\RRc$-linear isomorphism  
$$
\mathrm{EXP}^{\mathrm{ur}}_{(\mathrm{Gr}_\fp^{m}\TTc )^{\RRc}(1)} \,:\, \mathbb{D}^{\mathrm{ur}}((\mathrm{Gr}_\fp^{m}\TTc)^{\RRc}(1))  \lra 
\dfrac{H^1(\QQ_p^{\mathrm{ur}},(\mathrm{Gr}_\fp^{m}\TTc )^{\RRc}(1))}{(\mathrm{Gr}_\fp^{m}\TT)^{\RRn}(1)} 
$$ 
such that, for every $\kappa \in \mathcal{S}^{(m),+}_\cyc$ the following diagram commutes:
$$\xymatrix{\mathbb{D}^{\mathrm{ur}}((\mathrm{Gr}_\fp^{m}\TTc )^{\RRc}(1))\ar[d]_\kappa \ \ \ \ \ \ar[rrr]^(.47){\mathrm{EXP}^{\mathrm{ur}}_{(\mathrm{Gr}_\fp^{m}\TTc )^{\RRc}(1)}} &&& 
\ \ \ 
\dfrac{H^1(\QQ_p^{\mathrm{ur}},(\mathrm{Gr}_\fp^{m}\TTc )^{\RRc}(1))}{(\mathrm{Gr}_\fp^{m}\TT)^{\RRn}(1)} \ar[d]^\kappa\\
D^{\mathrm{ur}}_{\textup{dR}}((\mathrm{Gr}_\fp^{m}V_\kappa)^{\kappa (\RRc)}(1))\ \ \ \ \ \ar[rrr]_(.42) 
{\ \ \ 
\exp^{\mathrm{ur}}_{(\mathrm{Gr}_\fp^{m}V_\kappa)^{\kappa (\RRc )}(1)}
 \,\circ \,  e^{\mathrm{ur},+}_p
}
&&& \ \ \ H^1(\QQ_p^{\mathrm{ur}},(\mathrm{Gr}_\fp^{m}V_\kappa)^{\kappa (\RRc)}(1))
}
$$
Here $e^{\mathrm{ur},+}_p:=(-1)^{d_m(\kappa)-1}(d_m(\kappa)-1)!\,e^{\mathrm{ur}}_{p}$ and $e^{\mathrm{ur}}_{p}=e^{\mathrm{ur}}_{p}((\mathrm{Gr}_p^{m}V_\kappa)^{\kappa (\RRc)}(1))$ is the $p$-adic multiplier given by
$$
e^{\mathrm{ur}}_{p}:=\begin{cases} 
\left( 
1 - p^{-1}\varphi^{-1}
\right) \left( 
1 - \varphi
\right)^{-1}  &  \text{when $\mathrm{Gr}^{m}_p V_\kappa$ is crystalline,} \vspace*{5pt} \\ 
\left( p^{-1}\varphi^{-1} \right)^n & 
\text{when }  \mathrm{Gr}^{m}_p V_\kappa\vert_{I_p} \cong E_\kappa(c_m(\kappa))(\phi) \text{ with }\\
& \textup{ord}_p(\textup{cond}(\phi))=n\geq1.  \\ 
\end{cases}
$$
Also, for every $\kappa \in \mathcal{S}^{(m),-}_\cyc$ we also have the following commutative diagram:
$$
\xymatrix{\mathbb{D}^{\mathrm{ur}}((\mathrm{Gr}_\fp^{m}\TTc )^{\RRc}(1))\ar[d]_\kappa \ar[rrr]^(.48){\mathrm{EXP}^{\mathrm{ur}}_{(\mathrm{Gr}^{m}\TTc)^{\RRc}(1)}}&&& 
\dfrac{H^1(\QQ_p^{\mathrm{ur}},(\mathrm{Gr}_p^{m}\TTc)^{\RRc}(1))\ar[d]^\kappa}{{(\mathrm{Gr}_\fp^{m}\TT)^{\RRn}(1)}}\\
D^{\mathrm{ur}}_{\textup{dR}}((\mathrm{Gr}_\fp^{m}V_\kappa)^{\kappa (\RRc)}(1))
\ar[rrr]_(.45){\hspace*{10pt}
\left(\exp^{\mathrm{ur},*}_{\mathrm{Gr}^{m}_p V_\kappa}\right)^{-1}
 \,\circ \,  e^{\mathrm{ur},-}_p
} 
&&& H^1(\QQ_p^{\mathrm{ur}},(\mathrm{Gr}_p^{m}V_\kappa)^{\kappa (\RRc)}(1))
}$$
where $ e^{\mathrm{ur},-}_p:=\dfrac{ e^{\mathrm{ur}}_p}{(-d_m(\kappa))!}$\,.
\end{thm}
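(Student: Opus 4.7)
The plan is to deduce Theorem~\ref{thm:interpolationforcoleman_unr} from Theorem~\ref{perrinriou3} via a coordinate-change trick, following the strategy indicated at the end of \S 6 of \cite{ochiai-AJM14}. The starting observation is that, by the grading hypothesis (Ord) in Definition~\ref{definition:ord12} together with the definition of $\TTc$, the free rank-one module $(\mathrm{Gr}_\fp^{m}\TTc)^{\RRc}(1)$ realizes $G_{\QQ_p}$ via the single character
$$
\widetilde{\chi}_1^{-e_{m,1}}\cdots \widetilde{\chi}_r^{-e_{m,r}}\,\widetilde{\chi}_{\mathrm{cyc}}\,\omega^{a_m}\chi^{b_m}_{\mathrm{cyc}}\,\widetilde{\alpha}_m^{-1}\,,
$$
so the construction splits into two independent parts: (a) the unramified piece $\widetilde{\alpha}_m^{-1}$ taking values in $\mathcal{R}^\times$, and (b) the Iwasawa-theoretic piece involving the tautological characters of $\Gamma_1,\ldots,\Gamma_r,\Gamma_\cyc$ together with the fixed twist $\omega^{a_m}\chi_\cyc^{b_m}$.

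First I would reduce to the universal case $\mathcal{R}=\LL_{\mathcal{O}}^{(r)}=\mathcal{O}[[\Gamma_1\times\cdots\times\Gamma_r]]$. Since $\RRc$ is finite flat over $\LL_\cyc^{(r+1)}:=\mathcal{O}[[\Gamma_1\times\cdots\times\Gamma_r\times\Gamma_{\mathrm{cyc}}]]$, once the isomorphism is constructed over the universal base it can be transported to $\RRc$ by base change. Within this universal setting, Corollary~\ref{lem:invariantunramified} handles the unramified twist: tensoring the map we seek by $\widehat{\ZZ}_p^{\mathrm{ur}}$ trivializes $\widetilde{\alpha}_m^{-1}$ at the level of $\mathrm{Gal}(\QQ_p^{\mathrm{ur}}/\QQ_p)$-invariants, turning both the source and target into free rank-one modules over $\LL_\cyc^{(r+1)}$. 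Consequently, the problem reduces to constructing the isomorphism when $\widetilde{\alpha}_m=1$.

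Next, I would apply the coordinate change. Consider the continuous group automorphism $\sigma_m$ of $\Gamma_1\times\cdots\times\Gamma_r\times\Gamma_\cyc$ that fixes each $\Gamma_i$ and sends the topological generator of $\Gamma_\cyc$ to the image of $\prod_i \chi_i^{-e_{m,i}}\cdot\chi_\cyc$; this induces a ring automorphism of $\LL_\cyc^{(r+1)}$ under which the Galois character above pulls back to $\widetilde{\chi}_\cyc\,\omega^{a_m}\chi_\cyc^{b_m}$, times characters of the $\Gamma_i$'s which act only on the coefficient ring and not through Galois. The resulting Galois module is precisely a base change of $\ZZ_p(\omega^{a_m})\otimes\ZZ_p(b_m)\otimes\Lambda_\cyc^\sharp$ from Theorem~\ref{perrinriou3}. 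Applying that theorem with $(a,b)=(a_m,b_m)$ and then extending scalars along $\sigma_m^{-1}$ (and along $\LL_\cyc^{(r+1)}\hookrightarrow\RRc$) yields the $\RRc$-linear isomorphism $\mathrm{EXP}^{\mathrm{ur}}_{(\mathrm{Gr}_\fp^{m}\TTc)^{\RRc}(1)}$, with both source and target identified exactly as prescribed in Definition~\ref{define:thefatdieudonnemodule}.

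For the interpolation property, one tracks how an arithmetic specialization $\kappa$ of $\RRc$ of weight $(w_1(\kappa),\ldots,w_r(\kappa),w_\cyc(\kappa))$ behaves under $\sigma_m$: the pulled-back specialization $\kappa\circ\sigma_m^{-1}$ becomes, on the cyclotomic variable, arithmetic of weight $\sum_{i}w_i(\kappa)\,e_{m,i}-w_\cyc(\kappa)=c_m(\kappa)+b_m-1$, so that the sign of $w_\cyc+b-1$ in Theorem~\ref{perrinriou3} corresponds precisely to the sign of $d_m(\kappa)$ appearing in Definition~\ref{def:criticalarithmeticpoint}. The two regimes $\kappa\in\mathcal{S}_\cyc^{(m),+}$ and $\kappa\in\mathcal{S}_\cyc^{(m),-}$ therefore match, respectively, the $\exp$-direction and $\exp^*$-direction of Theorem~\ref{perrinriou3}, and the factorials and Euler-like multiplier $e_p^{\mathrm{ur}}$ in the statement of Theorem~\ref{thm:interpolationforcoleman_unr} are obtained directly from those in Theorem~\ref{perrinriou3} after this substitution. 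The main obstacle is this last bookkeeping step for the non-crystalline specializations, where the ramified character $\phi$ on $\mathrm{Gr}^m_p V_\kappa|_{I_p}$ arises from the interaction of $\sigma_m^{-1}$ with the finite-order component of $\kappa_\cyc$; here one checks that the factor $(p^{-1}\varphi^{-1})^n$ emerges without additional Gauss-sum corrections, precisely because the coordinate change preserves the conductor of $\phi$ at each finite level.
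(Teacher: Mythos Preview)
Your strategy is the same as the paper's: perform a coordinate change on $\Gamma_1\times\cdots\times\Gamma_r\times\Gamma_\cyc$ so that the character $\widetilde{\chi}_1^{-e_{m,1}}\cdots\widetilde{\chi}_r^{-e_{m,r}}\widetilde{\chi}_\cyc$ collapses to a single tautological character $\widetilde{\chi}'_\cyc$, apply Theorem~\ref{perrinriou3} with $(a,b)=(a_m,b_m)$, base-change to $\RRc$, and twist by the unramified character $\widetilde{\alpha}_m^{-1}$.

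Two points to clean up. First, your invocation of Corollary~\ref{lem:invariantunramified} is misplaced: that corollary computes $\mathrm{Gal}(\QQ_p^{\mathrm{ur}}/\QQ_p)$-invariants and is used to pass from the present theorem to Theorem~\ref{thm:interpolationforcoleman} (the version over $\QQ_p$). Here we remain over $\QQ_p^{\mathrm{ur}}$ throughout, so there is nothing to descend. The unramified twist is handled more simply: since $\widetilde{\alpha}_m$ is trivial on $G_{\QQ_p^{\mathrm{ur}}}$, tensoring by $\mathcal{R}(\widetilde{\alpha}_m^{-1})$ only modifies the residual $\mathrm{Gal}(\QQ_p^{\mathrm{ur}}/\QQ_p)$-equivariant structure on source and target, and the $\RRc$-linear isomorphism from Theorem~\ref{perrinriou3} carries over directly after this twist. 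Second, recheck the sign in your weight computation: with your $\sigma_m$, the new cyclotomic weight of $\kappa$ should be $w_\cyc(\kappa)-\sum_i w_i(\kappa)e_{m,i}=d_m(\kappa)-b_m$, not $\sum_i w_i(\kappa)e_{m,i}-w_\cyc(\kappa)$; this is what makes the cutoff $w'_\cyc\geq 1-b_m$ of Theorem~\ref{perrinriou3} match $d_m(\kappa)\geq 1$. Your stated conclusion about the matching of the two regimes is correct, but the intermediate formula has the wrong sign.
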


\begin{proof}
Recall that $\RRc$ is finite flat over $\ZZ_p [[\Gamma_1 \times \cdots \times \Gamma_r \times \Gamma_{\mathrm{cyc}}]]$ 
and take another set of groups $\Gamma'_1 ,\ldots ,\Gamma_r' $ and 
$\Gamma'_\cyc$ such that 
\begin{enumerate}
\item[(i)] 
We have an isomorphism $\Gamma_1 \times \cdots \times \Gamma_r \times \Gamma_{\mathrm{cyc}} \overset{\sim}{\lra}
\Gamma'_1 \times \cdots \times \Gamma'_r \times \Gamma'_{\mathrm{cyc}}$. 
\item[(ii)] 
The group $\Gamma'_i$ has an isomorphism $\chi'_i : \Gamma'_i \overset{\sim}{\lra} 1+p\ZZ_p$ ($i=1,\ldots ,r$) and 
$\Gamma'_{\mathrm{cyc}}$ has an isomorphism $\chi'_\cyc : \Gamma'_{\mathrm{cyc}} \overset{\sim}{\lra} 1+p\ZZ_p$. 
\end{enumerate}
We call such a set of groups $\{\Gamma'_1 ,\ldots ,\Gamma'_r ,\Gamma'_\cyc \}$ a coordinate change of 
$\{ \Gamma_1 ,\ldots ,\Gamma_r , \Gamma_\cyc \}$. For a given coordinate change $\{\Gamma'_1 ,\ldots ,\Gamma'_r ,\Gamma'_\cyc \}$, we define 
a Galois character $\widetilde{\chi}'_i : G_{\QQ_p} \lra \ZZ_p [[\Gamma'_i]]^\times$ to be 
$$
G_{\QQ_p} \twoheadrightarrow \Gamma_{\mathrm{cyc}} \xrightarrow[(\chi'_i )^{-1}\circ \chi_\cyc]{\sim} 
\Gamma'_i \hookrightarrow \ZZ_p [[\Gamma'_i]]^\times.  
$$ 
We define a Galois character $\widetilde{\chi}'_\cyc : G_{\QQ_p} \lra \ZZ_p [[\Gamma'_\cyc]]^\times$ in the same way.  
\par
The crucial observation is that for each $m \in \{ 0,1,\ldots , d-1\}$, there exists a coordinate change such that the action of $G_{\QQ_p}$
on $(\mathrm{Gr}_\fp^{m}\TTc )^{\RRc}(1)$ is given by $\widetilde{\chi}'_\cyc  \omega^{a}  \chi^{b}_\cyc \widetilde{\alpha}^{-1}_{m}$. 
\par 
Let us identify the cyclotomic Iwasawa algebra $\LL_\cyc$ of Theorem~\ref{perrinriou3} and $\ZZ_p [[\Gamma'_\cyc]]$ here. 
Then the commutative diagrams of Theorem~\ref{thm:interpolationforcoleman_unr} are obtained by taking 
base extension functor $\left( - \otimes_{\ZZ_p [[\Gamma'_\cyc]]} \RRc \right)$ to the commutative diagrams of Theorem \ref{perrinriou3} and by twisting 
by unramified character $\widetilde{\alpha}^{-1}_m : G_{\QQ_p} \lra \RRn^\times$. The proof follows. 
\end{proof}
The following result is the main theorem of this section, which is deduced on passing to $\mathrm{Gal} (\QQ^{\mathrm{ur}}_p /\QQ_p)$-invariants in Theorem~\ref{thm:interpolationforcoleman_unr}. 
%%%%%%%%%%%%%%%%%%%%%%%%%%%%%%%%%%%%%%%%%%%
\begin{thm}\label{thm:interpolationforcoleman}
Let $\mathcal{R}$ be a local domain which is finite flat over 
$\mathbb{Z}_p [[\Gamma_1 \times \cdots \times \Gamma_r]]$ and let $(\TT,\mathcal{R} ,\mathcal{S})$ be a deformation datum. 
Suppose that we have a strictly decreasing, 
$G_{\QQ_p}$-stable, exhaustive filtration $\{\mathrm{Fil}_p^i\TT\}_{i \in \mathbb{Z}}$ satisfying 
the conditions (Ord) of Definition \ref{definition:ord12}. 
Let $m$ be an integer in $\{0,1, \ldots , d-1\} $ and assume that the unramified character $\widetilde{\alpha}_m$ is non-trivial. 
\par 
Then, we have an $\RRc$-linear isomorphism 
$$
\mathrm{EXP}_{(\mathrm{Gr}_p^{m}\TTc)^{\RRc}(1)} \,:\, 
\mathbb{D}((\mathrm{Gr}_p^{m}\TTc)^{\RRc}(1))  \lra H^1(\QQ_p,(\mathrm{Gr}_p^{m}\TTc)
^{\RRc}(1)) $$ 
such that, for every $\kappa \in \mathcal{S}^{(m),+}_\cyc$ the following diagram commutes:
$$
\xymatrix{\mathbb{D}((\mathrm{Gr}_p^{m}\TTc)^{\RRc}(1))\ar[d]_\kappa \ar[rrr]^{\mathrm{EXP}_{(\mathrm{Gr}^{m}_p\TTc)^{\RRc}(1)}}&&& 
H^1(\QQ_p,(\mathrm{Gr}_p^{m}\TTc)^{\RRc}(1))\ar[d]^\kappa \\ 
D_{\textup{dR}}((\mathrm{Gr}_p^{m}V_\kappa)^{\kappa (\RRc)}(1))
\ar[rrr]_{\hspace*{10pt}e_p^+\,\times\, 
\exp_{(\mathrm{Gr}^{m}_p V_\kappa)^{\kappa (\RRc)}(1)}} 
&&& H^1(\QQ_p, (\mathrm{Gr}_p^{m}V_\kappa)^{\kappa (\RRc)}(1))
}$$
Here $e_p^+:=(-1)^{d_m(\kappa)-1}(d_m(\kappa)-1)!\,e_p$ and $e_p=e_p((\mathrm{Gr}_p^{m}V_\kappa)^{\kappa (\RRc)}(1))$ is the $p$-adic multiplier given by
$$
e_{p}:=\begin{cases} 
\left( 
1 - \dfrac{p^{d_m(\kappa)-1}}{\kappa\vert_{\mathcal{R}} (\widetilde{\alpha}_m (\mathrm{Frob}_p))}
\right) \left( 
1 - \dfrac{\kappa\vert_{\mathcal{R}} (\widetilde{\alpha}_m (\mathrm{Frob}_p))}{p^{d_m(\kappa)}}
\right)^{-1}  &  \text{when $\mathrm{Gr}^{m}_p V_\kappa$ is crystalline,} \vspace*{5pt} \\ 
\left( \dfrac{p^{d_m(\kappa)-1}}{\kappa\vert_{\mathcal{R}} (\widetilde{\alpha}_m (\mathrm{Frob}_p))}\right)^n & 
\text{when }  \mathrm{Gr}^{m}_p V_\kappa\vert_{I_p} \cong E_\kappa(c_m(\kappa))(\phi) \\
& \text{ with }\textup{ord}_p(\textup{cond}(\phi))=n\geq1.\\ 
\end{cases}
$$
Also, for every $\kappa \in \mathcal{S}^{(m),-}_\cyc$ we also have the following commutative diagram:
$$
\xymatrix{\mathbb{D}((\mathrm{Gr}_p^{m}\TTc)^{\RRc}(1))\ar[d]_\kappa \ar[rrr]^(.48){\mathrm{EXP}_{(\mathrm{Gr}^{m}\TTc)^{\RRc}(1)}}&&& 
H^1(\QQ_p,(\mathrm{Gr}_p^{m}\TTc)^{\RRc}(1))\ar[d]^\kappa\\
D_{\textup{dR}}((\mathrm{Gr}_p^{m}V_\kappa)^{\kappa (\RRc)}(1))
\ar[rrr]_(.45){\hspace*{10pt}e_p^-\,\times\, 
\left(\exp^*_{\mathrm{Gr}^{m}_p V_\kappa}\right)^{-1}} 
&&& H^1(\QQ_p,(\mathrm{Gr}_p^{m}V_\kappa)^{\kappa (\RRc)}(1))
}$$
where $e_p^-:=\dfrac{e_p}{(-d_m(\kappa))!}$\,.
\end{thm}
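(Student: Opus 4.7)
The plan is to deduce Theorem~\ref{thm:interpolationforcoleman} from Theorem~\ref{thm:interpolationforcoleman_unr} by passing to $\mathrm{Gal}(\QQ_p^{\mathrm{ur}}/\QQ_p)$-invariants. Set $M:=(\mathrm{Gr}_p^{m}\TTc)^{\RRc}(1)$, a rank-one $\RRc$-module on which $G_{\QQ_p}$ acts; under (Ord) the associated character factors as a product of a ramified character (built from $\widetilde{\chi}_i$, $\widetilde{\chi}_\cyc$, $\omega$ and $\chi_\cyc$) and the nontrivial unramified character $\widetilde{\alpha}_m^{-1}$. The hypothesis that $\widetilde{\alpha}_m$ is nontrivial is exactly what activates Corollary~\ref{lem:invariantunramified} for this twist.

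First I would identify the source of the proposed map on invariants. By Definition~\ref{define:thefatdieudonnemodule}, $\mathbb{D}(M)$ already carries the tensor factor $\bigl(\mathcal{R}(\widetilde{\alpha}_m^{-1})\widehat{\otimes}_{\ZZ_p}\widehat{\ZZ}_p^{\mathrm{ur}}\bigr)^{G_{\QQ_p}}$, whereas in $\mathbb{D}^{\mathrm{ur}}(M)$ this factor appears without taking invariants. Applying Lemma~\ref{rank} to the unramified, nontrivial character $\widetilde{\alpha}_m^{-1}$ yields $(\mathbb{D}^{\mathrm{ur}}(M))^{\mathrm{Gal}(\QQ_p^{\mathrm{ur}}/\QQ_p)} = \mathbb{D}(M)$, free of rank one over $\RRc$.

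Next I would identify the target. Apply $(-)^{\mathrm{Gal}(\QQ_p^{\mathrm{ur}}/\QQ_p)}$ to the short exact sequence implicit in the target of $\mathrm{EXP}^{\mathrm{ur}}_M$, and combine with the inflation-restriction sequence
\[
0\lra H^1(\mathrm{Gal}(\QQ_p^{\mathrm{ur}}/\QQ_p),M^{I_p})\lra H^1(\QQ_p,M)\lra H^1(\QQ_p^{\mathrm{ur}},M)^{\mathrm{Gal}(\QQ_p^{\mathrm{ur}}/\QQ_p)}\lra 0,
\]
where the right-hand $H^2$ vanishes because $\mathrm{Gal}(\QQ_p^{\mathrm{ur}}/\QQ_p)\cong\widehat{\ZZ}$ has cohomological dimension one. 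A direct computation using the Coleman power series interpretation (as in the proof of Theorem~\ref{thm:interpolationclassic_unr}) shows that the image of $(\mathrm{Gr}_p^{m}\TT)^{\RRn}(1)$ in $H^1(\QQ_p^{\mathrm{ur}},M)$ gets matched, after taking $\mathrm{Gal}(\QQ_p^{\mathrm{ur}}/\QQ_p)$-invariants, with $H^1(\mathrm{Gal}(\QQ_p^{\mathrm{ur}}/\QQ_p),M^{I_p})$. Together with Corollary~\ref{lem:invariantunramified} applied to the factor $\widetilde{\alpha}_m^{-1}$, this produces the desired $\RRc$-linear isomorphism
\[
\mathrm{EXP}_M\,:\,\mathbb{D}(M)\stackrel{\sim}{\lra} H^1(\QQ_p,M).
\]

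Finally, for the interpolation property at an arithmetic point $\kappa$, I would note that the formation of $\mathrm{Gal}(\QQ_p^{\mathrm{ur}}/\QQ_p)$-invariants commutes with $\kappa$, so the upper interpolation diagrams of Theorem~\ref{thm:interpolationforcoleman_unr} descend. The crystalline Frobenius $\varphi$ appearing in the unramified $p$-multiplier $e_p^{\mathrm{ur}}$ acts on the tensor factor $(\mathcal{R}(\widetilde{\alpha}_m^{-1})\widehat{\otimes}_{\ZZ_p}\widehat{\ZZ}_p^{\mathrm{ur}})^{G_{\QQ_p}}$ through multiplication by $\widetilde{\alpha}_m(\mathrm{Frob}_p)$; thus after specialization by $\kappa$ it becomes multiplication by $\kappa\vert_{\mathcal{R}}(\widetilde{\alpha}_m(\mathrm{Frob}_p))$. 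Substituting this in both the crystalline formula $(1-p^{-1}\varphi^{-1})(1-\varphi)^{-1}$ and the conductor-$p^n$ formula $(p^{-1}\varphi^{-1})^n$ gives the stated $e_p$; the factorials $(d_m(\kappa)-1)!$ and $1/(-d_m(\kappa))!$ carry over verbatim from Theorem~\ref{thm:interpolationforcoleman_unr} via the normalization $d_m(\kappa)=-c_m(\kappa)+1$.

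The main obstacle I expect is the bookkeeping needed to match $(\mathrm{Gr}_p^{m}\TT)^{\RRn}(1)$ with the inflation term $H^1(\mathrm{Gal}(\QQ_p^{\mathrm{ur}}/\QQ_p),M^{I_p})$: this requires unwinding the Coleman power series construction underlying Theorem~\ref{thm:interpolationclassic_unr}, keeping careful track of how the unramified extension $\QQ_p^{\mathrm{ur}}(\mu_{p^\infty})/\QQ_p$ interacts with the $\widetilde{\alpha}_m^{-1}$-twist, and invoking the nontriviality hypothesis at exactly the right point (as already exploited in Corollary~\ref{lem:invariantunramified}).
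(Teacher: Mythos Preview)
Your overall strategy---deduce the theorem from Theorem~\ref{thm:interpolationforcoleman_unr} by taking $\mathrm{Gal}(\QQ_p^{\mathrm{ur}}/\QQ_p)$-invariants---is exactly the paper's approach, and your treatment of the source $\mathbb{D}(M)$ and of the interpolation diagrams matches the paper's.

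The place where your plan diverges is the identification of the target, and here you are making life harder than necessary. You propose to show that the image of $N:=(\mathrm{Gr}_p^m\TT)^{\RRn}(1)$, after taking invariants, ``gets matched with'' the inflation term $H^1(\mathrm{Gal}(\QQ_p^{\mathrm{ur}}/\QQ_p),M^{I_p})$, and you flag the required bookkeeping as the main obstacle. But these two objects live in different exact sequences (the first inside $H^1(\QQ_p^{\mathrm{ur}},M)$, the second inside $H^1(\QQ_p,M)$), and there is no natural comparison to set up. More to the point, no matching is needed: both terms are simply zero. The invariants $N^{\mathrm{Frob}=1}$ vanish because Frobenius acts on the rank-one $\RRn$-module $N$ through the nontrivial character $\widetilde{\alpha}_m^{-1}$ and $\RRn$ is a domain---this is exactly where the hypothesis on $\widetilde{\alpha}_m$ is used. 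Independently, $M^{I_p}=0$ because inertia acts on $M$ through a character whose $\widetilde{\chi}_\cyc$-component alone is already nontrivial on $I_p$; hence the inflation term dies as well. With both obstructions gone one reads off
\[
H^1(\QQ_p,M)\;\cong\;H^1(\QQ_p^{\mathrm{ur}},M)^{\mathrm{Gal}(\QQ_p^{\mathrm{ur}}/\QQ_p)}\;\cong\;\bigl(H^1(\QQ_p^{\mathrm{ur}},M)/N\bigr)^{\mathrm{Gal}(\QQ_p^{\mathrm{ur}}/\QQ_p)}
\]
directly, which is what the paper does (carrying out the inflation--restriction step modulo $\mm_{\RRc}^r$ and passing to the inverse limit). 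No unwinding of Coleman power series is needed at this stage.

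One minor remark on the multiplier: the crystalline Frobenius $\varphi$ on $D_{\mathrm{crys}}$ of the specialized rank-one representation does not act merely by $\kappa\vert_{\mathcal{R}}(\widetilde{\alpha}_m(\mathrm{Frob}_p))$; the Tate twist contributes a further factor $p^{-d_m(\kappa)}$. Once that is included, your substitution into $(1-p^{-1}\varphi^{-1})(1-\varphi)^{-1}$ indeed recovers the displayed $e_p$.
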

\begin{proof}
We begin with a study of the ${\mathrm{Gal} ( \QQ_p^{\mathrm{ur}} / \QQ_p )}$-invariants of the modules that appear in Theorem~\ref{thm:interpolationforcoleman_unr}. 
\par 
By definition, we have the following identification by definition: 
\begin{equation}\label{equation:invarian_part_D}
\mathbb{D}((\mathrm{Gr}_\fp^{m}\TTc )^{\RRc}(1))= 
\mathbb{D}^{\mathrm{ur}}((\mathrm{Gr}_\fp^{m}\TTc )^{\RRc }(1))
^{\mathrm{Gal} ( \QQ_p^{\mathrm{ur}} / \QQ_p )}\,.
\end{equation}
Let us also calculate the ${\mathrm{Gal} ( \QQ_p^{\mathrm{ur}} / \QQ_p )}$-invariants in the Galois cohomology side (in the diagrams of Theorem~\ref{thm:interpolationforcoleman_unr}). 
By our requirement that $\widetilde{\alpha}_m$ be non-trivial, we have 
\begin{equation}\label{equation:invarian_part_H}
\left( \dfrac{H^1(\QQ_p^{\mathrm{ur}},(\mathrm{Gr}_\fp^{m}\TTc )^{\RRc}(1))}{(\mathrm{Gr}_\fp^{m}\TT)^{\RRn}(1)} \right)^{{\mathrm{Gal} ( \QQ_p^{\mathrm{ur}} / \QQ_p )}} 
= H^1(\QQ_p^{\mathrm{ur}},(\mathrm{Gr}_\fp^{m}\TTc )^{\RRc}(1))^{{\mathrm{Gal} ( \QQ_p^{\mathrm{ur}} / \QQ_p )}} 
\end{equation}
Let us calculate the right-hand side. 
The quotient $(\mathrm{Gr}_\fp^{m}\TTc )^{\RRc }(1)/\mm^r_{\RRc} (\mathrm{Gr}_\fp^{m}\TTc )^{\RRc }(1)$ 
is a finite abelian group with continuous action of $G_{\QQ_p}$ for any natural number $r$. 
Consider the restriction map 
\begin{multline*}
H^1\bigl( \QQ_p,(\mathrm{Gr}_\fp^{m}\TTc )^{\RRc }(1)/\mm^r_{\RRc } (\mathrm{Gr}_\fp^{m}\TTc )^{\RRc }(1)\bigr) 
\\ \longrightarrow 
H^1\bigl( \QQ_p^{\mathrm{ur}},(\mathrm{Gr}_\fp^{m}\TTc )^{\RRc }(1)/\mm^r_{\RRc } (\mathrm{Gr}_\fp^{m}\TTc )^{\RRc }(1)\bigr)^{G_{\QQ_p}}. 
\end{multline*}
By the inflation-restriction sequence, the kernel and the cokernel of this map are the modules  
$H^1 (\QQ_p^{\mathrm{ur}} /\QQ_p , A^{G_{\QQ_p^{\mathrm{ur}}}}_r)$ 
and $H^2 (\QQ_p^{\mathrm{ur}} /\QQ_p , A^{G_{\QQ_p^{\mathrm{ur}}}}_r)$ respectively, where  
we have set 
$$A_r:= (\mathrm{Gr}_\fp^{m}\TTc )^{\RRc }(1)/\mm^r_{\RRc } (\mathrm{Gr}_\fp^{m}\TTc )^{\RRc }(1)$$ 
to ease our notation. Since $\mathrm{Gal} (\QQ_p^{\mathrm{ur}} /\QQ_p)\cong\widehat{\mathbb{Z}}$ has cohomological dimension 
one, we have $H^2 (\QQ_p^{\mathrm{ur}}/\QQ_p , A^{G_{\QQ_p^{\mathrm{ur}}}}_r)=0$ and 
$H^1 (\QQ_p^{\mathrm{ur}} /\QQ_p , A^{G_{\QQ_p^{\mathrm{ur}}}}_r)$ is isomorphic to 
the largest $\mathrm{Gal} (\QQ_p^{\mathrm{ur}} /\QQ_p)$-coinvariant quotient 
$\left(A^{G_{\QQ_p^{\mathrm{ur}}}}_r\right)_{\mathrm{Gal} (\QQ_p^{\mathrm{ur}} /\QQ_p)}$ of $A^{G_{\QQ_p^{\mathrm{ur}}}}_r$. 
Since we have 
$$
\left( (\mathrm{Gr}_\fp^{m}\TTc )^{\RRc }(1) \right)^{G_{\QQ_p^{\mathrm{ur}}}}=0$$ 
thanks to our running hypothesis that the unramified character $\widetilde{\alpha}_m$ (that appears in the formulation of (Ord)) is non-trivial, it follows that $\varprojlim_r A^{G_{\QQ_p^{\mathrm{ur}}}}=0$. We therefore infer that 
$$\varprojlim_r H^1 (\QQ_p^{\mathrm{ur}} /\QQ_p , A^{G_{\QQ_p^{\mathrm{ur}}}}_r)=0$$
and that we have an isomorphism 
\begin{multline*}
H^1 \bigl( \QQ_p ,(\mathrm{Gr}_\fp^{m}\TTc )^{\RRc }(1)/\mm^r_{\RRc } (\mathrm{Gr}_\fp^{m}\TTc )^{\RRc }(1)\bigr) 
\\ \cong 
H^1\bigl( \QQ_p^{\mathrm{ur}},(\mathrm{Gr}_\fp^{m}\TTc )^{\RRc }(1)/\mm^r_{\RRc } (\mathrm{Gr}_\fp^{m}\TTc )^{\RRc}(1)\bigr)^{G_{\QQ_p}}
\end{multline*}
induced by the restriction map. Taking the inverse limit with respect to $r$, we have 
\begin{equation}\label{equation:invarian_part_H1}
H^1 \bigl( \QQ_p ,(\mathrm{Gr}_\fp^{m}\TTc )^{\RRc }(1)\bigr) 
\\ \cong 
H^1\bigl( \QQ_p^{\mathrm{ur}},(\mathrm{Gr}_\fp^{m}\TTc )^{\RRc }(1) \bigr)^{G_{\QQ_p}}
\end{equation}
Let us define $\mathrm{EXP}_{(\mathrm{Gr}_\fp^{m}\TTc )^{\RRc }(1)}$ to be the map induced by 
$\mathrm{EXP}^{\mathrm{ur}}_{(\mathrm{Gr}_\fp^{m}\TT)^{\RRc}(1)}$ on the ${\mathrm{Gal} ( \QQ_p^{\mathrm{ur}} / \QQ_p )}$-invariants.  
Thanks to \eqref{equation:invarian_part_D}, \eqref{equation:invarian_part_H} and \eqref{equation:invarian_part_H1}, we have an $\RRc$-linear isomorphism 
$$
\mathrm{EXP}_{(\mathrm{Gr}_\fp^{m}\TTc )^{\RRc }(1)}\,:\, \mathbb{D}((\mathrm{Gr}_\fp^{m}\TTc )^{\RRc }(1))  \lra H^1(F,(\mathrm{Gr}_\fp^{m}\TTc )^{\RRc }(1))\, 
.
$$ By its very construction, the map $\mathrm{EXP}_{(\mathrm{Gr}_\fp^{m}\TTc )^{\RRc }(1)}$ verifies the desired interpolation property for every $\kappa \in \mathcal{S}^{(m),+}_\cyc \cup \mathcal{S}^{(m),-}$. 
\end{proof}
\par 
Let us define 
$$
\mathbb{D}(\mathrm{Gr}_p^{m}\TTc ) := \Hom_{\RRc} (\mathbb{D}(\mathrm{Gr}_\fp^{m}\TTc )^{\RRc }(1)) ,\RRc )\,. 
$$ 
The following result is an important consequence of Theorem~\ref{thm:interpolationforcoleman}. 
%%%%%%%%%%%%%%%%%%%%%%%%%%%%%%%%%%
\begin{cor}\label{cor:dualexponentialmapforH++} 
Let $\mathcal{R}$ be a local domain which is finite flat over 
$\mathbb{Z}_p [[\Gamma_1 \times \cdots \times \Gamma_r]]$ and let $(\TT,\mathcal{R} ,\mathcal{S})$ be a deformation datum. 
Suppose that we have a strictly decreasing, 
$G_{\QQ_p}$-stable, exhaustive filtration $\{\mathrm{Fil}_p^i\TT\}_{i \in \mathbb{Z}}$ satisfying 
the conditions (Ord) of Definition \ref{definition:ord12}. 
Let $m$ be an integer in $\{0,1, \ldots , d-1\} $ and assume that the unramified character $\widetilde{\alpha}_m$ is non-trivial.  
\par 
Then, we have an $\RRc$-linear isomorphism 
$$
\mathrm{EXP}^\ast_{(\mathrm{Gr}_p^{m}\TTc)^{\RRc}(1)} \,:\, 
H^1(\QQ_p,\mathrm{Gr}_p^{m}\TTc )
\lra 
\mathbb{D}(\mathrm{Gr}_p^{m}\TTc )  
$$ 
such that, for every $\kappa \in \mathcal{S}^{(m),+}_\cyc$ the following diagram commutes:
$$
\xymatrix{
H^1(\QQ_p,\mathrm{Gr}_p^{m}\TTc)
\ar[d]_\kappa \ar[rrr]^{\mathrm{EXP}^\ast_{(\mathrm{Gr}^{m}_p\TTc)^{\RRc}(1)}} &&& 
\mathbb{D}(\mathrm{Gr}_p^{m}\TTc )
\ar[d]^\kappa \\ 
H^1(\QQ_p, \mathrm{Gr}_p^{m}V_\kappa )
\ar[rrr]_{\hspace*{10pt}e_p^+\,\times\, 
\exp^\ast_{(\mathrm{Gr}^{m}_p V_\kappa)^{\kappa (\RRc)}(1)}} 
&&& 
D_{\textup{dR}}(\mathrm{Gr}_p^{m}V_\kappa )
}$$
Here $e_p^+:=(-1)^{d_m(\kappa)-1}(d_m(\kappa)-1)!\,e_p$ and $e_p=e_p((\mathrm{Gr}_p^{m}V_\kappa)^{\kappa (\RRc)}(1))$ is the $p$-adic multiplier given by
$$
e_{p}:=\begin{cases} 
\left( 
1 - \dfrac{p^{d_m(\kappa)-1}}{\kappa\vert_{\mathcal{R}} (\widetilde{\alpha}_m (\mathrm{Frob}_p))}
\right) \left( 
1 - \dfrac{\kappa\vert_{\mathcal{R}} (\widetilde{\alpha}_m (\mathrm{Frob}_p))}{p^{d_m(\kappa)}}
\right)^{-1}  &  \text{when $\mathrm{Gr}^{m}_p V_\kappa$ is crystalline,} \vspace*{5pt} \\ 
\left( \dfrac{p^{d_m(\kappa)-1}}{\kappa\vert_{\mathcal{R}} (\widetilde{\alpha}_m (\mathrm{Frob}_p))}\right)^n & 
\text{when }  \mathrm{Gr}^{m}_p V_\kappa\vert_{I_p} \cong E_\kappa(c_m(\kappa))(\phi) \\
& \text{ with }\textup{ord}_p(\textup{cond}(\phi))=n\geq1.\\ 
\end{cases}
$$
Also, for every $\kappa \in \mathcal{S}^{(m),-}_\cyc$ we also have the following commutative diagram:
$$
\xymatrix{
H^1(\QQ_p,\mathrm{Gr}_p^{m}\TTc )
\ar[d]_\kappa \ar[rrr]^(.48){\mathrm{EXP}^\ast_{(\mathrm{Gr}^{m}\TTc)^{\RRc}(1)}}&&& 
\mathbb{D}(\mathrm{Gr}_p^{m}\TTc )
\ar[d]^\kappa\\
H^1(\QQ_p,\mathrm{Gr}_p^{m}V_\kappa )
\ar[rrr]_(.45){\hspace*{15pt}e_p^-\,\times\, 
\log_{(\mathrm{Gr}^{m}_p V_\kappa)^{\kappa (\RRc)}(1)}
} 
&&& 
D_{\textup{dR}}(\mathrm{Gr}_p^{m}V_\kappa )
}$$
where $e_p^-:=\dfrac{e_p}{(-d_m(\kappa))!}$\,.
\end{cor}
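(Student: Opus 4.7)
The plan is to deduce Corollary~\ref{cor:dualexponentialmapforH++} from Theorem~\ref{thm:interpolationforcoleman} by combining local Tate duality with $\mathcal{R}_\cyc$-linear dualization. First I would invoke local Tate duality in families: since $\mathrm{Gr}_p^m\TTc$ is a free $\mathcal{R}_\cyc$-module of rank one endowed with a continuous $G_{\QQ_p}$-action, the standard (Iwasawa-theoretic) Tate pairing yields a perfect $\mathcal{R}_\cyc$-bilinear pairing
$$H^1(\QQ_p,\mathrm{Gr}_p^m\TTc)\,\times\, H^1(\QQ_p,(\mathrm{Gr}_p^m\TTc)^{\mathcal{R}_\cyc}(1))\,\lra\,\mathcal{R}_\cyc,$$
whence an $\mathcal{R}_\cyc$-linear isomorphism
$$H^1(\QQ_p,\mathrm{Gr}_p^m\TTc)\,\stackrel{\sim}{\lra}\,\textup{Hom}_{\mathcal{R}_\cyc}\!\left(H^1(\QQ_p,(\mathrm{Gr}_p^m\TTc)^{\mathcal{R}_\cyc}(1)),\,\mathcal{R}_\cyc\right).$$
The freeness of $H^1(\QQ_p,(\mathrm{Gr}_p^m\TTc)^{\mathcal{R}_\cyc}(1))$ as an $\mathcal{R}_\cyc$-module (needed for the cleanness of the duality) is already delivered by Theorem~\ref{thm:interpolationforcoleman}, which identifies it with the evidently free module $\mathbb{D}((\mathrm{Gr}_p^m\TTc)^{\mathcal{R}_\cyc}(1))$.

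Next, I would apply $\textup{Hom}_{\mathcal{R}_\cyc}(-,\mathcal{R}_\cyc)$ to the isomorphism $\mathrm{EXP}_{(\mathrm{Gr}_p^m\TTc)^{\mathcal{R}_\cyc}(1)}$ furnished by Theorem~\ref{thm:interpolationforcoleman}. By the very definition of $\mathbb{D}(\mathrm{Gr}_p^m\TTc)$ as the $\mathcal{R}_\cyc$-linear dual of $\mathbb{D}((\mathrm{Gr}_p^m\TTc)^{\mathcal{R}_\cyc}(1))$, this produces an $\mathcal{R}_\cyc$-linear isomorphism
$$\textup{Hom}_{\mathcal{R}_\cyc}\!\left(H^1(\QQ_p,(\mathrm{Gr}_p^m\TTc)^{\mathcal{R}_\cyc}(1)),\mathcal{R}_\cyc\right)\,\stackrel{\sim}{\lra}\,\mathbb{D}(\mathrm{Gr}_p^m\TTc).$$
Composing these two isomorphisms defines the desired map $\mathrm{EXP}^\ast_{(\mathrm{Gr}_p^m\TTc)^{\mathcal{R}_\cyc}(1)}$.

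Then I would verify the interpolation properties by specializing both pairings. At an arithmetic $\kappa$ of the appropriate type, the family-level Tate pairing specializes to the classical local Tate pairing for $V_\kappa$, and the $\mathcal{R}_\cyc$-duality of Dieudonn\'e modules specializes to the de Rham duality for $\mathrm{Gr}_p^m V_\kappa$. The cornerstone Bloch-Kato adjunction relations then state that $\exp_{(\mathrm{Gr}_p^m V_\kappa)^{\kappa(\mathcal{R}_\cyc)}(1)}$ is adjoint to $\exp^\ast_{(\mathrm{Gr}_p^m V_\kappa)^{\kappa(\mathcal{R}_\cyc)}(1)}$ (interpreted as a map out of $H^1(\QQ_p,\mathrm{Gr}_p^m V_\kappa)$), and similarly $(\exp^{\ast}_{\mathrm{Gr}_p^m V_\kappa})^{-1}$ is adjoint to $\log_{(\mathrm{Gr}_p^m V_\kappa)^{\kappa(\mathcal{R}_\cyc)}(1)}$. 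Transporting the commutative diagrams of Theorem~\ref{thm:interpolationforcoleman} through the dualization then recovers exactly the diagrams claimed for $\mathcal{S}^{(m),+}_\cyc$ (respectively, $\mathcal{S}^{(m),-}_\cyc$); the Euler factors $e_p$, $e_p^+$, $e_p^-$ depend only on the pair of Galois representation data and pass through unchanged, which is consistent with the fact that the formulas appearing in Theorem~\ref{thm:interpolationforcoleman} and Corollary~\ref{cor:dualexponentialmapforH++} are literally identical.

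The main technical care will be in Step~3: one must confirm that the Iwasawa-level Tate pairing and Bloch-Kato duality are compatible under specialization in the precise sign conventions used to state the interpolation formulas, and that the tautological identifications $\mathbb{D}(\mathrm{Gr}_p^m\TTc)\otimes_{\mathcal{R}_\cyc,\kappa}E_\kappa\simeq D_{\textup{dR}}(\mathrm{Gr}_p^m V_\kappa)$ are exactly the duals (in the de Rham pairing) of the corresponding identifications for $(\mathrm{Gr}_p^m V_\kappa)^{\kappa(\mathcal{R}_\cyc)}(1)$. Granting these standard compatibilities (whose verification is routine but somewhat notation-heavy), the proof is complete.
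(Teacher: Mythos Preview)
Your proposal is correct and follows essentially the same approach as the paper: define $\mathrm{EXP}^\ast$ as the $\mathcal{R}_\cyc$-linear (Kummer) dual of $\mathrm{EXP}$ via local Tate duality and the definition of $\mathbb{D}(\mathrm{Gr}_p^m\TTc)$, then deduce the interpolation diagrams from the standard fact that $\exp$ is adjoint to $\exp^\ast$ and $(\exp^\ast)^{-1}$ is adjoint to $\log$. The paper's proof is in fact more terse than yours and does not spell out the compatibility checks you flag in your final paragraph.
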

\begin{proof}
We define $\mathrm{EXP}^\ast_{(\mathrm{Gr}_p^{m}\TTc)^{\RRc}(1)}$ to be $\RRc$-linear Kummer dual of 
the big exponential map $\mathrm{EXP}_{(\mathrm{Gr}_p^{m}\TTc)^{\RRc}(1)}$. 
Note that we have 
$$
H^1(\QQ_p,\mathrm{Gr}_p^{m}\TTc ) \cong \Hom_{\RRc} (H^1(\QQ_p,(\mathrm{Gr}_p^{m}\TTc)^{\RRc}(1) ),\RRc ) 
$$
by local Tate duality theorem of Galois cohomology. 
Recall that, for any de Rham $p$-adic representation $V$ of $G_{\QQ_p}$, the Kummer dual 
of $\mathrm{exp}_V$ (resp. $(\mathrm{exp}^\ast_V)^{-1}$) is known to be $\mathrm{exp}^\ast_V$ (resp. $\mathrm{log}_V$). 
These facts ensure that $\mathrm{EXP}^\ast_{(\mathrm{Gr}_p^{m}\TTc)^{\RRc}(1)}$ satisfies the desired interpolation property 
and completes the proof. 
\end{proof}

\begin{rem} (For readers who might be distressed about the absence of Gauss sum in the interpolation formula of Coleman map)
\label{rem:comparetorubinwithGausssums} 
In the most basic set up with the cyclotomic deformation of the $p$-adic Tate module of an ordinary elliptic curve $E$, Rubin in \cite[Prop. A.2]{r98} presents the following interpolation formula for any $n \geq m+1$:
\begin{equation}\label{equation:rubin'sarticle_gauss_sum}
\chi (\mathrm{Col}_n (z)) = \alpha^{-m}\tau (\chi ) \sum_{\gamma \in G_n} \chi^{-1} (\gamma ) \mathrm{exp}^\ast_{\omega_E} (z^\gamma) .
\end{equation}
Here $\chi $ is a nontrivial Dirichlet character of conductor $p^m$, $\tau (\chi)$ is the Gauss sum for $\chi$, and 
$\alpha$ is the $p$-unit root of the $p$-Euler polynomial for $E$. 
The group $G_n$ in the summation above is nothing but the group $\Gamma_{\mathrm{cyc}}/\Gamma^{p^n}_{\mathrm{cyc}}$ in the current article.  The map $\mathrm{Col}_n$ above is a map from $H^1 (\mathbb{Q}_p , T_p (E) \otimes \mathbb{Z}_p [G_n]^\sharp )
\cong H^1 (\mathbb{Q}_{p,n} , T_p (E) )
$ 
where $\mathbb{Q}_{p,n} $ is the $n$-th layer of the local cyclotomic $\mathbb{Z}_p$-extention 
$\mathbb{Q}_{p,\infty} /\mathbb{Q}_p$. 
\par 
 {Let $\chi$ be a character of $G_n$. Recall that $\mathcal{O}(\chi)$ is a free $\mathbb{Z}_p [\chi]$-module of rank one on which $G_{\mathbb{Q}_p}$ acts by the character $\chi$. The character $\chi$ induces a $G_{\mathbb{Q}_p}$-equivariant map $\mathbb{Z}_p [G_n]^\sharp \rightarrow \mathcal{O}(\chi)$ and we have the following identity by direct calculation:
\begin{equation}\label{equation:rubin'sarticle_gauss_sum2}
\tau (\chi )
\sum_{\gamma \in G_n} \chi^{-1} (\gamma) z^\gamma  =  \chi (z).
\end{equation}
For each $z \in H^1 (\mathbb{Q}_p , T_p (E) \otimes_{\mathbb{Z}_p} \mathbb{Z}_p [G_n ]^\sharp )$, 
we also write $\chi(z)$  for the image of $z$ under the map 
\be\label{eqn:twistedprojectionmap} H^1 (\mathbb{Q}_p , T_p (E) \otimes \mathbb{Z}_p [G_n]^\sharp ) \longrightarrow 
H^1 (\mathbb{Q}_p , T_p (E) \otimes \mathcal{O}(\chi))
\ee
for any character $\chi$ of $G_n$. 
It therefore follows from \eqref{equation:rubin'sarticle_gauss_sum} and\eqref{equation:rubin'sarticle_gauss_sum2} that 
$$
\chi (\mathrm{Col}_n (z)) = \alpha^{-m} \mathrm{exp}^\ast_{\omega_E} (\chi^{-1}(z)) .
$$   
This perfectly matches up with our interpolation formulae: In the setting of \cite{r98}, note that the only possibility that the integer $j$ in the interpolation formulae in Theorem~\ref{thm:interpolationforcoleman} can assume is the value $1$. In other words, Gauss sums are not really missing in our formulae, but rather encoded in the projection maps (\ref{eqn:twistedprojectionmap}).}
\end{rem}
%%%%%%%%%%%%%%%%%%%%%%%%%%%%%%%%%%%%%%%%%%%%%%%%%%%%%%%%%%%%%%%%%%%%%
\section{{Nearly ordinary families of} R\lowercase{ankin}-S\lowercase{elberg convolutions}}
\label{sec:BFES}
From Section~\ref{sec:selmerstructures} to Section~\ref{sec:Colemanmaps}, we established a general formalism of the theory 
and machineries to attack Iwasawa Main conjecture of general Galois deformations. 
In this section, we apply our results to the setting of Section~\ref{subs:exampleRankinSelberg} with help of Beilinson--Flach elements. 
\par 
Until the end of Section~\ref{sec:BFES}, we shall work in the setting of Section~\ref{subs:exampleRankinSelberg}. 
Throughout the section, we take the base field $K$ to be $\mathbb{Q}$ and we set $N_1$ and $N_2$ to be positive integers which are prime to $p$.  
We shall work with a pair of Hida families $\Bf_i$ ($i=1,2$) with respective tame levels $N_i$ and central characters 
$$\Psi_i: \left(\ZZ/pN_i\ZZ\right)^\times \lra \mathcal{O}^\times$$
by setting $\Psi_i(\ell)$ to be the eigenvalue of the diamond operator $\langle \ell \rangle$ acting on the family $\Bf_i$. Here, $\mathcal{O}$ is the ring of integers of a finite extension $\mathcal{E}$ of $\QQ_p$ which contain the images of both Dirichlet characters $\Psi_i$. Recall also the local domain $\mathcal{R}:=\mathbb{I}_{\Bf_1}\widehat{\otimes}\,\mathbb{I}_{\Bf_2}$ and the two-dimensional $\mathcal{R}[[G_{\QQ,\Sigma}]]$-representation $\TT:=\TT_{\Bf_1}\widehat{\otimes}\,\TT_{\Bf_2}$.

Suppose that $p \geq 7$. In particular, the hypothesis {\rm ({MR4})} holds true. 
%%%%%%%%
\subsection{Main conjectures for the nearly deformations of Rankin-Selberg products}
First, we will verify that the required conditions to apply our theory holds true in a great variety of cases: The conditions {\rm (H.0}, {\rm (H.2)}, 
{\rm (H.0$^-$)}, {\rm (H.2$^+$)} and {\rm H.2$^{++}$)} are covered by Lemma~\ref{lem:allhypoHholdstrue}; whereas {\rm (MR1)} by Lemma~\ref{lem:absoluteirredOK}, {\rm (MR2)} by Theorem~\ref{thm:MR2holds}. Notice that {\rm (MR3)} readily follows as a consequence of  {\rm (H.0)} and 
{\rm (H.2)}, whereas {\rm (MR4)} also holds since we have assumed $p \geq 7$.

We will consider the following conditions for both families $\Bf_i$ ($i=1,2$): 

{\rm (F.CM)} $\Bf_i$ is non-CM. 

{\rm (F.PS)} For $i=1,2$, the ring $\mathbb{I}_{\Bf_i}$ is isomorphic to a power series ring in one variable with coefficients in $\mathcal{O}$. 

The condition {\rm (F.PS)} is expected to be valid very often; c.f. the discussion in \cite[Lemma 2.7]{fouquetochiai}. 
%%%%%%
\begin{rem}
Notice that the case when $\Bf_2$ has CM is the subject of \cite{buyukbodukleiordmainconj, castellaordmainconj, skinnerurban, wanordmainconj} and our main results in the context of Rankin-Selberg convolutions (c.f. Corollary~\ref{cor:mainthmRankinSelberg} below) handle the case when neither of the forms have CM.
\end{rem}
%%%%%%%

\begin{define}
Let $f_i=\sum a_n(f_i)q^n \in S_{k_i}(\Gamma_1(N_i))$ ($i=1,2$) be a pair of newforms of respective weights $k_1, k_2$, levels $N_1,N_2$. Let $L_1$ and $L_2$ denote the normal closure of their respective Hecke fields. We say that $f_1$ and $f_2$ are twisted conjugates to mean there exists an embedding $\delta:L_1\hookrightarrow \mathbb{C}$ and a Dirichlet character $\chi_\delta$ (necessarily of conductor dividing $4N_1$) such that $\delta(a_\ell(f_1))=\chi_\delta(\ell)a_\ell(f_2)$.
\end{define}

\begin{lemma}
\label{lem:absoluteirredOK}
\textup{(1)} Let $f_i \in S_{k_i}(\Gamma_1(N_i  ))$ ($i=1,2$) be non-CM newforms which are not twisted-conjugate to each other. Then, the residual representation $\overline{\rho}_f\otimes \overline{\rho}_g$ (modulo $p$) is absolutely irreducible for every sufficiently large $p$. 
\\ 
\textup{(2)} Fix a large enough $p$ so that the conclusion of the first part holds and such that both forms $f_i$ are $p$-ordinary. Let $\Bf_i$ be denote the unique Hida family that admits the $p$-stabilization of $f_i$  as a weight-$k_i$ arithmetic specialization ($i=1,2$). 
Then the condition {\rm (F.CM)} holds true for $\Bf_i$ ($i=1,2$) and the residual representation $\mathbb{T}/\frak{m}_\mathcal{R}\TT$ is absolutely irreducible.
\end{lemma}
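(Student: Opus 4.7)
The plan for part~(1) is to combine the big image theorem for non-CM newforms, due to Ribet (later refined by Momose, Dickson and others), with an application of Goursat's lemma, and to use the fact that the outer tensor product of two absolutely irreducible representations of simple-image groups is absolutely irreducible. By Ribet's theorem, for a non-CM newform $f$ of weight $\geq 2$ defined over a number field $L_f$, the image of $\rho_{f} : G_{\QQ} \to \textup{GL}_2(\mathcal{O}_{L_f})$ contains an open subgroup of $\textup{SL}_2$ in a suitable sense (taking into account inner twists). Reduction modulo $p$ then shows that, for every sufficiently large prime $p$ lying outside an explicit finite set depending only on $f$, the image of $\overline{\rho}_f$ contains a conjugate of $\textup{SL}_2(\mathbb{F}_p)$, possibly after enlarging the residue field. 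Pick such a large $p$ uniformly for both $f_1$ and $f_2$, sufficiently large so that $\textup{PSL}_2(\mathbb{F}_p)$ is a nonabelian simple group.

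Let $G \subset \textup{GL}_2(\mathbb{F}_p) \times \textup{GL}_2(\mathbb{F}_p)$ denote the image of $(\overline{\rho}_{f_1} \times \overline{\rho}_{f_2})(G_{\QQ})$, and let $H_i$ denote the image of its $i$-th projection. By Goursat's lemma, $G$ is pinned down by an isomorphism $\phi : H_1/N_1 \xrightarrow{\sim} H_2/N_2$ for suitable normal subgroups $N_i \triangleleft H_i$. Since $H_i$ contains $\textup{SL}_2(\mathbb{F}_p)$ and $\textup{PSL}_2(\mathbb{F}_p)$ is simple and nonabelian, there are only two possibilities. Either $N_i$ contains the $\textup{SL}_2(\mathbb{F}_p)$-part of $H_i$, in which case $G$ contains a conjugate of $\textup{SL}_2(\mathbb{F}_p) \times \textup{SL}_2(\mathbb{F}_p)$ and a direct Schur-type argument for outer tensor products shows that $\overline{\rho}_{f_1} \otimes \overline{\rho}_{f_2}$ is absolutely irreducible; or $\phi$ induces an isomorphism on the $\textup{PSL}_2$-quotients. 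The classification of automorphisms of $\textup{PSL}_2(\mathbb{F}_p)$ then implies that such an identification must arise from a residual isomorphism of the form $\overline{\rho}_{f_1} \cong \overline{\rho}_{f_2}^{\sigma} \otimes \chi$ with $\chi$ a Dirichlet character and $\sigma$ a Galois automorphism of the coefficient field. For $p$ sufficiently large, this residual twist-conjugacy relation lifts to a twist-conjugacy between $f_1$ and $f_2$ themselves, contradicting our standing hypothesis.

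For part~(2), the non-CM condition $(\textup{F.CM})$ for $\Bf_i$ is automatic since every arithmetic specialization of a CM Hida family is a CM form, whereas $f_i$ is non-CM by assumption. Moreover, the residual representation of Hida's big Galois representation $\TT_{\Bf_i}$ coincides with the residual representation $\overline{\rho}_{f_i}$ of any of its arithmetic specializations, so there is a $G_{\QQ}$-isomorphism $\TT/\frak{m}_\mathcal{R}\TT \cong \overline{\rho}_{f_1} \otimes \overline{\rho}_{f_2}$, and absolute irreducibility follows from part~(1). The main obstacle will be the lifting step at the end of part~(1): namely, making precise how a Goursat identification of residual images forces a characteristic-zero twist-conjugacy between $f_1$ and $f_2$ for sufficiently large $p$. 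This likely requires an effective Chebotarev-type argument, comparing the traces of Frobenius for $\rho_{f_1}$ and $\rho_{f_2} \otimes \chi_\delta$ at sufficiently many primes, to rule out the possibility that the two forms look twist-conjugate modulo $p$ for infinitely many $p$ without being twist-conjugate in characteristic zero.
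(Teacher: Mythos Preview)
Your proposal is correct and follows essentially the same strategy as the reference the paper cites (Loeffler, \S4.2 of \cite{loeffler17Glasgow}); note that the paper itself does not give a self-contained proof of this lemma but simply points to that source. The Ribet/Momose big image input, the Goursat dichotomy, and the classification of automorphisms of $\textup{PSL}_2$ over the residue field are exactly the ingredients Loeffler uses, and your treatment of part~(2) is the standard one.

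On the ``main obstacle'' you flag: the cleanest way to organise the lifting step is the contrapositive. Since $f_1$ and $f_2$ are not twisted-conjugate, for each of the finitely many pairs $(\sigma,\chi)$ (finitely many Galois conjugates, and $\chi$ has conductor bounded in terms of $N_1,N_2$) there is a positive-density set of primes $\ell$ with $a_\ell(f_1)^2 \neq \chi(\ell)^2\,\ell^{k_1-k_2}\,\sigma(a_\ell(f_2))^2$, by strong multiplicity one (or Ramakrishnan's theorem, as in Lemma~\ref{lem:f1andf2arenotgaloisconjugatescomparisonoftraces}). Fix one such $\ell$ for each pair; then for all $p$ outside the finite set of primes dividing these nonzero algebraic numbers, the Goursat ``bad branch'' is excluded mod $p$. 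This is exactly the mechanism the paper later spells out in the proof of Theorem~\ref{thm:MR2holds} for the Hida-family analogue, so you can also look there to see the argument written out in full.
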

See \cite[\S 4.2]{loeffler17Glasgow} for the proof of Lemma \ref{lem:absoluteirredOK}. 
\par 
Next, we turn our attention to the hypotheses {\rm (H.0)}, {\rm (H.2)}, \rm{(H.0$^-$)}, {\rm (H.2$^+$)} and {\rm (H.2$^{++}$)}. To that end, we let $\alpha_i \in \texttt{k}$ denote the reduction of the eigenvalue for the $U_p$-action on $\Bf_i$, modulo the maximal ideal $\frak{m}_i$ of $\mathbb{I}_{\Bf_i}$. Writing $\overline{\rho}_i:G_{\QQ,\Sigma} \ra \textup{GL}_2(\texttt{k})$ for the residual representation carried by $\TT_{\Bf_i}/\frak{m}_i\TT_{\Bf_i}$, it follows that
\begin{equation}
\label{eqn:themodpreprestrictedtoGQpexplicit}
\overline{\rho}_i\big{|}_{G_{\QQ_p}} \sim \left(\begin{array}{cc} \overline{\Psi}_i\alpha_i^{-1} & \star\\ 0 & \alpha_i\end{array}\right)
\,\ee
where, by abuse of notation, we let $\alpha_i$ to denote also the unramified character that assumes the value $\alpha_i$ at the arithmetic Frobenius at $p$. Recall also that, the filtration given in \eqref{eqn:themodpreprestrictedtoGQpexplicit} may be lifted to $\mathbb{I}_{\Bf_i}$ thanks to our assumption that each $\Bf_i$ is $p$-distinguished.  This gives rise to a $4$-step filtration of $\TT$. We recall the steps $F_p^{+}\TT \subset F_p^{++}\TT$, which are both direct summands of $\TT$ of respective ranks $2$ and $3$. Recall also the subquotients $F_p^{--}\TT:=\TT/F_p^{++}\TT$ and $F_p^{-+}\TT:=F_p^{++}\TT/F_p^{+}\TT$.
%%%%%%%%%%%%%%%%%%%%%%%
\begin{lemma}
\label{lem:allhypoHholdstrue} $\,$\\
$\mathbf{(1)}$ If $\alpha_1\alpha_2 \not\equiv 1 \mod \pi_\mathcal{O}$ then {\rm (H.0$^-$)} holds true. If in addition
\begin{itemize}
\item[(i)] either $\overline{\Psi}_1\overline{\Psi}_2$ is ramified at $p$,
\item[(ii)] or else $\overline{\Psi}_1\overline{\Psi}_2(p) \not\equiv \alpha_1\alpha_2 \mod \pi_\mathcal{O}$
\end{itemize}
then $({H.0})$ also holds true.
\\
$\mathbf{(2)}$ Suppose 
\begin{itemize} 
\item[(i)] either that $\overline{\Psi}_1\overline{\Psi}_2\omega^{-1}$ is ramified at $p$,
\item[(ii)] or else $\overline{\Psi}_1\overline{\Psi}_2\omega^{-1}(p) \not\equiv \alpha_1\alpha_2 \mod \pi_\mathcal{O}$
\item[(iii)] either that $\overline{\Psi}_1\omega^{-1}$ is ramified at $p$,
\item[(iv)] or else $\overline{\Psi}_1\omega^{-1}(p) \not\equiv \alpha_1\alpha_2^{-1} \mod \pi_\mathcal{O}$
\end{itemize}
Then  $({H.2^+})$ holds true. If in addition 
 \begin{itemize} 
\item[(v)] either $\overline{\Psi}_2\omega^{-1}$ is ramified at $p$,
\item[(vi)] or else $\overline{\Psi}_2\omega^{-1}(p) \not\equiv \alpha_1^{-1}\alpha_2\mod \pi_\mathcal{O}$
\end{itemize}
then both {\rm (H.2)} and {\rm (H.2$^{++}$)} also hold true.
\end{lemma}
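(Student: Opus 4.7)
My plan is to reduce each of the five cohomological vanishings to the non-triviality of an explicit short list of characters of $G_{\QQ_p}$, and then match these non-trivialities with the stated arithmetic conditions.

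Using the shape \eqref{eqn:themodpreprestrictedtoGQpexplicit} of each $\overline{\rho}_i|_{G_{\QQ_p}}$, I will tensor the two $p$-ordinary filtrations to refine the filtration $F_p^+\overline{T}\subset F_p^{++}\overline{T}\subset \overline{T}$ into a four-step filtration of $\overline{T}|_{G_{\QQ_p}}$ with graded characters
$$\chi_a=\overline{\Psi}_1\overline{\Psi}_2\alpha_1^{-1}\alpha_2^{-1},\quad \chi_b=\overline{\Psi}_1\alpha_1^{-1}\alpha_2,\quad \chi_c=\overline{\Psi}_2\alpha_1\alpha_2^{-1},\quad \chi_d=\alpha_1\alpha_2,$$
where $\chi_a,\chi_b$ are the graded pieces of $F_p^+\overline{T}=F_p^+\overline{\rho}_1\otimes\overline{\rho}_2$; $\chi_a,\chi_b,\chi_c$ those of $F_p^{++}\overline{T}$; and $\chi_c,\chi_d$ those of $\overline{T}/F_p^+\overline{T}$. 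A d\'evissage along the filtration shows that for each such subquotient $X$, $H^0(\QQ_p,X)=0$ holds iff every graded character of $X$ is non-trivial on $G_{\QQ_p}$---equivalently, iff each such character is either ramified or has Frobenius value $\not\equiv 1\pmod{\pi_\mathcal{O}}$.

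For part (1) this reduces (H.0$^-$) to the non-triviality of $\chi_c,\chi_d$, and (H.0) to the additional non-triviality of $\chi_a,\chi_b$; inspection shows that $\chi_d$ is killed exactly by $\alpha_1\alpha_2\not\equiv 1$ and $\chi_a$ by the alternatives (i)/(ii). For part (2) I will invoke local Tate duality $H^2(\QQ_p,X)\cong H^0(\QQ_p,X^\vee(1))^\vee$; since $\chi_\cyc\equiv\omega\pmod{\pi_\mathcal{O}}$ on $G_{\QQ_p}$, the Tate dual of a character $\chi$ is $\chi^{-1}\omega\pmod{\pi_\mathcal{O}}$, and Tate dualization reverses the order of the filtration. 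Thus (H.2$^+$), (H.2$^{++}$), and (H.2) reduce respectively to the non-triviality of $\{\chi_a^{-1}\omega,\chi_b^{-1}\omega\}$, $\{\chi_a^{-1}\omega,\chi_b^{-1}\omega,\chi_c^{-1}\omega\}$, and $\{\chi_a^{-1}\omega,\chi_b^{-1}\omega,\chi_c^{-1}\omega,\chi_d^{-1}\omega\}$. A direct check confirms that conditions (i)/(ii), (iii)/(iv) and (v)/(vi) rule out triviality of $\chi_a^{-1}\omega$, $\chi_b^{-1}\omega$ and $\chi_c^{-1}\omega$ respectively, while $\chi_d^{-1}\omega=\omega\alpha_1^{-1}\alpha_2^{-1}$ is automatically non-trivial since the hypothesis $p\geq 7$ forces $\omega$ to be ramified at $p$.

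The main care required is bookkeeping: one must verify that the four-step tensor filtration aligns correctly with $F_p^+$ and $F_p^{++}$ as defined just before the lemma, and that Tate dualization of the filtration reverses the order of the graded pieces. No cohomological input beyond local Tate duality and the long exact sequence of a two-step filtration is required.
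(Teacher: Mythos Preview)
Your approach is exactly what the paper's one-line proof (``this is evident thanks to the local description'') is gesturing at: compute the four graded characters of the tensor filtration, reduce $H^0$-vanishing to their non-triviality by d\'evissage, and handle $H^2$ via local Tate duality. Your treatment of part~(2) matches the stated hypotheses cleanly, including the observation that $\chi_d^{-1}\omega$ is automatically ramified.

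One bookkeeping point in part~(1): you correctly reduce (H.$0^-$) to the non-triviality of $\chi_c,\chi_d$ and (H.0) to that of all four $\chi_a,\chi_b,\chi_c,\chi_d$, but you then only match $\chi_d$ to the hypothesis $\alpha_1\alpha_2\not\equiv 1$ and $\chi_a$ to the alternatives (i)/(ii). The non-triviality of $\chi_b=\overline{\Psi}_1\alpha_1^{-1}\alpha_2$ and $\chi_c=\overline{\Psi}_2\alpha_1\alpha_2^{-1}$ is \emph{not} forced by the hypotheses as written, so either some standing assumption (e.g.\ a form of $p$-distinguishedness for the tensor product, or ramification of $\overline{\Psi}_i$) is being used tacitly, or the lemma's statement is slightly imprecise. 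Since you explicitly flag bookkeeping as the main care required, you should address these two characters rather than leave them to ``inspection''.
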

\begin{proof}
This is evident thanks to the local description in (\ref{eqn:themodpreprestrictedtoGQpexplicit}) of the residual representations.
\end{proof}
Finally, we shall provide an explicit sufficient condition for the hypothesis {\rm (MR2)} to hold true. 

\begin{lemma}
\label{lem:f1andf2arenotgaloisconjugatescomparisonoftraces}
Let $f_i \in S_{k_i}(\Gamma_1(N_i  ))$ ($i=1,2$) be non-CM newforms of respective weights $k_1, k_2$, levels $N_1,N_2$ which are not twisted-conjugate to each other. Let $L_i$ denote the normal closure of the Hecke field of $f_i$ and fix an embedding of $L_1L_2$ into $\overline{\QQ}$. 
\\
\textup{(1)} For every $\delta\in \textup{Gal}(L_2/\QQ)$, the set of primes $\ell$ for which we have $a_\ell({{f}}_1)^2=\ell^{k_1-k_2}\delta(a_\ell({{f}}_2))^2$ has  zero density.
\\
\textup{(2)} For a given number field $F/\QQ$, there exists $B(f_1,f_2,F)\in \ZZ^+$ such that for every prime $p>B(f_1,f_2,F)$ and any $\delta\in \textup{Gal}(L_2/\QQ)$ we have
$$v_p\left(a_\ell({{f}}_1)^2-\ell^{k_1-k_2}\delta(a_\ell({{f}}_2))^2\right)=0$$
for every prime $\ell$ which splits completely in $F/\QQ$.
\end{lemma}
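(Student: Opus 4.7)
My plan is to deduce both parts from the ``big-image'' theorem for the product Galois representation attached to a non-twist-conjugate pair of non-CM newforms (Ribet--Momose for individual forms, refined by Loeffler for pairs; this is exactly the input invoked for Lemma~\ref{lem:absoluteirredOK}). After fixing embeddings, set $\rho_1 := \rho_{f_1}$ and $\rho_2 := \delta \circ \rho_{f_2}$, viewed with values in $\textup{GL}_2(\overline{\QQ}_p)$. The image of $G_\QQ$ under $\rho_1 \oplus \rho_2$ is then open in a natural subgroup $H \subset \textup{GL}_2 \times \textup{GL}_2$ coupling the two determinants through $\chi_1$, $\delta(\chi_2)$, and the cyclotomic character.

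For part (1), using $\det \rho_i(\mathrm{Fr}_\ell) = \chi_i(\ell)\ell^{k_i-1}$ I would rewrite $\ell^{k_1-k_2}$ algebraically in terms of $\det A_1/\det A_2$ and the abelian characters, so that the equation $\textup{tr}(A_1)^2 = \ell^{k_1-k_2}\,\textup{tr}(A_2)^2$ takes the form of an algebraic identity $\textup{tr}(A_1)^2 = u(A_1, A_2)\,\textup{tr}(A_2)^2$ cutting out a closed subvariety $Z \subset H$. The key step will be to check $Z \subsetneq H$: were equality to hold, evaluating along a suitable one-parameter family in $H$ would force a Dirichlet character $\phi$ with $\chi_1 = \phi \cdot \delta(\chi_2)$ together with a trace coincidence, ultimately making $f_1$ twist-conjugate to $f_2$ and contradicting the hypothesis. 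Chebotarev density applied to the open image then yields the zero-density conclusion.

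For part (2), I would run a modulo-$p$ analogue of the above. Take $B(f_1,f_2,F)$ large enough that, for every $p>B$: (i) the reduction mod $p$ of $\rho_1 \oplus \rho_2$ still has open image in the residual group $\overline H$ --- this follows from Ribet--Serre plus the fact that the twist-conjugacy defect is encoded in finitely many Dirichlet characters, hence detectable modulo $p$ once $p$ is large; and (ii) the given field $F$ and the splitting field of the residual representations together with $\mu_p$ are linearly disjoint over $\QQ$, which I would enforce by demanding $p$ unramified in $F$ and $B$ large. Chebotarev on the compositum then exhibits primes $\ell$ split in $F$ whose residual Frobenius avoids the mod-$p$ bad locus $\overline Z$, delivering the desired $v_p$-vanishing. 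The main obstacle is this uniform residual descent: the properness $Z \subsetneq H$ is a direct geometric consequence of openness in the $p$-adic setting, but its residual analogue $\overline Z \subsetneq \overline H$ must hold for every $p$ beyond a finite exceptional set --- this is precisely Loeffler's refinement of Ribet--Momose applied modulo $p$.
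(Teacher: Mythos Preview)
Your plan for part~(1) risks circularity. You want to invoke the big-image theorem for the pair $(\rho_{f_1},\delta\circ\rho_{f_2})$, but in this paper---and in Loeffler's argument that it follows---the joint big-image statement is established \emph{using} the present lemma: see Step~2 in the proof of Theorem~\ref{thm:MR2holds}, where one assumes the joint image is strictly smaller than $\textup{SL}_2(R_{\Bf_1})\times\textup{SL}_2(R_{\Bf_2})$, extracts via Goursat's lemma a trace-squared coincidence for all primes split in a fixed number field, and then appeals to Lemma~\ref{lem:f1andf2arenotgaloisconjugatescomparisonoftraces}(2) for the contradiction. So the big-image input you cite sits logically downstream of what you are trying to prove. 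The paper instead proves part~(1) directly from Ramakrishnan's strong multiplicity-one theorem: a positive-density set of primes with $a_\ell(f_1)^2=\ell^{k_1-k_2}\delta(a_\ell(f_2))^2$ would force $f_1$ to be a character twist of the conjugate form $f_2^\delta$, contradicting the non-twist-conjugate hypothesis.

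For part~(2), even setting circularity aside, you are working far harder than necessary. The paper's proof is one line: since $\textup{Gal}(L_2/\QQ)$ is finite and primes split in $F$ have positive density, part~(1) furnishes for each $\delta$ some prime $\ell$ split in $F$ with $a_\ell(f_1)^2-\ell^{k_1-k_2}\delta(a_\ell(f_2))^2\neq 0$; this finite collection of nonzero algebraic numbers is divisible by only finitely many rational primes, and any $B(f_1,f_2,F)$ exceeding all of them works. No residual big-image statement, mod-$p$ Chebotarev, or linear-disjointness argument is needed.
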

\begin{proof}
The first assertion follows from a theorem of Ramakrishnan~\cite[Theorem A]{Ramakrishnan2000}, as in \cite[Lemma 3.1.1]{loeffler17Glasgow}. The second assertion is an immediate consequence from the first, since there only finitely many primes at which all members of a non-zero collection of algebraic numbers have positive valuation.
\end{proof}
We let $F_0$ denote the compositum of $L_1$ and $L_2$. We also choose an integer $B\geq B(f_1,f_2,F_0)$ such that for every $p>B$, both conclusions of Lemma~\ref{lem:absoluteirredOK}(ii) are valid.
\begin{thm}
\label{thm:MR2holds}
Let $f_i$ be as in Lemma~\ref{lem:f1andf2arenotgaloisconjugatescomparisonoftraces} and suppose $p>B$ is a good ordinary prime for both forms. Let $\Bf_i$ be denote the unique Hida family that admits the $p$-stabilization of $f_i$  as a weight-$k_i$ arithmetic specialization ($i=1,2$). Suppose that the hypothesis {\rm (F.Dist)} holds true. The Rankin-Selberg Galois representation $\TT$ 
satisfies {\rm (MR2)} if we further assume the following  conditions: 
\begin{itemize}
\item[(BI.1)] \begin{itemize}
\item[(i)] Either $(N_1,N_2)=1$ and there exists $u$ such that $\Psi_2(u)=-1$, 
\item[(ii)] or the product of the reductions of two central characters $\overline{\Psi}_1\overline{\Psi}_2$ is non-trivial.
\end{itemize}
\item[(BI.2)] The residual representations associated to both $\Bf_1$ and $\Bf_2$ are full in the sense that they contain a conjugate of $\textup{SL}_{2}(\mathbb{F}_p)$.
\end{itemize}
\end{thm}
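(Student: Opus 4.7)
The plan is to construct $\tau \in G_\QQ$ fixing $\QQ(\mu_{p^\infty})$ such that $\tau$ acts diagonalizably on each $\TT_{\Bf_i}$ with one eigenvalue equal to $1$ and the other eigenvalue $\beta_i \in \mathbb{I}_{\Bf_i}$ satisfying the property that $\beta_1 - 1$, $\beta_2 - 1$ and $\beta_1\beta_2 - 1$ are all units in $\mathcal{R} = \mathbb{I}_{\Bf_1}\widehat{\otimes}\mathbb{I}_{\Bf_2}$. Once such a $\tau$ is found, the diagonal decomposition yields bases $\{v_i^+,v_i^-\}$ of $\TT_{\Bf_i}$ over $\mathbb{I}_{\Bf_i}$ on which $\tau$ acts as $1$ and $\beta_i$ respectively. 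The tensor product $\TT$ then splits as the $\mathcal{R}$-free rank-one eigenspaces spanned by the four elementary tensors with $\tau$-eigenvalues $1, \beta_2, \beta_1, \beta_1\beta_2$, and since the last three eigenvalues differ from $1$ by units of $\mathcal{R}$, the quotient $\TT/(\tau-1)\TT$ is freely generated over $\mathcal{R}$ by the image of $v_1^+\otimes v_2^+$.

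To exhibit such $\tau$, I would work at the residual level. By (BI.2) each image $\overline{\rho}_{\Bf_i}(G_\QQ)$ contains a conjugate of $\textup{SL}_2(\mathbb{F}_p)$, and since the cyclotomic character is trivial on $G_{\QQ(\mu_{p^\infty})}$ while $\textup{SL}_2(\mathbb{F}_p)$ is perfect (using $p\geq 7$), the image of each $\overline{\rho}_{\Bf_i}|_{G_{\QQ(\mu_{p^\infty})}}$ still contains $\textup{SL}_2(\mathbb{F}_p)$ up to conjugation. I would then apply a Goursat-type analysis (in the spirit of Ribet--Momose and, in the Hida-theoretic setting, of \cite{loeffler17Glasgow}) to the joint representation $\overline{\rho}_{\Bf_1}\times \overline{\rho}_{\Bf_2}$ restricted to $G_{\QQ(\mu_{p^\infty})}$: any proper subgroup of $\textup{SL}_2(\mathbb{F}_p)\times \textup{SL}_2(\mathbb{F}_p)$ projecting surjectively onto each factor would force an isomorphism between the two projections modulo centers, leading to an identification of $\overline{\rho}_{\Bf_1}$ and $\overline{\rho}_{\Bf_2}$ up to a character twist. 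Hypothesis (BI.1)(i) rules this out by an argument based on the ramification profile at the primes dividing $N_1 N_2$ together with the non-triviality of $\Psi_2$ on some $u$, while (BI.1)(ii) rules it out directly by comparison of determinants, since any twist-equivalence would force $\overline{\Psi}_1\overline{\Psi}_2$ to be trivial. The upshot is that the image of $G_{\QQ(\mu_{p^\infty})}$ in the residual joint representation contains $\textup{SL}_2(\mathbb{F}_p)\times \textup{SL}_2(\mathbb{F}_p)$.

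Inside this large image I choose elements $\overline{\tau}_i\in \textup{SL}_2(\mathbb{F}_p)$ each having eigenvalues $\{1,\overline{\beta}_i\}$ with $\overline{\beta}_i \neq 1$; since $\mathbb{F}_p^\times\setminus\{1\}$ has cardinality at least five, I can further require $\overline{\beta}_1\overline{\beta}_2\neq 1$. Any $\tau\in G_{\QQ(\mu_{p^\infty})}$ whose image under the joint residual representation is conjugate to $(\overline{\tau}_1,\overline{\tau}_2)$ then acts on each $\TT_{\Bf_i}/\mathfrak{m}_i\TT_{\Bf_i}$ with two distinct eigenvalues in $\mathbb{F}_p^\times$; Hensel's lemma produces the desired $\mathbb{I}_{\Bf_i}$-rational diagonalization with eigenvalues $1$ and $\beta_i\equiv \overline{\beta}_i$, so both $\beta_i-1\in \mathbb{I}_{\Bf_i}^\times$ and $\beta_1\beta_2-1\in \mathcal{R}^\times$. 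The verification of the quotient structure then proceeds as in the first paragraph. The main obstacle is the Goursat step: one needs to systematically rule out all proper subgroups of $\textup{SL}_2(\mathbb{F}_p)\times\textup{SL}_2(\mathbb{F}_p)$ surjecting onto each factor, and to translate the resulting correlations between $\overline{\rho}_{\Bf_1}$ and $\overline{\rho}_{\Bf_2}$ into contradictions with (BI.1); this is exactly what the two alternatives of (BI.1) are engineered to handle.
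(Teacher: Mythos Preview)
Your argument has a fatal arithmetic gap at the point where you choose $\overline{\tau}_i\in\textup{SL}_2(\mathbb{F}_p)$ with eigenvalues $\{1,\overline{\beta}_i\}$ and $\overline{\beta}_i\neq 1$. Any element of $\textup{SL}_2$ has determinant $1$, so if one eigenvalue is $1$ the other is forced to be $1$ as well. More generally, if you stay inside $\textup{SL}_2(\mathbb{F}_p)\times\textup{SL}_2(\mathbb{F}_p)$ and try a purely diagonal construction, the four eigenvalues on the tensor product come in reciprocal pairs, and you can never arrange that exactly one of them equals $1$. So the entire diagonal--plus--Hensel strategy collapses. (There is a second, related issue: even if you could find a suitable residual element, Hensel lifts the eigenvalues to $\mathbb{I}_{\Bf_i}$, but nothing forces one of the lifted eigenvalues to be \emph{exactly} $1$ rather than merely $\equiv 1$ modulo the maximal ideal.)

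This is why the paper's construction looks different from yours, and why it splits into two cases. The Goursat step in the paper is carried out at the \emph{integral} level (for $\textup{SL}_2(R_{\Bf_1})\times\textup{SL}_2(R_{\Bf_2})$, following Fischman), and the obstruction there is ruled out not by (BI.1) but by the hypothesis that $f_1,f_2$ are not twisted conjugates together with $p>B$ (via Lemma~\ref{lem:f1andf2arenotgaloisconjugatescomparisonoftraces}). The hypothesis (BI.1) is used for something else entirely: it allows one to locate elements in $\rho(G_{\QQ(\mu_{p^\infty})})$ that lie \emph{outside} $\textup{SL}_2\times\textup{SL}_2$. Under (BI.1)(i) the paper produces a $\tau$ acting as $\left(\begin{smallmatrix}1&1\\0&1\end{smallmatrix}\right)$ on $\TT_{\Bf_1}$ (unipotent, not diagonalizable) and as $\left(\begin{smallmatrix}1&0\\0&-1\end{smallmatrix}\right)$ on $\TT_{\Bf_2}$ (determinant $-1$, so not in $\textup{SL}_2$); under (BI.1)(ii) it produces a diagonal element on each factor with determinant $\Psi_i(u)$, again outside $\textup{SL}_2$. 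In both cases the explicit $4\times4$ matrix has $\TT/(\tau-1)\TT$ free of rank one. You should rework your argument along these lines: the large-image result buys you $\textup{SL}_2(R_{\Bf_1})\times\textup{SL}_2(R_{\Bf_2})$ as a \emph{coset space} you can translate within, but the specific $\tau$ must come from a coset determined by (BI.1).
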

\begin{rem}
Let $f \in S_k(\Gamma_1(N),\varepsilon)$ be a non-CM newform. Then the results of Momose, Ribet, and Ghate--Gonzalez-Jimenez--Quer~\cite{GGQ} guarantee that for all but finitely primes $p$, mod $p$ representation $\overline{\rho}_f$ associated to $f$ is full.
\end{rem}
\textbf{Notation.} For $f_i$ as in the statement of Theorem~\ref{thm:MR2holds}, we fix an embedding $F_0 \hookrightarrow \overline{\QQ}_p$ which sends both $a_p(f_i)$ to units. Let $\frak{p}$ denote the prime of $F_0$ induced by this embedding. By slight abuse, we shall denote the prime of $L_i$ lying below $\frak{p}$ also by the symbol $\frak{p}$.
\begin{proof}[Proof of Theorem~\ref{thm:MR2holds}]
Our proof builds on the work of Fischman and Loeffler; our notation in this proof is a hybrid of that used in these two articles.  For $i=1,2$, we let $H_{{\bf{f}}_i}
\subset G_\QQ$ denote the subgroup defined at the beginning of \cite[\S3.2]{fischmanAIF} and let $H=H_{{\bf{f}}_1}\cap H_{{\bf{f}}_2}$. 
Since $H_{{\bf{f}}_i}$ is of finite index in $G_\QQ$,  $H$ is a subgroup of finite index 
in $H_{{\bf{f}}_1}$ and $H_{{\bf{f}}_2}$ and both $\Psi_1$ and $\Psi_2$ are trivial on $H$.  We also recall the subring $R_{{\bf{f}}_i}\subset \mathbb{I}_{{\bf{f}}_i}$ defined in loc. cit. Set $\rho:=\rho_{{\bf{f}}_1}\otimes \rho_{{\bf{f}}_2}$. When BI.1(i) is valid, we shall prove that
\be\label{eqn:goodeleemntinthejointimage}\left(\left(\begin{array}{cc} 1&1 \\
0& 1 \end{array}\right) , \left(\begin{array}{cc} 1&0 \\
0& -1\end{array}\right)\right) \in \rho(G_{\QQ(\mu_{p^\infty})})
\ee
 
\begin{itemize}
\item \textbf{Step 1.} We let $G_i^{(n)}:=\rho_{{\bf{f}}_i}\left(H\cap G_{\QQ(\mu_{p^n})}\right)$ and $G_i^{\circ}:=\displaystyle{\bigcap_n}\, G_i^{(n)}=\rho_{{\bf{f}}_i}\left(H\cap G_{\QQ(\mu_{p^\infty})}\right)$. We also set $\mathscr{G}_n=\Gal(\QQ(\mu_{p^{n+1}})/\QQ)$.
\end{itemize}
Then $G_i^\circ=\textup{SL}_2(R_{{\bf{f}}_i})$. 

Indeed, it follows from \cite[Corollary 4.11]{fischmanAIF} that 
\be\label{eqn:Fischmanreduction1}
G_i^{(n)}\subset \{M\in \GL_2(R_{{\bf{f}}_i}) \mid \det(M)\in \left(1+p\ZZ_p\right)^{p^n}\}
\ee
(where Fischman uses the notation $\Gamma^\prime$ for the group  $1+p\ZZ_p$). 
Note also that the quotient group $\rho_{{\bf{f}}_i}\left(H\right)/G_i^{(n)}$ is a cyclic group of dividing $(p-1)p^n$, being the homomorphic image of $H/H\cap G_{\QQ(\mu_{p^\infty})}$ under the map induced from $\det\circ\,\rho_{{\bf{f}}_i}$. On the other hand, it also follows from  \cite[Corollary 4.11]{fischmanAIF} combined with (\ref{eqn:Fischmanreduction1}) that
$$\rho_{{\bf{f}}_i}\left(H\right)/G_i^{(n)}\stackrel{\det}{\twoheadrightarrow} (\mu_p\times(1+p\ZZ_p))\big{/}(1+p\ZZ_p)^{p^n}\cong (\ZZ/p^{n+1}\ZZ)^\times$$
and we conclude that the containment in (\ref{eqn:Fischmanreduction1}) is an equality. We may more precisely write 
\begin{align*}
\notag G_i^{(n)}=\left\{ 
M\in \GL_2(R_{{\bf{f}}_i})\, \middle| \, \det(M)\in \{\Psi_i^{(p)}\widetilde{\chi}_i(\gamma)\}_{\gamma\in G_{\QQ(\mu_{p^n})}}\right\}.
\end{align*}
Since 
$$\left\{ 
M\in \GL_2(R_{{\bf{f}}_i})\, \middle| \, \det(M)\in \{\Psi_i^{(p)}\widetilde{\chi}_i(\gamma)\}_{\gamma\in G_{\QQ(\mu_{p^n})}}\right\}= 
\bigcup_{\gamma\in G_{\QQ(\mu_{p^n})}}\left(\begin{array}{cc} 1&0 \\0& \Psi_i^{(p)}\widetilde{\chi}_i(\gamma) \end{array}\right)\textup{SL}_2(R_{{\bf{f}}_i})
$$
we conclude that
\begin{align}
\label{eqn:theimageforrhoiexplicit}
 G_i^{(n)}=\bigcup_{\gamma\in G_{\QQ(\mu_{p^n})}}\left(\begin{array}{cc} 1&0 \\0& \Psi_i^{(p)}\widetilde{\chi}_i(\gamma) \end{array}\right)\textup{SL}_2(R_{{\bf{f}}_i})\,.
\end{align}
On considering the intersection $\displaystyle{\bigcap_n G_i^{(n)}}$, we conclude that
$$G^\circ_i= \left\{ M\in \GL_2(R_{{\bf{f}}_i}) 
\, \middle| \, \det(M)\in \bigcap_{n}\left(1+p\ZZ_p\right)^{p^n}\right\}= \textup{SL}_2(R_{{\bf{f}}_i})\,.
$$
\begin{itemize}
\item\textbf{Step 2.} Now set $G^{(n)}:=\rho\left(H\cap G_{\QQ(\mu_{p^n})}\right) \subset G_1^{(n)}\times G^{(n)}_2$ and $G^{\circ}=\displaystyle{\bigcap_n} \,G^{(n)}$. We claim that $G^\circ=G_1^\circ\times G^\circ_2=\textup{SL}_2(R_{{\bf{f}}_1})\times \textup{SL}_2(R_{{\bf{f}}_2})$.
\end{itemize}
We shall very closely follow the arguments of Loeffler in the proofs of \cite[Proposition 3.2.1 and Theorem 3.2.2]{loeffler17Glasgow} in order to verify this claim. To that end, we let $U<\rho(H)\subset \rho_{{\bf{f}}_1}(H)\times  \rho_{{\bf{f}}_2}(H)$ denote the subgroup of elements $(M_1,M_2)$ such that $M_i \in \textup{SL}_2(R_{{\bf{f}}_i})$. By the discussion in the first step, notice that both natural projection maps $U\ra \textup{SL}_2(R_{{\bf{f}}_i})$ are surjective. 

We shall need the following result, which is a particular instance of Goursat's Lemma:
\begin{lemma}\label{lemma:goursat}
Let $\mathscr{N} < \textup{SL}_2(R_{{\bf{f}}_1}) \times \textup{SL}_2(R_{{\bf{f}}_2})$ be 
a closed  subgroup that surjects onto each factor under the natural projection maps.  Then there exists closed normal subgroups $\frak{N}_i< \textup{SL}_2(R_{{\bf{f}}_i})$ such that the image of $\mathscr{N}$ in $\textup{SL}_2(R_{{\bf{f}}_1})/\frak{N}_1\times \textup{SL}_2(R_{{\bf{f}}_2})/\frak{N}_2$ is the graph of an isomorphism $\textup{SL}_2(R_{{\bf{f}}_1})/\frak{N}_1\cong \textup{SL}_2(R_{{\bf{f}}_2})/\frak{N}_2$. Moreover, $\mathscr{N}$ is a proper subgroup of $\textup{SL}_2(R_{{\bf{f}}_1})\times \textup{SL}_2(R_{{\bf{f}}_2})$ if and only if 
$\frak{N}_i$ is a proper normal subgroup of $\textup{SL}_2(R_{{\bf{f}}_i})$.
\end{lemma}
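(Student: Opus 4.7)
This statement is the classical Goursat lemma, adapted here to closed subgroups of a product of two topological groups. The plan is to write down the two candidate normal subgroups explicitly and verify each assertion directly.

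First, I would define
$$\frak{N}_1:=\left\{M\in \textup{SL}_2(R_{\Bf_1}) \mid (M,I)\in \mathscr{N}\right\}\quad\text{and}\quad \frak{N}_2:=\left\{M\in \textup{SL}_2(R_{\Bf_2}) \mid (I,M)\in \mathscr{N}\right\}.$$
These are closed subgroups of the respective factors because $\mathscr{N}$ is closed in the product and because each is cut out by the closed condition of having a fixed second (resp. first) coordinate. To verify normality of $\frak{N}_1$ in $\textup{SL}_2(R_{\Bf_1})$, I would pick an arbitrary $M\in \textup{SL}_2(R_{\Bf_1})$ and use the surjectivity of the first projection to find $M'\in \textup{SL}_2(R_{\Bf_2})$ with $(M,M')\in \mathscr{N}$. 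For any $n\in \frak{N}_1$ the conjugate $(M,M')(n,I)(M,M')^{-1}=(MnM^{-1},I)$ then lies in $\mathscr{N}$, so $MnM^{-1}\in \frak{N}_1$. The argument for $\frak{N}_2$ is symmetric.

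Next, I would show that the image $\overline{\mathscr{N}}$ of $\mathscr{N}$ inside $\textup{SL}_2(R_{\Bf_1})/\frak{N}_1\times \textup{SL}_2(R_{\Bf_2})/\frak{N}_2$ is the graph of a group isomorphism. The key point is well-definedness: if $(M_1,M_2)$ and $(M_1,M_2')$ both belong to $\mathscr{N}$, then their quotient $(I,M_2 M_2'^{-1})\in \mathscr{N}$, forcing $M_2M_2'^{-1}\in \frak{N}_2$. Hence the assignment $\bar{M_1}\mapsto \bar{M_2}$ is a well-defined homomorphism $\textup{SL}_2(R_{\Bf_1})/\frak{N}_1\to \textup{SL}_2(R_{\Bf_2})/\frak{N}_2$; surjectivity follows from the surjectivity of the projection $\mathscr{N}\twoheadrightarrow \textup{SL}_2(R_{\Bf_2})$, and the symmetric argument (swapping the roles of the two factors) provides a two-sided inverse.

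Finally, for the equivalence in the last statement of the lemma, observe that if $\frak{N}_i=\textup{SL}_2(R_{\Bf_i})$ then both quotients in the graph isomorphism are trivial, hence both $\frak{N}_1$ and $\frak{N}_2$ must be improper simultaneously; in that case $\mathscr{N}$ contains both $\textup{SL}_2(R_{\Bf_1})\times\{I\}$ and $\{I\}\times \textup{SL}_2(R_{\Bf_2})$, and so $\mathscr{N}$ is the full product. Conversely, if both $\frak{N}_i$ are proper, then the index $[\textup{SL}_2(R_{\Bf_1})\times \textup{SL}_2(R_{\Bf_2}):\mathscr{N}]$ equals the common (non-trivial) index $[\textup{SL}_2(R_{\Bf_i}):\frak{N}_i]$, so $\mathscr{N}$ is necessarily proper. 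The argument is elementary and I do not foresee any technical obstruction; the only subtlety to bear in mind is that the surjectivity hypothesis on $\mathscr{N}$ is used crucially in the normality step and in exhibiting the isomorphism.
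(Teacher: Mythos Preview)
Your proof is correct and is precisely the standard Goursat argument. The paper does not supply its own proof for this lemma; it simply states the result as ``a particular instance of Goursat's Lemma'' and moves on, so your write-up is in fact more detailed than what the paper provides. One minor cosmetic point: in the final paragraph, the index computation is a bit informal for profinite groups, and it would be cleaner to argue directly that if $\frak{N}_2$ is proper then any $M\notin\frak{N}_2$ gives $(I,M)\notin\mathscr{N}$.
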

Next, we next explain that the only maximal proper normal subgroup of $\textup{SL}_2(R_{{\bf{f}}_1})$ is the kernel of the projection to $\textup{PSL}_2(\texttt{k}_{i})$ (where $\texttt{k}_{i}=R_{{\bf{f}}_i}/\frak{m}_{i}$ is the residue field). Indeed, if $\frak{N}<\textup{SL}_2(R_{{\bf{f}}_i})$ is a proper normal subgroup, then so is its image $\overline{\frak{N}}$ in $\textup{PSL}_2(\texttt{k}_{i})$. However, since we have assumed that $p>7$, it follows that $\textup{PSL}_2(\texttt{k}_{i})$ is simple and hence we either have $\overline{\frak{N}}=\{1\}$ or else $\overline{\frak{N}}=\textup{PSL}_2(\texttt{k}_{i})$.  We shall next explain that the latter option is impossible: If  $\overline{\frak{N}}=\textup{PSL}_2(\texttt{k}_{i})$, then since only proper non-trivial normal subgroup of $\textup{SL}_2(\texttt{k}_{i})$ is $\{\pm 1\}$, it follows that $\frak{N}$ maps onto  $\textup{SL}_2(\texttt{k}_{i})$ (under the obvious reduction map induced from $R_{\bf{f}_i}\ra \texttt{k}_{i}$). This observation combined with \cite[Proposition 3.15]{fischmanAIF} shows that $\frak{N}=\textup{SL}_2(R_{{\bf{f}}_i})$, contrary to our assumption that $\frak{N}$ is a proper normal subgroup of $\textup{SL}_2(R_{{\bf{f}}_i})$, so $\overline{\frak{N}}=\{1\}$. We conclude that $\frak{N}\subset \ker\left(\textup{SL}_2(R_{{\bf{f}}_i})\twoheadrightarrow\textup{PSL}_2(\texttt{k}_{i})\right)$, as claimed.

Suppose now that $U$ is a proper subgroup of  $\textup{SL}_2(R_{{\bf{f}}_1})\times \textup{SL}_2(R_{{\bf{f}}_2})$. By Lemma \ref{lemma:goursat}, 
 we have isomorphisms 
$$\xymatrix{\textup{SL}_2(R_{{\bf{f}}_1})/\frak{N}_1\ar[r]^\sim_{\phi}\ar@{->>}[d]&\textup{SL}_2(R_{{\bf{f}}_2})/\frak{N}_2\ar@{->>}[d]\\
\textup{PSL}_2(\texttt{k}_{1})\ar[r]^{\sim}&\textup{PSL}_2(\texttt{k}_{2})
}$$
(where the isomorphism on the second row is induced from the one on the first row), which in turn also induces an isomorphism $\texttt{k}_{1}\stackrel{\sim}{\ra} \texttt{k}_{2}$ and $R_{{\bf{f}}_1}\stackrel{\sim}{\ra}R_{{\bf{f}}_2}$. Henceforth, we shall identify $\texttt{k}_1$ and $\texttt{k}_2$  
through this isomorphism and denote either one of them by $\texttt{k}$. Since all automorphisms of $\textup{PSL}_2(\texttt{k})$ arise as the compositum of a field automorphism of $\texttt{k}$ and conjugation by an element of $\textup{PSL}_2(\texttt{k})$, we conclude with the following:
\begin{claim}
There exists $\delta \in \textup{Gal}(\texttt{k}/\mathbb{F}_p)$ such that $U$ is contained in the group
$$
\left\{(M_1,M_2)\in \textup{SL}_2(R_{{\bf{f}}_1})\times \textup{SL}_2(R_{{\bf{f}}_2}) \, \middle| \,  M_1 \hbox{ mod }\frak{m}_{1} = 
\delta\left(\pm M_2  \hbox{ mod }\frak{m}_{2}\right) \right\}\,.$$
\end{claim}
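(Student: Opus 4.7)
The plan is to combine Goursat's lemma (Lemma~\ref{lemma:goursat}) with the classification of the normal subgroups of $\textup{SL}_2(R_{\mathbf{f}_i})$ (already carried out in the preceding paragraph of the excerpt) and with the classical description of $\textup{Aut}\bigl(\textup{PSL}_2(\texttt{k})\bigr)$, and finally to lift the resulting diagonal relation from $\textup{PSL}_2$ back to $\textup{SL}_2$, where the two-to-one covering will supply the sign $\pm$.

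First, I would apply Lemma~\ref{lemma:goursat} to $U\subset\textup{SL}_2(R_{\mathbf{f}_1})\times\textup{SL}_2(R_{\mathbf{f}_2})$, which is a proper closed subgroup surjecting onto each factor (by Step~1 of the preceding discussion). The resulting proper normal subgroups $\mathfrak{N}_i\lhd\textup{SL}_2(R_{\mathbf{f}_i})$ must, by the analysis we have just carried out, be contained in the unique maximal proper normal subgroup of $\textup{SL}_2(R_{\mathbf{f}_i})$, namely the kernel of reduction to $\textup{PSL}_2(\texttt{k}_i)$. Thus, on composing $\phi$ with the two surjections to $\textup{PSL}_2(\texttt{k}_i)$, I would obtain the induced isomorphism $\overline{\phi}:\textup{PSL}_2(\texttt{k}_1)\stackrel{\sim}{\lra}\textup{PSL}_2(\texttt{k}_2)$ displayed in the second row of the diagram.

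Next, comparison of cardinalities forces $|\texttt{k}_1|=|\texttt{k}_2|$ (as both are finite fields of characteristic $p$), so a fortiori $\texttt{k}_1\cong\texttt{k}_2$ and I would identify them henceforth as a single field $\texttt{k}$. I would then invoke the classical fact that for $|\texttt{k}|>3$ every automorphism of $\textup{PSL}_2(\texttt{k})$ has the form $\textup{Int}(g)\circ\delta$ with $g\in\textup{PGL}_2(\texttt{k})$ and $\delta\in\textup{Gal}(\texttt{k}/\mathbb{F}_p)$; the hypothesis $p\geq 7$ guarantees this range. The inner part $\textup{Int}(g)$ can be absorbed by an auxiliary change of basis for the representation $\overline{\rho}_{\mathbf{f}_2}$ (replacing the chosen basis by its image under any lift of $g^{-1}$), after which $\overline{\phi}$ is identified with the Galois automorphism $\delta$ itself. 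Since the desired conclusion only asserts containment of $U$ in an explicit subgroup (rather than equality), this change-of-basis normalisation is harmless.

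Finally, pulling back the graph of $\delta$ under the degree-two covering $\textup{SL}_2(\texttt{k})\times\textup{SL}_2(\texttt{k})\twoheadrightarrow\textup{PSL}_2(\texttt{k})\times\textup{PSL}_2(\texttt{k})$ describes exactly the pairs $(\overline{M}_1,\overline{M}_2)$ with $\overline{M}_1=\pm\,\delta(\overline{M}_2)$, which yields the stated containment. The main technical obstacle I anticipate is the $\textup{PGL}_2/\textup{PSL}_2$ ``diagonal" component of $\textup{Aut}(\textup{PSL}_2(\texttt{k}))$: it is not present in the final displayed formula, and one must argue (as above) that it may be normalised away by a choice of basis on $\overline{\rho}_{\mathbf{f}_2}$ without affecting the subsequent use of the conclusion in proving (H.0), (H.2) and their variants.
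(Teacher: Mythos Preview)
Your proposal is correct and follows essentially the same route as the paper: the paper derives the Claim in one sentence from Goursat's lemma, the classification of maximal proper normal subgroups of $\textup{SL}_2(R_{\mathbf{f}_i})$, and the structure of $\textup{Aut}(\textup{PSL}_2(\texttt{k}))$, and you have simply unpacked these steps explicitly. Your version is in fact slightly more careful than the paper's, which writes ``conjugation by an element of $\textup{PSL}_2(\texttt{k})$'' where it should be $\textup{PGL}_2(\texttt{k})$; your remark that the extra $\textup{PGL}_2/\textup{PSL}_2$ component can be absorbed into a choice of basis for $\overline{\rho}_{\mathbf{f}_2}$ (harmlessly, since only traces and determinants are used downstream) is the right way to close that gap.
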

Fix a choice of $\delta$ as in the claim above. Let $(A_1,A_2) \in G^{(n)}$ be any pair and set 
\begin{equation}
\label{eqn:twillbescalarlater}
t=(A_1 \hbox{ mod } \frak{m}_{1})^{-1}\delta(A_2 \hbox{ mod } \frak{m}_{2}) \in \GL_2(\texttt{k}).
\end{equation}
Let $[t] \in  \GL_2(\texttt{k})/\{\pm1\}$ denote its image (and similarly, we shall also talk about $[M]$ for any $M\in \GL_2(R_{{\bf{f}}_i})$). For any $(u,v) \in U$, we have the identity 
$$[u^{-1}tu]=[u^{-1}A_1^{-1}(\delta A_2)u]=[A_1^{-1}][(A_1uA_1^{-1})^{-1}(\delta( A_2 v A_2)^{-1})][\delta A_2][(\delta v)^{-1}u]=[A_1^{-1}(\delta A_2)]=[t]$$
where we have used for the third equality the fact that $U$ is a normal subgroup of $G^{(n)}$ and the Claim above. This in particular shows that $[t]$ commutes with every element of $\textup{PSL}_2(\texttt{k})$
 and therefore that $t$ is a scalar matrix.

The fact that $t$ is a scalar matrix combined with (\ref{eqn:twillbescalarlater}) shows that
\be\label{eqn:tsquareisratioofdets}
t^2=(\det(A_1) \mod \frak{m}_1)^{-1}\delta(\det(A_2)\mod \frak{m}_2),
\ee
as well as that 
\be\label{eqn:tisratiooftraces}t\cdot\left(\textup{tr}\,\rho_{{\bf{f}}_1}(\sigma) \hbox{ mod } \frak{m}_1\right)\pm  \delta \left(\textup{tr}\,\rho_{{\bf{f}}_2}(\sigma) \hbox{ mod } \frak{m}_2\right)=0
\ee
for every $\sigma \in H$. Let $\ell$ be any prime that splits completely in $\overline{\QQ}^{H}/\QQ$. Then decomposition groups at $\ell$ are subgroups of $H$. By choosing $\sigma$ to be any lift of the Frobenius at $\ell$ to $H$ in  (\ref{eqn:tisratiooftraces}), we conclude that
\be\label{eqn:aellhidafamiliesandt}t\cdot \left(a_\ell({\bf{f}}_1)\hbox{ mod } \frak{m}_1\right)\pm \delta(a_\ell({\bf{f}}_2) \hbox{ mod } \frak{m}_2)=0\,.\ee 
Let $\overline{x}$ denote the image of $x$ in the residue field $\texttt{k}$. 
We have the following equality which takes place in $\texttt{k}$ 
\begin{multline}\label{eqn:residualrepsaretesamesotracesare}
\overline{a_\ell({{f}}_1)}^2 -{\ell}^{k_1-k_2}\delta(\overline{a_\ell({{f}}_2)})^2=
\\   \left(a_\ell({\bf{f}}_1)^2\hbox{ mod } \frak{m}_1\right) -  \left(\det\left(\rho_{{\bf{f}}_1}(\textup{Fr}_\ell)\right) \hbox{ mod } \frak{m}_1\right)\delta(\det\left(\rho_{{\bf{f}}_2}(\textup{Fr}_\ell) \hbox{ mod } \frak{m}_2\right)^{-1})
\delta(a_\ell({\bf{f}}_2)\hbox{ mod } \frak{m}_2)^2
\end{multline} 
since $f_i$ is a specialization of the Hida family $\bf{f}_i$ and since we have $\Psi_i(\ell)=1$ by the choice of $\ell$. Moreover, it follows from (\ref{eqn:tsquareisratioofdets}) and (\ref{eqn:aellhidafamiliesandt}) that
$$- \left(\det\left(\rho_{{\bf{f}}_1}(\textup{Fr}_\ell)\right) \hbox{ mod } \frak{m}_1\right)\delta(\det\left(\rho_{{\bf{f}}_2}(\textup{Fr}_\ell) \hbox{ mod } \frak{m}_2\right)^{-1}) \delta(a_\ell({\bf{f}}_2)\hbox{ mod } \frak{m}_2)^2+  \left(a_\ell({\bf{f}}_1)^2\hbox{ mod } \frak{m}_1\right)=0.$$ 
Combining this with (\ref{eqn:residualrepsaretesamesotracesare}), we see for every $\ell$ as above that
$$-\ell^{k_1-k_2}\delta(\overline{a_\ell({{f}}_2)})^2+\overline{a_\ell({{f}}_1)}^2 =0.$$
Recall that $L_2$ is the normal closure of the Hecke field of $f_2$. We conclude that there exists a choice of $\widetilde{\delta}\in{D}_{\fp}(L_2/\QQ):=\{\tau \in \textup{Gal}(L_2/\QQ): \tau(\fp)=\fp\}$ lifting the automorphism $\delta$ of $\texttt{k}$  such that 
$$v_p\left(a_\ell({{f}}_1)^2-\ell^{k_1-k_2}\widetilde{\delta}(a_\ell({{f}}_2))^2\right)>0$$
for a set of primes $\ell$ with positive upper density. This contradicts our choice of the pair of eigenforms $f_1, f_2$ and the prime $p$ (in view of Lemma~\ref{lem:f1andf2arenotgaloisconjugatescomparisonoftraces}) and shows that 
\be\label{eqn:pindowntheimagewithdetone}
U=\textup{SL}_2(R_{{\bf{f}}_1})\times \textup{SL}_2(R_{{\bf{f}}_2})\,.
\ee
It follows from the identification  (\ref{eqn:pindowntheimagewithdetone}) combined with the explicit description (\ref{eqn:theimageforrhoiexplicit}) that 
$$G^{(n)}=\bigcup_{\gamma\in G_{\QQ(\mu_{p^n})}}\left(\left(\begin{array}{cc} 1&0 \\0& \Psi_1^{(p)}\widetilde{\chi}_1(\gamma) \end{array}\right),\left(\begin{array}{cc} 1&0 \\0& \Psi_2^{(p)}\widetilde{\chi}_2(\gamma) \end{array}\right)\right)\textup{SL}_2(R_{{\bf{f}}_1})\times \textup{SL}_2(R_{{\bf{f}}_2})\,.$$
This concludes the proof that
$$ G^\circ=\bigcap_n G^{(n)}=\textup{SL}_2(R_{{\bf{f}}_1})\times \textup{SL}_2(R_{{\bf{f}}_2})=G^\circ_1\times G^\circ_2\,.$$
\begin{itemize}
\item\textbf{Step 3.} There exists $\tau\in G_{\QQ(\mu_{p^\infty})}$ such that $\rho(\tau)=\left(\left(\begin{array}{cc} 1&0 \\0&1 \end{array}\right),\left(\begin{array}{cc} 1&0 \\0& -1 \end{array}\right)\right)$.
\end{itemize}
Indeed, since we assumed that $N_1,N_2$ and $p$ are pairwise coprime, we may choose $\sigma \in  G_{\QQ(\mu^{p_\infty})}$ such that $\psi(\sigma)=1$ for all Dirichlet characters $\psi$ of conductor $N_1$, $\Psi_2(\sigma)=-1$. In the proof of \cite[Theorem 4.15]{fischmanAIF}, Fischman has verified for each one of the Hida families ${\bf{f}}_1, {\bf{f}}_2$ and any $g\in G_{\QQ(\mu_{p^\infty})}$ that
$$\left(\begin{array}{cc} \beta_i(g)&0 \\0&\Psi_i^{(N_i)}\beta_i^{-1}(g) \end{array}\right)\in \rho_{{\bf f}_i}(G_{\QQ_{\mu_{p^\infty}}}),$$
where we write $\beta_i(g)$ in place of Fischman's $\alpha(g)$ when we are working with the Hida family ${\bf{f}}_i$. Thus, 
$$\left(\left(\begin{array}{cc} \beta_1(g)&0 \\0&\Psi_1^{(N_1)}\beta_1^{-1}(g) \end{array}\right),\left(\begin{array}{cc} \beta_2(g)&0 \\0&\Psi_2^{(N_2)}\beta_2^{-1}(g) \end{array}\right)\right)\in \rho(G_{\QQ_{\mu_{p^\infty}}})\,.$$
Let the automorphism $\gamma$ and the Dirichlet character $\chi_\gamma$ be as in the paragraph preceding Lemma~4.14 of \cite{fischmanAIF}, with $F={\bf{f}}_1$. For $\sigma$ chosen as above, we have $\chi_\gamma(\sigma)=1$ since the conductor of $\chi_\gamma$ divides $N_1$. The choice of $\beta_1(\sigma) \in \mathbb{I}_{{{\bf{f}}}_1}$ which is explained in the same paragraph of op. cit. (where we remind the reader that our $\beta_1$ corresponds to Fischman's $\alpha$), we may take $\beta_1(\sigma)=1$. Hence,
$$
\left(\begin{array}{cc} \beta_1(\sigma)&0 \\0&\Psi_1^{(N_1)}\beta_1^{-1}(\sigma) \end{array}\right)=\left(\begin{array}{cc} 1&0 \\0&1\end{array}\right).
$$
Furthermore, 
$$
\left(\begin{array}{cc} \beta_2(\sigma)&0 \\0&\Psi_2^{(N_2)}\beta_2^{-1}(\sigma) \end{array}\right)=
\left(\begin{array}{cc} \beta&0 \\0&-\beta^{-1} \end{array}\right)\,.
$$
where we have set $\beta=\beta_2(\sigma)$. Combining with the main conclusion of Step 2, we deduce that
$$\left(\left(\begin{array}{cc} 1&0 \\0&1\end{array}\right), \left(\begin{array}{cc} \beta&0 \\0&-\beta^{-1} \end{array}\right)\right)\textup{SL}_2(R_{{\bf{f}}_1})\times \textup{SL}_2(R_{{\bf{f}}_2})\in \rho(G_{\QQ_{\mu_{p^\infty}}})\,.$$
Since we have $\left(\begin{array}{cc} 1&1 \\0&1 \end{array}\right) \in \textup{SL}_2(R_{{\bf{f}}_1})$ and $\left(\begin{array}{cc} \beta^{-1}&0 \\0&\beta \end{array}\right) \in \textup{SL}_2(R_{{\bf{f}}_2})$, we conclude our proof that 
\be\label{goodelementwhenBIiholds}\left(\left(\begin{array}{cc} 1&1 \\0&1 \end{array}\right),\left(\begin{array}{cc} 1&0 \\0& -1 \end{array}\right)\right) \in \rho(G_{\QQ_{\mu_{p^\infty}}})\,.\ee
\begin{itemize}
\item\textbf{Step 4.} Conclusion.
\end{itemize}
The image of the element (\ref{goodelementwhenBIiholds}) inside $\textup{GL}_4(\mathcal{R})$ is the matrix 
$$\left(\begin{array}{cccc} 1&1&0&0 \\0&1&0&0\\0&0&-1&-1\\0&0&0&-1\end{array}\right)$$ 
and we have  verified the existence of an element in $\rho(G_{\QQ_{\mu_{p^\infty}}})$ whose image has the desired shape when BI1.(i) is valid.
\\
\\

When BI.1(ii) is in effect, one may similarly prove for any $x \in \ZZ_p[[X]]^\times$ (which we view as an element of $\LL_i:=\ZZ_p[[\Gamma_i]]$ via $\chi_i$) and any $u \in (\ZZ/pN_1N_2)^\times$ with $\overline{\Psi}_1(u)\overline{\Psi}_2(u) \neq 1$ that 
\be\label{eqn:goodeleemntinthejointimageii}\left(\left(\begin{array}{cc} x^{-1}&0 \\0& x\Psi_1(u) \end{array}\right), \left(\begin{array}{cc} x&0 \\0& x^{-1}\Psi_2(u) \end{array}\right)\right)\in \rho\left(G_{\QQ(\mu_{p^\infty})}\right)\,.
\ee

The image of the element (\ref{eqn:goodeleemntinthejointimageii}) inside $\textup{GL}_4(\mathcal{R})$ is the diagonal matrix 
$$\textup{Diag}\left(1, x^{-2}\Psi_2(u), x^{2}\Psi_1(u), \Psi_1(u)\Psi_2(u)\right).$$ 
On choosing $x$ in a way that 
$$
x^{-2}\Psi_2(u) \not\equiv 1   \hbox{ and }  x^{2}\Psi_2(u) \not\equiv 1  \hbox{ mod } (\pi_\mathcal{O}, X),
$$
we have again verified the existence of an element in $\rho(G_{\QQ_{\mu_{p^\infty}}})$ whose image has the desired shape.
\end{proof}

\subsection{Beilinson--Flach elements and their relation to $p$-adic $L$-functions}\label{section:BFelements}
Consider the following condition on the central characters of the Hida families $\Bf_1$ and $\Bf_2$:
\begin{itemize}
\item[(H.c)] The conductor of ${\Psi}_1{\Psi}_2$ is prime to $p$.
\end{itemize}

Until the end of this article, we shall assume the validity of (H.c). For every positive integer $n$ that is coprime to $6N_1N_2$, there exists a generalized Beilinson--Flach element
$$\BF^{\Bf_1,\Bf_2}_{n}\in H^1(\QQ(\mu_{n}),\TT_\cyc)$$
that was constructed in \cite{KLZ1, KLZ2}, building on \cite{LLZ}. 
{\begin{rem}
The collection $\{\BF^{\Bf_1,\Bf_2}_{n}\}_n$ verifies \emph{some} Euler system distribution relation as $n$ varies, which is not  the distribution relation we have asked for in Section~\ref{subsec:locallyrestricKSexists}. However, one may rely on \cite[\S9.6]{r00} to obtain an Euler system with the distribution relation given as in Definition~\ref{define_ES} and such that the initial term of the new Euler system agrees with $\BF^{\Bf_1,\Bf_2}_{1}$; see \cite[\S 7.3]{LLZ} for details.
\end{rem}}
\begin{rem}
Since we assume the validity of (H.c), we may work with the normalised collection of classes $\{\BF^{\Bf_1,\Bf_2}_{n}\}$ rather than those classes denoted by $\{_c\BF^{\Bf_1,\Bf_2}_{n}\}$ in \cite{KLZ1, KLZ2}. Here, $c$ is the ``smoothing factor'' of \cite{LLZ}.  We refer the readers to \cite[\S 6.8.1]{LLZ} where the authors explain how to dispose this auxiliary factor and verify the integrality of the original classes $\BF^{\Bf_1,\Bf_2}_{n}$. 
\end{rem}
%%%%%%%%%%%

The following statement is proved in \cite[Proposition 8.1.7]{KLZ2}.
\begin{prop}
\label{prop:BFislocallyrestrictedES}
The collection $\BF=\{\BF^{\Bf_1,\Bf_2}_{n}\}$ gives rise to an element of ${\ES}^{+}(\TT_\cyc)$, in the sense of Definition \ref{def:locallyrestrictedES}. 
\end{prop}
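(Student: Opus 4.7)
The plan is to split the verification of Proposition~\ref{prop:BFislocallyrestrictedES} into two independent parts: (a) show that $\BF = \{\BF^{\Bf_1,\Bf_2}_n\}$ satisfies the Euler system distribution relations (ES1) and (ES2), and (b) verify the local condition at $p$ that makes the system \emph{locally restricted} in the sense of Definition~\ref{def:locallyrestrictedES}.

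For part (a), the norm-compatibility relations among Beilinson-Flach elements as the tame conductor $n$ varies are established in \cite{LLZ, KLZ1, KLZ2}. The distribution relation there is phrased using a variant of our Euler polynomials $P_\lambda(X)$; as recorded in Remark~\ref{rem:altertheeulerfactors} and \cite[\S IX]{r00}, this reformulation yields an equivalent Euler system in our sense. A second technical point is \emph{integrality}: to remove the auxiliary ``smoothing factor'' $c$ of \cite{LLZ} and obtain classes genuinely in $H^1(\QQ(\mu_n),\TT_\cyc)$ (not merely in the rationalised cohomology), I would invoke the construction of \cite[\S 6.8.1]{LLZ}; the hypothesis (H.c) on the central characters is exactly what is needed to guarantee that this can be done without introducing denominators.

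For part (b), the task is to verify that for every finite subextension $F \subset \KKK$,
$$\res_p\!\left(\BF^{\Bf_1,\Bf_2}_F\right) \in H^1_{++}(F_p,\TT_\cyc) = \textup{im}\!\left(H^1(F_p, F^{++}\TT_\cyc) \lra H^1(F_p,\TT_\cyc)\right).$$
Equivalently, pushing forward to the quotient $F^{--}\TT_\cyc = (\TT_{\Bf_1}/F_p^+\TT_{\Bf_1})\widehat{\otimes}(\TT_{\Bf_2}/F_p^+\TT_{\Bf_2})\widehat{\otimes}(\LL_\cyc^\sharp)^\iota$, the image of $\res_p(\BF^{\Bf_1,\Bf_2}_F)$ must vanish. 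The approach is to exploit the geometric origin of the Beilinson-Flach classes as ordinary Hida-projections of étale Eisenstein classes pushed forward along the diagonal embedding of a modular curve into a fibre product of two modular curves. Along this diagonal, the ordinary filtrations on the two factors interact: the image of the pushforward lives in the submodule where at least one of the two factors has been cut down to its $F_p^+$-part. Interpolating this observation along the two Hida families and the cyclotomic tower yields the desired containment in $F^{++}\TT_\cyc$.

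The main obstacle will be matching the concrete étale construction of the Beilinson-Flach classes at finite level with the $p$-adic interpolation across the three-variable family, and tracking that the $(-,-)$-component vanishes uniformly in the limit. Fortunately, this precise analysis has been carried out in \cite[Proposition~8.1.7]{KLZ2}, so once part (a) is in place and we have confirmed that our running hypotheses (most importantly (H.c), together with the $p$-ordinary filtrations on each $\TT_{\Bf_i}$ afforded by Hida theory) match those of loc.\,cit., the statement reduces to a direct invocation of their result.
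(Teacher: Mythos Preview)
Your proposal is correct and aligns with the paper's own treatment: the paper simply states that the result is proved in \cite[Proposition~8.1.7]{KLZ2}, and your discussion unpacks exactly what that citation covers (the norm relations from \cite{LLZ,KLZ1,KLZ2} together with the vanishing of the $(-,-)$-component at $p$). Your additional remarks on integrality via (H.c) and on the Euler-factor variant (Remark~\ref{rem:altertheeulerfactors}) are also consistent with the paper's surrounding discussion.
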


In more concrete terms, Proposition~\ref{prop:BFislocallyrestrictedES} asserts that the projection of the local class 
$$\res_p(\BF^{\Bf_1,\Bf_2}_{n}) \in H^1(\QQ(\mu_n)_p,\TT_\cyc)$$ to 
$H^1(\QQ(\mu_n)_p,F^{--}_p\TT_\cyc)$
is trivial and the collection $\BF$ forms a locally restricted Euler system in the sense of Definition~\ref{def:locallyrestrictedES}. This means that our Theorem~\ref{thm:weakleopoldt} (and its corollaries) may be applied with the collection $\BF$ to yield the following result.
%%%%%%%
\begin{thm}
\label{thm:mainconjwithoutpadicLfunc}
Let $f_i \in S_{k_i}(\Gamma_1(N_i ))$ ($i=1,2$) be non-CM newforms of respective weights $k_1, k_2$, levels $N_1,N_2$ which are not twisted-conjugate to each other and suppose $p>B$ is a good ordinary prime for both forms.  Let $\Bf_i$ be the unique Hida family that admits the $p$-stabilization of $f_i$ as a weight-$k_i$ arithmetic specialization ($i=1,2$). 
\par 
Assume the conditions listed in Lemma~\ref{lem:allhypoHholdstrue} which ensure the validity of {\rm (H.0)}, {\rm (H.2)}, 
{\rm (H.0$^-$)}, {\rm (H.2$^+$)} and {\rm (H.2$^{++}$)}. Assume in addition the hypotheses \textup{(H.c), (BI.1), (BI.2)} and {\rm (F.Dist)} as well as that the class $\BF^{\Bf_1,\Bf_2}_{1} \in H^1(\QQ,\TT_\cyc)$ is non-trivial.
\begin{itemize}
\item[(1)]  The module $H^1_{\FF_+^*}(\QQ,\TT^\vee_\cyc (1))^\vee$ is $\mathcal{R}_\cyc$-torsion. 
\item[(2)] The module $H^1_{\FF_{+}}(\QQ,\TT_\cyc)$ is a torsion-free $\mathcal{R}_\cyc$-module of rank one.
\item[(3)] We have
$$\textup{char}_{\mathcal{R}_\cyc}\left(H^1_{\FF_+^*}(\QQ,\TT_\cyc^\vee (1))^\vee\right)\,\, \supset \,\, \textup{char}_{\mathcal{R}_\cyc}\left(H^1_{\FF_+}(\QQ,\TT_\cyc)\big{/}\mathcal{R}_\cyc\cdot\BF^{\Bf_1,\Bf_2}_{1}\right).$$
\end{itemize}
Suppose in addition that $\res_p(\BF^{\Bf_1,\Bf_2}_{1})\neq 0$. Then,
\begin{itemize}
\item[(4)] The module $H^1_{\FF_\Gr^*}(K,\TT_\cyc^\vee (1))^\vee$ is $\mathcal{R}_\cyc$-torsion.
\item[(5)] We have 
$$\textup{char}\left(H^1_{\FF_\Gr^*}(\QQ,\TT_\cyc^\vee (1))^\vee\right)\,\, \supset \,\, \textup{char}\left(H^1_{+/\textup{f}}(\QQ_p,\TT_\cyc)\big{/}\mathcal{R}_\cyc\cdot \res_{+/\textup{f}}\left(\BF^{\Bf_1,\Bf_2}_{1}\right)\right)$$
where we recall that 
$$
H^1_{+/\textup{f}}(\QQ_p,\TT_\cyc):=H^1_{\FF_{+}}(\QQ_p,\TT_\cyc)/H^1_{\FF_{\Gr}}(\QQ_p,\TT_\cyc)\stackrel{\sim}{\lra} H^1(\QQ_p,F^{-+}_p\TT_\cyc)$$
and the map $\res_{+/\textup{f}}$ is the compositum of the arrows 
$$
H^1(\QQ,\TT_\cyc)\stackrel{\res_p}{\lra} H^1(\QQ_p,\TT_\cyc)\lra H^1(\QQ_p,F^{-+}_p\TT_\cyc)\,.$$
\end{itemize}
\end{thm}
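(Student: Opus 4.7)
The plan is to assemble the machinery developed in Sections~\ref{sec:selmerstructures}--\ref{sec:dimreduction} with Proposition~\ref{prop:BFislocallyrestrictedES}, feeding the Beilinson–Flach Euler system of~\cite{LLZ,KLZ1,KLZ2} into Theorem~\ref{thm:weakleopoldt}, Corollary~\ref{cor:Greenbergtorsion} and Theorem~\ref{thm:maingreenbergbound}. The collection $\BF:=\{\BF^{\Bf_1,\Bf_2}_{n}\}$ belongs to $\textup{ES}^+(\TT_\cyc)$ by Proposition~\ref{prop:BFislocallyrestrictedES}, and its initial term is non-trivial by hypothesis; this will play the role of the non-zero class $c_K$ demanded by Theorem~\ref{thm:weakleopoldt}.

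Before invoking Theorem~\ref{thm:weakleopoldt}, I will check that all of its running hypotheses hold in the present set up. Absolute irreducibility of the residual representation carried by $\TT$ (hypothesis (MR1), and thereby the first vanishing in (MR3) once (H.0) is in force) follows from Lemma~\ref{lem:absoluteirredOK}(2). The Mazur--Rubin hypothesis (MR2) is precisely the content of Theorem~\ref{thm:MR2holds} under (BI.1), (BI.2), (F.Dist) and $p>B$; (MR4) is automatic since $p\geq 7>4$. The local cohomological conditions (H.0), (H.$0^-$), (H.2), (H.$2^+$) and (H.$2^{++}$) are verified in Lemma~\ref{lem:allhypoHholdstrue} under the explicit character hypotheses made. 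The structural requirement that $\mathcal{R}_\cyc$ be isomorphic to a power series ring—built into the setting of Section~\ref{sub:ESboundmain}—is provided by the ambient three-variable setup (\emph{cf.}\ the remark following Theorem~B and~\cite[Lemma~2.7]{fouquetochiai}). With all these inputs, Theorem~\ref{thm:weakleopoldt} applied to $(\bfT,R,\mathbf{c})=(\TT_\cyc,\mathcal{R}_\cyc,\BF)$ delivers assertions (1), (2) and (3) of the present statement as its three parallel conclusions.

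For assertions (4) and (5), I will exploit the extra hypothesis that $\res_p(\BF^{\Bf_1,\Bf_2}_{1})\neq 0$. Since $\BF$ is locally restricted in the sense of Definition~\ref{def:locallyrestrictedES}, its image at $p$ lies in $H^1(\QQ_p,F^{++}\TT_\cyc)$ and hence has a well-defined projection $\res_{+/\textup{f}}(\BF_1)\in H^1(\QQ_p,F^{-+}\TT_\cyc)\simeq H^1_{+/\textup{f}}(\QQ_p,\TT_\cyc)$ that is identified with the localization $\res_p^s(\BF_1)$ entering Corollary~\ref{cor:Greenbergtorsion}. The non-vanishing hypothesis thus feeds directly into Corollary~\ref{cor:Greenbergtorsion} to yield (4), and into Theorem~\ref{thm:maingreenbergbound} to yield the divisibility in (5). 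The principal delicate point is the verification of (MR2), which is already isolated as Theorem~\ref{thm:MR2holds} and rests on Goursat's lemma combined with the Fischman--Loeffler analysis of the image of $\rho_{\Bf_1}\otimes\rho_{\Bf_2}$; once this is granted, the remainder of the argument is a bookkeeping exercise that unifies the abstract Euler system bound of Theorem~\ref{thm:weakleopoldt} with the particular construction of the Beilinson--Flach classes.
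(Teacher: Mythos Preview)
Your proposal is correct and follows essentially the same approach as the paper: verify the hypotheses (MR1)--(MR4), (H.0), (H.$0^-$), (H.2), (H.$2^+$), (H.$2^{++}$) via Lemma~\ref{lem:absoluteirredOK}, Lemma~\ref{lem:allhypoHholdstrue} and Theorem~\ref{thm:MR2holds}, then feed the locally restricted Beilinson--Flach Euler system of Proposition~\ref{prop:BFislocallyrestrictedES} into Theorem~\ref{thm:weakleopoldt} for (1)--(3) and into Corollary~\ref{cor:Greenbergtorsion} and Theorem~\ref{thm:maingreenbergbound} for (4)--(5). Your write-up is in fact more explicit than the paper's own proof, which compresses all of this into two sentences.
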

\begin{proof}
(1), (2) and (3) follows from Theorem~\ref{thm:weakleopoldt}, which applies in the present situation thanks to Lemma~\ref{lem:allhypoHholdstrue} and Theorem~\ref{thm:MR2holds}.  Parts (4) and (5) are then respective restatements of Corollary~\ref{cor:Greenbergtorsion} and Theorem~\ref{thm:maingreenbergbound} in the current set up.
\end{proof}
%%%%%%%%%%%%%%%%%%%
\begin{define}\label{DEF_extensionsofbranches}
{Set $N=\gcd(N_1,N_2)$ and let $\bf{h}$ denote Hida's universal ordinary Hecke algebra of level $Np^\infty$, which is finite flat over $\LL:=\ZZ_p[[1+p\ZZ_p]]$. Let us write ${\rm Quot}(\bf h)$ for the total ring of quotients of $\bf{h}$. Let $\mathscr{K}\subset \overline{{\rm Frac}(\LL)}$ denote a sufficiently large extension of ${\rm Frac}(\LL)$ that contains all Galois conjugates of all subfields of ${\rm Quot}(\bf h)$. Let $\mathbb{I}$ denote the integral closure of $\LL$ in $\mathscr{K}$.} 

{We fix once and for all embeddings $\mathbb{I}_{\bf{f}_1}\hookrightarrow \mathbb{I}$ and $\mathbb{I}_{\bf{f}_2}\hookrightarrow \mathbb{I}$ of $\LL$-algebras, which in turn induces a $\LL\widehat{\otimes}_{\ZZ_p}\LL$-algebra embedding $\iota:\mathbb{I}_{\bf{f}_1}\widehat{\otimes}_{\ZZ_p}\mathbb{I}_{\bf{f}_2}\hookrightarrow  \mathbb{I}\widehat{\otimes}_{\ZZ_p}\mathbb{I}$. We let $H \in \mathbb{I}_{\Bf_1}$ to denote the congruence divisor associated to the $\LL$-adic form $\bf{f}_1$. See \cite[Section 4]{Hida88} for a precise definition and further properties of $H$. We will only note here that the congruence divisor $H$ is an invariant controlling the congruences between ${\bf{f}}_1$ and other Hida families of the same tame conductor. Through the embedding we have fixed above, we will regard $H$ also as an element of $\mathbb{I}$.}
\end{define}
Suppose $\kappa_i:\mathbb{I}_{\Bf_i}\lra \overline{\mathbb{Q}}_p$ is an arithmetic specialization. We pick any extension of $\kappa_i$ to a morphism $\mathbb{I}\lra \overline{\mathbb{Q}}_p$ (which exists thanks to going-up theorem) and denote it also by $\kappa_i$ with an abuse of notation. 
%%%%%%%%%%%%%%%%%

\begin{define}
\label{def:HidaspadicLfunction}
{Let $L_p^{\textup{Hida}}(\Bf_1,\Bf_2,\mathbf{j}) \in \frac{1}{H}
\mathbb{I}\,\widehat{\otimes}_{\ZZ_p}\mathbb{I}\,\widehat{\otimes}_{\ZZ_p}\LL_\cyc$ denote Hida's $p$-adic $L$-function in $3$-variables defined in \cite[Theorem I]{Hida88}, where $\mathbf{j}$ stands for the cyclotomic variable. } The $p$-adic $L$-function 
$L_p^{\textup{Hida}}(\Bf_1,\Bf_2,\mathbf{j})$ is characterized by the interpolation property that 
\begin{align}
\label{equation:interpolationHida}
L_p^{\textup{Hida}}(\Bf_1 (\kappa_1 ),\Bf_2 (\kappa_2 ),j) 
=& \left(1-\frac{p^{j-1}}{\alpha (\Bf_1 (\kappa_1))\alpha (\Bf_2(\kappa_2))} \right) \left(1-\frac{p^{j-1}}{\alpha (\Bf_1 (\kappa_1))\beta (\Bf_2(\kappa_2))}  \right)\\
\notag &\times \left(1-\frac{\beta (\Bf_1 (\kappa_1))\alpha (\Bf_2(\kappa_2))}{p^{j}} \right) \left(1-\frac{\beta (\Bf_1 (\kappa_1))\alpha (\Bf_2(\kappa_2))}{p^{j}}  \right)\\
\notag&\times {\left(1-\frac{\beta (\Bf_1(\kappa_1))}{p\alpha (\Bf_1(\kappa_1))}   \right)^{-1} \left(1-\frac{\beta (\Bf_1(\kappa_1))}{\alpha (\Bf_1(\kappa_1))} \right)^{-1}}\times \dfrac{L(\Bf_1 (\kappa_1 ) ,\Bf_2 (\kappa_2 ), j)}{\Omega (\kappa_1 ,\kappa_2 ,j )}
\end{align}
for arithmetic specializations $\kappa_1,\kappa_2$ of the families $\Bf_1,\Bf_2$ of respective weights $w_1> w_2$ and integers $j \in [w_2+2,w_1+1]$ such that the character $\omega^{-w_2-1+j}\Psi_2^{(p)}\phi_{\kappa_2}\phi$ is trivial (which amounts to the requirement that the $G_{\QQ_p}$-representation $F^{-+}V_\kappa$ be crystalline). Here, 
\begin{enumerate}
\item[(i)] $\alpha (\Bf_i (\kappa_i  )) =\kappa_i\left(a_p (\Bf_i )\right)$ and 
$\beta(\Bf_i(\kappa_i ))=p^{w_i+1}\Psi_i^{(N_i)}(p)\kappa_i\left(a_p (\Bf_i )\right)^{-1}$;
\item[(ii)]
$L(\Bf_1(\kappa_1) , \Bf_2(\kappa_2), s)$ is a Rankin-Selberg $L$-function;
\item[(iii)]$\Omega (\kappa_1 ,\kappa_2 ,j ) \in \mathbb{C}^\times$ is the complex period which is given as the ratio $\dfrac{\Omega(P,Q,R)}{cw}$ in the notation of \cite[Theorem I]{Hida88}.
\end{enumerate}
\end{define}

{We next explain the connection of Beilinson--Flach elements and Hida's $p$-adic $L$-function. In order to do so, we first record a restatement (in Theorem~\ref{thm:mainEXPrankinselberg} below) of Corollary~\ref{cor:dualexponentialmapforH++} in the particular set up when $\textup{Gr}^1_p\TT_\cyc=F^{-+}\TT_\cyc$.} We shall henceforth drop $\left(\textup{Gr}^1_p\TT_\cyc\right)^{\mathcal{R}_\cyc}(1)$ and ${(\mathrm{Gr}^{m}_p V_\kappa)^{\kappa (\RRc)}(1)}$ from the subscripts to ease our notation, as we shall solely work with this choice until the end of this article. Notice further that we have\
$$d_1(\kappa)=j-\omega_2\,\,\,\,\,\,\,\hbox{and}\,\,\,\,\kappa\vert_{\mathcal{R}} (\widetilde{\alpha}_1 (\mathrm{Fr}_p))=a_p(\Bf_1(\kappa_1) )a_p(\Bf_2(\kappa_2) )^{-1}\Psi_2^{(N_2)}(p)$$ 
in the notation of Corollary~\ref{cor:dualexponentialmapforH++}.
%%%%%%%%%%%%%%%%%%
\begin{thm}
\label{thm:mainEXPrankinselberg}
There exists an $\mathcal{R}_\cyc$-linear map 
$$\mathrm{EXP}^\ast \,:\, 
H^1(\QQ_p,F^{-+}\TT_\cyc)
\lra 
\mathbb{D}(F^{-+}\TT_\cyc)  
$$ 
which is characterized by the following interpolation property: For every $\kappa \in \mathcal{S}^{(1),+}_\cyc$  (so that $j> w_2$) the following diagram commutes:
$$
\xymatrix{
H^1(\QQ_p,F^{-+}\TT_\cyc)
\ar[d]_\kappa \ar[rrr]^{\mathrm{EXP}^\ast} &&& 
\mathbb{D}(F^{-+}\TT_\cyc )
\ar[d]^\kappa \\ 
H^1(\QQ_p, F^{-+}V_\kappa)
\ar[rrr]_{\hspace*{10pt}e_p^+\,\times\, 
\exp^\ast} 
&&& 
D_{\textup{dR}}(F^{-+}V_\kappa)
}$$
Here $e_p^+:=(-1)^{j-w_2}(j-w_2)!\,e_p$ and $e_p=e_p((F^{-+}V_\kappa)^{\kappa (\RRc)}(1))$ is the $p$-adic multiplier given by
$$
e_p=\left( 1 - \dfrac{p^{j-w_2-1}}{a_p(\Bf_1(\kappa_1) )a_p(\Bf_2(\kappa_2) )^{-1}\Psi_2^{(N_2)}(p)} \right) \left( 
1 - \dfrac{a_p(\Bf_1(\kappa_1) )a_p(\Bf_2(\kappa_2) )^{-1}\Psi_2^{(N_2)}(p)}{p^{j-w_2}}\right)^{-1}$$
in case $\omega^{-w_2-1+j}\Psi_2^{(p)}\phi_{\kappa_2}\phi$ is the trivial character (which is equivalent to the requirement that $F^{-+}V_\kappa$ be crystalline), and

$$ e_{p}=\left( \dfrac{p^{j-w_2-1}}{a_p(\Bf_1(\kappa_1) )a_p(\Bf_2(\kappa_2) )^{-1}\Psi_2^{(N_2)}(p)}\right)^n$$
when  $F^{-+} V_\kappa\,\vert_{I_p} \cong E_\kappa(\omega_2+1-j)(\eta)$ with $\textup{ord}_p(\textup{cond}(\eta))=n\geq1$.

Also, for every $\kappa \in \mathcal{S}^{(1),-}_\cyc$ (so that $j \leq w_2$) we also have the following commutative diagram:
$$
\xymatrix{
H^1(\QQ_p,F^{-+}\TTc )
\ar[d]_\kappa \ar[rrr]^(.48){\mathrm{EXP}^\ast}&&& 
\mathbb{D}(F^{-+}\TTc )
\ar[d]^\kappa\\
H^1(\QQ_p,F^{-+}V_\kappa )
\ar[rrr]_(.45){\hspace*{15pt}e_p^-\,\times\, 
\log
} 
&&& 
D_{\textup{dR}}(\mathrm{Gr}_p^{m}V_\kappa )
}$$
where $e_p^-:=\dfrac{e_p}{(w_2-j)!}$\,.
\end{thm}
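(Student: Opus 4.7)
The plan is to deduce this as a direct application of Corollary~\ref{cor:dualexponentialmapforH++} to the graded piece $\mathrm{Gr}^1_p\TT_\cyc$ of the filtration $\{\mathrm{Fil}^i_p\TT_\cyc\}$ introduced in Section~\ref{subs:exampleRankinSelberg}. First, I would recall that by construction,
$$
\mathrm{Fil}^2_p\TT_\cyc = F^{+}_p\TT_\cyc \subset \mathrm{Fil}^1_p\TT_\cyc = F^{++}\TT_\cyc \subset \TT_\cyc,
$$
so the graded piece $\mathrm{Gr}^1_p\TT_\cyc = F^{++}\TT_\cyc/F^{+}_p\TT_\cyc$ is canonically isomorphic, via the natural projection, to the quotient
$F^{-+}\TT_\cyc = \left(\mathbb{T}_{\mathbf{f}_1}/F^+_p\mathbb{T}_{\mathbf{f}_1}\right)\otimes F^+_p\mathbb{T}_{\mathbf{f}_2}\widehat{\otimes}(\Lambda^\sharp_{\cyc})^\iota$, as recalled in the introduction.

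Next, I would verify that the filtration $\{\mathrm{Fil}^i_p\TT\}$ satisfies condition (Ord) of Definition~\ref{definition:ord12}. The $p$-distinguished hypothesis on each $\mathbf{f}_i$ combined with Hida's work gives rank-one $G_{\QQ_p}$-stable direct summands $F^+_p\mathbb{T}_{\mathbf{f}_i}\subset \mathbb{T}_{\mathbf{f}_i}$, and the characters by which $G_{\QQ_p}$ acts on the four rank-one subquotients of $\TT$ take the form $\widetilde{\chi}_i^{e_{m,i}}\omega^{1-a_m}\chi_\cyc^{1-b_m}\widetilde{\alpha}_m$, as demanded by (Ord). Explicitly, for the graded piece $\mathrm{Gr}^1_p\TT_\cyc$, the $G_{\QQ_p}$-action is via $\widetilde{\chi}_2^{-1}\widetilde{\chi}_\cyc^{-1}\cdot\omega^{-w_2}\chi_\cyc^{-w_2}\cdot\widetilde{\alpha}_1$ where $\widetilde{\alpha}_1$ is the unramified character sending $\mathrm{Fr}_p$ to $a_p(\mathbf{f}_1)a_p(\mathbf{f}_2)^{-1}\Psi_2^{(N_2)}(p)$. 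This yields $e_{1,1}=0$, $e_{1,2}=1$, $a_1=w_2+1$, $b_1=w_2+1$, and hence, for an arithmetic specialization $\kappa=\kappa_1\otimes\kappa_2\otimes\chi_\cyc^j\phi$ of weights $(w_1,w_2,j)$, one computes
$$
c_1(\kappa) = w_2(\kappa)-j+1-b_1 = w_2-j, \qquad d_1(\kappa) = -c_1(\kappa)+1 = j-w_2.
$$

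The third step is to verify that the unramified character $\widetilde{\alpha}_1$ is non-trivial, which is the main substantive input one has to check before invoking Corollary~\ref{cor:dualexponentialmapforH++}. This follows from the residual big image hypotheses in Section~\ref{sec:BFES} (specifically, the consequences of (BI.2) together with Lemma~\ref{lem:allhypoHholdstrue}), which preclude the characters $\alpha_i$ from all being congruent to $1$ modulo $\frak{m}_\mathcal{R}$; in the sufficiently generic setup at hand, $\widetilde{\alpha}_1$ is in fact even non-trivial modulo the maximal ideal.

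With (Ord) verified and $\widetilde{\alpha}_1$ non-trivial, Corollary~\ref{cor:dualexponentialmapforH++} applies and yields an $\mathcal{R}_\cyc$-linear isomorphism
$$
\mathrm{EXP}^\ast\,:\, H^1(\QQ_p,F^{-+}\TT_\cyc)\lra \mathbb{D}(F^{-+}\TT_\cyc),
$$
together with interpolation formulae for $\kappa\in \mathcal{S}^{(1),+}_\cyc$ (equivalently $j>w_2$, i.e.\ $d_1(\kappa)>0$) and for $\kappa\in\mathcal{S}^{(1),-}_\cyc$ ($j\leq w_2$). Substituting $d_1(\kappa)=j-w_2$ and $\kappa|_\mathcal{R}(\widetilde{\alpha}_1(\mathrm{Fr}_p))=a_p(f_1(\kappa_1))a_p(f_2(\kappa_2))^{-1}\Psi_2^{(N_2)}(p)$ into the general formula of Corollary~\ref{cor:dualexponentialmapforH++} gives precisely the multipliers $e_p$, $e_p^+=(-1)^{j-w_2}(j-w_2)!\,e_p$, and $e_p^-=e_p/(w_2-j)!$ claimed in the theorem; the crystalline and ramified branches of $e_p$ correspond respectively to $F^{-+}V_\kappa$ being crystalline or becoming isomorphic to $E_\kappa(w_2+1-j)(\eta)$ on $I_p$, exactly as in the statement. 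The main (but purely bookkeeping) obstacle is to carefully match up the abstract parameters $(a_m,b_m,e_{m,i},\widetilde{\alpha}_m)$ of Definition~\ref{definition:ord12} with their concrete Rankin-Selberg incarnations, which is what the preceding paragraph accomplishes; once done, the theorem is immediate.
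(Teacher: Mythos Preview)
Your approach is exactly the paper's: the theorem is stated in the paper as a direct restatement of Corollary~\ref{cor:dualexponentialmapforH++} for $m=1$, with the paper only recording (in the sentence preceding the theorem) that $d_1(\kappa)=j-w_2$ and $\kappa\vert_{\mathcal{R}}(\widetilde{\alpha}_1(\mathrm{Fr}_p))=a_p(f_1(\kappa_1))a_p(f_2(\kappa_2))^{-1}\Psi_2^{(N_2)}(p)$, which is precisely the identification you carry out.

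One bookkeeping slip to clean up: you write the $G_{\QQ_p}$-character on $\mathrm{Gr}^1_p\TT_\cyc$ as $\widetilde{\chi}_2^{-1}\widetilde{\chi}_\cyc^{-1}\omega^{-w_2}\chi_\cyc^{-w_2}\widetilde{\alpha}_1$ and then set $a_1=w_2+1$, $b_1=w_2+1$. This cannot be right, since in condition (Ord) the integers $a_m,b_m$ are fixed (independent of any specialization), whereas $w_2=w_2(\kappa)$ is the weight of $\kappa_2$. The weight-dependence is already absorbed by the universal character $\widetilde{\chi}_2$ (with exponent $e_{1,2}=1$, not $-1$), so the residual $\omega^{1-a_1}\chi_\cyc^{1-b_1}$ factor must be a genuinely constant character. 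Your intermediate value $c_1(\kappa)=w_2-j$ is also off by one relative to $d_1(\kappa)=-c_1(\kappa)+1=j-w_2$. None of this affects the strategy or the final formulae, which you state correctly; but if you keep the explicit parameter identification in your write-up, you should redo that paragraph.
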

Recall that we have an identification
\begin{align}
F^{-+}\TT_\cyc=\textup{Gr}^1_p\TT_\cyc=F_p^-\TT_{\Bf_1}\widehat{\otimes}_{\ZZ_p}F_p^+\TT_{\Bf_2}\widehat{\otimes}_{\ZZ_p}(\LL_\cyc^{\sharp})^\iota
%{
%\cong \mathcal{R}_\cyc(\widetilde{\chi}_2\widetilde{\chi}_\cyc^{-1}\widetilde{\alpha}_{\Bf_1}\widetilde{\alpha}_{\Bf_2}^{-1}\chi_\cyc\Psi_2)\,}.
\end{align}
and hence 
\begin{equation}\label{equation:decomposition_of_D}
\mathbb{D}(F^{-+}\TT_\cyc)  =
\mathbb{D}\left(F_p^-\TT_{\Bf_1}\right) \widehat{\otimes}_{\mathbb{Z}_p} \mathbb{D}\left(F_p^+\TT_{\Bf_2}\right)
\widehat{\otimes}_{\mathbb{Z}_p} \Lambda_{\mathrm{cyc}}.
\end{equation}
\begin{rem}
\label{rem:HidamailiesherevsHidafamiliesinKLZ2}
We recall that what we call a Hida family in this article corresponds to a new, cuspidal branch of a Hida family $\mathbf{F}_i$ of tame level $N_i$ in the sense of \cite[Remark 7.2.4]{KLZ2}. 
The ring denoted by $\Lambda_{\mathbf{a}}$ in op. cit. agrees with our $\mathbb{I}_{\bf{f}_1}$, with $\Bf$ in loc. cit. chosen as $\mathbf{F}_1$ and the minimal ideal $\mathbf{a}$ in the notation of \cite{KLZ2} corresponds to our branch $\Bf_1$. 
Finally, the fractional ideal denoted by $I_{\mathbf{a}}$ in \cite[Notation 7.7.1]{KLZ2} agrees with the fractional ideal $\dfrac{1}{H}\mathbb{I}_{\Bf_1}$ above. 
\end{rem}
{
Let $\overline{S}^{\rm ord}(N)$ be the space of ordinary $\LL$-adic cusp forms of tame level $N$. 
Recall that we have a canonical injection 
\begin{equation}\label{equation:ohta}
\mathbb{D}\left(F_p^- \TT^\ast_{\Bf_2} (1)\right) \hookrightarrow \overline{S}^{\rm ord}(N)
\end{equation} 
induced by Ohta's $\LL$-adic Eichler-Shimura isomorphism established in \cite[Theorem 2.1.11]{Ohta2000}. }
\begin{define}
\label{def_specialeigenvectors}
{Let us choose an element 
$$\omega_{\Bf_2} \in \mathbb{D}\left(F_p^- \TT^\ast_{\Bf_2} (1)\right) ={\rm Hom}_{\mathbb{I}_{\Bf_2}}\left(\mathbb{D}\left(F_p^+ \TT_{\Bf_2}\right),\mathbb{I}_{\Bf_2}\right)$$ which interpolates holomorphic differential forms corresponding to arithmetic specializations of the dual Hida family $\overline{\Bf}_2$ of ${\Bf}_2$ via \eqref{equation:ohta}. We denote by $\omega_{\Bf_2}^{(\mathbb{{I}})}$ the extension (by linearity) of $\omega_{\Bf_2}$ to an element of 
${\rm Hom}_{\mathbb{I}}\left(\mathbb{D}\left(F_p^+ \TT_{\Bf_2}\right),\mathbb{I}\right)$. Similarly we define $\omega_{\Bf_1} \in \mathbb{D}\left( F_p^- \TT_{\Bf_1} (1)\right)$ which interpolates holomorphic differential forms corresponding to arithmetic specializations of the Hida family ${\Bf}_1$. 
}
%\green{We also take an element $\eta_{\Bf_1} \in \mathbb{D}\left(F_p^+ \TT^\ast_{\Bf_1} (1)\right) $ which interpolates anti-holomorphic differential forms that correspond to arithmetic specializations of the dual Hida family $\overline{\Bf}_1$ of $\Bf_1$. }
{Mimicking Hida's construction in~\cite[(7.5)]{Hida88}, we let $\eta_{\Bf_1}\in {\rm Hom}_{\mathbb{I}}\left(\mathbb{D}\left(F_p^-\TT_{\Bf_1}\right) ,\dfrac{1}{H}\mathbb{I}\right)$ denote the compositum of the following arrows:
$$\mathbb{D}\left(F_p^-\TT_{\Bf_1}\right)\lra \mathbb{D}\left(F_p^-\TT_{\Bf_1}\right)\otimes_{\mathbb{I}_{\Bf_2}}\mathbb{I} 
\hookrightarrow \overline{S}^{\rm ord}(N)\otimes_{\LL}\mathbb{I} \stackrel{(1_{\Bf_1},\,\,)}{\lra} \frac{1}{H}\mathbb{I}\, 
$$
where $\mathbf{1}_{\Bf_1}\in  \mathbf{h}\otimes_{\LL}\mathscr{K}$ is the idempotent that Hida in op. cit. denotes by $\mathbf{1}_{\mathscr K}$, which corresponds to the choice $\Bf_1$ as a branch and the pairing $(\,,\,)$ is the one given as in \cite[Theorem 7.4]{Hida88}. 
Note that the compositum of these arrows a priori land in $\mathscr{K}$. However, Hida explains that by the very definition of the congruence ideal in \cite[(4.3)]{Hida88}, it follows that $H\cdot \mathbf{1}_{\Bf_1} \in \mathbf{h}\otimes_{\LL}\mathbb{I}$.}
\end{define}
 \begin{define}
 \label{def:geometricpadicLfunction}
Using the identification \eqref{equation:decomposition_of_D}, we define
%$$
%\langle \ , \ \rangle : \mathbb{D}(F^{-+}\TT_\cyc) 
%\times \left(\mathbb{D}\left(F_p^+ \TT^\ast_{\Bf_1} (1)\right) \widehat{\otimes}
%\mathbb{D}\left(F_p^- \TT^\ast_{\Bf_2} (1)\right) \right)
%\longrightarrow \mathbb{I}_{\Bf_1} \widehat{\otimes} \mathbb{I}_{\Bf_2}\widehat{\otimes}\LL_\cyc\,.
%$$
%On pairing with the element $\eta_{\Bf_1}\widehat{\otimes}\omega_{\Bf_2} 
%\in \mathbb{D}\left(F_p^+ \TT^\ast_{\Bf_1} (1)\right) \widehat{\otimes} 
%\mathbb{D}\left(F_p^- \TT^\ast_{\Bf_2} (1)\right) 
%$, 
%this pairing induces  
%$$
% \eta_{\Bf_1}\widehat{\otimes} \omega_{\Bf_2} : \mathbb{D}(F^{-+}\TT_\cyc)\lra \mathbb{I}_{\Bf_1} \widehat{\otimes}\mathbb{I}_{\Bf_2}\widehat{\otimes}\LL_\cyc
%$$
We define
\begin{equation}\label{equation:definitionL_pBF}
L_p^{\textup{BF}}(\Bf_1,\Bf_2,\mathbf{j}):= 
\left\langle  \EXP^\ast \circ\,\res_p\left(\BF_1^{\Bf_1,\Bf_2} \right)
, \eta_{\Bf_1}\widehat{\otimes} \omega_{\Bf_2}^{(\mathbb{I})}\right\rangle 
 \in \frac{1}{H}\mathbb{I}\widehat{\otimes}\mathbb{I}\widehat{\otimes}\LL_\cyc
\end{equation}
where $\mathbf{j}$ is the cyclotomic variable. 
%We remark that, by choosing a suitable  identification of $\mathbb{I}_{\Bf_1}$ with $\mathbb{I}_{\Bf_2}$, the element $L_p^{\textup{BF}}(\Bf_1,\Bf_2,\mathbf{j})$ can be regarded as an element of $\dfrac{1}{H}\mathbb{I}\widehat{\otimes}\mathbb{I}\widehat{\otimes}\LL_\cyc$ (where $\mathbb{I}=\mathbb{I}_{\Bf_1}=\mathbb{I}_{\Bf_2}$), as we have explained within Definition~\ref{def:HidaspadicLfunction}. 
\end{define}
%%%%
The following is a very slight variant of \cite[Theorem 10.2.2]{KLZ2} (and its proof follows the argument in op. cit.).
\begin{thm}
\label{thm:excplticitreciprocityPart2} We have the equality 
{\be\label{eqn:BFpadicvsHidapadic} 
 (L_p^{\textup{BF}}(\Bf_1,\Bf_2,\mathbf{j}))
=(L_p^{\textup{Hida}}(\Bf_1,\Bf_2,\Bj)).
\ee
of fractional ideals of $\mathbb{I}\widehat{\otimes}\mathbb{I}\widehat{\otimes}\LL_\cyc$.}
\end{thm}
\begin{proof}
{Let us take any arithmetic point $\kappa =(\kappa_1 ,\kappa_2, j)$ in ${\rm Sp}(\mathbb{I}\widehat{\otimes}\mathbb{I}\widehat{\otimes}\LL_\cyc)$ such that 
\begin{itemize}
\item[(i)] the weights $w_1$ and $w_2$ of $\kappa_1 $ and $\kappa_2$ satisfy the condition 
$1\leq j \leq w_2+1 <w_1 +1$, 
\item[(ii)] $\Bf_1(\kappa_1)$ and $\Bf(\kappa_2)$ are $p$-old,
\item[(iii)] the character $\omega^{-w_2-1+j}\Psi_2^{(p)}\phi_{\kappa_2}\phi$ is trivial.
\end{itemize} }
 
By the definition of $L_p^{\textup{BF}}(\Bf_1,\Bf_2,\mathbf{j})$  given in \eqref{equation:definitionL_pBF}, we have 
\begin{equation}\label{equation:final1}
\kappa (L_p^{\textup{BF}}(\Bf_1,\Bf_2,\mathbf{j}))
= \left\langle \kappa (\EXP^\ast \circ\,\res_p\left(\BF_1^{\Bf_1,\Bf_2}\right)) , 
\eta_{\Bf_1 (\kappa_1 )}\otimes \omega_{\Bf_2 (\kappa_2)}
\right\rangle .
\end{equation}
By the interpolation property of the big exponential map $\EXP^\ast$ 
obtained in Theorem \ref{thm:mainEXPrankinselberg}, we have 
\begin{equation}\label{equation:final2}
\kappa (\EXP^\ast \circ\,\res_p\left(\BF_1^{\Bf_1,\Bf_2}\right))
= e^-_p \cdot \log (\kappa (\res_p\left(\BF_1^{\Bf_1,\Bf_2}\right))). 
\end{equation}
Furthermore, the class 
$\kappa (\res_p\left(\BF_1^{\Bf_1,\Bf_2}\right))$ is related to 
the special value of $L_p^{\textup{Hida}}$ via the identity {
\begin{multline}\label{equation:final3}
\left\langle \log \left(\kappa (\res_p\left(\BF_1^{\Bf_1,\Bf_2}\right))\right), 
\kappa_1(\eta_{\Bf_1})\otimes \kappa_2(\omega_{\Bf_2}^{(\mathbb{I})})\right\rangle
= \lambda_N(\Bf_1(\kappa_1)^\circ)^{-1} (-1)^{j}
 L_p^{\textup{Hida}}(\Bf_1 (\kappa_1),\Bf_2 (\kappa_2) ,j)
\end{multline}
(where $\Bf_1(\kappa_1)^\circ$ is the newform whose $p$-stabilization is $\Bf_1(\kappa_1)$ and $\lambda_N(\Bf_1(\kappa_1)^\circ)$ is the Atkin-Lehner pseudo-eigenvalue) thanks to \cite[Theorem 6.5.9]{KLZ1} combined with  \cite[Proposition 10.1.1]{KLZ2}. }
{Note that the arithmetic points $\kappa = (\kappa_1 ,\kappa_2 ,j)$ satisfying the conditions (i), (ii) and (iii) are Zariski dense in ${\rm Sp}(\mathbb{I}\widehat{\otimes}\mathbb{I}\widehat{\otimes}\LL_\cyc)$. }
Combining the equations \eqref{equation:final1}, \eqref{equation:final2} and  \eqref{equation:final3}, it follows that 
$( L_p^{\textup{BF}}(\Bf_1,\Bf_2,\mathbf{j}))
=( L_p^{\textup{Hida}}(\Bf_1,\Bf_2,\Bj))$, as we have claimed.
\end{proof}

%%%%%%%%%%%%%%%
\begin{cor}\label{cor:nontriviality_BF}
The class $\res_p(\BF^{\Bf_1,\Bf_2}_{1})$ is non-trivial.
\end{cor}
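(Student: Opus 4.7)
The plan is to argue by contraposition, passing through the explicit reciprocity law established in Theorem~\ref{thm:excplticitreciprocityPart2} to reduce the non-triviality of $\res_p(\BF^{\Bf_1,\Bf_2}_1)$ to the non-vanishing of Hida's three-variable $p$-adic Rankin--Selberg $L$-function, and then reduce that in turn to a classical non-vanishing statement for complex Rankin--Selberg $L$-values.

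First, suppose for contradiction that $\res_p(\BF^{\Bf_1,\Bf_2}_1)=0$ in $H^1(\QQ_p,F^{-+}\TT_\cyc)$. Applying the $\mathcal{R}_\cyc$-linear map $\EXP^\ast$ of Theorem~\ref{thm:mainEXPrankinselberg} (which, by Corollary~\ref{cor:dualexponentialmapforH++}, is in fact an isomorphism), we obtain $\EXP^\ast(\res_p(\BF^{\Bf_1,\Bf_2}_1))=0$ in $\mathbb{D}(F^{-+}\TT_\cyc)$. Pairing against $\eta_{\Bf_1}\widehat{\otimes}\omega_{\Bf_2}$ and using the very definition \eqref{equation:definitionL_pBF} gives $L_p^{\textup{BF}}(\Bf_1,\Bf_2,1+\mathbf{j})=0$. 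The explicit reciprocity law of Theorem~\ref{thm:excplticitreciprocityPart2} then forces $\mathcal{R}_\cyc\,L_p^{\textup{Hida}}(\Bf_1,\Bf_2,1+\Bj)=0$, and hence $L_p^{\textup{Hida}}(\Bf_1,\Bf_2,1+\Bj)=0$ identically on a Zariski dense set of arithmetic points of $\mathbb{I}_{\Bf_1}\widehat{\otimes}\mathbb{I}_{\Bf_2}\widehat{\otimes}\LL_\cyc$.

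To derive a contradiction, I would invoke the interpolation formula \eqref{equation:interpolationHida}. Specialize at any arithmetic point $\kappa=(\kappa_1,\kappa_2,j)$ with $w_2+1\leq j\leq w_1$ and with $\kappa_{\cyc}=\chi_\cyc^{j}\phi$, where $\phi$ is a finite-order character of $\Gamma_\cyc$ chosen of sufficiently large $p$-power conductor. By classical non-vanishing theorems for Rankin--Selberg convolutions twisted by highly ramified Dirichlet characters (due to Rohrlich, Jacquet--Shalika, and others), one may choose $\phi$ and $j$ in the critical range so that $L(\Bf_1(\kappa_1),\Bf_2(\kappa_2),j)\neq 0$; and, since the four Euler factors appearing in \eqref{equation:interpolationHida} are likewise non-zero for a suitable choice of $j$ and $\phi$ (each is an explicit function of $p$-power units and the ramified twist), the right-hand side of \eqref{equation:interpolationHida} is non-zero. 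This contradicts $L_p^{\textup{Hida}}(\Bf_1,\Bf_2,1+\Bj)=0$ and concludes the argument.

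The main obstacle to writing this out in detail is ensuring the existence of a single arithmetic specialization at which all four Euler correction factors in \eqref{equation:interpolationHida} and the archimedean $L$-value are simultaneously non-zero; once that is granted, the remainder is a mechanical chain of implications. In practice, this can be arranged by fixing $(\kappa_1,\kappa_2)$ and varying $(j,\phi)$ over a sufficiently rich family of twists, since the Euler factors vanish only on a proper Zariski closed subset while the non-vanishing results on Rankin--Selberg $L$-values hold on a Zariski dense set of such twists.
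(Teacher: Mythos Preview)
Your argument is correct in outline and follows the same overall strategy as the paper: reduce via Theorem~\ref{thm:excplticitreciprocityPart2} to the non-vanishing of $L_p^{\textup{Hida}}(\Bf_1,\Bf_2,1+\Bj)$, and then use the interpolation formula to reduce to a non-vanishing statement for complex Rankin--Selberg $L$-values.

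The execution differs, however, and the paper's route is simpler. You invoke non-vanishing theorems for highly ramified twists (Rohrlich, Jacquet--Shalika), varying the finite-order character $\phi$. The paper avoids this entirely: it observes that the interpolation range $j\in[w_2+1,w_1]$ meets the region of \emph{absolute convergence} of the Euler product (namely $\mathrm{Re}(s)>\tfrac{w_1+w_2+2}{2}+1$) as soon as $w_1$ is taken large enough relative to $w_2$, so the complex value $L(\Bf_1(\kappa_1),\Bf_2(\kappa_2),j)$ is trivially non-zero there. The Euler-like modification factors are then checked to be non-zero by a straightforward archimedean estimate on $|\alpha(\Bf_i(\kappa_i))|,|\beta(\Bf_i(\kappa_i))|$. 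This gives a specialization $(\kappa_1,\kappa_2,w_1)$ with $w_1\gg w_2$ at which the interpolated value is non-zero, without any appeal to deep non-vanishing results. A minor additional point: the interpolation formula \eqref{equation:interpolationHida} as recorded in the paper is stated only at untwisted integer points, so your proposal to vary $\phi$ would require citing the fuller interpolation formula from \cite{Hida88}; the paper's argument stays within what is written.
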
 
\begin{proof}
Theorem~\ref{thm:excplticitreciprocityPart2} reduces the non-vanishing of the class $\res_p(\BF^{\Bf_1,\Bf_2}_{1})$ to the non-triviality of Hida's $p$-adic Rankin-Selberg $L$-function $L_p^{\textup{Hida}}(\Bf_1,\Bf_2,\mathbf{j})$. 

To see the non-triviality of $L_p^{\textup{Hida}}(\Bf_1,\Bf_2,\mathbf{j})$, 
we consider the interpolation property \eqref{equation:interpolationHida} of $L_p^{\textup{Hida}}(\Bf_1,\Bf_2,\mathbf{j})$. 
We recall that the Euler product of the $L$-function $L(\kappa_1 (\Bf_1) ,\kappa_2 (\Bf_1),s)$ is absolutely convergent (and therefore non-zero) for $\mathrm{Re} (s) > \frac{w_1 +w_2 +2}{2}+1$. 
It is also easy to check the non-vanishing of the Euler-like factor 
\begin{multline*}
\left(1-\frac{p^{j-1}}{\alpha (\Bf_1 (\kappa_1))\alpha (\Bf_2(\kappa_2))} \right) \left(1-\frac{p^{j-1}}{\alpha (\Bf_1 (\kappa_1))\beta (\Bf_2(\kappa_2))}  \right)\left(1-\frac{\beta (\Bf_1 (\kappa_1))\alpha (\Bf_2(\kappa_2))}{p^{j}} \right) \\
 \times \left(1-\frac{\beta (\Bf_1 (\kappa_1))\alpha (\Bf_2(\kappa_2))}{p^{j}}  \right)
 {\left(1-\frac{\beta (\Bf_1(\kappa_1))}{p\alpha (\Bf_1(\kappa_1))}   \right)^{-1} \left(1-\frac{\beta (\Bf_1(\kappa_1))}{\alpha (\Bf_1(\kappa_1))} \right)^{-1}}.
\end{multline*}
In fact, it is known that the archimedean absolute values of the eigenvalues $\alpha (\Bf_1 (\kappa_1))$ and $\beta (\Bf_1 (\kappa_1))$ belong to the range $[p^{\frac{w_1 }{2}}, p^{\frac{w_1 }{2}+1}]$. A similar statement holds true for $\alpha (\Bf_2 (\kappa_2))$ and $\beta (\Bf_2 (\kappa_2))$. 
Based on these facts, it is now easy to check that $L_p^{\textup{Hida}}(\Bf_1,\Bf_2,\mathbf{j})$ has a specialization 
$(\kappa_1,\kappa_2 , w_1)$ with $w_1>w_2+2$ where the value of 
the function $L_p^{\textup{Hida}}(\Bf_1,\Bf_2,\mathbf{j})$ is non zero.  
Thence, the function $L_p^{\textup{Hida}}(\Bf_1,\Bf_2,\mathbf{j})$ itself 
is non zero. 
\par 
Consequentially, Theorem~\ref{thm:excplticitreciprocityPart2} implies that the class $\res_p(\BF^{\Bf_1,\Bf_2}_{1})$ is non-trivial.
\end{proof}

\begin{cor}
\label{cor:mainthmRankinSelberg} Let $f_i \in S_{k_i}(\Gamma_1(N_i  ))$ ($i=1,2$) be non-CM newforms of respective weights $k_1, k_2$, levels $N_1,N_2$ which are not twisted-conjugate to each other and suppose  $p>B$ is a good ordinary prime for both forms.  Let $\Bf_i$ be the unique Hida family that admits the $p$-stabilization of $f_i$ as a weight-$k_i$ arithmetic specialization ($i=1,2$). Suppose that the conditions in Lemma~\ref{lem:allhypoHholdstrue} that ensure the validity of {\rm (H.0)}, {\rm (H.2)}, {\rm (H.0$^-$)}, 
{\rm (H.2$^+$)} and {\rm (H.2$^{++}$)} hold. Assume also the hypotheses {\rm (H.c)}, \textup{(BI.1), (BI.2)} and  {\rm (F.Dist)}. Then, 
we have the following inclusion of integral ideals of $\mathbb{I}\widehat\otimes\mathbb{I}\widehat\otimes\LL_\cyc $: 
{$$\textup{char}_{\mathbb{I}\widehat\otimes\mathbb{I}\widehat\otimes\LL_\cyc }\left( 
H^1_{\FF_\Gr^*}(\QQ,\TT_\cyc^\vee (1))^\vee \otimes_{\mathcal{R}_\cyc } 
\mathbb{I}\widehat\otimes\mathbb{I}\widehat\otimes\LL_\cyc \right) 
\,\, \supset \,\, (H \cdot  L_p^{\textup{Hida}}(\Bf_1,\Bf_2,\Bj) ) \, .
$$}
\end{cor}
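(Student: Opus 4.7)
The plan is to combine the three main threads developed in the paper: the locally restricted Euler system bound (Theorem~\ref{thm:mainconjwithoutpadicLfunc}), the Coleman-type isomorphism $\EXP^\ast$ for the rank-one subquotient $F^{-+}\TT_\cyc$ (Theorem~\ref{thm:mainEXPrankinselberg}), and the explicit reciprocity law of Kings--Loeffler--Zerbes that relates the Beilinson--Flach class to Hida's $p$-adic $L$-function (Theorem~\ref{thm:excplticitreciprocityPart2}).

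\textbf{Step 1: Non-triviality and Euler system bound.} Under the running hypotheses, Lemma~\ref{lem:allhypoHholdstrue}, Lemma~\ref{lem:absoluteirredOK} and Theorem~\ref{thm:MR2holds} verify the hypotheses {\rm (H.0)}, {\rm (H.2)}, {\rm (H.0$^-$)}, {\rm (H.2$^+$)}, {\rm (H.2$^{++}$)} and {\rm (MR1)--(MR4)} needed by the locally restricted Euler system machinery. Proposition~\ref{prop:BFislocallyrestrictedES} ensures that $\BF = \{\BF_n^{\Bf_1,\Bf_2}\}$ is a locally restricted Euler system, and Corollary~\ref{cor:nontriviality_BF} (which follows from the reciprocity law together with the non-vanishing of $L_p^{\textup{Hida}}(\Bf_1,\Bf_2,1+\Bj)$ observed at sufficiently large weight/cyclotomic specializations) gives the non-triviality of $\res_p(\BF^{\Bf_1,\Bf_2}_1)$. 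Thus the full conclusion of Theorem~\ref{thm:mainconjwithoutpadicLfunc}(5) applies:
$$
\textup{char}_{\mathcal{R}_\cyc}\left(H^1_{\FF_\Gr^*}(\QQ,\TT_\cyc^\vee(1))^\vee\right) \,\supset\, \textup{char}_{\mathcal{R}_\cyc}\!\left(\frac{H^1_{+/\textup{f}}(\QQ_p,\TT_\cyc)}{\mathcal{R}_\cyc\cdot\res_{+/\textup{f}}(\BF^{\Bf_1,\Bf_2}_1)}\right).
$$

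\textbf{Step 2: Transport via the Coleman isomorphism.} Since $H^1_{+/\textup{f}}(\QQ_p,\TT_\cyc)\cong H^1(\QQ_p, F^{-+}\TT_\cyc)$ and Theorem~\ref{thm:mainEXPrankinselberg} provides an $\mathcal{R}_\cyc$-linear \emph{isomorphism} $\EXP^\ast$ onto $\mathbb{D}(F^{-+}\TT_\cyc)$, the characteristic ideal on the right of the divisibility above equals
$$
\textup{char}_{\mathcal{R}_\cyc}\!\left(\frac{\mathbb{D}(F^{-+}\TT_\cyc)}{\mathcal{R}_\cyc\cdot \EXP^\ast\!\left(\res_{+/\textup{f}}(\BF^{\Bf_1,\Bf_2}_1)\right)}\right).
$$
Now pair against the element $\eta_{\Bf_1}\widehat\otimes\omega_{\Bf_2}$ of the dual module (which is a basis for the rank-one $\mathcal{R}_\cyc$-module $\mathbb{D}(F^{-+}\TT_\cyc)$ after the identification given by \eqref{equation:decomposition_of_D}). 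By Definition~\ref{def:geometricpadicLfunction}, the image of $\EXP^\ast\circ\res_p(\BF^{\Bf_1,\Bf_2}_1)$ under this pairing is precisely $L_p^{\textup{BF}}(\Bf_1,\Bf_2,1+\Bj)$, so that
$$
\textup{char}_{\mathcal{R}_\cyc}\!\left(\frac{\mathbb{D}(F^{-+}\TT_\cyc)}{\mathcal{R}_\cyc\cdot \EXP^\ast(\res_{+/\textup{f}}(\BF^{\Bf_1,\Bf_2}_1))}\right) \,=\, \mathcal{R}_\cyc\cdot L_p^{\textup{BF}}(\Bf_1,\Bf_2,1+\Bj).
$$

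\textbf{Step 3: Comparison of $p$-adic $L$-functions.} Theorem~\ref{thm:excplticitreciprocityPart2}, proved via the Beilinson--Flach reciprocity formula of \cite[Theorem 6.5.9]{KLZ1} combined with the interpolation of $\EXP^\ast$ given in Theorem~\ref{thm:mainEXPrankinselberg}, yields the identity of fractional ideals $\mathcal{R}_\cyc\cdot L_p^{\textup{BF}}(\Bf_1,\Bf_2,1+\Bj) = \mathcal{R}_\cyc\cdot L_p^{\textup{Hida}}(\Bf_1,\Bf_2,1+\Bj)$. However, $L_p^{\textup{Hida}}$ is a priori only an element of $\frac{1}{H}\mathbb{I}\widehat\otimes\mathbb{I}\widehat\otimes\Lambda_\cyc$ (see Remark~\ref{rem:HidamailiesherevsHidafamiliesinKLZ2}), so clearing the denominator $H$ introduced by Hida's congruence divisor yields the integral divisibility
$$
\textup{char}_{\mathcal{R}_\cyc}\left(H^1_{\FF_\Gr^*}(\QQ,\TT_\cyc^\vee(1))^\vee\right) \,\supset\, (H)\cdot \mathcal{R}_\cyc\cdot L_p^{\textup{Hida}}(\Bf_1,\Bf_2,1+\Bj),
$$
as required.

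\textbf{Main obstacle.} The nontrivial content has already been absorbed into the earlier results being invoked; the actual difficulty of Corollary~\ref{cor:mainthmRankinSelberg} is bookkeeping, in particular ensuring that the auxiliary factors in the interpolation formula of $\EXP^\ast$ (Theorem~\ref{thm:mainEXPrankinselberg}) match exactly those appearing in the KLZ reciprocity law, and that the congruence divisor $H$ is accounted for correctly when passing between the ideals generated by $L_p^{\textup{BF}}$ and $L_p^{\textup{Hida}}$. This last point is the reason why the correction factor $(H)$ appears in the statement; the surjectivity of $\EXP^\ast$ guarantees that no further auxiliary factors (e.g.\ those coming from denominators in Perrin-Riou/Coleman-type construction) are needed.
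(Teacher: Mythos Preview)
Your overall architecture is exactly that of the paper: apply Theorem~\ref{thm:mainconjwithoutpadicLfunc}(5), transport through the isomorphism $\EXP^*$ of Theorem~\ref{thm:mainEXPrankinselberg}, and then invoke Theorem~\ref{thm:excplticitreciprocityPart2} together with Corollary~\ref{cor:nontriviality_BF}. There is, however, a genuine slip in how you account for the factor $(H)$.

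In Step~2 you assert that $\eta_{\Bf_1}\widehat\otimes\omega_{\Bf_2}$ is an $\mathcal{R}_\cyc$-basis of the dual of $\mathbb{D}(F^{-+}\TT_\cyc)$. This is not correct: the paper (following \cite[Proposition~10.1.1(1)]{KLZ2}) only knows that $\omega_{\Bf_2}$ is a basis of its factor; the element $\eta_{\Bf_1}$ is tied to the fractional ideal $I_{\Ba}=\frac{1}{H}\mathbb{I}_{\Bf_1}$ (cf.\ Remark~\ref{rem:HidamailiesherevsHidafamiliesinKLZ2}), not to $\mathbb{I}_{\Bf_1}$ itself. Consequently the pairing against $\eta_{\Bf_1}\widehat\otimes\omega_{\Bf_2}$ does \emph{not} identify $\textup{char}\bigl(\mathbb{D}(F^{-+}\TT_\cyc)/\mathcal{R}_\cyc\cdot\EXP^*(\res_{+/\textup{f}}(\BF_1^{\Bf_1,\Bf_2}))\bigr)$ with $\mathcal{R}_\cyc\cdot L_p^{\textup{BF}}$; rather one obtains the containment
\[
\textup{char}_{\mathcal{R}_\cyc}\!\left(\frac{H^1_{+/\textup{f}}(\QQ_p,\TT_\cyc)}{\mathcal{R}_\cyc\cdot\res_{+/\textup{f}}(\BF_1^{\Bf_1,\Bf_2})}\right)\ \supset\ (H)\cdot\mathcal{R}_\cyc\, L_p^{\textup{BF}}(\Bf_1,\Bf_2,1+\Bj),
\]
with the $(H)$ coming precisely from the discrepancy between $\eta_{\Bf_1}$ and an integral basis. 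Your Step~3 then recovers the correct factor, but for the wrong reason: ``clearing the denominator $H$ of $L_p^{\textup{Hida}}$'' after an (alleged) equality would merely weaken a containment you claim to already have, rather than produce it. The $(H)$ is not a bookkeeping artefact of $L_p^{\textup{Hida}}$ having denominators; it enters at the moment you compare $\EXP^*(\res_{+/\textup{f}}(\BF_1))$ with the pairing value $L_p^{\textup{BF}}$ through the non-basis element $\eta_{\Bf_1}$. Once this is corrected, your argument and the paper's coincide.
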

\begin{proof}
Under our running hypotheses we have the following chain of isomorphisms
$$
H^1_{+/\textup{f}}(\QQ_p,\TT_\cyc)\stackrel{\sim}{\lra} H^1(\QQ_p,F^{-+}\TT_\cyc)\stackrel{\sim}{\lra} \mathbb{D}(F^{-+}\TT_\cyc).
$$
Since $\omega_{\Bf_2}$ is an isomorphism (see \cite[Proposition 10.1.1(1)]{KLZ2}), it follows from Theorem~\ref{thm:excplticitreciprocityPart2} that
{$$
\textup{char}_{\mathbb{I}\widehat\otimes\mathbb{I}\widehat\otimes\LL_\cyc }
\left(\dfrac{H^1_{+/\textup{f}}(\QQ_p,\TT_\cyc)}{\mathcal{R}_\cyc \res_{+/\textup{f}}\left(\BF^{\Bf_1,\Bf_2}_{1}\right)} 
\otimes_{\mathcal{R}_\cyc } 
\mathbb{I}\widehat\otimes\mathbb{I}\widehat\otimes\LL_\cyc
\right)\supset
(H \cdot  L_p^{\textup{BF}}(\Bf_1,\Bf_2,\Bj)) \,.$$}
Corollary now follows from Theorem~\ref{thm:mainconjwithoutpadicLfunc} (3) 
and Corollary~\ref{cor:nontriviality_BF}. 
\end{proof}

%%%%%%%%%%%%%%%
%%%%%%%%%%%%%%%%%%%%%%%%%%%%%%%%%%%%%%%%%%%%%%%%

{\bibliographystyle{halpha}
\bibliography{references}

\begin{thebibliography}{GGJQ05}

\bibitem[{Ber}03]{BergerBlochKatoDocumenta}
Laurent {Berger}.
\newblock {Bloch and Kato's exponential map: three explicit formulas.}
\newblock {\em {Doc. Math., J. DMV}}, Extra Vol.:99--129, 2003.

\bibitem[BL15]{kbblei1}
K{\^a}z{\i}m B{\"u}y{\"u}kboduk and Antonio Lei.
\newblock Coleman-adapted {R}ubin-{S}tark {K}olyvagin systems and supersingular
  {I}wasawa theory of {CM} abelian varieties.
\newblock {\em Proc. Lond. Math. Soc. (3)}, 111(6):1338--1378, 2015.

\bibitem[BL18]{buyukbodukleiordmainconj}
K\^{a}z{\i}m B\"{u}y\"{u}kboduk and Antonio Lei.
\newblock Anticyclotomic {$p$}-ordinary {I}wasawa theory of elliptic modular
  forms.
\newblock {\em Forum Math.}, 30(4):887--913, 2018.

\bibitem[B{\"u}y09a]{kbbstark}
K\^az{\i}m B{\"u}y\"ukboduk.
\newblock Kolyvagin systems of {S}tark units.
\newblock {\em J. Reine Angew. Math.}, 631:85--107, 2009.

\bibitem[B{\"u}y09b]{kbbiwasawa}
K\^az{\i}m B{\"u}y\"ukboduk.
\newblock Stark units and the main conjectures for totally real fields.
\newblock {\em Compositio Math.}, 145:1163--1195, 2009.

\bibitem[B{\"u}y10]{kbbESrankr}
K{\^a}zim B{\"u}y{\"u}kboduk.
\newblock On {E}uler systems of rank {$r$} and their {K}olyvagin systems.
\newblock {\em Indiana Univ. Math. J.}, 59(4):1277--1332, 2010.

\bibitem[B{\"u}y11]{kbb}
K\^az{\i}m B{\"u}y\"ukboduk.
\newblock {$\Lambda$}-adic {K}olyvagin systems.
\newblock {\em IMRN}, 2011(14):3141--3206, 2011.

\bibitem[B{\"u}y14]{kbbCMabvar}
K{\^a}z{\i}m B{\"u}y{\"u}kboduk.
\newblock Main conjectures for {CM} fields and a {Y}ager-type theorem for
  {R}ubin-{S}tark elements.
\newblock {\em Int. Math. Res. Not. IMRN}, (21):5832--5873, 2014.

\bibitem[B{\"u}y16]{kbbdeform}
K{\^a}z{\i}m B{\"u}y{\"u}kboduk.
\newblock Deformations of {K}olyvagin systems.
\newblock {\em Ann. Math. Qu\'e.}, 40(2):251--302, 2016.

\bibitem[Cas15]{castellaordmainconj}
Francesc Castella.
\newblock $p$-adic heights of {H}eegner points and {B}eilinson-{F}lach
  elements, 2015.
\newblock Preprint, arXiv:1509.02761.

\bibitem[{Fis}02]{fischmanAIF}
Ami {Fischman}.
\newblock {On the image of $\Lambda$-adic Galois representations.}
\newblock {\em {Ann. Inst. Fourier}}, 52(2):351--378, 2002.

\bibitem[FO12]{fouquetochiai}
Olivier Fouquet and Tadashi Ochiai.
\newblock Control theorems for {S}elmer groups of nearly ordinary deformations.
\newblock {\em J. Reine Angew. Math.}, 666:163--187, 2012.

\bibitem[GGJQ05]{GGQ}
Eknath Ghate, Enrique Gonz\'alez-Jim\'enez, and Jordi Quer.
\newblock On the {B}rauer class of modular endomorphism algebras.
\newblock {\em Int. Math. Res. Not.}, (12):701--723, 2005.

\bibitem[Hid88]{Hida88}
Haruzo Hida.
\newblock A {$p$}-adic measure attached to the zeta functions associated with
  two elliptic modular forms. {II}.
\newblock {\em Ann. Inst. Fourier (Grenoble)}, 38(3):1--83, 1988.

\bibitem[KLZ17]{KLZ2}
Guido Kings, David Loeffler, and Sarah Zerbes.
\newblock {R}ankin--{E}isenstein classes and explicit reciprocity laws.
\newblock {\em Cambridge J. Math.}, 5(1):1--122, 2017.

\bibitem[KLZ19]{KLZ1}
Guido Kings, David Loeffler, and Sarah Zerbes.
\newblock Rankin-{E}isenstein classes for modular forms, $\geq$2019.
\newblock \emph{American Journal of Mathematics}, to appear.

\bibitem[LLZ14]{LLZ}
Antonio Lei, David Loeffler, and Sarah~Livia Zerbes.
\newblock Euler systems for {R}ankin-{S}elberg convolutions of modular forms.
\newblock {\em Ann. of Math. (2)}, 180(2):653--771, 2014.

\bibitem[LO14]{ochiai-AJM14}
Francesco Lemma and Tadashi Ochiai.
\newblock {T}he {C}oleman map for {H}ida families of $\textup{GSp}_4$.
\newblock {\em Amer. J. Math.}, 136(3):729--760, 2014.

\bibitem[Loe17]{loeffler17Glasgow}
David Loeffler.
\newblock Images of adelic {G}alois representations for modular forms.
\newblock {\em Glasg. Math. J.}, 59(1):11--25, 2017.

\bibitem[MR04]{mr02}
Barry Mazur and Karl Rubin.
\newblock Kolyvagin systems.
\newblock {\em Mem. Amer. Math. Soc.}, 168(799):viii+96, 2004.

\bibitem[Nek06]{nek}
Jan Nekov{\'a}{\v{r}}.
\newblock Selmer complexes.
\newblock {\em Ast\'erisque}, (310):viii+559, 2006.

\bibitem[Och03]{ochiai-AJM03}
Tadashi Ochiai.
\newblock A generalization of the {C}oleman map for {H}ida deformations.
\newblock {\em Amer. J. Math.}, 125(4):849--892, 2003.

\bibitem[Och05]{ochiai-AIF}
Tadashi Ochiai.
\newblock Euler system for {G}alois deformations.
\newblock {\em Ann. Inst. Fourier (Grenoble)}, 55(1):113--146, 2005.

\bibitem[Oht00]{Ohta2000}
Masami Ohta.
\newblock Ordinary {$p$}-adic \'{e}tale cohomology groups attached to towers of
  elliptic modular curves. {II}.
\newblock {\em Math. Ann.}, 318(3):557--583, 2000.

\bibitem[{Per}94]{perrinriou}
Bernadette {Perrin-Riou}.
\newblock {Th\'eorie d'Iwasawa des repr\'esentations $p$-adiques sur un corps
  local.}
\newblock {\em {Invent. Math.}}, 115(1):81--149, 1994.

\bibitem[Ram00]{Ramakrishnan2000}
D.~Ramakrishnan.
\newblock Recovering modular forms from squares. \emph{Appendix to } ``{A}
  problem of {L}innik for elliptic curves and mean-value estimates for
  automorphic representations'' \emph{(by W. Duke and E. Kowalski)}.
\newblock {\em Invent. Math.}, 139(1):1--39, 2000.

\bibitem[Rub98]{r98}
Karl Rubin.
\newblock Euler systems and modular elliptic curves.
\newblock In {\em Galois representations in arithmetic algebraic geometry
  ({D}urham, 1996)}, volume 254 of {\em London Math. Soc. Lecture Note Ser.},
  pages 351--367. Cambridge Univ. Press, Cambridge, 1998.

\bibitem[Rub00]{r00}
Karl Rubin.
\newblock {\em Euler systems}, volume 147 of {\em Annals of Mathematics
  Studies}.
\newblock Princeton University Press, Princeton, NJ, 2000.
\newblock Hermann Weyl Lectures. The Institute for Advanced Study.

\bibitem[SU14]{skinnerurban}
Christopher Skinner and Eric Urban.
\newblock The {I}wasawa main conjectures for {$\rm GL_2$}.
\newblock {\em Invent. Math.}, 195(1):1--277, 2014.

\bibitem[Wan15]{wanordmainconj}
Xin Wan.
\newblock The {I}wasawa main conjecture for {H}ilbert modular forms.
\newblock {\em Forum of Mathematics, Sigma}, 3, 2015.

\end{thebibliography}
}

\end{document}